\DeclareSymbolFontAlphabet{\mathbb}{AMSb} %to ensure that the meaning of \mathbb does not change
\DeclareSymbolFontAlphabet{\mathbbl}{bbold} 
\newcommand{\Prism}{{\mathlarger{\mathbbl{\Delta}}}}
\theoremstyle{plain}
\newtheorem{thm}{Theorem}[section]
\newtheorem{lem}[thm]{Lemma}
\newtheorem{prop}[thm]{Proposition}
\newtheorem{cor}[thm]{Corollary}
\theoremstyle{definition}
\newtheorem{rem}[thm]{Remark}
\newtheorem{defn}[thm]{Definition}
\newtheorem{exam}[thm]{Example}
\newtheorem{const}[thm]{Construction}
\newtheorem{notation}[thm]{Notation}
\newcommand\bA{\mathbb A}
\newcommand\bQ{\mathbb Q}
\newcommand\bN{\mathbb N}
\newcommand\bZ{\mathbb Z}
\newcommand\bF{\mathbb F}
\newcommand\cO{\mathcal O}
\DeclareMathOperator{\Spec}{Spec}
\DeclareMathOperator{\Spf}{Spf}
\DeclareMathOperator{\gp}{gp}
\DeclareMathOperator{\Inf}{inf}
\DeclareMathOperator{\Shv}{Shv}
\DeclareMathOperator{\et}{\acute{e}t}
\DeclareMathOperator{\crys}{crys}
\DeclareMathOperator{\CRYS}{CRYS}
\DeclareMathOperator{\BK}{BK}
\DeclareMathOperator{\pr}{pr}
\DeclareMathOperator{\id}{id}
\title{Logarithmic prismatic cohomology I}
\author{Teruhisa Koshikawa}
\date{}
\begin{document}

\begin{abstract}
We introduce a logarithmic variant of the notion of $\delta$-rings, which we call $\delta_{\log}$-rings, and use it to define a logarithmic version of the prismatic site introduced by Bhatt and Scholze. In particular, this enables us to construct the Breuil-Kisin cohomology in the semistable case. 
\end{abstract}

\maketitle

\tableofcontents

\section{Introduction}
Bhatt and Scholze introduced the notion of prisms and prismatic site \cite{BS}. It is a mixed-characteristic generalization/refinement of the classical crystalline theory, and recovers the $A_{\Inf}$-cohomology and the Breuil-Kisin cohomology in the smooth case developed in \cites{BMS1, BMS2}. The prismatic theory and its variant,  $q$-crystalline theory, also provide a construction of the $q$-de Rham cohomology conjectured by Scholze \cite{Scholze}\footnote{It is instructive to compare the prismatic theory and $q$-crystalline theory with Pridham's earlier attempt \cite{Pridham}.}.   

In this article, we begin to develop a logarithmic generalization of their theory, which recovers the $A_{\Inf}$-cohomology in the semistable case \cite{CK} and provides its Breuil-Kisin descent. 
(A logarithmic generalization of \cite{BMS2} is not yet known, so this is the first construction of the Breuil-Kisin cohomology in the semistable case.)

Let us first recall some basics of prisms. 
A \emph{prism} is a pair $(A, I)$ that consists of a \emph{$\delta$-ring} $A$ and an ideal $I\subset A$ satisfying some properties. In particular, it comes with a Frobenius lift $\phi=\phi_A\colon A\to A$. 
In practice, $I$ is a principal ideal generated by a nonzerodivisor; such a prism is called orientable.  
Some examples of orientable prisms are the following:

\begin{exam}
Fix a prime number $p$. 
\begin{enumerate}
    \item (A special case of crystalline prisms) Let $k$ be a perfect field of characteristic $p$. Then, $(W(k), (p))$ is a prism with $\phi$ being the canonical Frobenius lift. 
    \item (A special case of perfect prisms) Let $C$ be the completion of the algebraic closure of $W(k)[1/p]$. There is a usual surjection $\theta\colon A_{\Inf}\to \cO_C$. Then, $(A_{\Inf}, \ker \theta)$ is a prism with $\phi$ being the canonical Frobenius lift. 
    \item (Breuil-Kisin prisms) Let $K$ be a finite totally ramified extension of $W(k)[1/p]$. Fix a uniformizer $\pi$ and let $E(u) \in W(k)[u]$ denote the (monic) Eisenstein polynomial such that $E(\pi)=0$. Then, $(W(k)[\![ u]\!], (E(u)))$ is a prism with $\phi$ sending $u$ to $u^p$. 
\end{enumerate}
\end{exam}
Note that these prisms are related as follows: if $\overline{k}$ denotes the residue field of $C$, then a natural surjection $A_{\Inf}\to W(\overline{k})$ induces a map of prisms
\[
(A_{\Inf}, \ker \theta) \to (W(\overline{k}), (p)). 
\]
For a compatible system $\pi, \pi^{1/p}, \pi^{1/p^2}, \dots$ of $p$-power roots of $\pi$ inside $C$, a natural map $W(k) [\![ u ]\!]\to A_{\Inf}; u\mapsto [\pi^\flat]$ induces a map of prisms:
\[
(W(k)[\![ u]\!], (E(u))) \to (A_{\Inf}, \ker \theta). 
\]

Let $(A, I)$ be a bounded prism, i.e., $A/I[p^{\infty}]=A/I[p^n]$ for some $n$. (All examples above are bounded.) In particular $A, A/I$ are classically $(p,I)$-complete. For a smooth $p$-adic formal scheme $X$ over $A/I$, Bhatt and Scholze defined the prismatic site $(X/A)_{\Prism}$ with the structure sheaf $\cO_{\Prism}$. An object of the prismatic site is a bounded prism $(B,IB)$ over $(A, I)$ with a map $\Spf (B/IB)\to X$ satisfying a compatibility condition. 
The \emph{prismatic cohomology} is the cohomology of the structure sheaf:
\[
R\Gamma_{\Prism}(X/A)\coloneqq R\Gamma ((X/A)_{\Prism}, \cO_{\Prism}), 
\]
which is a $(p,I)$-complete object of $D(A)$ equipped with a $\phi_A$-semilinear map. 

Regarding the examples above, Bhatt and Scholze established the following:
\begin{exam}
For simplicity, assume $X$ is proper over $A/I$. Then, $R\Gamma_{\Prism}(X/A)$ is perfect and, in each case of (1)-(3) above, it is
\begin{enumerate}
    \item a Frobenius descent of the crystalline cohomology $R\Gamma_{\crys}(X/W(k))$,
    \item a Frobenius descent of the $A_{\Inf}$-cohomology $R\Gamma_{A_{\Inf}}(X)$ \cite{BMS1}, 
    \item isomorphic to the Breuil-Kisin cohomology $R\Gamma_{\mathfrak{S}}(X)$ \cite{BMS2}; we prefer to write $R\Gamma_{\BK}(X)$ instead. 
\end{enumerate}
\end{exam}

General results for the prismatic cohomology are summarized in \cite{BS}*{Theorem 1.8, 1.16}. 
In particular, cohomology groups in (1)-(3) are related by base change along the maps explained above. 

We aim to introduce a logarithmic generalization, i.e., a version for formal schemes with \emph{log structures} in the sense of Fontaine-Illusie-Kato. 
The first approximation of log structures is the notion of \emph{prelog rings}. Recall that a prelog ring consists of a ring $A$, a monoid $M$, and a map $\alpha\colon M\to A$ of monoids, where $A$ is regarded as the multiplicative monoid. We can endow the example (1)-(3) with the following prelog structures:
\begin{exam}
We send the unit element to the unit element. 
\begin{enumerate}
    \item $\bN\to W(k); 1\mapsto 0$. This is used to define the so-called Hyodo-Kato cohomology. 
    \item $\cO_C^\flat\setminus\{0\}\to A_{\Inf}; x\mapsto [x]$. This appears, for instance, in \cite{CK}. 
    \item $\bN\to W(k)[\![u]\!]; 1\mapsto u$. 
\end{enumerate}
Note that the prelog structure of (3) is compatible with those of (1) and (2) via maps 
\begin{gather*}
W(k)[\![u]\!] \to W(k);u\mapsto 0, \quad \bN\to \bN;1\mapsto 1, \quad \textnormal{and} \\
W(k)[\![u]\!]\to A_{\Inf},\quad  \bN\mapsto \cO_C^\flat\setminus\{0\}; 1\mapsto [\pi^\flat]
\end{gather*}
respectively.
(The prelog structures of (1) and (2) are also related but less directly.) 
\end{exam}

In these examples, $\phi$ is simply the $p$-th power map on the image $\alpha(M)$, thus $\phi$ is compatible with $M\to M; m\mapsto m^p$. One possible approach to develop a logarithmic prismatic theory is to consider
\begin{center}
    a bounded prism $(A, I)$ with a prelog structure $\alpha\colon M\to A$ such that $\phi_A(\alpha(m))=\alpha(m^p)=\alpha(m)^p$ holds for every $m\in M$. 
\end{center}
It is indeed a reasonable object and we call it a \emph{prelog prism of rank $1$}.  
However, this approach is restrictive. In the language of log geometry, it means we work with \emph{charts} of log structures satisfying the special condition on $\phi_A$ as above. It seems impossible to define a well-behaved log prismatic site for a general log formal scheme in this way. (Nevertheless, it can be used to compute the right cohomology locally on $X$ at least in reasonable situations; cf. Proposition \ref{more coverings}.)

To go beyond that, let us now recall the definition of a log structure on a formal scheme $X$. 
A log structure is a map of \'etale sheaves of monoids $\alpha\colon \mathcal{M} \to \cO_X$ that induces an isomorphism $\alpha^{-1}(\cO_X^\times)\cong \cO_X^\times$. Locally, we have a \emph{log ring} rather than a prelog ring, i.e., $\alpha\colon M\to A$ induces an isomorphism $\alpha^{-1}(A^\times)\cong A^\times$. So, we will work with log rings. (We will always to pass the associated log structure in the main text, but we will ignore this point in this introduction.)
In this context, a naive version of a log prism is a triple $(A, I, M)$ such that $(A,I)$ is a bounded prism and $(A, M)$ is a log ring, equipped with an endomorphism $(\phi_A, \phi_M)$ of $(A, M)$ extending $\phi_A$, i.e., a sort of Frobenius lift. One may ask:
\begin{center}
    What is a Frobenius lift on $M$?
\end{center} 
Also, let us recall that a $\delta$-structure is finer than a Frobenius lift on a ring in general. With this in mind, here is our solution: $\phi_M$ should have the form of 
\begin{center}
$\phi_M (m)=m^p (1+ p\delta_{\log}(m))$ for a map\footnote{The notation $\delta_{\log}$ is borrowed from K. Kato.} $\delta_{\log}\colon M\to A$. 
\end{center}
Note that $1+ p\delta_{\log}(m)$ is invertible in $A$ as $p$ is topologically nilpotent, and thus can be regarded as an element of $M$ since $(A, M)$ is a log ring. The map $\delta_{\log}$ must satisfy some formulas analogous to those that $\delta$ satisfies. 
It is more convenient to have a prelog version of this notion, forgetting $I$. A \emph{$\delta_{\log}$-ring} is a tuple
\[
(A, M, \alpha\colon M\to A, \delta\colon A\to A, \delta_{\log}\colon M\to A),
\]
where $(A, \delta)$ is a $\delta$-ring, $(A, M, \alpha)$ is a prelog ring, and the map $\delta_{\log}$ satisfy
\begin{gather*}
    \delta_{\log}(e)=0, \quad \alpha(m)^p \delta_{\log}(m)=\delta (\alpha(m)), \\
    \delta_{\log}(mm')=\delta_{\log}(m)+\delta_{\log}(m')+p \delta_{\log}(m)\delta_{\log}(m'). 
\end{gather*}
Namely, $\delta_{\log}$ of the log prism should satisfy these formulas as well.  
The previous example (1)-(3) may be regarded as $\delta_{\log}$-rings by letting $\delta_{\log}=0$. 
Let us also note that there is a unique $\delta_{\log}$ on the trivial log structure $A^\times \subset A$, given by
\[
\delta_{\log}(a)=\frac{\delta (a)}{a^p}
\]
for $a\in A^\times$. 
By passing to the associated log ring (or more precisely, log structure in the main text), it is always possible to attach a log prism to any (bounded) prelog prism.

Once the notion of log prisms is defined, the log prismatic site is defined immediately; it is a mixture of the prismatic site and the (big) log crystalline site. We refer to Definition \ref{log prismatic site} for the precise definition. Fix a bounded prelog prism $(A, I, M_A)$. 
If $(X, M_X)$ is a smooth\footnote{In the main text, we require that it be integral over the base. In examples (1)-(3), this is automatic.} log $p$-adic scheme over a prelog ring $(A/I, M_A)$, we write $((X, M_X)/ (A, M_A))_{\Prism}$ for its log prismatic site, and define the \emph{log prismatic cohomology} of $(X, M_X)$ relative to $(A, M_A)$ by
\[
R\Gamma_{\Prism}((X, M_X)/ (A, M_A))\coloneqq R\Gamma (((X, M_X)/ (A, M_A))_{\Prism}, \cO_{\Prism})
\]
for the structure sheaf $\cO_{\Prism}$. 

Let us illustrate our results on the log prismatic cohomology:

\begin{exam}[A special case of the crystalline comparison]
This is the case (1) above. For a quasi-compact quasi-separated smooth log formal scheme $(X, M_X)$ over $(W(k), \bN)$, there is a canonical isomorphism
\[
(\phi_{W(k)}^* R\Gamma_{\Prism}((X, M_X)/(W(k), \bN)))^{\wedge}_p \cong
R\Gamma_{\crys}((X, M_X)/(W(k), \bN)),
\]
where the left hand side is the $p$-completed base change along the Frobenius lift $\phi_{W(k)}$ and the right hand side is the log crystalline cohomology. 
This (or a reformulated version) holds for more general log crystalline prisms, i.e., log prisms with $I=(p)$; see Theorem \ref{crystalline comparison:global}. 
\end{exam}

\begin{exam}[$A_{\Inf}$-cohomology]
Consider the case (2): $(A_{\Inf}, \ker\theta, \cO_C^\flat\setminus\{0\})$. 
Let $X$ be a quasi-compact quasi-separated semistable formal scheme over $\cO_C$ treated in \cite{CK}, and let $M_X$ denote its canonical log structure, which naturally admits a map $\cO_C\setminus\{0\} \to  M_X$. Then, there is a canonical isomorphism
\[
(\phi_{A_{\Inf}}^* R\Gamma_{\Prism}((X, M_X)/(A_{\Inf}, \cO_C^\flat\setminus\{0\})))^{\wedge}_{(p, \phi(\xi))} \cong
R\Gamma_{A_{\Inf}}(X), 
\]
where the left hand side is the $(p, \phi (\xi))$-completed base change along the Frobenius lift $\phi_{A_{\Inf}}$ and the right hand side is the $A_{\Inf}$-cohomology in \cite{CK}. 
This is established through an intermediate log $q$-crystalline/$q$-de Rham theory, combined with the Hodge-Tate comparison for the log prismatic cohomology. 
Using the crystalline comparison of the log prismatic cohomology, we can translate\footnote{We did not check if they have the same map or not, but see Theorem \ref{comparison with BdR^+-cohomology}.} the absolute crystalline comparison for the $A_{\Inf}$-cohomology to the following base change isomorphism: 
\begin{multline*}
A_{\crys}\widehat{\otimes}^L_{A_{\Inf}} R\Gamma_{\Prism}((X, M_X)/(A_{\Inf}, \cO_C^\flat \setminus\{0\})) \\ \cong 
R\Gamma_{\crys}((X, M_X)_{\cO_C/p}/(A_{\crys}, \cO_C^\flat \setminus\{0\})),
\end{multline*}
where $A_{\Inf}\to A_{\crys}$ is the composite of the natural embedding and the Frobenius $\phi$, and the completion is $p$-adic. 
\end{exam}

\begin{exam}[Breuil-Kisin cohomology]
Now the case (3). Let $X$ be a quasi-compact quasi-separated semistable formal scheme over $\cO_K$ with the canonical log structure $M_X$. 
There is the base change isomorphism
\begin{multline*}
(A_{\inf}\otimes^L_{W(k)[\![u]\!]}R\Gamma_{\Prism}(X, M_X)/(W(k)[\![u]\!], \bN)))^{\wedge}_{(p,\xi)} \\ \cong
R\Gamma_{\Prism}((X, M_X)_{\cO_C}/ (A_{\Inf}, \cO_C^\flat\setminus \{0\})). 
\end{multline*}
Combined with the previous example, we see that the left hand side gives a descent of the $A_{\Inf}$-cohomology. 
Thus, we define the \emph{Breuil-Kisin cohomology} of $X$ by
\[
R\Gamma_{\BK}(X):=R\Gamma_{\Prism}((X, M_X)/(W(k)[\![u]\!], \bN)). 
\]
If $X$ is proper over $\cO_K$, it follows from the Hodge-Tate comparison (see below) that $R\Gamma_{\BK}(X)$ is a perfect complex, and the base change holds without completion.  
\end{exam}

Main properties of the log prismatic cohomology established in this paper are

\begin{itemize}
    \item the Hodge-Tate comparison (Theorem \ref{Hodge-Tate}),
    \item base change (Corollary \ref{base change II}), and
    \item the crystalline comparison (Theorem \ref{crystalline comparison:global}). 
\end{itemize}

A very short summary of the proof of the crystalline comparison is that it is done by comparing the \emph{log PD envelopes} and \emph{log prismatic envelopes}, which are tools to compute the corresponding cohomology.  
For those familiar with log crystalline theory, let us point out that a key technical point in this paper is to show that $\delta_{\log}$-structures extend through the exactification (Proposition \ref{change of monoids}); this leads to the existence of log prismatic envelopes. 

We prove the Hodge-Tate comparison by reducing it to the non-log case treated in \cite{BS} by working with certain explicit complexes of log prismatic envelopes. So it is actually independent of the crystalline comparison in the log setting, while the original proof of the Hodge-Tate comparison in \cite{BS} uses the crystalline comparison\footnote{More direct proofs of the Hodge-Tate comparison are found later.}. 
The base change property is a simple corollary of the Hodge-Tate comparison as in the non-log case.

We study the Nygaard filtration, the isogeny property of Frobenius\footnote{See Remark \ref{Isogeny} for the case of log $q$-crystalline cohomology.}, the de Rham comparison\footnote{\label{de Rham}The de Rham comparison can be already proved in some cases by using the crystalline comparison and base change. Say $(A, I)=(W(k)[\![u]\!], E(u))$, then we take the ($\phi$-twisted) base change to the $p$-completed PD envelope $S$ of $(E(u))\subset W(k)[u]$, apply the crystalline comparison, and specialize the log crystalline cohomology from $S$ to $\cO_K$.}, and the \'etale comparison in a sequel \cite{Koshikawa-Yao} through the derived log prismatic cohomology. 
Let us also note that Diao-Yao has generalized the construction of $A_{\Inf}$-cohomology \cite{CK} to more general smooth log $p$-adic formal schemes. The comparison with the log prismatic cohomology would provide the \'etale comparison in that case too. 

\subsection*{Notation}
For many conventions and notation, we follow \cite{BS}. 
In particular, we refer the reader to \cite{BS} for the notion of $I$-completed flatness/smoothness/\'etaleness. 
For sites and topoi, we follow \cite{Stacks}. 
Base changes are denoted by adding subscripts. 

The completion means the derived completion unless otherwise specified, and it is denoted by $(-)^{\wedge}$. 
We sometimes write $(-)^{\wedge}_{\textnormal{cl}}$ for the classical completion. 
The topology for which the completion is taken depends on context and sometimes explicated by adding a subscript, e.g. $(-)^{\wedge}_p$. 
The completed polynomial ring is denoted by $\langle - \rangle$. 

A monoid $M$ is assumed to be commutative and its unit element is denoted by $e$. 
Its associated abelian group is denoted by $M^{\gp}$. We write $M^\times\subset M$ for the subgroup of invertible elements. For a formal scheme $X$ and a map of monoids $M \to \Gamma (X, \cO_X)$, we write $M^a_{X}$ for the associated log structure on the \'etale site of $X$. 
For several terminology of log geometry, we refer the reader to Appendix. 

\subsection*{Acknowledgements}
I would like to thank Kazuya Kato for constant encouragement which leads the author to the current generality of the theory presented in this paper. Takeshi Tsuji kindly pointed out a mistake in a previous version, and I want to thank him a lot.  
I am grateful to Bhargav Bhatt, Heng Du, Luc Illusie, Yutaro Mikami, and Zijian Yao for helpful remarks. 
I thank students of the University of Tokyo who shared a correction on \cite{BS}, and Ildar Gaisin and Koji Shimizu for their comments on drafts. 
This work was supported by JSPS KAKENHI Grant Number 20K14284.

\section{\texorpdfstring{$\delta_{\log}$}{delta-log}-rings}
Fix a prime number $p$. 
We first review the notion of $\delta$-rings introduced by Joyal \cite{Joyal}, following \cite{BS}. All rings are assumed to be $\bZ_{(p)}$-algebras. 

A \emph{$\delta$-ring} is a commutative ring $A$ with a map $\delta\colon A\to A$ satisfying
\begin{gather*}
    \delta(0)=\delta(1)=0, \quad \delta(xy)=\delta (x)y^p +x^p \delta(y) +p \delta(x)\delta(y), \\
    \delta(x+y)=\delta(x)+\delta(y) -\sum_{i=1}^{p-1} \frac{(p-1)!}{i!(p-i)!}x^i y^{p-i}. 
\end{gather*}
Given a $\delta$ as above, $\phi (x)=x^p +p\delta (x)$ is a ring endomorphism of $A$ lifting the Frobenius on $A/p$, and $\delta$ is determined by $\phi$ if $A$ is $p$-torsionfree. 

The category of $\delta$-rings is the obvious one. 
All limits and colimits exist, and forgetting $\delta$-structures preserves both limits and colimits \cite{BS}. In particular, the tensor product of $\delta$-rings admits a natural $\delta$-structure. 
In fact, the forgetful functor from the category of $\delta$-rings to the category of rings admits both left and right adjoint functors. It is a fundamental observation of Joyal \cite{Joyal} that the right adjoint functor is given by the ring of Witt vectors. 

An element $a$ of a $\delta$-ring $(A, \delta)$ is called \emph{distinguished} if $\delta (a)$ is a unit, and has \emph{rank $1$}
 if $\delta (a)=0$, which implies $\phi (a)=a^p$. 

Let us record the following criterion for an element to have rank $1$. 

\begin{lem}[\cite{BS}*{Lemma 2.32}]\label{rank 1 elements}
Let $(A, \delta)$ be a $p$-adically separated $\delta$-ring. An element $a$ of $A$ has rank $1$ if it admits a $p^n$-th root for any integer $n\geq 0$. 
\end{lem}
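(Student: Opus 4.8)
The plan is to prove directly that $\delta(a) \in p^n A$ for every integer $n \geq 0$. Since $A$ is $p$-adically separated we have $\bigcap_{n \geq 0} p^n A = 0$, so this forces $\delta(a) = 0$, which by definition means $a$ has rank $1$.

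The first step is to record an explicit formula for $\delta$ of a power. Iterating the product rule $\delta(xy) = \delta(x)y^p + x^p\delta(y) + p\delta(x)\delta(y)$ gives the recursion $\delta(x^{k+1}) = \phi(x)\,\delta(x^k) + (x^p)^k \delta(x)$, and unrolling it yields
\[
\delta(x^k) = \delta(x)\sum_{j=0}^{k-1}\phi(x)^{k-1-j}(x^p)^j .
\]
Now fix $n$ and choose any $p^n$-th root $a_n$ of $a$ (no compatibility among the chosen roots for different $n$ is needed). Applying the formula with $x = a_n$ and $k = p^n$ gives $\delta(a) = \delta(a_n)\, S_n$, where $S_n = \sum_{j=0}^{p^n-1}\phi(a_n)^{p^n-1-j}(a_n^p)^j$.

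The second step is to analyze $S_n$. Expanding $\phi(a_n) = a_n^p + p\,\delta(a_n)$ by the binomial theorem, substituting into $S_n$, and interchanging the two summations, the inner sums collapse by the hockey-stick identity $\sum_{l=0}^{p^n-1}\binom{l}{i} = \binom{p^n}{i+1}$, leaving
\[
S_n = \sum_{i=0}^{p^n-1} p^i\binom{p^n}{i+1}\,\delta(a_n)^i\, a_n^{\,p(p^n-1-i)} .
\]
The key point is then the valuation estimate: by the standard identity $v_p\binom{p^n}{j} = n - v_p(j)$ for $1 \leq j \leq p^n$, one has $v_p\!\bigl(p^i\binom{p^n}{i+1}\bigr) = i + n - v_p(i+1) \geq n$, since $v_p(i+1) \leq i$ for every $i \geq 0$ (which holds because $p^i \geq i+1$). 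Hence every term of $S_n$ lies in $p^n A$, so $S_n \in p^n A$ and therefore $\delta(a) \in p^n A$. Letting $n$ vary completes the argument.

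The point that actually requires care — and the reason a softer estimate is not enough — is that one must keep all the $p$-divisibility on $\delta(a)$ itself rather than on $p\,\delta(a)$. The naive approach of repeatedly raising the congruence $\phi(a_n) \equiv a_n^p \pmod{pA}$ to the $p$-th power only yields $\phi(a) \equiv a^p \pmod{p^{n+1}A}$, i.e. $p\,\delta(a) \in p^{n+1}A$ for all $n$, hence $p\,\delta(a) = 0$; this gives $\phi(a) = a^p$ but not $\delta(a) = 0$ when $A$ has $p$-torsion. The explicit power formula is exactly what avoids this loss. Accordingly, I expect the only real work to be the bookkeeping in deriving that formula and in pinning down the valuation bound (and checking that the argument survives at $p = 2$).
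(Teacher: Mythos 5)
Your proof is correct: the power formula $\delta(x^k)=\delta(x)\sum_{j=0}^{k-1}\phi(x)^{k-1-j}(x^p)^j$ follows from the product rule as you say, the hockey-stick collapse giving $S_n=\sum_{i=0}^{p^n-1}p^i\binom{p^n}{i+1}\delta(a_n)^i a_n^{p(p^n-1-i)}$ checks out, and the estimate $v_p\bigl(p^i\binom{p^n}{i+1}\bigr)=i+n-v_p(i+1)\geq n$ is valid for all $p$ (including $p=2$), so $\delta(a)\in p^nA$ for every $n$ and separatedness finishes it. Note that the paper itself offers no proof, only the citation to Bhatt--Scholze, whose argument is the same in spirit (track $\delta$ of powers directly, never divide by $p$) but organized more economically: from your own formula with $k=p$ one sees immediately that $\sum_{j=0}^{p-1}\phi(y)^{p-1-j}(y^p)^j\equiv p\,y^{p(p-1)}\equiv 0 \pmod{p}$, hence $\delta(y^p)\in p\,\delta(y)A$ for any $y$, and iterating this single step $n$ times starting from $y=a_n$ gives $\delta(a)=\delta(a_n^{p^n})\in p^nA$ with no binomial bookkeeping at all. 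Your closed-form expansion of $S_n$ is a legitimate one-shot alternative, and your closing remark correctly identifies why the naive $\phi$-based congruence only yields $p\,\delta(a)=0$; the inductive route buys brevity, while yours makes the exact $p$-divisibility of each term visible.
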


%Let $A^{\delta=0}$ denotes the subset of elements of rank $1$, which is a submonoid of the multiplicative monoid $A$. This is relevant to us when studying log structures. 

Recall that a \emph{prelog structure} $\alpha\colon M\to A$ of a ring $A$ is a map from a monoid $M$ to the multiplicative monoid $A$. 

\begin{defn}[$\delta_{\log}$-rings]
A \emph{$\delta_{\log}$-ring} is a tuple $(A, \delta, \alpha\colon M\to A, \delta_{\log}\colon M\to A)$ consisting of a $\delta$-ring $(A, \delta)$, a prelog structure $\alpha\colon M\to A$, and a map $\delta_{\log}\colon M \to A$ satisfying
\begin{enumerate}
    \item $\delta_{\log}(e)=0$, 
    \item $\alpha (m)^p \delta_{\log}(m)=\delta(\alpha(m))$, and
    \item $\delta_{\log}(mm')=\delta_{\log}(m)+\delta_{\log}(m')+p \delta_{\log}(m)\delta_{\log}(m')$
\end{enumerate}
for $m, m'\in M$. 
We simply write $(A, M)$ for a $\delta_{\log}$-ring if no confusion occurs. A morphism of $(A, M)\to (B,N)$ of $\delta_{\log}$-rings is a morphism of prelog rings compatible with $\delta$ and $\delta_{\log}$, and $\delta_{\log}$-rings form a category.  
A $\delta_{\log}$-ring is of \emph{rank $1$} if $\delta_{\log}=0$. 
\end{defn}

Let $(A, \alpha \colon M\to A)$ be a $\delta_{\log}$-ring. 
The property (2) of $\delta_{\log}$ relates the Frobenius lift $\phi$ of $A$ and the $p$-th power map on $M$:
\[
\phi (\alpha(m))= \alpha(m)^p \cdot (1 + p \delta_{\log}(m))
\]
for $m \in M$, and the map $m\mapsto 1 + p \delta_{\log}(m)$ is multiplicative on $m$ by the property (3). 
Thus, we see that $\phi^n (\alpha(m))\in \alpha(m)^{p^n}(1 + pA)$ for any integer $n \geq 0$. 
In particular, $\phi^n (\alpha(m))$ and $\alpha(m)^{p^n}$ differ by a unit if $A$ is classically $p$-complete; this has an effect that $\phi$ preserves the associated log structure as we discuss later. See also the remark below.  

Suppose that $\alpha(M)$ consists of nonzerodivisors. Then, $\delta_{\log}$ is uniquely determined by $\delta$ if it exists, and $\delta_{\log}$ exists if and only if $\delta (\alpha(m))$ is divisible by $\alpha(m)^p$ for all $m\in M$.  

\begin{rem}[Frobenius lifts on log rings]\label{Frobenius lifts on monoids}
If $(A, M)$ is a $\delta_{\log}$-ring of rank $1$, the $p$-th power map of $M$ lifts the Frobenius $\phi_A$ of $A$, and we obtain an endomorphism $(\phi_A, \phi_M)$ of $(A, M)$. A general $\delta_{\log}$-ring may not admit such an endomorphism. 

For log rings with $\delta_{\log}$-structure, we have a unique lift of $\phi_A$ under a mild assumption as follows.  
Let $(A, \alpha\colon M\to A)$ be a $\delta_{\log}$-ring, and assume that the Jacobson radical $\textnormal{rad}(A)$ contains $p$ and $(A, M)$ is a log ring, i.e., $\alpha$ induces an isomorphism $\alpha^{-1}(A^\times)\cong A^{\times}$. Then, 
\begin{center}
$1+p\delta_{\log}(m) \in A^\times$ for every $m \in M$,  
\end{center}
and $\alpha^{-1}(1+p\delta_{\log}(m))$ makes sense as an element of $M$. 
We have an endomorphism $\phi_M$ of $M$ defined by  
\[
\phi_M (m)=m^p \alpha^{-1}(1+p\delta_{\log}(m)),
\]
and we obtain an commutative diagram
\[
\begin{CD}
M @> \alpha >> A  \\
@V \phi_M VV @V \phi_A VV \\
M @>\alpha >> A, 
\end{CD}
\]
i.e., $(\phi_A, \phi_M)$ is an endomorphism of the log ring $(A, M)$. 
\end{rem}

\begin{exam}
\begin{enumerate}
    \item Let $(A, \delta)$ be a $\delta$-ring, and $A^\times \overset{\subset}{\longrightarrow} A$ the trivial log structure. There exists a unique $\delta_{\log}$-structure $\delta_{\log}\colon A^\times \to A$ in this setting, determined by
\[
\delta_{\log} (x)=\frac{\delta (x)}{x^p}
\]
    for $x\in A^\times$. 
    \item Let $R$ be a perfect ring of characteristic $p$. The ring of Witt vectors $W(R)$ with the canonical Frobenius lift may be regarded as a $\delta$-ring. The Teichm\"uller lifting 
    \[
    [-]\colon R \to W(R);\quad x \to [x]
    \]
    determines a $\delta_{\log}$-ring $(W(R), R)$ of rank $1$.  
    \item Let $M$ be a monoid, and consider the associated ring $\bZ_{(p)}[M]$. We endow $\bZ_{(p)}[M]$ with a $\delta$-structure such that every element $m$ of $M$ has rank $1$; there is a ring endomorphism
    \[
    \bZ_{(p)}[M]\to \bZ_{(p)}[M]; \quad m \mapsto m^p
    \]
    induced from a corresponding endomorphism of $M$, and it lifts the Frobenius of $\bF_p [M]$. It determines the $\delta$-structure as $\bZ_{(p)}[M]$ is $p$-torsion free. 
    
    Using the natural map $M\to \bZ_{(p)}[M]$, we obtain a $\delta_{\log}$-ring $(\bZ_{(p)}[M], M)$ of rank $1$. 
    \item Let $(A, \alpha\colon M\to A)$ be a $\delta_{\log}$-ring and $A\to B$ a map of $\delta$-rings. The composites
    \[
    M\overset{\alpha}{\longrightarrow} A\to B, \quad M\overset{\delta_{\log}}{\longrightarrow}A\to B
    \]
    determine a $\delta_{\log}$-ring $(B, M)$. 
\end{enumerate}
\end{exam}

\begin{rem}\label{W2}
\cite{BS}*{Remark 2.4} says that a $\delta$-structure on a ring $A$ corresponds to a section $w\colon A\to W_2 (A)$ of rings given by $w(x)=(x, \delta (x))$. Given a $\delta$-ring $(A, \delta)$ and a prelog structure $\alpha\colon M\to A$, a $\delta_{\log}$-structure corresponds to a map of (multiplicative) monoids 
\[
w_{\log}\colon M\to W_2(R);\quad m \mapsto (1, \delta_{\log}(m))
\]
satisfying $w(\alpha (m))=(\alpha(m), 0)\cdot w_{\log}(m)$.  
\end{rem}

\begin{rem}[Limits and colimits]
As in the category of $\delta$-rings, all limits and colimits of $\delta_{\log}$-rings exist. They can be computed at the level of underlying rings and monoids, i.e., the forgetful functor from $\delta_{\log}$-rings to prelog rings (or pairs of a ring and a monoid) commutes with limits and colimits. (For colimits, use Remark \ref{W2} as in \cite{BS}*{Remark 2.7}.)

In fact, using this, one can deduce that the forgetful functors admit left adjoint functors. 
Moreover, the left adjoint functors are the identity on the monoid part. 
Also, the forgetful functor from $\delta_{\log}$-rings to prelog rings admits a right adjoint functor. See also Remark \ref{right adjoint} below.  
\end{rem}

\begin{notation}
For $\delta$-rings, we use $\{-\}^{\delta}$ for adjoining elements; this notation is different from \cite{BS}. 
For a monoid $M$, we write $\{M\}_{\log}^{\delta}$ for adjoining $M$ to prelog structures in the theory of $\delta_{\log}$-rings. More precisely, for a $\delta_{\log}$-ring $(A, M_A)$, we obtain $(A\{ M\}_{\log}^{\delta}, M_A\oplus M)$ by applying the left adjoint functor to a prelog ring $(A[M], M_A\oplus M)$.
If $M$ is a free monoid $\bN^I$ with the unique basis $\{x_i \}_{i\in I}$, then we also write $\{x_i\}_{\log}^{\delta}$ instead. 
\end{notation}

\begin{rem}
If $(A, M_A)$ is a $\delta_{\log}$-ring, a natural map $\bZ_{(p)}[M_A]\to A$ induces a map of $\delta_{\log}$-rings:
\[
(\bZ_{(p)}\{M_A\}_{\log}^{\delta}, M_A)\to (A, M_A).
\]
\end{rem}

Let $(\bZ_{(p)}\{ x\}_{\log}^{\delta}, x^{\bN})$ denote the $\delta_{\log}$-ring freely generated by one element $x$ of the prelog structure, and $(\bZ_{(p)}\{ x^{\pm 1}\}_{\log}^{\delta}, x^{\bZ})$ denote the $\delta_{\log}$-ring with $x$ inverted in the prelog structure, i.e., associated with the monoid $x^{\bZ}$; these are the subject of Lemma \ref{one generator} below. 

\begin{lem}[Completions]\label{completions}
Let $(A, M)$ be a $\delta_{\log}$-ring, and $I\subset A$ a finitely generated ideal containing $p$. 
If $A^{\wedge}_{\textnormal{cl}}$ denotes the classical $I$-completion of $A$, then $(A^{\wedge}_{\textnormal{cl}}, M)$ admits a natural $\delta_{\log}$-structure. 
\end{lem}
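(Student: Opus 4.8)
The plan is to reduce the statement to the analogous fact for $\delta$-rings and then carry the monoid and the map $\delta_{\log}$ along unchanged. First I would observe that $\phi_A$ is $I$-adically continuous: writing $I=(p, f_1,\dots,f_r)$, we have $\phi_A(p)=p^p+p\delta(p)\in I$ and $\phi_A(f_i)=f_i^p+p\delta(f_i)\in I$ since $p\in I$, so $\phi_A(I)\subseteq I$ and hence $\phi_A(I^n)\subseteq I^n$ for all $n$. Therefore $\phi_A$ extends to the classical $I$-completion $A^{\wedge}_{\textnormal{cl}}$, and $A^{\wedge}_{\textnormal{cl}}$ inherits a canonical $\delta$-structure for which the completion map $A\to A^{\wedge}_{\textnormal{cl}}$ is a morphism of $\delta$-rings; this is the $\delta$-ring analogue of the lemma, namely \cite{BS}*{Lemma 2.17}. (The only delicate point there, already handled in loc.\ cit., is producing $\delta$ from $\phi$ when $A$ is not $p$-torsionfree, via the $W_2$-description of $\delta$-structures recalled in Remark \ref{W2}.)

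It then remains to equip $A^{\wedge}_{\textnormal{cl}}$ with a prelog structure and a $\delta_{\log}$-map, and the obvious choices work: take the prelog structure $M\xrightarrow{\alpha}A\to A^{\wedge}_{\textnormal{cl}}$ and the composite
\[
M\xrightarrow{\ \delta_{\log}\ }A\longrightarrow A^{\wedge}_{\textnormal{cl}}.
\]
I would then check the three defining identities of a $\delta_{\log}$-ring. Each of them — $\delta_{\log}(e)=0$, the relation $\alpha(m)^p\delta_{\log}(m)=\delta(\alpha(m))$, and the multiplicativity formula for $\delta_{\log}(mm')$ — is an identity among elements of $A$, so applying the ring homomorphism $A\to A^{\wedge}_{\textnormal{cl}}$ immediately gives the corresponding identity in $A^{\wedge}_{\textnormal{cl}}$; for the second identity one additionally uses that $A\to A^{\wedge}_{\textnormal{cl}}$ is a map of $\delta$-rings, so that $\delta(\alpha(m))$ computed in $A^{\wedge}_{\textnormal{cl}}$ is the image of $\delta(\alpha(m))$ computed in $A$. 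Naturality in the triple $(A,M,I)$ (including compatibility with the map $(A,M)\to(A^{\wedge}_{\textnormal{cl}},M)$) follows from the same functoriality together with the naturality of the $\delta$-ring completion.

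The honest assessment is that there is essentially no real obstacle: all the content sits in the $\delta$-ring statement \cite{BS}*{Lemma 2.17}, and once that is granted the lemma is purely formal, since $M$ and $\delta_{\log}$ are simply transported. The one point to keep an eye on is that ``$I$-completion'' here means the \emph{classical} one; this is harmless precisely because $I$ is finitely generated, which is what guarantees continuity of $\phi_A$ and that $A^{\wedge}_{\textnormal{cl}}$ is again a ring on which the construction makes sense.
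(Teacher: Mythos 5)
Your proof is correct and matches the paper's argument, which likewise reduces everything to the existence of the natural $\delta$-structure on $A^{\wedge}_{\textnormal{cl}}$ from \cite{BS}*{Lemma 2.17} and transports $M$ and $\delta_{\log}$ along the completion map. The paper states this in one line; your write-up just makes explicit the routine verification that the three $\delta_{\log}$-identities are preserved by the ring homomorphism $A\to A^{\wedge}_{\textnormal{cl}}$.
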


\begin{proof}
This holds as $A^{\wedge}_{\textnormal{cl}}$ admits a natural $\delta$-structure \cite{BS}*{Lemma 2.17}. 
\end{proof}

\begin{exam}
Let $M$ be a $p$-divisible monoid. 
If $(A, \alpha\colon M\to A)$ is a classically $p$-complete $\delta_{\log}$-ring, then $\delta (\alpha(m))=0$ for $m\in M$ by Lemma \ref{rank 1 elements}. This implies that the natural map
\[
((\bZ_p \{ M \}^{\delta}_{\log})^{\wedge}_{\textnormal{cl}}, M) \to 
(\bZ_p \langle M \rangle, M)
\]
is an isomorphism of $\delta_{\log}$-rings of rank $1$. 
\end{exam}

\begin{lem}\label{one generator}
The ring $\bZ_{(p)}\{ x\}_{\log}^{\delta}$ is a polynomial ring on the set
\[
\{x, \delta_{\log} (x), \delta(\delta_{\log}(x)), \delta^2 (\delta_{\log}(x)), \dots \}
\]
and its Frobenius endomorphism $\phi$ is faithfully flat. 
The ring 
\[
\bQ_{(p)}\{ x\}_{\log}^{\delta}=\bZ_{(p)}\{ x\}_{\log}^{\delta}[1/p]
\]
is also a polynomial ring on the set $\{x, \phi (x)/ x^p, \phi (\phi (x)/ x^p), \phi^2 (\phi (x)/ x^p), \dots \}$. 

A natural map
\[
(\bZ_{(p)}\{ x\}_{\log}^{\delta}[x^{-1}], x^{\bZ}) \to
(\bZ_{(p)}\{ x^{\pm 1}\}_{\log}^{\delta}, x^{\bZ})
\]
induces an isomorphism of prelog rings after the classical $p$-completion.  
\end{lem}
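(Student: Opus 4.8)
The plan is to first pin down the ring $\bZ_{(p)}\{x\}_{\log}^{\delta}$ directly from the universal property of the left adjoint, and then read off the remaining assertions. Since the left adjoint is the identity on the monoid part, $\bZ_{(p)}\{x\}_{\log}^{\delta}$ carries the prelog structure $x^{\bN}$; writing $y:=\delta_{\log}(x)$, axiom (2) forces $\delta(x)=x^{p}y$ while $\delta$ is free on $y$ and its iterates, and axiom (3) forces $\delta_{\log}(x^{n})=\tfrac{(1+py)^{n}-1}{p}\in\bZ_{(p)}[y]$. One then checks that the polynomial ring $R:=\bZ_{(p)}[x,y,\delta(y),\delta^{2}(y),\dots]$, equipped with the ring endomorphism $\phi$ defined by $\phi(x)=x^{p}(1+py)$ and $\phi(\delta^{k}(y))=\delta^{k}(y)^{p}+p\,\delta^{k+1}(y)$, is a $\delta$-ring (this $\phi$ reduces to Frobenius mod $p$ and $R$ is $p$-torsionfree) with $\delta(x)=x^{p}y$; that the above $\delta_{\log}$ makes $(R,x^{\bN})$ a $\delta_{\log}$-ring; and that it has the claimed universal property, since a map to a $\delta_{\log}$-ring $(C,N)$ must send $x,y,\delta(y),\dots$ to forced elements and the axioms in $C$ then make the resulting ring map a $\delta_{\log}$-map. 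This gives the first assertion, and after inverting $p$ and setting $u:=\phi(x)/x^{p}=1+py$ it gives the second: $y=(u-1)/p$ and, inductively, $\delta^{k}(y)$ and $\phi^{k}(u)$ generate the same $\bQ_{(p)}$-subalgebra with triangular transition formulas, so $\bQ_{(p)}\{x\}_{\log}^{\delta}=\bQ_{(p)}[x,u,\phi(u),\phi^{2}(u),\dots]$ is a polynomial ring.

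For the faithful flatness of $\phi$, set $v:=\phi(x)$ and $w_{k}:=\phi(\delta^{k}(y))$; then $\phi(R)=\bZ_{(p)}[v,w_{0},w_{1},\dots]$ is a polynomial ring (algebraic independence follows, after inverting $p$, from the triangular relations $v=x^{p}(1+py)$ and $w_{k}=\delta^{k}(y)^{p}+p\,\delta^{k+1}(y)$). Writing $y_{k}:=\delta^{k}(y)$ and $R=\bigcup_{n}\phi(R)[x,y_{0},\dots,y_{n}]$, the generators $v,w_{0},\dots,w_{n-1}$ become redundant polynomial expressions in $x,y_{0},\dots,y_{n}$, so $\phi(R)[x,y_{0},\dots,y_{n}]=\bZ_{(p)}[x,y_{0},\dots,y_{n},w_{n},w_{n+1},\dots]$. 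After the harmless base change along $\bZ_{(p)}\to\bZ_{(p)}[w_{n},w_{n+1},\dots]$ it suffices to show that the map of polynomial rings $\bZ_{(p)}[v,w_{0},\dots,w_{n-1}]\to\bZ_{(p)}[x,y_{0},\dots,y_{n}]$ sending $v\mapsto x^{p}(1+py_{0})$ and $w_{i}\mapsto y_{i}^{p}+py_{i+1}$ is faithfully flat. This is the same argument as for free $\delta$-rings in \cite{BS}, via miracle flatness: $\bA^{n+2}_{\bZ_{(p)}}$ is regular hence Cohen--Macaulay, $\bA^{n+1}_{\bZ_{(p)}}$ is regular, and every fibre is nonempty of pure dimension $1$ --- in residue characteristic $p$ the equations become $x^{p}=v$, $y_{i}^{p}=w_{i}$, a complete intersection, while in characteristic $0$ one eliminates $y_{1},\dots,y_{n}$ and is left with the hypersurface $x^{p}(1+py_{0})=v$. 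A filtered colimit of faithfully flat maps is faithfully flat, so $\phi$ on $R$ is faithfully flat.

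For the final assertion, one first identifies $\bZ_{(p)}\{x^{\pm1}\}_{\log}^{\delta}$ as a localization of $\bZ_{(p)}\{x\}_{\log}^{\delta}$. Since $x^{-1}$ lies in the prelog structure, axiom (3) applied to $x\cdot x^{-1}$ forces $\delta_{\log}(x^{-1})=\tfrac{(1+p\delta_{\log}(x))^{-1}-1}{p}$, which requires $1+p\delta_{\log}(x)$ to be invertible; and for $\delta$, hence $\phi$, to extend over the resulting localization one must further invert $\phi^{j}(1+p\delta_{\log}(x))$ for all $j\ge0$, after which nothing more is needed. Hence $\bZ_{(p)}\{x^{\pm1}\}_{\log}^{\delta}=\bZ_{(p)}\{x\}_{\log}^{\delta}\bigl[x^{-1},\ \phi^{j}(1+p\delta_{\log}(x))^{-1}:j\ge0\bigr]$ with its prelog structure $x^{\bZ}$, and the map in the statement is exactly this localization map together with the identity on the monoid. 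Now each $\phi^{j}(1+p\delta_{\log}(x))$ reduces to $\bar\phi^{j}(1)=1$ modulo $p$, hence is already a unit in $\bZ_{(p)}\{x\}_{\log}^{\delta}[x^{-1}]/p^{n}$, where $p$ is nilpotent; therefore $\bZ_{(p)}\{x^{\pm1}\}_{\log}^{\delta}/p^{n}=\bZ_{(p)}\{x\}_{\log}^{\delta}[x^{-1}]/p^{n}$ for every $n$, and passing to the inverse limit yields the asserted isomorphism of prelog rings after classical $p$-completion.

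I expect the main obstacles to be the two identifications: (i) the faithful flatness of $\phi$, which genuinely needs the miracle-flatness (or \cite{BS}-style) bookkeeping, in particular the two fibre-dimension computations, rather than a naive ``torsionfree $\Rightarrow$ flat'' argument, which fails here; and (ii) pinning down that $\bZ_{(p)}\{x^{\pm1}\}_{\log}^{\delta}$ is obtained from $\bZ_{(p)}\{x\}_{\log}^{\delta}$ by inverting precisely $x$ and the $\phi$-orbit of $1+p\delta_{\log}(x)$. Granting these, the universal-property computation, the $[1/p]$ description, and the $p$-completion comparison are all routine, the last being immediate once one observes that every element inverted in step (ii) is congruent to $1$ modulo $p$.
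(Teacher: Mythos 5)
Your proof is correct and follows essentially the same route as the paper: identify $\bZ_{(p)}\{x\}_{\log}^{\delta}$ with the explicit polynomial ring $\bZ_{(p)}[x,y_0,y_1,\dots]$ via the universal property, check faithful flatness of $\phi$ by the \cite{BS}-style miracle-flatness computation, and observe that the passage from $\bZ_{(p)}\{x\}_{\log}^{\delta}[x^{-1}]$ to $\bZ_{(p)}\{x^{\pm1}\}_{\log}^{\delta}$ only inverts elements congruent to $1$ modulo $p$. The only cosmetic differences are that you spell out the fibre-dimension bookkeeping that the paper delegates to \cite{BS}*{Lemma 2.11}, and that you identify the uncompleted $\bZ_{(p)}\{x^{\pm1}\}_{\log}^{\delta}$ as an explicit localization (inverting $x$ and the $\phi$-orbit of $1+p\delta_{\log}(x)$) before completing, whereas the paper completes first and verifies the universal property among classically $p$-complete $\delta_{\log}$-rings.
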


\begin{proof}
We modify the proof of \cite{BS}*{Lemma 2.11} except for the final point. 
Consider the polynomial ring $A=\bZ_{(p)}[x, y_0, y_1, y_2, \dots]$ on countably many generators with a prelog structure $x^{\bN}\subset A$. 
The assignment $x\mapsto x^p(1+ p y_0), y_i \mapsto y_i^p +p y_{i+1}$ define an endomorphism $\phi$ of $A$ that lifts the Frobenius on $A/p$. As $A$ is $p$-torsionfree, there is a unique $\delta$-structure such that $\delta (x)=x^p y_0$, $\delta (y_i)=y_{i+1}$. As $x$ is a nonzerodivisor of $A$ and $\delta (x^n)$ is divisible by $x^{pn}$ for all integers $n\geq 0$, there is a unique $\delta_{\log}$-structure such that $\delta_{\log}(x)=y_0$. 
It is clear that the $\delta_{\log}$-ring $(A, x^{\bN})$ satisfies the same universality of $(\bZ_{(p)}\{ x\}_{\log}^{\delta}, x^ \bN)$, and we have $(\bZ_{(p)}\{ x\}_{\log}^{\delta}, x^ \bN)=(A, x^{\bN})$. The statement for $\bQ_{(p)}\{ x\}_{\log}^{\delta}$ follows immediately by inverting $p$. 

To check the faithfully flatness, we note that
\[
\bZ_{(p)}[1/p][x^p y_0] \subset \bZ_{(p)}[1/p][x, y_0]
\]
is faithfully flat. The argument in \cite{BS} works using this faithfully flatness. 

For the final point, we first note that $\bZ_{p}\langle x, y_0, y_1, \dots\rangle[x^{-1}]$ is a $\delta$-ring by \cite{BS}*{Lemma 2.15} as $\varphi^{n}(x)$ is invertible in this ring for all $n\geq 0$. It is easy to check that there is a unique $\delta_{\log}$-structure with $\delta_{\log}(x^{-1})=-y_0/(1+ py_0)$. Therefore, by passing to the completion, $(\bZ_{p}\langle x^{\pm 1}, y_0, y_1, \dots\rangle, x^{\bZ})$ is also a $\delta_{\log}$-ring. It satisfies the same universality of the completion of $\delta_{\log}$-ring $(\bZ_{(p)}\{ x^{\pm 1}\}_{\log}^{\delta}, x^{\bZ})$ among classically $p$-complete $\delta_{\log}$-rings.  
\end{proof}

\begin{rem}[The right adjoint of the forgetful functor]\label{right adjoint}
Let us describe the right adjoint of the forgetful functor from $\delta_{\log}$-rings to prelog rings, following the original description of \cite{Joyal} in the case of $\delta$-rings. 

Let $(A, M, \alpha\colon M\to A)$ be a prelog ring. 
Consider sets
\[
A'\coloneqq \textnormal{Hom}_{\textnormal{rings}}(\bZ_{(p)}\{ x\}^\delta, A), \quad
M'\coloneqq\textnormal{Hom}_{\textnormal{prelogrings}}((\bZ_{(p)}\{ y\}^\delta_{\log}, y^{\bN}), (A, M)) 
\]
with natural projections $A'\to A, M'\to M$ induced by 
\[
\bZ_{(p)}[x]\to \bZ_{(p)}\{ x\}^\delta, \quad (\bZ_{(p)}[ y], y^{\bN})\to (\bZ_{(p)}\{ y\}^\delta_{\log}, y^{\bN})
\]
respectively. 
We note that $A' \cong A^{\bN}$ and $M' \cong M\times A^{\bN}$ as sets using free generators $\{\delta^n (x)\}_{n \geq 0}$ and $y, \{\delta^n (\delta_{\log} (y))\}_{n \geq 0}$. 

The $\delta$-structure of $\bZ_{(p)}\{ x\}^\delta$ gives rise to a self map $\delta\colon A'\to A'$ induced by $x\mapsto \delta (x)$. 
Using this map $\delta$, we can make $A'$ a $\delta$-ring in a unique way such that the projection $A'\to A$ is a ring map. 
Similarly, the $\delta_{\log}$-structure of $(\bZ_{(p)}\{ y\}^{\delta}_{\log}, y^{\bN})$ gives rise to a map $\delta_{\log}\colon M' \to A'$ induced by sending $\delta^ n (x)$ to $\delta^n (\delta_{\log}(y))$ for integers $n\geq 0$. Now, the $\delta$-ring $A'$ may be upgraded uniquely to a $\delta_{\log}$-ring $(A', M', \alpha'\colon M'\to A')$ with this map $\delta_{\log}$ such that $(A', M')\to (A, M)$ is a map of prelog rings.  
Note that the projection $M\times A^{\bN}\to A^{\bN}$ corresponds to $\delta_{\log}\colon M' \to A'$. While $\alpha'$ has a more complicated expression in terms of these coordinates,  it is simply induced by sending $\delta^n (x)$ to $\delta^n (y)$ for integers $n\geq 0$. 

Joyal proved that $A'$ is isomorphic to the ring of Witt vectors $W(A)$ as $\delta$-rings \cite{Joyal}. 
\end{rem}

\begin{lem}[Etale strict maps]\label{\'etale maps}
Let $(A, M)$ be a $\delta_{\log}$-ring, and $I\subset A$ a finitely generated ideal containing $p$. Assume that $A\to B$ is $I$-completely \'etale. Then, $(B, M)$ admits a natural $\delta_{\log}$-structure compatible with $(A, M)$.  
\end{lem}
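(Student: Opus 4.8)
The plan is to bootstrap off the corresponding statement for $\delta$-rings. The first step is to invoke the $\delta$-ring analogue, \cite{BS}*{Lemma 2.18}: since $A\to B$ is $I$-completely \'etale and $I$ contains $p$, the $\delta$-structure $\delta$ on $A$ extends uniquely to a $\delta$-structure $\delta_B$ on $B$ for which $f\colon A\to B$ becomes a morphism of $\delta$-rings. (The mechanism is that the restriction map $W_2(B)\to B$ is a nilpotent thickening after $I$-completion, so by $I$-complete \'etaleness the composite $A\xrightarrow{w}W_2(A)\xrightarrow{W_2(f)}W_2(B)$ lifts uniquely along it to a ring section $w_B\colon B\to W_2(B)$, which is the desired $\delta$-structure.)

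With $\delta_B$ in hand, I would transport the remaining data along $f$, leaving the monoid unchanged: set $\alpha_B\coloneqq f\circ\alpha\colon M\to B$, a prelog structure on $B$, and $\delta_{\log,B}\coloneqq f\circ\delta_{\log}\colon M\to B$. It then remains to check that $(B,\delta_B,\alpha_B,\delta_{\log,B})$ satisfies axioms (1)--(3) in the definition of a $\delta_{\log}$-ring. Axioms (1) and (3) follow by applying the ring homomorphism $f$ to the corresponding identities for $(A,M)$, since both sides of those identities are $\bZ_{(p)}$-polynomial expressions in the relevant $\delta_{\log}$-values. For axiom (2), applying $f$ to $\alpha(m)^p\delta_{\log}(m)=\delta(\alpha(m))$ gives $\alpha_B(m)^p\delta_{\log,B}(m)=f(\delta(\alpha(m)))$, and $f(\delta(\alpha(m)))=\delta_B(f(\alpha(m)))=\delta_B(\alpha_B(m))$ precisely because $f$ is a map of $\delta$-rings; this is the only point at which the first step is used. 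Equivalently, in the language of Remark \ref{W2}, the sought structure corresponds to $w_{\log,B}\coloneqq W_2(f)\circ w_{\log}\colon M\to W_2(B)$, and the compatibility $w(\alpha(m))=(\alpha(m),0)\cdot w_{\log}(m)$ is preserved by the ring map $W_2(f)$. Finally, compatibility with $(A,M)$ is immediate from the construction, and this $\delta_{\log}$-structure is the unique compatible one, because compatibility forces $\delta_B$ to restrict to $\delta$ (unique by the first step) and forces $\delta_{\log,B}=f\circ\delta_{\log}$.

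I do not expect any genuine obstacle here: once \cite{BS}*{Lemma 2.18} is available, the $\delta_{\log}$-part is purely formal, precisely because the map is \emph{strict}, i.e., the monoid $M$ and hence the prelog and $\delta_{\log}$ data are unchanged. The only points requiring a little care are bookkeeping ones --- matching the hypotheses (that $I$ is finitely generated and contains $p$, and that "$I$-completely \'etale" is read in the sense of \cite{BS}), and observing that $B$ itself need not be assumed $I$-adically complete for the argument to go through.
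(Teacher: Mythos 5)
Your proof is correct and takes essentially the same route as the paper: the paper's own proof consists of the single observation that ``the only nontrivial part is the existence of the $\delta$-structure, which is \cite{BS}*{Lemma 2.18},'' and your transport of $\alpha$ and $\delta_{\log}$ along $f$ (with the verification of axiom (2) using that $f$ is a $\delta$-map) is exactly the formal part the paper leaves implicit.
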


\begin{proof}
The only nontrivial part is the existence of $\delta$-structure, which is proved in \cite{BS}*{Lemma 2.18}. 
\end{proof}

$\delta_{\log}$-structures pass to the associated log structures:

\begin{prop}\label{adding units}
Let $(A, M)$ be a $\delta_{\log}$-ring. Consider the following pushout diagram of monoids:
\[
\begin{CD}
\alpha^{-1}(A^\times) @>>> A^\times \\
@VVV @VVV \\
M @>>>  N. 
\end{CD}
\]
There is a unique $\delta_{\log}$-structure on $(A, N)$ compatible with that of $(A, M)$. 
\end{prop}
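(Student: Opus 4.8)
The plan is to construct $\delta_{\log}\colon N\to A$ explicitly on the pushout and check the three axioms, using that the pushout $N$ is built from $M$ by adjoining inverses of elements already present in $A$. Concretely, since the diagram is a pushout of monoids with $\alpha^{-1}(A^\times)\to A^\times$ an inclusion of groups, every element of $N$ can be written (non-uniquely) as a product $m\cdot u$ with $m\in M$ and $u\in A^\times$, where the ambiguity is exactly multiplication of $m$ by an element $v\in\alpha^{-1}(A^\times)$ and $u$ by $\alpha(v)^{-1}$. I would define
\[
\delta_{\log}(m\cdot u)=\delta_{\log}(m)+\frac{\delta(u)}{u^p}+p\,\delta_{\log}(m)\,\frac{\delta(u)}{u^p},
\]
i.e. the ``multiplicative'' combination of $\delta_{\log}(m)$ with the canonical $\delta_{\log}$ on the trivial log structure $A^\times\subset A$ from the first Example. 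Equivalently, in the reformulation of Remark \ref{W2}, I would set $w_{\log}(m\cdot u)=w_{\log}(m)\cdot w(u)\cdot(u,0)^{-1}$ inside $W_2(A)^\times$, which makes the multiplicativity transparent since $w_{\log}$ and $w$ are both multiplicative; this is really the cleanest way to organize the whole argument.

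The key steps, in order: (1) check well-definedness, i.e. that replacing $(m,u)$ by $(mv, \alpha(v)^{-1}u)$ for $v\in\alpha^{-1}(A^\times)$ does not change the value — this follows because $\delta_{\log}(v)=\delta(\alpha(v))/\alpha(v)^p$ by axiom (2) for $(A,M)$, so the two correction terms cancel in the $W_2$-picture; (2) verify axioms (1)--(3) of a $\delta_{\log}$-ring for this $\delta_{\log}$ on $(A,N)$ — axiom (1) is immediate, axiom (3) is the multiplicativity of $w_{\log}$, and axiom (2) needs $\alpha_N(n)^p\delta_{\log}(n)=\delta(\alpha_N(n))$, which one reduces to the case $n=m$ (known) and $n=u$ (the trivial-log-structure formula) and then combines via the $\delta$-Leibniz rule; (3) compatibility with $(A,M)$, i.e. $\delta_{\log}|_M$ is the original map, which is the case $u=e$; and (4) uniqueness: any $\delta_{\log}$ on $(A,N)$ restricting to the given one on $M$ must agree with the canonical formula on $A^\times\subset N$ by the uniqueness statement in the first Example, and then axiom (3) forces the value on all of $N$ since $N$ is generated by $M$ and $A^\times$.

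The main obstacle I expect is step (1), the well-definedness check: one must be careful that the presentation of $N$ as $M\cdot A^\times$ modulo the relation from $\alpha^{-1}(A^\times)$ is correctly handled, and that the cancellation of correction terms is verified in $A$ itself (not just after inverting $p$), since $A$ may have $p$-torsion. Passing to $W_2(A)$ and using that $w$, $w_{\log}$ are multiplicative maps of monoids — so the relation to be checked is the single identity $w(\alpha(v))=(\alpha(v),0)\cdot w_{\log}(v)$, which is exactly the compatibility condition of Remark \ref{W2} for $(A,M)$ — sidesteps the torsion issue entirely and reduces everything to manipulations in the group $W_2(A)^\times$. Once well-definedness is in hand, the remaining verifications are formal.
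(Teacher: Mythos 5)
Your proof is correct, but it takes a more explicit route than the paper's. The paper's argument is purely categorical: it observes that the prelog structures $A^\times \to A$ and $\alpha\colon \alpha^{-1}(A^\times)\to A$ each carry a unique $\delta_{\log}$-structure (both land in units, so $\delta_{\log}$ is forced to equal $\delta(\alpha(-))/\alpha(-)^p$), upgrades the square to a pushout diagram of $\delta_{\log}$-rings with common underlying ring $A$, and invokes the fact that the forgetful functor from $\delta_{\log}$-rings to prelog rings preserves colimits, so the pushout has underlying prelog ring $(A,N)$. What you do instead is unwind that colimit by hand: present $N$ by pairs $(m,u)$, define $w_{\log}$ on $N$ as $w_{\log}(m)\,w(u)\,(u,0)^{-1}$ in $W_2(A)$, and check that this monoid map coequalizes the two maps from $\alpha^{-1}(A^\times)$ into $M\oplus A^\times$ --- which is exactly the single identity $w(\alpha(v))=(\alpha(v),0)\cdot w_{\log}(v)$ of Remark \ref{W2}. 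Since the paper's claim that colimits of $\delta_{\log}$-rings are computed on underlying prelog rings is itself proved via the $W_2$-description, the two arguments have essentially the same mathematical content; yours buys an explicit formula for $\delta_{\log}$ on $N$ and avoids the general colimit machinery, at the cost of the well-definedness check (which, as you rightly note, is cleanest in $W_2(A)$, where the $p$-torsion issue disappears). One small imprecision: two presentations $n=mu=m'u'$ of the same element of the pushout are related by a zig-zag of your generating moves (equivalently by a pair $v,v'\in\alpha^{-1}(A^\times)$ with $mv=m'v'$ and $\alpha(v)^{-1}u=\alpha(v')^{-1}u'$), not necessarily by a single $v$; but invariance under the generating relations is all that descent to the pushout requires, so this does not affect the argument.
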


\begin{proof}
There are unique $\delta_{\log}$-structures on prelog structures
\[
A^\times \to A, \quad \alpha\colon \alpha^{-1}(A^\times)\to A,
\]
and the above diagram may be upgraded to a pushout diagram of $\delta_{\log}$-rings with the common underlying ring $A$. So, the assertion holds.  
\end{proof}

\begin{cor}[Log structures]\label{sheafification}
Let $(A, M)$ be a $\delta_{\log}$-ring. 
Assume that $A$ is classically $I$-complete for some finitely generated ideal $I$ containing $p$. Consider the associated log $I$-adic formal scheme
\[
(X, M_X)=(\Spf (A), \underline{M}^a)
\]
of $M\to A$, and set
\[
M^a_U\coloneqq \Gamma (U, M_X), \quad (B, M)^a\coloneqq (B, M^a_U), 
\]
for any affine $U=\Spf (B)$ \'etale over $\Spf (A)$. 
Then, $(B, M)^a$ admits a unique $\delta_{\log}$-structure compatible with $(A, M)$ and its \'etale localizations. 
\end{cor}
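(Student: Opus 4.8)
The plan is to first upgrade the structure sheaf and log structure on the small \'etale site $X_{\et}$ of $X=\Spf(A)$ to a sheaf-theoretic $\delta_{\log}$-ring, and then pass to global sections, using $\Gamma(X,\cO_X)=A$ and $\Gamma(X,M_X)=M^a$. The local input is already available: for an affine object $U=\Spf(B)$ of $X_{\et}$ the map $A\to B$ is $I$-completely \'etale, so Lemmas~\ref{completions} and~\ref{\'etale maps} make $B$ a $\delta$-ring and $(B,M)$ a $\delta_{\log}$-ring, both compatibly with $(A,M)$; by Proposition~\ref{adding units} the associated log ring $(B,N_B)$ with $N_B=M\oplus_{\alpha_B^{-1}(B^\times)}B^\times$ then carries a unique compatible $\delta_{\log}$-structure. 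Invoking the uniqueness clauses of Lemma~\ref{\'etale maps} and Proposition~\ref{adding units} once more shows these structures are compatible under restriction along further \'etale maps, so that $\cO_X$ becomes a sheaf of $\delta$-rings and $U\mapsto(B,N_B)$ a compatible system of $\delta_{\log}$-rings.

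Next I would glue. Since $M_X$ is the \'etale sheafification of the presheaf sending a connected affine $U=\Spf(B)$ to $N_B$, any global section $s\in M^a$ is represented, on a suitable \'etale cover $\{U_i=\Spf(B_i)\}$ of $X$, by $s_i\in N_{B_i}$; applying the local $\delta_{\log}$ gives $\delta_{\log}(s_i)\in B_i=\Gamma(U_i,\cO_X)$, and these agree on overlaps by the compatibility just established, hence glue to an element $\delta_{\log}(s)\in\Gamma(X,\cO_X)=A$, independent of the cover after passing to a common refinement. The three $\delta_{\log}$-axioms, together with compatibility of the resulting structure on $(A,M^a)$ with that of $(A,M)$ and of each localization $(B,N_B)$, may then be verified on a cover over which all the sections involved lie in the naive pushouts, where they reduce to the corresponding statements for the $(B_i,N_{B_i})$. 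For uniqueness I would use that $M_X$ is generated \'etale-locally over $\cO_X^\times$ by the image of the constant sheaf $\underline M$: a $\delta_{\log}$-structure compatible with $(A,M)$ and its \'etale localizations is forced on $\underline M$ to be the given map $\delta_{\log}\colon M\to A$, is forced on $\cO_X^\times$ to be the unique structure $u\mapsto\delta(u)/u^p$ of the trivial log structure (by axiom~(2) and invertibility of $u^p$), and is then forced on products by axiom~(3); hence it is determined \'etale-locally, so globally.

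I expect the only genuinely non-formal point to be already isolated in Proposition~\ref{adding units}: because $\delta_{\log}$ is neither additive nor multiplicative, it is not a priori clear that it descends along the sheafification defining the associated log structure, and that proposition is exactly what makes the descent work at the level of rings; once it is available \'etale-locally the remaining gluing and uniqueness arguments are routine \'etale-descent bookkeeping. As an alternative packaging of the gluing step, one can work instead with the honestly multiplicative map $m\mapsto 1+p\,\delta_{\log}(m)$, which takes values in $1+p\cO_X\subset\cO_X^\times$ and extends to a monoid homomorphism $M_X\to\cO_X^\times$ by the universal property of the pushout-and-sheafification defining $M_X$; one then recovers $\delta_{\log}$ by dividing by $p$, which is legitimate because the target may be computed \'etale-locally, where a $\delta_{\log}$ already exists by the local analysis above. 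No further obstacle is anticipated.
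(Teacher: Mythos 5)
Your proposal is correct and follows essentially the same route as the paper: extend the $\delta$-structure along $I$-completely \'etale maps via Lemma~\ref{\'etale maps}, extend $\delta_{\log}$ to the associated log ring of each localization via Proposition~\ref{adding units}, then sheafify and take global sections, verifying the axioms \'etale-locally (the paper says ``look at stalks''). Your extra bookkeeping on well-definedness under sheafification and the uniqueness argument via generation of $M_X$ by $\underline{M}$ and $\cO_X^\times$ just makes explicit what the paper leaves implicit.
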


Therefore, we can talk about a $\delta_{\log}$-structure on a log structure. 

\begin{proof}
Let $A \to B$ be an $I$-completely \'etale map. 
Recall that the $\delta$-structure on $A$ passes to $B$ by Lemma \ref{\'etale maps}. 
Proposition \ref{adding units} implies that the $\delta_{\log}$-structure extends to the log ring associated with $(B, M)$. Taking the sheafification and global sections, we get the $\delta_{\log}$-structure on $(A, M^a)$. 
(To see that it is indeed a $\delta_{\log}$-ring, look at stalks, for instance.)
\end{proof}

We can also treat a special case of formally \'etale maps:

\begin{prop}\label{change of monoids}
Let $(A, M, \alpha\colon M\to A)$ be a $\delta_{\log}$-ring with $M$ integral. 
If $\textnormal{rad}(A)$ contains $p$, then there exists a unique map $\delta_{\log}\colon M^{\gp}\to A$ satisfying
\[
\delta_{\log}(mm')=\delta_{\log}(m)+\delta_{\log}(m')+p \delta_{\log}(m)\delta_{\log}(m') \tag{$*_{\gp}$}
\]
for $m, m' \in M^{\gp}$. 
Moreover, for any submonoid $N\subset M^{\gp}$ containing $M$, there exists a unique $\delta$-structure on $A\otimes_{\bZ_{(p)}[M]}\bZ_{(p)}[N]$ making $(A\otimes_{\bZ_{(p)}[M]}\bZ_{(p)}[N], N)$ a $\delta_{\log}$-ring over $(A, M)$ using $\delta_{\log}$ above. 
The map 
\[
(A, M)\to (A\otimes_{\bZ_{(p)}[M]}\bZ_{(p)}[N], N)
\]
is universal among maps of $\delta_{\log}$-rings $(A, M)\to (B, N)$ compatible with the inclusion $M \subset N$. The construction of this universal map commutes with base changes $A\to A'$.  
\end{prop}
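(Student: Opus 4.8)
The plan is to reduce the whole statement to one observation: since $\textnormal{rad}(A)$ contains $p$, every element of $1+pA$ is a unit, so by relation (3) the assignment $m\mapsto 1+p\,\delta_{\log}(m)$ is a homomorphism $f$ from $M$ into the abelian \emph{group} $1+pA\subset A^\times$. Being a homomorphism into a group, $f$ extends uniquely to $f^{\gp}\colon M^{\gp}\to 1+pA$. For the first assertion I would first extract uniqueness: applying $(*_{\gp})$ to $m$ and $b$ shows that for $m=ab^{-1}$ with $a,b\in M$ one must have $\delta_{\log}(m)=\bigl(\delta_{\log}(a)-\delta_{\log}(b)\bigr)f(b)^{-1}$. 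Existence is then obtained by \emph{defining} $\delta_{\log}$ on $M^{\gp}$ by that formula; since $M$ is integral, $ab^{-1}=cd^{-1}$ in $M^{\gp}$ holds exactly when $ad=bc$ already holds in $M$, and a short manipulation of relation (3) for the products $ad$ and $bc$ shows the value is independent of the representation. One then checks $1+p\,\delta_{\log}=f^{\gp}$, that the formula restricts to the original $\delta_{\log}$ on $M$ (take $b=e$), and that $(*_{\gp})$ holds on all of $M^{\gp}$, which is a direct computation. I would emphasise that although only $p\,\delta_{\log}(m)$ is visibly pinned down by $f^{\gp}$, the formula pins down $\delta_{\log}(m)$ itself and every identity is checked on $\delta_{\log}$ directly, so no hypothesis on $p$-torsion in $A$ is needed.

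For the second and third assertions I would use the $W_2$-description of Remark~\ref{W2}. Write $A_N=A\otimes_{\bZ_{(p)}[M]}\bZ_{(p)}[N]$ and let $\delta_{\log}^N\colon N\to A_N$ be the restriction to $N\subset M^{\gp}$ of the map just built, composed with $A\to A_N$. The heart of the matter is that $w_{\log}\colon N\to W_2(A_N)$, $n\mapsto(1,\delta_{\log}^N(n))$, is a homomorphism of multiplicative monoids: this is precisely $(*_{\gp})$ restricted to $N$. Combining $w_{\log}$ with the $\delta$-section $A\to W_2(A)\to W_2(A_N)$ --- the two resulting prescriptions agreeing on $\bZ_{(p)}[M]$ by relation (2) for $(A,M)$ --- produces a ring section $w\colon A_N\to W_2(A_N)$ of the projection, i.e. a $\delta$-structure on $A_N$ over $A$; and relations (1)--(3) for $(A_N,N,\delta_{\log}^N)$ follow from $(*_{\gp})$ and the very construction of $w$, so $(A_N,N,\delta_{\log}^N)$ is a $\delta_{\log}$-ring over $(A,M)$. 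Uniqueness of the $\delta$-structure is automatic: $A_N$ is generated as a ring by $A$ together with the $\alpha_N(n)$, $\delta$ on (the image of) $A$ is forced since $A\to A_N$ is a $\delta$-map, and relation (2) prescribes $\delta(\alpha_N(n))=\alpha_N(n)^p\,\delta_{\log}^N(n)$ with $\delta_{\log}^N$ already determined by the first assertion; a $\delta$-structure compatible with prescribed values on a set of ring generators is unique.

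For universality, given a map of $\delta_{\log}$-rings $(A,M)\to(B,N)$ whose monoid component is the inclusion $M\subset N$, the ring maps $A\to B$ and $\bZ_{(p)}[N]\to B$ agree on $\bZ_{(p)}[M]$, so they factor through a unique ring map $A_N\to B$. It is a map of prelog rings, and compatibility with $\delta$ and $\delta_{\log}$ is checked on the generators: on $A$ it is given, and for $\alpha_N(n)$ relation (2) reduces everything to the identity ``the image of $\delta_{\log}^N(n)$ in $B$ equals $\delta_{\log,B}(n)$''. Writing $n=ab^{-1}$ with $a,b\in M$, the left-hand side equals $\bigl(\delta_{\log,B}(a)-\delta_{\log,B}(b)\bigr)\bigl(1+p\,\delta_{\log,B}(b)\bigr)^{-1}$ by applying $A\to B$ to the Part~1 formula --- here $1+p\,\delta_{\log,A}(b)$ is already a unit in $A$, so its image is a unit in $B$ --- while the right-hand side equals the same expression by relation (3) in $B$ applied to $n$ and $b$; in particular this argument uses nothing about $\textnormal{rad}(B)$. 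The base-change statement then follows from the universal property: for a map $(A,M)\to(A',M)$ of $\delta_{\log}$-rings one has $A'_N=A'\otimes_A A_N$, and $(A'_N,N)$ is the pushout $(A',M)\sqcup_{(A,M)}(A_N,N)$ in $\delta_{\log}$-rings, which is stable under composition.

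I expect the genuine difficulty to lie in the existence half of the first assertion: one must verify well-definedness of the formula for $\delta_{\log}$ on $M^{\gp}$ and the relation $(*_{\gp})$ without passing to $1+p\,\delta_{\log}$ (where everything is trivial, since $f^{\gp}$ is simply a group homomorphism), because that passage would amount to dividing by $p$; integrality of $M$ is exactly what makes the well-definedness computation close up. Everything downstream --- the $W_2$-construction of the $\delta$-structure on $A_N$, the universal property, and base change --- is then formal, given the first assertion and Remark~\ref{W2}.
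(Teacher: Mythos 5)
Your proposal is correct, and its first and last parts coincide with the paper's: the extension of $\delta_{\log}$ to $M^{\gp}$ is given by the same formula $\delta_{\log}(ab^{-1})=(\delta_{\log}(a)-\delta_{\log}(b))(1+p\,\delta_{\log}(b))^{-1}$, well-defined by cancellativity of $M$, and the universality and base-change statements are deduced formally once the $\delta$-structure exists. Where you genuinely diverge is on the one nontrivial step, the existence of the $\delta$-structure on $A\otimes_{\bZ_{(p)}[M]}\bZ_{(p)}[N]$. The paper first reduces to free $M$ by passing to the $p$-localization of $A\{M\}_{\log}^{\delta}$ so that $\alpha(M)$ consists of nonzerodivisors, identifies $A\otimes_{\bZ_{(p)}[M]}\bZ_{(p)}[M^{\gp}]$ with the localization at the $\phi$-stable set $S=\bigcup_n\phi^n(\alpha(M))$, invokes \cite{BS}*{Lemma 2.15}, checks $\delta$-stability of the subring via $\delta(n)=n^p\delta_{\log}(n)$, and then handles general integral $M$ by a further d\'evissage through $\bN^M\twoheadrightarrow M$. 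Your $W_2$-section argument replaces all of this: the multiplicativity of $n\mapsto(\alpha_N(n),0)\cdot(1,\delta_{\log}^N(n))$ is exactly $(*_{\gp})$, the two ring maps into $W_2(A_N)$ glue over $\bZ_{(p)}[M]$ by relation (2), and the resulting ring section is a $\delta$-structure by Remark \ref{W2}. This is shorter, stays entirely inside the pushout, and never needs $\alpha(M)$ to consist of nonzerodivisors; what it forgoes is the intermediate comparison map (1) between localized free $\delta_{\log}$-rings, which the paper's proof produces and reuses in its own treatment of the non-free case but which is not needed elsewhere. Two small points to spell out in a final writeup: that $x\mapsto(x,0)$ is multiplicative on $W_2$, and that $W_2(R)$ is again a $\bZ_{(p)}$-algebra for a $\bZ_{(p)}$-algebra $R$ (a routine check on $W_2(\bZ_{(p)})$), so the monoid map $N\to W_2(A_N)$ really does induce a ring map out of $\bZ_{(p)}[N]$.
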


\begin{proof}
For an element $m'/m \in M^{\gp}$, set
\[
\delta_{\log}(m'/m)=\frac{\delta_{\log}(m')-\delta_{\log}(m)}{1+ p \delta_{\log}(m)} \in A. 
\]
It is straightforward to check that this is well-defined and satisfies ($*_{\gp}$). 
The uniqueness assertions are clear; the $\delta_{\log}$ above determines $\delta$ on (the image of) $N$, hence on the whole $A\otimes_{\bZ_{(p)}[M]}\bZ_{(p)}[N]$. 
Once the desired $\delta$-structure is shown to exist, the rest of the statement follows easily. 

To discuss the existence of $\delta$-structures, we first assume that $M$ is free. 
The prelog structure $M\to A$ determines a natural map of prelog rings 
\[
(A[M], M\hookrightarrow A[M])\to (A, M)
\]
and in turn induces a surjection of $\delta_{\log}$-rings
\[
(A\{M\}_{\log}^{\delta}, M)\to (A, M),
\]
where $(A\{M\}_{\log}^{\delta}, M)$ is obtained by adjoining $M$ to a $\delta_{\log}$-ring $(A, \{e\})$. 

To show that $A\otimes_{\bZ_{(p)}[M]}\bZ_{(p)}[N]$ admits a $\delta$-structure, it suffices to prove the same statement for the $p$-localization $((1+(p))^{-1} A\{M\}_{\log}^{\delta}, M)$ and $N$; the original case is obtained by base change along $(1+(p))^{-1} A\{M\}_{\log}^{\delta} \to A$. 
(The $\delta$-structure of $A\{M\}_{\log}^{\delta}$ extends to the localization $(1+(p))^{-1} A\{M\}_{\log}^{\delta}$ as in \cite{BS}*{Remark 2.16}.)
In particular, using Lemma \ref{one generator}, we may and do assume that $\alpha$ is injective and $\alpha(M)$ consists of nonzerodivisor after replacing $(A, M)$. 
Further, we may assume that $\alpha(M)$ are still nonzerodivisors in $A\otimes_{\bZ_{(p)}[M]}\bZ_{(p)}[N]$. 

Now, $A\otimes_{\bZ_{(p)}[M]}\bZ_{(p)}[M^{\gp}]$ is identified with the localization $\alpha(M)^{-1}A$ with the inclusion $A\subset \alpha(M)^{-1}A$. 
Since
\[
\phi^n (\alpha(m))\in \alpha(m)^{p^n} (1+pA)
\]
for any integer $n\geq 0$, we have $\alpha(M)^{-1}A=S^{-1}A$ for $S=\bigcup_n \phi^n (\alpha(M))\subset A$. Thus, this localization admits a $\delta$-structure by \cite{BS}*{Lemma 2.15}. 
It remains to show that $A\otimes_{\bZ_{(p)}[M]}\bZ_{(p)}[N]\subset \alpha(M)^{-1}A$ is $\delta$-stable. This is clear since 
\[
\delta(n)=n^p \delta_{\log}(n)\in A\otimes_{\bZ_{(p)[M]}}\bZ_{(p)}[N].
\]

Assume again that $M$ is free. 
We can also deduce from the previous paragraphs that, similar to Lemma \ref{one generator}, there is a natural map of $\delta_{\log}$-rings
\begin{equation}
(1+ p A\{M\}_{\log}^{\delta})^{-1 }A\{M\}_{\log}^{\delta} \otimes_{\bZ_{(p)}[M]}\bZ_{(p)}[N] \to
A\{N\}_{\log}^{\delta}
\end{equation}
that becomes an isomorphism after the $p$-localization. 

Now consider a general $M$. Then, we have a surjection of prelog rings
\[
(A[\bN^{M}], \bN^M) \to (A, M); \quad 1_{m} \mapsto m. 
\]
It induces a surjection of $\delta_{\log}$-rings $(A\{\bN^M\}_{\log}^{\delta}, \bN^M)\to (A, M)$, where $\bN^M$ is adjoined to a $\delta_{\log}$-ring $(A, \{e\})$. 
Write $\widetilde{M}, \widetilde{N}$ for the inverse images of $M, N\subset M^{\gp}$ in $\bZ^M$ respectively. Note that $\widetilde{M}$ contains $\bN^M$. 
To show that $A\otimes_{\bZ_{(p)}[M]}\bZ_{(p)}[N]$ admits a $\delta$-structure, 
it suffices to prove the same statement for 
\begin{center}
$((1+p A\{\widetilde{M}\}_{\log}^{\delta})^{-1} A\{\widetilde{M}\}_{\log}^{\delta}, \widetilde{M})$ and $\widetilde{N}$. 
\end{center}
By the previous case we have discussed, we know that
\begin{multline*}
(1 + p A\{\bN^M\}_{\log}^{\delta})^{-1} A\{\bN^M\}_{\log}^{\delta} \otimes_{\bZ_{(p)}[\bN^M]}\bZ_{(p)}[\widetilde{N}]\\
= 
((1 + p A\{\bN^M\}_{\log}^{\delta})^{-1} A\{\bN^M\}_{\log}^{\delta} \otimes_{\bZ_{(p)}[\bN^M]}\bZ_{(p)}[\widetilde{M}])\otimes_{\bZ_{(p)}[\widetilde{M}]}\bZ_{(p)}[\widetilde{N}]
\end{multline*}
admits a $\delta$-structure. 
Using the map (1) for $\bN^M$ and $\widetilde{M}$, which is a map of $\delta$-rings, we see that
\[
(1 + p A\{\widetilde{M} \}_{\log}^{\delta})^{-1} A\{\widetilde{M} \}_{\log}^{\delta}
\otimes_{\bZ_{(p)}[\widetilde{M}]}\bZ_{(p)}[\widetilde{N}]
\]
is also a $\delta$-ring. This finishes the existence of $\delta$-structure in the general case, and we complete the proof. 
\end{proof}

\if0
\begin{rem}\label{Reduction to the free case}
Let us record a variant of the construction used in the above proof and its property. For every integral monoid $M$, there exists an exact surjection $\widetilde{M}\to M$ from a submonoid $\widetilde{M}\subset \bZ^M$, and $(\bZ_{(p)}\{\widetilde{M}\}_{\log}^{\delta})^{\wedge}_{\textnormal{cl}}$ is $p$-completely free over the monoid ring $\bZ_p[\widetilde{M}]$ attached to $\widetilde{M}$. In particular, $(\bZ_{(p)}\{\widetilde{M}\}_{\log}^{\delta})^{\wedge}_{\textnormal{cl}}$ is $p$-torsionfree over $\bZ_p$ and the formation of $(A\{\widetilde{M}\}_{\log}^{\delta})^{\wedge}_{\textnormal{cl}}$ for a classically $p$-complete $\delta$-ring $A$ commutes with $p$-completed base changes. 
If $M\to M;m\mapsto m^p$ is integral, the same holds for $\widetilde{M}$ and the Frobenius lift $\phi$ on $\bZ_{(p)}\{\widetilde{M}\}_{\log}^{\delta}$ is flat modulo $p$.  
\end{rem}
\fi

Recall that a \emph{$\delta$-pair} is a pair $(A, I)$ of a $\delta$-ring $A$ and an ideal $I$ of $A$. 
A \emph{$\delta_{\log}$-triple} is a triple $(A, I, M)$, where $(A, M)$ is a $\delta_{\log}$-ring and $I$ is an ideal of $A$. 
The category of $\delta$-triples is defined in an obvious way. 

We say that a map $(A, M)\to (B, N)$ of prelog rings (with integral monoids) is \emph{exact surjective} if it is surjective, i.e., $A\to B$, $M\to N$ are both surjective, and $M/M^\times \cong N/N^\times$; these induce exact closed immersions of associated (integral) log schemes, and one can treat them as if they are closed immersion of schemes. It it a useful technique in log geometry to pass to exact surjective maps. 

\begin{const}[Exactification of $\delta_{\log}$-triples]\label{Exactification}
Let $(A, I, M)$ be a $\delta_{\log}$-triple, $(A/I, N)$ a prelog ring and $(A, M)\to (A/I, N)$ a surjective map of prelog rings.  
Assume that $M$ and $N$ are integral, and write $h\colon M \to N$, $\overline{h}\colon M \to N/ N^\times$ with the associated maps $h^{\gp}\colon M^{\gp}\to N^{\gp}$, $\overline{h}^{\gp}\colon M^{\gp}\to (N/ N^\times)^{\gp}$ of abelian groups. 
Set $M'=(h^{\gp})^{-1}(N)=(\overline{h}^{\gp})^{-1}(N/N^\times)$. By the surjectivity of $h$, $M'$ is generated by $M$ and $(h^{\gp})^{-1}(\{e\})$. 

Assume that $A$ is classically $p$-complete. 
Applying Proposition \ref{change of monoids} to $(A, M)$ and $M\subset M'$, we obtain a $\delta_{\log}$-ring $(A', M')$ over $(A, M)$. By the same proposition, the natural map $M'\to N$ of monoids lifts the map $(A, M)\to (A/I, N)$ to an \emph{exact} surjection $(A', M')\to (A/I, N)$ of prelog rings. Let us write $I'$ for the kernel of the surjection $A'\to A/I$. Then, $(A', I', M')$ is a $\delta_{\log}$-triple with an exact surjection $(A', M')\to (A'/I', N)=(A/I, N)$ of prelog rings. We call $(A', I', M')$ the \emph{exactification} of $(A, I, M)$ along $(A, M)\to (A/I, N)$. 
This construction is functorial, and the formation of $(A', M')$ commutes with base changes on $A$. 
\end{const}

\begin{rem}[Exactification and integral maps]\label{Exactification and integral maps}
Recall that a map of integral monoids $M\to N$ is \emph{integral} if every pushout 
\[
\begin{CD}
M @>>> N \\
@VVV @VVV \\
M' @>>> N'
\end{CD}
\]
along a map $M\to M'$ of integral monoids
gives an integral monoid $N'$. It is clear that integral maps of integral monoids are preserved by base change.

In the setup of Construction \ref{Exactification}, assume that $(A, M)\to (A/I, N)$ lives over some base $(B, M_B)$, and assume that $M_B\to N$ is an integral map of integral monoids. Then, $N\to M'$ is integral as $M'\to N$ is exact; see \cite{Ogus}*{I.4.6.3.1}. 
\end{rem}

\section{Logarithmic prisms}
Now we recall the notion of prisms defined in \cite{BS}, and introduce two logarithmic variants: prelog prisms and log prisms. 

A \emph{prism} is a $\delta$-pair $(A, I)$ such that $I$ defines a Cartier divisor of $\Spec (A)$, $A$ is $(p,I)$-complete, and $p\in I +\phi(I)A$. Prisms form a full subcategory of the category of all $\delta$-pairs. 
Note that $I$ is finitely generated by Zariski descent. 

A prism is \emph{bounded} if $A/I$ has bounded $p^{\infty}$-torsion, in which case $A$ is classically $(p, I)$-complete \cite{BS}*{Lemma 3.7}.  
A prism is \emph{orientable} if $I$ is principal, and \emph{oriented} if it is orientable and a generator of $I$ is fixed. 

One of the most important properties of prisms is the \emph{rigidity}: if $(A, I)\to (B,J)$ is a map of prisms, then $I\otimes_A B\cong J$ canonically. This leads to the notion of \emph{prismatic envelopes}:

\begin{prop}[Existence of prismatic envelopes]\label{Bhatt:lecture}
Fix a prism $(A, I)$. Let $(B, J)$ be a $\delta$-pair over $(A,I)$, i.e., equipped with a map $(A, I)\to (B, J)$. If $(A, I)$ is orientable, then there exists a universal map $(B,J)\to (B', IB')$ of $\delta$-pairs over $(A, I)$ to a prism $(B', IB')$.  
\end{prop}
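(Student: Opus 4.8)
The plan is to reduce the construction of a prismatic envelope to a concrete presentation and then invoke a Rees-algebra / $\delta$-ring localization argument. First I would reduce to the oriented case: choose a generator $d$ of $I$ (working étale-locally on $\Spec A$ and using the rigidity statement to glue, since $I\otimes_A B \cong J$ canonically for any prism receiving a map from $(A,I)$, the candidate envelope will automatically be independent of the choice and the local pieces will glue). So assume $I = (d)$ with $d$ a nonzerodivisor, and write $J = (d, f_1, \dots, f_r)$ after $(p,I)$-completing $B$; the goal is to force each $f_j$ into the ideal generated by $d$ while staying in the world of $\delta$-rings.

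Next I would carry out the universal construction in stages. For a single generator $f$, one wants the universal $\delta$-$A$-algebra $B\to B'$ in which $f \in dB'$, i.e. $f = d\cdot g$ for a new element $g$; the naive candidate is $B\{g\}^\delta / (f - dg)$, but one must quotient further so that $d$ remains a nonzerodivisor and $B'/(p,d)$ has bounded $p^\infty$-torsion — this is exactly where the subtlety lies. I would handle this by taking the $\delta$-ring $B\{\tfrac{f}{d}\}^\delta$ defined as the (derived, then $(p,d)$-completed, then $\pi_0$) pushout presenting the Rees-type algebra $B\{g\}^\delta/(dg - f)$, then killing the $d$-torsion and $(p,d)$-completing. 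Iterating over $f_1, \dots, f_r$ gives a $\delta$-$A$-algebra $B'$ with a map $(B,J)\to (B', IB')$; the universal property is then formal from the universal properties of $\{-\}^\delta$, of quotients, and of completion in the category of $\delta$-pairs.

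The main obstacle — and the technical heart of the proof — is showing that the resulting $B'$ is genuinely a prism, i.e. that $d$ is a nonzerodivisor on $B'$ (equivalently that $I$ cuts out a Cartier divisor) and that $B'$ is classically $(p,I)$-complete with bounded $p^\infty$-torsion mod $I$. The clean way to control this is to give an explicit description of $B\{\tfrac{f}{d}\}^\delta$ modulo $d$: one shows $B\{\tfrac{f}{d}\}^\delta/d$ is a polynomial-type $\delta$-algebra over $B/d$ in the variables $g, \delta(g), \delta^2(g), \dots$ subject to the single relation coming from $f \equiv 0$, and in particular is $p$-completely flat (hence has no new $d$-torsion or unbounded $p^\infty$-torsion introduced) when $(A,I)$ is bounded; this is the prismatic analogue of the flatness computation for PD-envelopes and for the free $\delta$-ring on one generator in Lemma \ref{one generator}. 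Establishing this flatness/nonzerodivisor statement — rather than the formal universal property — is where essentially all the work goes.

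Finally, I would record that the construction commutes with the relevant base changes and completions (so that it can be localized on $\Spec A$ and glued), and note the orientation-independence via rigidity, completing the passage from the oriented local case back to the general prism $(A,I)$.
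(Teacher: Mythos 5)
Your proposal proves a different (and, in the stated generality, false) statement. Proposition \ref{Bhatt:lecture} asserts only the \emph{existence} of a universal map to a prism for an \emph{arbitrary} $\delta$-pair $(B,J)$ over $(A,I)$: the ideal $J$ need not be finitely generated over $IB$, $B$ need not be flat, complete, or torsionfree, and --- as the sentence immediately following the proposition stresses --- in general there is \emph{no} control of the prismatic envelope. Your argument (i) assumes a presentation $J=(d,f_1,\dots,f_r)$, and (ii) rests on the claim that $B\{\tfrac{f}{d}\}^{\delta}/d$ is $p$-completely flat over $B/d$ whenever $(A,I)$ is bounded. That flatness is precisely the content of Proposition \ref{prismatic envelopes for regular sequenes} and holds only under the extra hypotheses that $B$ is $(p,I)$-completely flat over $A$ and $f_1,\dots,f_r$ is a $(p,I)$-completely regular sequence relative to $A$; for a general $\delta$-pair the Rees-type quotient $B\{g\}^{\delta}/(dg-f)$ can have arbitrarily bad $d$-torsion, and after you kill the $d$-torsion and $(p,d)$-complete, $d$ can again fail to be a nonzerodivisor, so your ``quotient, kill torsion, complete'' loop is not visibly terminating. (The reduction to the oriented case by gluing also silently uses that the construction commutes with localization on $\Spec(A)$, which again requires the flatness you do not have.) In effect you have reproduced the proof of the regular-sequence refinement rather than of the existence statement.

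The proof the paper invokes, \cite{Bhatt:lecture}*{V. Lemma 5.1}, is purely formal and sidesteps all of this: one observes that the category of prisms $(C,IC)$ over $(A,I)$ equipped with a map of $\delta$-pairs from $(B,J)$ is closed under small limits (computed on the underlying $(p,I)$-complete $\delta$-rings) and satisfies a solution-set condition (every such $C$ receives a map from one generated, as a complete $\delta$-ring, by the image of $B$, so the relevant isomorphism classes form a set), whence an initial object exists by the adjoint functor theorem. This yields existence with no hypotheses and, correspondingly, no control over the result. Your explicit construction is the right strategy for Proposition \ref{prismatic envelopes for regular sequenes}, but to prove Proposition \ref{Bhatt:lecture} as stated you should replace it by the categorical argument (or else add the finiteness and regular-sequence hypotheses and prove the weaker claim).
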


\begin{proof}
This is \cite{Bhatt:lecture}*{V. Lemma 5.1}. 
\end{proof}

In general, there is no control of the prismatic envelope $(B', IB')$. If $J$ is generated by a regular sequence, it behaves well. 

\begin{prop}[Prismatic envelopes for regular sequences]\label{prismatic envelopes for regular sequenes}
Using the notation of Proposition \ref{Bhatt:lecture}, assume that $(A, I)$ is bounded and $B$ is $(p,I)$-completely flat over $A$ and 
\[
J=(I, x_1, \dots, x_r)\subset B
\]
for a $(p, I)$-completely regular sequence $x_1, \dots, x_r$ relative to $A$ in the sense of \cite{BS}*{Definition 2.41}. 
Then, the prismatic envelope $(B', IB')$ is $(p,I)$-completely flat over $(A, I)$, and its formation commutes with base changes $(A, I)\to (A',I')$ and $(p,I)$-completely flat base changes on $B$. 
\end{prop}

In particular, we see that $(B', IB')$ is a bounded prism and $B'$ is classically $(p,I)$-complete by \cite{BS}*{Lemma 3.7}. 

\begin{proof}
This is \cite{BS}*{Proposition 3.13}\footnote{This is stated for not necessarily orientable prisms $(A, I)$. As pointed out by Mikami, the descent argument there is nontrivial if one considers maps from $(B, J)$ to unbounded prisms. The notion of animated prism in \cite{Bhatt-Lurie} would suffice to justify the argument in such generality, but let us remark that we essentially only use the universality for maps to bounded prisms.}. 
\end{proof}

We are now in position to introduce logarithmic variants. 

\begin{defn}[Prelog prisms and log prisms]
A (bounded) \emph{prelog prism} is a $\delta_{\log}$-triple $(A, I, M)$, where $(A, I)$ is a (bounded) prism. Prelog prisms form a full subcategory of the category of all $\delta_{\log}$-triples. 
A prelog prism is of \emph{rank $1$} if $\delta_{\log}=0$. 
If the underlying prism is bounded, any prelog prism gives rise to a log structure $M^a_{\Spf (A)}$ on the $(p,I)$-adic formal scheme $\Spf (A)$ with a $\delta_{\log}$-structure. 
(Recall that $A$ is classically $(p,I)$-complete if $(A, I)$ is bounded.)
A \emph{log prism} $(A, I, M_{\Spf (A)})$ is a bounded prism $(A, I)$ with a log structure $M_{\Spf (A)}$ and its $\delta_{\log}$-structure that comes from some prelog prism $(A, I, M)$. 
A map of log prisms is a map of the underlying log formal schemes that induces a map of the underlying prisms and preserves $\delta_{\log}$-structures.  
\end{defn}

If $(A, I, M)$ is a bounded prelog prism, we have the associated log prism
\[
(A, I, M)^a\coloneqq (A, I, M^a_{\Spf(A)})
\]
by Corollary \ref{sheafification}, and this construction is functorial. 
Conversely, if $(A, I, M_{\Spf (A)})$ is a log prism, then $(A, I, \Gamma (\Spf (A), M_{\Spf (A)}))$ is a prelog prism. 

\begin{exam}
\begin{enumerate}
\item For any bounded prism $(A,I)$, the trivial log structure defines a log prism $(A, I, \cO_{\Spf (A)}^\times)$. 
\item For any prism $(A,I)$, a prelog structure $\bN \to A; 1\mapsto 0$ defines a prelog prism $(A, I, \bN)$ of rank $1$. 
\item Let $(A, I)$ be a \emph{perfect prism}, equivalently, $R= A/I$ is a perfectoid ring (in the sense of \cite{BMS1}) and $A=W(R^\flat)$ \cite{BS}*{Theorem 3.10}. Using Teichm\"uller lifts $[-]\colon R^\flat\to A$ as a prelog structure, $(A, I, R^\flat)$ is a prelog prism of rank $1$.
We will be mainly interested in the case where $R[1/p]$ is a perfectoid field $C$, and consider
\[
(A, I, R^\flat \setminus\{0\})=(A_{\Inf}, (\xi), \cO^\flat_C\setminus\{0\})
\]
instead, which makes sense more generally if $R^\flat$, equivalently $R$, is an integral domain. 
\item Let $\cO_K$ be a totally ramified finite extension of $W(k)$ for a perfect field $k$ of characteristic $p$ with uniformizer $\pi$. Let $E(u)\in W(k)[u]$ be an Eisenstein polynomial with $E(\pi)=0$. 
The \emph{Breuil-Kisin prelog prism} is
\[
(W(k)[\![ u]\!], (E(u)), \mathbb{N}\to W(k)[\![ u]\!] ; n \mapsto u^n)
\]
with $\delta_{\log}=0$. 
\end{enumerate}
\end{exam}

A natural analogue of the rigidity of prisms \cite{BS}*{Lemma 3.5} holds for prelog prisms. In particular, we see that if $(A, I, M_A)$ is a prelog prism and $A\to B$ is a map of $\delta$-rings with $B$ being $(p,I)$-complete, then $(B, IB, M_A)$ is a prelog prism exactly when $B[I]=0$. 
This condition is satisfied if $(A, I)$ is bounded and $B$ is $(p,I)$-completely flat over $A$ \cite{BS}*{Lemma 3.7 (3)}. 

\begin{rem}[Maps from prelog prisms]\label{Maps from prelog prisms}
Let $(A, I, M_A)$ be a bounded prelog prism and let $(B, J, M_{\Spf (B)})$ be a log prism. 
Any map of prelog prisms $(A, I, M_A) \to (B, J, \Gamma (\Spf (B), M_{\Spf (B)}))$ induces a map of log prisms $(A, I, M_A)^a \to (B, J, M_{\Spf (B)})$ (and this is clearly a one-to-one correspondence by taking global sections). Indeed, $J\cong IB$ and we obtain a map from the constant prelog structure on $\Spf (B)$ determined by $M_A$ to $M_{\Spf (B)}$ compatibly with $\delta_{\log}$-structures. It then induces a map from the associated log structure to $M_{\Spf (B)}$ compatibly with $\delta_{\log}$-structures. 

Note however that the canonical map $(B, J, \Gamma (\Spf (B), M_{\Spf (B)}))^a \to (B, J, M_{\Spf (B)})$ may not be an isomorphism.
\end{rem}

\if0
\begin{exam}\label{universal prelog prism}
Let $A_0=\bZ_{(p)}\{ d, \delta (d)^{-1}\}^{\delta}$ denote the universal $\delta$-ring with a distinguished element $d$ \cite{BS}*{Example 2.21}. For an integral monoid $M$, take the exact surjection $\widetilde{M}\to M$ from $\widetilde{M}\subset \bZ^M$ as in Remark \ref{Reduction to the free case}. Let $A$ be the $(p, d)$-completion of $A_0 \{\widetilde{M}\}_{\log}^{\delta}$. The triple $(A, (d), \widetilde{M})$ is the universal oriented prelog prism with $\widetilde{M}$ as the monoid part. We note that $A$ is $(p, d)$-completely free over $A_0 [\widetilde{M}]$, and hence the sequence $d$, $p$ is regular in $A$. 
If $M\to M; m\mapsto m^p$ is integral, the same holds for $\widetilde{M}$ and the Frobenius on $A/p$ is $d$-completely flat. 
\end{exam}
\fi

\begin{prop}[Existence of prelog prismatic envelopes]\label{Existence of prelog prismatic envelopes}
Fix an orientable prelog prism $(A, I, M_A)$ with $M_A$ integral. Let $(B, J, M_B)$ be a $\delta_{\log}$-triple over $(A, I, M_A)$, i.e., with a map $(A, I, M_A)\to (B, J, M_B)$, and let $(B, M_B)\to (B/J, N)$ be a surjection of prelog rings with integral monoids. Then,
\begin{center}
there is a universal map $(B,J, M_B)\to (B', IB', M_{B'})$
\end{center}
of $\delta_{\log}$-triples over $(A, I, M_A)$ to a prelog prism $(B', IB', M_{B'})$ with an \emph{exact} surjection $(B', M_{B'})\to (B'/IB', N)$. 
Moreover, the monoid $M_{B'}$ is integral. 
\end{prop}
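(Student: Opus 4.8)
The plan is to reduce the logarithmic statement to the non-logarithmic prismatic envelope of Proposition~\ref{Bhatt:lecture} by first making the surjection $(B,M_B)\to(B/J,N)$ exact: once it is exact the monoid part becomes rigid and can be carried along a map of $\delta$-rings without obstruction. As a preliminary reduction, note that every prelog prism is $(p,I)$-complete, so any map of $\delta_{\log}$-triples from $(B,J,M_B)$ to a prelog prism factors uniquely through the classical $(p,I)$-completion of $B$, which carries a compatible $\delta_{\log}$-structure by Lemma~\ref{completions}. Hence I may assume $B$ is classically $(p,I)$-complete; in particular $p\in\textnormal{rad}(B)$, so that Construction~\ref{Exactification} and Proposition~\ref{change of monoids} apply.

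Next I form the exactification $(\widetilde B,\widetilde J,\widetilde M)$ of $(B,J,M_B)$ along $(B,M_B)\to(B/J,N)$ via Construction~\ref{Exactification}. This is a $\delta_{\log}$-triple over $(B,J,M_B)$, hence over $(A,I,M_A)$ because $I\widetilde B\subseteq J\widetilde B\subseteq\widetilde J$; the monoid $\widetilde M$ is a submonoid of $M_B^{\gp}$ containing $M_B$, so it is integral; and it comes with an \emph{exact} surjection $(\widetilde B,\widetilde M)\to(\widetilde B/\widetilde J,N)=(B/J,N)$. The key point is its universal property: for any prelog prism $(C,IC,M_C)$ over $(A,I,M_A)$, a map of $\delta_{\log}$-triples $(B,J,M_B)\to(C,IC,M_C)$ together with an exact surjection $(C,M_C)\to(C/IC,N)$ lifting $(B,M_B)\to(B/J,N)$ factors uniquely through $(\widetilde B,\widetilde J,\widetilde M)$. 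By Proposition~\ref{change of monoids} it suffices to see that $M_B^{\gp}\to M_C^{\gp}$ carries $\widetilde M$ into $M_C$; but by construction $\widetilde M$ maps into $N$ under $M_B^{\gp}\to N^{\gp}$, hence into $N/N^\times$ under $M_B^{\gp}\to(N/N^\times)^{\gp}$, while exactness of $(C,M_C)\to(C/IC,N)$ gives $M_C/M_C^\times\cong N/N^\times$, so membership in $M_C$ can be tested in $(N/N^\times)^{\gp}$.

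Finally I regard $(\widetilde B,\widetilde J)$ as a $\delta$-pair over the prism $(A,I)$ and take its prismatic envelope $(B',IB')$ (Proposition~\ref{Bhatt:lecture}, which needs no regularity hypothesis), a prism with a map of $\delta$-rings $\widetilde B\to B'$. Since a $\delta_{\log}$-structure transports along any map of $\delta$-rings, putting $M_{B'}:=\widetilde M$ with structure map $\widetilde M\to\widetilde B\to B'$ and with the composite $\widetilde M\xrightarrow{\delta_{\log}}\widetilde B\to B'$ as its $\delta_{\log}$-map makes $(B',IB',M_{B'})$ a prelog prism over $(A,I,M_A)$ with $M_{B'}$ integral. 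Because any map of $\delta$-pairs $(\widetilde B,\widetilde J)\to(B',IB')$ kills $\widetilde J$ on reductions, the composite $N\to B/J=\widetilde B/\widetilde J\to B'/IB'$ exhibits the prelog structure on $B'/IB'$, and $(B',M_{B'})\to(B'/IB',N)$ is exact since $M_{B'}/M_{B'}^\times\cong N/N^\times$ is unchanged and both components are surjective. For universality, given $(C,IC,M_C)$ and the data above, factor $(B,J,M_B)\to(C,IC,M_C)$ through $(\widetilde B,\widetilde J,\widetilde M)$ by the previous paragraph, then factor the underlying $\delta$-pair map uniquely through $(B',IB')$ by Proposition~\ref{Bhatt:lecture}; the resulting $\delta$-ring map $B'\to C$ together with the monoid map $\widetilde M\to M_C$ is a map of $\delta_{\log}$-triples over $(A,I,M_A)$ (the prelog structures and the $\delta_{\log}$'s all factor through $\widetilde B\to B'$), and it is unique because $B'\to C$ is unique and $M_{B'}=\widetilde M$.

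The step I expect to be the main obstacle is the universal property of the exactification: one has to pin down the correct ambient category — prelog prisms under $(B,J,M_B)$ equipped with a compatible exact surjection onto $(-,N)$ — and check that exactness of the target is precisely what forces the extra generators of $\widetilde M$, a priori only elements of $M_C^{\gp}$, into $M_C$. After that the argument is formal: once the surjection has been exactified, the prism part is controlled entirely by the non-logarithmic envelope, and the monoid simply rides along the map $\widetilde B\to B'$.
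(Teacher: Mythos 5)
Your proof follows the paper's own argument: exactify the surjection via Construction \ref{Exactification} and then apply the non-logarithmic prismatic envelope of Proposition \ref{Bhatt:lecture}, letting the (now rigid) monoid ride along the map of $\delta$-rings. The extra details you supply --- the preliminary completion of $B$ and the check that exactness of $(C,M_C)\to(C/IC,N)$ forces $\widetilde M$ into $M_C$ --- are precisely what the paper's two-line proof leaves implicit.
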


\begin{proof}
Applying Construction \ref{Exactification}, we may assume that $(B, M_B)\to (B/J, N)$ is an exact surjection. Then, the statement follows from Proposition \ref{Bhatt:lecture}. 
\end{proof}

The universal object $(B', IB', M_{B'})$ in the previous proposition is called the \emph{prelog prismatic envelope}. (We later use the same terminology for such an object for non-orientable $(A, I, M_A)$.)
If $(B', IB', M_{B'})$ is bounded, then we have the associated log prism $(B', IB', M_{B'})^a$ and the surjection $(B', M_{B'})\to (B'/IB', N)$ induces an exact closed immersion of associated log $(p, I)$-adic formal schemes. 
It gives the \emph{log prismatic envelope}:

\begin{prop}[Existence of log prismatic envelopes]\label{Existence of log prismatic envelopes}
We use the notation of Proposition \ref{Existence of prelog prismatic envelopes}. 
Assume that $(B', IB', M_{B'})$ is bounded. 
The construction of the log prism $(B', IB', M_{B'})^a$ is universal in the sense that
the following commutative diagram
\[
\begin{CD}
(\Spf (B'/IB'), N^a_{\Spf (B'/IB')}) @>>> (\Spf (B'), M^a_{B', \Spf (B')}) \\
@VVV @VVV \\
(\Spf (B/J), N_{\Spf (B/J)}^a) @>>> (\Spf (B), M_{B, \Spf (B)}^a), 
\end{CD}
\]
is final in the category of commutative diagrams
\[
\begin{CD}
(\Spf (C/IC), N^a_{\Spf (C/IC)}) @>>> (\Spf (C), M_{\Spf (C)}) \\
@VVV @VVV \\
(\Spf (B/J), N_{\Spf (B/J)}^a) @>>> (\Spf (B), M_{B, \Spf (B)}^a), 
\end{CD}
\]
where $(C, IC, M_{\Spf (C)})$ is a log prism with an integral log structure, and the top arrow is an exact closed immersion of log $(p,I)$-adic formal schemes. 
\end{prop}

\begin{lem}\label{exact closed immersion}
Put
\[
N^a_{C/I}\coloneqq \Gamma (\Spf (C/IC), N^a_{\Spf (C/IC)}).
\]
The map $(C, \Gamma (\Spf (C), M_{\Spf (C)}))\to (C/IC, N^a_{C/I})$ is exact surjective.  
\end{lem}

\begin{proof}
\cite{Beilinson}*{p.3, Exercises (iii)} implies that, for any integers $m, n\geq 1$, the exact closed immersion 
\[
(\Spf (C/(p^m, I)), M_{\Spf (C/(p^m, I))}) \hookrightarrow (\Spf (C/(p^m, I^{n})), M_{\Spf (C/(p^m, I^{n}))}),
\]
where $M_{?}$ denotes the restriction of $M_{\Spf (C)}$ (as a log structure),  gives rise to an exact surjection
\begin{multline*}
(C/(p^m, I^{n}), \Gamma (\Spf (C/(p^m, I^n), M_{\Spf (C/(p^m, I^{n}))})) \\
\to (C/(p^m, I), \Gamma (\Spf (C/(p^m, I), M_{\Spf (C/(p^m, I))})), 
\end{multline*}
which is a $(1+ IC/(p^m, I))$-torsor on monoids. By passing to the limits, we obtain an exact surjection
\begin{multline*}
(C, \Gamma (\Spf (C), M_{\Spf (C)}))\cong \varprojlim_{m,n} (C/(p^m, I^{n}), \Gamma (\Spf (C/(p^m, I^n)), M_{\Spf (C/(p^m, I^{n}))})) \\
\to 
\varprojlim_m (C/(p^m, I), \Gamma (\Spf (C/(p^m, I)), M_{\Spf (C/(p^m, I))})) \\
\cong (C/I, \Gamma (\Spf (C/I), M_{\Spf (C/I)})) \cong
(C/I, N^a_{C/I}), 
\end{multline*}
which is a $(1+IC)$-torsor on monoids. 
\end{proof}

\begin{proof}[Proof of Proposition \ref{Existence of log prismatic envelopes}]
Take the fiber product $\Gamma (\Spf (C), M_{\Spf (C)})\times_{N^a_{C/I}} N$. The natural map $\Gamma (\Spf (C), M_{\Spf (C)})\times_{N^a_{C/I}} N\to C$ is a chart of $M_{\Spf (C)}$ on $\Spf (C)$ since $(\Spf (C/IC), N^a_{\Spf (C/IC)}) \hookrightarrow (\Spf (C), M_{\Spf (C)})$ is an exact closed immersion and the underlying affine $(p, I)$-adic formal schemes are the same modulo topologically nilpotent $I$; cf. \cite{Beilinson}*{footnote 6} and the proof of Lemma \ref{exact closed immersion} above.
By Lemma \ref{exact closed immersion}, $(C, \Gamma (\Spf (C), M_{\Spf (C)}))\to (C/IC, N^a_{C/I})$ is exact surjective. Thus, we see that 
\[
(C, \Gamma (\Spf (C), M_{\Spf (C)})\times_{N^a_{C/I}} N)\to (C/IC, N)
\]
is also exact surjective as base change preserves exact surjective maps of monoids. 

We may regard $(C, IC, \Gamma (\Spf (C), M_{\Spf (C)})\times_{N^a_{C/I}} N)$ as a bounded prelog prism using the $\delta_{\log}$-structure induced by $\delta_{\log}\colon M_C\to C$ so that 
\[
(C, IC, \Gamma (\Spf (C), M_{\Spf (C)})\times_{N^a_{C/I}} N)^a\cong (C, IC, M_{\Spf (C)}).
\]
Applying Proposition \ref{Existence of prelog prismatic envelopes} to $(C, IC, \Gamma (\Spf (C), M_{\Spf (C)})\times_{N^a_{C/I}} N)$, we obtain a map of bounded prelog prisms \[
(B', IB', M_{B'})\to (C, IC, \Gamma (\Spf (C), M_{\Spf (C)})\times_{N^a_{C/I}} N).
\]
Passing to the associated log prisms, we obtain $(B', IB', M_{B'})^a \to (C, IC, M_{\Spf (C)})$. 
This means $(B', IB', M_{B'})^a$ satisfies the universality we want. 
\end{proof}

It is important to control prelog/log prismatic envelopes under certain smoothness assumption. In particular, we need to control their boundedness and flatness. The proposition below is a technical core to compute log prismatic cohomology. 
Before stating the proposition, we need to introduce several notions in log geometry. These are summarized in Appendix, and we briefly explain them here for reader's convenience.  

We say that a map of prelog rings $(A, M)\to (B, N)$ with $A, B$ being classically $p$-complete is \emph{$p$-completely smooth} if the associated log $p$-adic formal scheme $(\Spf (B), N)^a$ is \emph{smooth} over $(A, M_A)$ in the sense of Definition \ref{smooth}; our class of smooth log (formal) schemes is different from smooth fine log (formal) schemes in the literature: 1. we do not impose any finiteness condition on the base chart $M_A \to A$ (although this makes the smoothness possibly dependent on the choice of the chart), 2. we require a smooth morphism be integral; see Appendix for the precise definition and its properties.

Some constructions later involve possibly huge monoids, and we will put them a mild finiteness condition.  
A map of monoids $h\colon M\to N$ is \emph{finitely generated} (resp. \emph{weakly finitely generated}) if there exist finitely many elements of $N$ that, together with the image $h(M)$ (resp. $h(M)$ and $N^\times$), generate $N$. We say that $N$ is (weakly) finitely generated over $M$ if $h$ is so. Being weakly finitely generated is actually a mild condition, and it holds locally for global sections of log structures in practice; see Lemma \ref{small neighborhood}. 

Also recall the following property of an integral map of monoids $M\to N$: the induced map $\bZ[M]\to \bZ[N]$ of the associated rings is flat if and only if $M\to N$ is integral and injective \cite{Kato:log}*{4.1}. In particular, integral injective maps are stable under pushouts. 

Here is the key proposition:

\begin{prop}[Prelog prismatic envelopes for smooth log algebras]\label{Prelog prismatice envelops for smooth log algebras}
Fix a bounded prelog prism $(A,I, M_A)$ with integral $M_A$. 
Let $(B_0, M_B)$ be a prelog ring over $(A, M_A)$ with $M_B$ integral and let $(B_0, M_B)\to (B_0/J, N)$ be a surjection of prelog rings with kernel $J\subset B_0$ to a $p$-completely smooth prelog ring $(B_0/J, N)$ over $(A/I, M_A)$. Assume that $M_A\to N$ is \emph{integral} and the following condition $(*)$ holds
\begin{center}
  $(*)$: $M_A\to M_B$ is injective and integral, the cokernel $M_B^{\gp}/M_A^{\gp}$ is a free abelian group, and $B_0$ is the $(p,I)$-completely free over the $(p, I)$-completion of $A\otimes_{\bZ_{(p)}[M_A]}\bZ_{(p)}[M_B]$. 
\end{center}
Let $(B, M_B)$ denote the $(p,I)$-completed universal $\delta_{\log}$-ring over $(A, M_A)$ generated by $(B_0, M_B)$. 
If $N$ is \emph{weakly finitely generated} over $M_A$, then the prelog prismatic envelope $(B', IB', M_{B'})$ of $(B, (JB)^{\wedge}_{(p,I)}, M_B)$ exists and it is \emph{$(p,I)$-completely flat} over $(A, I, M_A)$. Moreover, its formation commutes with base changes on $(A, I, M_A)$ and $(p, I)$-completely flat base changes on $B_0$. 
In particular, the prelog prism $(B', IB', M_{B'})$ is bounded. 
\end{prop}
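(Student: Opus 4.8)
The plan is to reduce the statement to the non-logarithmic prismatic envelope theorem (Proposition~\ref{prismatic envelopes for regular sequenes}) by exactifying the surjection and presenting $N$ (after exactification) as freely obtained from $M_A$ by adjoining finitely many generators and imposing relations that form a regular sequence. First I would apply Construction~\ref{Exactification} to $(B, (JB)^{\wedge}_{(p,I)}, M_B)\to (B_0/J, N)$: since $M_B$ and $N$ are integral and $B$ is classically $(p,I)$-complete, we obtain an exactification $(B^{\mathrm{ex}}, I^{\mathrm{ex}}, M_B^{\mathrm{ex}})$ with an exact surjection onto $(B_0/J, N)$, and by Proposition~\ref{change of monoids} the $\delta_{\log}$-structure extends and the formation commutes with base change on $A$. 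Because $M_A\to N$ is integral and $M_B^{\mathrm{ex}}\to N$ is exact, Remark~\ref{Exactification and integral maps} gives that $N\to M_B^{\mathrm{ex}}$ — equivalently the relevant transition maps — are integral, hence the monoid ring maps are flat (using the cited fact $\bZ[M]\to\bZ[N]$ flat $\iff$ integral injective), which is what keeps flatness under control after exactification.

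Next I would use the weak finite generation of $N$ over $M_A$ together with condition $(*)$ to choose a presentation. Since $N$ is weakly finitely generated over $M_A$, pick finitely many $t_1,\dots,t_r\in N$ whose images together with $N(M_A)$ and $N^\times$ generate $N$; lifting these to $B$ and using that $(B_0/J,N)$ is $p$-completely smooth over $(A/I,M_A)$ with $M_A\to N$ integral, the kernel of the surjection from the appropriate free $\delta_{\log}$-ring is generated (up to completion) by $I$ together with a $(p,I)$-completely regular sequence $x_1,\dots,x_s$ relative to the exactified base — here the freeness in $(*)$ and the freeness of $M_B^{\gp}/M_A^{\gp}$ are used to see that the ``monoid directions'' and the ``ring directions'' are independent, so the exactified ring $B^{\mathrm{ex}}$ is $(p,I)$-completely flat over $A$ and $J$ becomes generated by a regular sequence. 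Then Proposition~\ref{prismatic envelopes for regular sequenes} applies directly to the exactified $\delta$-pair $(B^{\mathrm{ex}}, (I,x_1,\dots,x_s))$ over the bounded prism $(A,I)$: the prismatic envelope is $(p,I)$-completely flat over $(A,I)$ and its formation commutes with base change on $(A,I)$ and with $(p,I)$-completely flat base change on $B^{\mathrm{ex}}$. Since the prelog prismatic envelope of $(B, (JB)^\wedge, M_B)$ is by Proposition~\ref{Existence of prelog prismatic envelopes} computed as the prismatic envelope after exactification, with monoid part $M_B^{\mathrm{ex}}$, this transports back to the claim; flatness plus boundedness of $(A,I)$ then forces $(B', IB', M_{B'})$ bounded via \cite{BS}*{Lemma 3.7}, and $B'$ classically $(p,I)$-complete.

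The compatibility with base change needs a little care: base change on $(A,I,M_A)$ must be seen to commute with exactification (Construction~\ref{Exactification} asserts this for $A$, and Proposition~\ref{change of monoids} asserts the monoid-ring construction commutes with $A\to A'$), then with the identification of the generating regular sequence, then with the prismatic envelope (Proposition~\ref{prismatic envelopes for regular sequenes}); stacking these gives the result, using that $(p,I)$-completely flat base change on $B_0$ propagates to $B$ (the $\delta_{\log}$-envelope commutes with flat base change by Remark~\ref{Reduction to the free case}) and then to $B^{\mathrm{ex}}$.

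I expect the main obstacle to be the step where the exactified ring $B^{\mathrm{ex}}$ is shown to be $(p,I)$-completely flat over $A$ with $J$ cut out by a $(p,I)$-completely regular sequence. This is where all the hypotheses conspire: integrality of $M_A\to N$ (so monoid-ring maps stay flat through exactification), the freeness of $M_B^{\gp}/M_A^{\gp}$ and the $(p,I)$-complete freeness of $B_0$ in $(*)$ (so the exactification only adjoins ``unit-like'' monoid elements without destroying flatness, and the ring-theoretic variables remain a regular sequence), and the $p$-complete smoothness of $(B_0/J, N)$ over $(A/I, M_A)$ with $N$ weakly finitely generated (so there are finitely many regular-sequence generators). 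Checking that the exactification interacts correctly with the $\delta_{\log}$-structure — i.e., that $\delta_{\log}$ genuinely extends through the exactification, which is exactly Proposition~\ref{change of monoids} — is the conceptual heart, but the bookkeeping that turns ``integral + exact + free cokernel'' into ``flat base ring and regular sequence'' is the part most likely to require a careful induction, reducing first to $M_A$ free as in the proof of Proposition~\ref{change of monoids} and then to the one-variable computation of Lemma~\ref{one generator}.
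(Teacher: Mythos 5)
Your overall strategy (exactify the surjection, then reduce to the non-logarithmic Proposition~\ref{prismatic envelopes for regular sequenes}) is the same as the paper's, and the exactification step via Construction~\ref{Exactification}, Proposition~\ref{change of monoids} and Remark~\ref{Exactification and integral maps} is handled correctly. But there is a genuine gap in the middle: you claim that after exactification the kernel is generated by $I$ together with a \emph{finite} $(p,I)$-completely regular sequence $x_1,\dots,x_s$, so that Proposition~\ref{prismatic envelopes for regular sequenes} ``applies directly.'' This is false in the stated generality. The sets $S$ and $T$ in the typical example $B=(A\{(X_s)_{s\in S}\}^\delta\{\bN^T\}_{\log}^{\delta})^{\wedge}$ are arbitrary, and --- more to the point --- weak finite generation of $N$ over $M_A$ only controls $N/N^\times$; the exactification adjoins the entire group $(h^{\gp})^{-1}(N^\times)$ to $M_B$, which need not be finitely generated. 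So there is no finite regular sequence to feed into Proposition~\ref{prismatic envelopes for regular sequenes}, and picking finitely many $t_1,\dots,t_r\in N$ does not repair this, because those elements only generate $N$ \emph{modulo} $N^\times$ and the image of $M_A$.

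The paper closes this gap in two steps you are missing. First, it replaces $(A,M_A)$ by $(A',M'_A)$ with $M'_A=(M_A^{\gp}\to N^{\gp})^{-1}(N)$ --- a $(p,I)$-completely flat base change under which the envelope is unchanged by universality --- so that $M_A\to N$ becomes exact and hence $(h^{\gp})^{-1}(N^\times)\cap M_A^{\gp}=M_A^\times$. Second, it uses this exactness to embed $(h^{\gp})^{-1}(N^\times)/M_A^\times$ into the free abelian group $M_B^{\gp}/M_A^{\gp}$; being a subgroup of a free group it is free, hence a filtered union of finitely generated free subgroups $\bZ^{T'}$. This exhibits the exactified $(B_0,M_B)$ as a $(p,I)$-completed filtered colimit of finite-type smooth pieces $(B_s,M_s)$ with $(p,I)$-completely \emph{faithfully flat} transition maps, each of whose kernels is Zariski-locally cut out by a genuine finite regular sequence; one applies Proposition~\ref{prismatic envelopes for regular sequenes} (as in \cite{BS}*{Corollary 3.14}) to each piece and passes to the colimit. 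This is precisely where the hypothesis that $M_B^{\gp}/M_A^{\gp}$ is free is consumed, rather than the vaguer ``independence of monoid and ring directions'' you invoke; without the preliminary exactification of $M_A\to N$ the embedding into the free group fails and the colimit presentation is unavailable.
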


\begin{rem}
All the assumptions are preserved by base changes $(A, I, M_A)\to (A', IA', M_{A'})$. Note that the relevant pushouts of monoids are automatically integral as $M_A\to M_B$, $M_A\to N$ are integral.
\end{rem}

A typical example is
\[
B=(A\{(X_s)_{s\in S}\}^\delta \{\bN^T\}_{\log}^{\delta})^{\wedge}_{(p,I)}, \quad
B_0=A\langle (X_s)_{s\in S}, \bN^T\rangle, \quad
M_B=M_A\oplus \bN^T
\]
for some sets $S$, $T$. (Note that $B$ is $(p,I)$-completely free over $B_0$ by Lemma \ref{one generator}.)

\begin{proof}
First note that $\bZ_{(p)}[M_B]$ is flat over $\bZ_{(p)}[M_A]$ as $M_A\to M_B$ is injective and integral. Hence the $(p,I)$-completion appearing in $(*)$ agrees with the classical $(p, I)$-completion. Similarly, we see that $B_0/J$ is $p$-completely flat over $A/I$, using the $p$-completely smoothness of $(B_0/J, N)$ over $(A/I, M_A)$.  

Let $M'_A\subset M_A^{\gp}$ denote the inverse image of $N \subset N^{\gp}$ under $M_A^{\gp} \to N^{\gp}$. Then, one can check that the natural maps $M_A \hookrightarrow M'_A$, $M'_A \to N$ are integral as $M_A\to N$ is integral by using \cite{Kato:log}*{4.1}. 
In particular, the natural map
\[
A\to A'\coloneqq (A\otimes_{A[M_A]}A[M'_A])^{\wedge}_{(p,I)} 
\]
is $(p, I)$-completely flat. It may be regard as a map of $\delta_{\log}$-rings by Proposition \ref{change of monoids}, hence a map of bounded prelog prisms. 
Also note that $(B_0/J, M_B)$ lives over $(A'/ IA', M'_A)$ and it is smooth. 
After replacing $(A, M_A)$ (resp. $(B_0, M_B)$) by $(A', M'_A)$ (resp. its base change along $(A, M_A) \to (A', M'_A)$) but keeping $(B_0/J, N)$, the prelog prismatic envelope $(B', IB', M_{B'})$ does not change because of its universality. 
So, we may assume that the inverse image of $N$ is $M_A$, i.e., $M_A\to N$ is exact. (For the definition of exactness, see \S \ref{crystalline comparison}.) 
This also implies that the inverse image of $N^\times$ in $M_A^{\gp}$ is $M_A^\times$.  

Write $h$ for $M_B\to N$ and set $M'_B=(h^{\gp})^{-1}(N)$. 
This gives the exactification $(B_0, M_B)\to (B''_0, M'_B)$ of $(B_0, M_B)\to (B_0/J, N)$, and the condition $(*)$ continues to hold by Remark \ref{Exactification and integral maps}. Let $J''$ denote the kernel of $B''_0\to B_0/J$ and $B''$ denote the $(p,I)$-completed base change of $B$ along $B_0\to B''_0$. 
Then, the exactification of $(B, (JB)^{\wedge}_{(p,I)}, M_B)$ along $(B, M_B)\to (B/(JB)^{\wedge}_{(p,I)}, N)$ can be written as $(B'', (J''B'')^{\wedge}_{(p,I)}, M'_B)$.  
The prelog prismatic envelope of $(B, (JB)^{\wedge}_{(p,I)}, M_B)$ is the same as the prelog prismatic envelope of $(B'', (J''B'')^{\wedge}_{(p,I)}, M'_B)$. Since the formation of $(B''_0, M'_B)$ and $J''$ commutes with base changes on $(A, I, M_A)$ and $(p, I)$-completely flat base changes on $B_0$, we may and do assume that $(B_0, M_B)\to (B_0/J, N)$ is exact surjective. 
(For the claim about base changes on $(A, I, M_A)$, use that the exact morphisms of integral monoids are stable under pushouts in the category of integral monoids.)
In the rest of the proof, prelog prismatic envelopes involved are just prismatic envelopes. 

Now we use the assumption that $M_A\to N$ is weakly finitely generated to write $(B_0, M_{B})$ as the $(p, I)$-completed colimit of a filtered diagram ${(B_s, M_{s})}_{s\in S}$ of $(p,I)$-completely smooth $(A,M_A)$-algebras with integral monoids and $(p,I)$-completely faithfully flat transition maps. 
This is indeed possible because $M_B=(h^{\gp})^{-1}(N)$ is generated by $M_A$ and $(h^{\gp})^{-1}(N^{\times})$ together with finitely many elements; $(h^{\gp})^{-1}(N^{\times})$ is a group, and its quotient $(h^{\gp})^{-1}(N^{\times})/M_A^{\times}$ is embedded into $M_B^{\gp}/M_A^{\gp}$ and isomorphic to $\bZ^T$ for some $T$. 
(Any subgroup of a free abelian group is free.)
Thus, the quotient can be written as a union of finitely generated abelian free subgroups $\bZ^{T'}$, $T'\subset T$. 
One may assume that $(B_s, M_{s})\to (B_0/J, N)$ is exact surjective for all $s\in S$. 

Using that the kernel of $B_s\to B_0/J$ is Zariski locally generated by a $(p,I)$-completely regular sequence relative to $A$, one deduces from Proposition \ref{prismatic envelopes for regular sequenes} by Zariski descent that the statement of the proposition holds with $(B_0, J, M_B)$ replaced by $(B_s, J_s, M_s)$, where $J_s$ denotes the kernel of $B_s\to B_0/J$. We finish by taking the $(p,I)$-completed filtered colimit with respect to $s$. 
\end{proof}

The following (perhaps more convenient) variant holds:

\begin{prop}\label{Prelog prismatice envelops for smooth log algebras:variant}
Fix a bounded prelog prism $(A,I, M_A)$ with integral $M_A$. 
Let $(B, M_B)$ be a $(p,I)$-completely smooth $\delta_{\log}$-ring over $(A, M_A)$. Let $(B, M_B)\to (R, P)$ be a surjection of prelog rings to a $p$-completely smooth prelog ring $(R, P)$ over $(A/I, M_A)$. 
Assume that $M_A\to M_B$ is a smooth chart and $M_A\to P$ is integral. 
Then the prelog prismatic envelope $(B', IB', M_{B'})$ of $(B, M_B)\to (R, P)$ exists and it is \emph{$(p,I)$-completely flat} over $(A, I, M_A)$. Moreover, its formation commutes with base changes on $(A, I, M_A)$. 
In particular, the prelog prism $(B', IB', M_{B'})$ is bounded. 
\end{prop}

\begin{proof}
We may assume that $M_B\to P$ is exact surjective by Lemma \ref{change of monoids} and Remark \ref{Exactification and integral maps}.
Then, the kernel of $B\to R$ is Zariski locally generated by a $(p,I)$-completely regular sequence relative to $A$, so the prismatic envelope is $(p,I)$-completely flat over $A$. 
\end{proof}

\section{The logarithmic prismatic site}
We define a logarithmic variant of the prismatic site of \cite{BS}. 
Fix a bounded prelog prism $(A, I, M_A)$ with the associated log prism $(A, I, M_A)^a$. All log formal schemes are assumed to be $(p, I)$-adic. 

\subsection{Definition of the logarithmic prismatic site}
Suppose $M_A$ is integral and fix a log formal scheme $(X, M_X)$ smooth over $(A/I, M_A)$. By our convention on smoothness, $M_X$ is an integral log structure. 

\begin{defn}[Logarithmic prismatic site]\label{log prismatic site}
Let $((X,M_X)/ (A, M_A))_{\Prism}$ be the opposite of the category with objects described as follows: an object consists of
\begin{itemize}
 \item a log prism $(B, IB, M_{\Spf (B)})=(B, IB, M_B)^a$ with integral log structure $M_{\Spf (B)}$ and a map of log prisms $(A, I, M_A)^a\to (B, IB, M_{\Spf (B)})$, 
 \item a map of formal schemes $f\colon  \Spf (B/IB) \to X$ over $A/I$, and 
 \item an exact closed immersion of log formal schemes 
 \[
 (\Spf (B/IB), f^*M_X)\to (\Spf (B), M_{\Spf (B)})
 \]
 over $(A, M_A)$.
\end{itemize}    
Morphisms are obvious ones. 
We simply write $(B, IB, M_{\Spf(B)})$ for an object when there is no confusion. 

A morphism $(B, IB, M_{\Spf (B)})\to (C, IC, M_{\Spf (C)})$ in $((X,M_X)/ (A, M_A))_{\Prism}$ is an \'etale cover if $B\to C$ is $(p,I)$-completely \'etale and faithfully flat and the induced map of log formal schemes $(\Spf (C), M_{\Spf (C)})\to \Spf (B), M_{\Spf (B)})$  is \'etale and strict\footnote{This means the pullback induces an isomorphism of log structures.}. 
The log prismatic site is the category $((X,M_X)/ (A, M_A))_{\Prism}$ with the \'etale (pre)topology.  

The structure sheaf $\cO_{\Prism}$ is defined by $(B, IB, M_{\Spf (B)})\mapsto B$. Similarly, $\overline{\cO}_{\Prism}$ is defined by $(B, IB, M_{\Spf (B)})\mapsto B/IB$, and satisfies $\cO_{\Prism}\otimes^L_A A/I\cong \overline{\cO}_{\Prism}$. 
\end{defn}

\begin{rem}[The category of \'etale coverings]\label{slice category}
Let $(B, IB, M_{\Spf (B)})$ be an object of $((X,M_X)/ (A, M_A))_{\Prism}$, and $B/IB \to \overline{C}$ a $p$-completely \'etale map. Then, by Lemma \ref{\'etale maps} and Proposition \ref{sheafification}, it lifts uniquely to a $(p, I)$-completely \'etale map $(B, IB, M_{\Spf (B)})\to (C, IC, M_{\Spf (C)})$ in $((X,M_X)/ (A, M_A))_{\Prism}$ with $C/IC \cong \overline{C}$. 
Thus, the category of objects \'etale over $(B, IB, M_{\Spf (B)})$ is equivalent to the category of $p$-completely \'etale $B/IB$-algebras. This guarantees that the log prismatic site is indeed a site and the structure sheaves are sheaves on it. 
\end{rem}

\begin{rem}[Flat topology]\label{flat topology}
One can also use $(p, I)$-completely faithfully flat maps $B\to C$ as coverings. 
One can show that this indeed defines a site and the cohomology of the structure sheaf does not change. 
\end{rem}

\begin{rem}
We fixed a chart $M_A\to A$ of the log formal scheme $(\Spf (A), M_A)^a$. While the assumption that $(X, M_X)$ is smooth over $(A/I, M_A)$ in our sense possibly depends on the chart, the log prismatic site itself only depends on $(\Spf (A), M_A)^a$ and $(X, M_X)$. 

If $M_X$ is the trivial log structure, our log prismatic site recovers the prismatic site $(X/ A)_{\Prism}$ of Bhatt-Scholze with \'etale topology in place of flat topology \cite{BS}*{Definition 4.1}. 
\end{rem}

\begin{rem}[Relation to the \'etale site]\label{prismatic to etale}
With Remark \ref{slice category}, we use the same construction as in \cite{BS}*{Remark 4.4} to define a morphism of topoi:
\[
\nu \colon \textnormal{Shv}(((X,M_X)/ (A, M_A))_{\Prism}) \to
\textnormal{Shv}(X_{\et}), 
\]
and, for any \'etale map $U\to X$, we have a canonical isomorphism
\[
(\nu_* F)(U\to X)\cong H^0 (((U,M_U)/ (A, M_A))_{\Prism}, F|_{((U,M_U)/ (A, M_A))_{\Prism}}),
\]
where $M_U$ is the pullback of $M_X$. Write
\begin{align*}
\Prism_{(X, M_X)/(A, M_A)}&\coloneqq R\nu_* \cO_{\Prism}\in D(X_{\et}, A), \\
\overline{\Prism}_{(X, M_X)/(A, M_A)}&\coloneqq R\nu_* \overline{\cO}_{\Prism}\in D(X_{\et}, A/I).
\end{align*}
These are commutative algebra objects in the corresponding derived categories and satisfy
\[
\overline{\Prism}_{(X, M_X)/(A, M_A)}
\cong 
\Prism_{(X, M_X)/(A, M_A)}\otimes_{A}^L A/I. 
\]
\end{rem}

\begin{rem}[Absolute log prismatic site]\label{absotelu site}
Let $(X, M_X)$ be an integral log $p$-adic formal scheme. We define the absolute log prismatic site $(X, M_X)_{\Prism}$ as the category of diagrams
\[
(\Spf (B), M_{\Spf (B)}) \hookleftarrow (\Spf (B/J), M_{\Spf (B/J)}) \to (X, M_X), 
\]
where $(B, J, M_{\Spf (B)})$ is a log prism, $M_{\Spf (B)}$ is integral, and $M_{\Spf (B/J)}$ is the restriction of $M_{\Spf (B)}$ as a log structure. We also assume that the right map $(\Spf (B/J), M_{\Spf (B/J)}) \to (X, M_X)$ admits an integral chart \'etale locally. 
We equip it with the flat topology. We have a variant with ``integral'' replaced by ``saturated'' and we use this variant in \cite{Koshikawa-Yao}. 

We can also consider a variant which requires $(\Spf (B/J), M_{\Spf (B/J)}) \to (X, M_X)$ to be strict. 
The relative version defined in Definition \ref{log prismatic site} admits a natural forgetful functor to this variant.
\end{rem}

\subsection{Computing log prismatic cohomology}\label{computing log prismatic cohomology}
Suppose that $X$ is an affine formal scheme $\Spf (R)$ and there is an integral chart $P\to \Gamma (X, M_X)$ over $M_A$. 
In this situation, we also write $((R, P)/ (A, M_A))_{\Prism}$ for $((X, M_X)/ (A, M_A))_{\Prism}$ instead. 
A standard argument shows that the cohomology of the structure sheaf does not change by replacing the \'etale topology by the indiscrete topology.  
We assume that 
\begin{center}
$M_A\to P$ is integral and weakly finitely generated 
\end{center}
to have good control of log prismatic envelopes. 

Set
\[
\Prism_{(R, P)/(A, M_A)}\coloneqq R\Gamma (((R, P)/(A, M_A)_{\Prism}, \cO_{\Prism}). 
\]
This is a $(p,I)$-complete commutative algebra object in $D(A)$ equipped with a $\phi$-semilinear map $\Prism_{(R, P)/(A, M_A)}\to \Prism_{(R, P)/(A, M_A)}$ induced by that of $\cO_{\Prism}$ determined by $\delta$-structures. 
Similarly, we define 
\[
\overline{\Prism}_{(R, P)/(A, M_A)}\coloneqq R\Gamma (((R, P)/(A, M_A)_{\Prism}, \overline{\cO}_{\Prism}).
\]
This is a $p$-complete commutative algebra object in $D(A/I)$, and there is a natural isomorphism
\[
\Prism_{(R, P)/(A, M_A)}\otimes^L_{A} A/I \cong \overline{\Prism}_{(R, P)/(A, M_A)}.
\]
These objects only depend on $(X, M_X)=(\Spf(R), P)^a$ and $(A, I, M_A)^a$. 

\begin{const}[\v{C}ech-Alexander complexes for log prismatic cohomology]\label{\v{C}ech}
Take a surjection $M_{B}=M_A\oplus M_{B}^A\to P$ from a monoid $M_B$ free over $M_A$, i.e., $M_B^A$ is isomorphic to $\bN^T$ for some set $T$.  
Take a surjection from a $(p,I)$-completed polynomial ring
\[
B_0\coloneqq A\langle (X_{s})_{s\in S}, \bN^T\rangle\to R
\]
for some set $S$, compatible with the map $M_B\to P$. 

Let $(B, M_{B})$ be the $(p,I)$-completed $\delta_{\log}$-ring over $(A, M_A)$ generated by $B_0$:
\[
B=(A\{ (X_{s})_{s\in S}\}^{\delta}\{\bN^T\}_{\log}^{\delta})^{\wedge}_{(p,I)}.  
\]
Let $(B_0^{\bullet}, M_{B}^{\bullet})$ (resp. $(B^{\bullet}, M_{B}^{\bullet})$) be the $(p,I)$-completed \v{C}ech nerve of $(A, M_A)\to (B_0, M_B)$ (resp. $(A, M_A)\to (B, M_B)$), with a natural inclusion
\[
(B_0^{\bullet}, M_{B}^{\bullet}) \hookrightarrow (B^{\bullet}, M_{B}^{\bullet})
\]
identifying monoids. There is a surjection of prelog rings
\[
(B_0^{\bullet}, M_{B}^{\bullet})\to (R, P). 
\]
Denote by $J^{\bullet}\subset B_0^{\bullet}$ the kernel of $B_0^{\bullet}\to R$. 

Each $(B_0^{\bullet}, M_B^{\bullet})$ satisfies the condition $(*)$ of Proposition \ref{Prelog prismatice envelops for smooth log algebras}, thus we may apply the proposition to 
\[
(B^{\bullet}, (J^{\bullet}B^{\bullet})^{\wedge}_{(p,I)}, M_{B}^{\bullet})
\]
as all other assumptions are satisfied as well, and we obtain a cosimplicial prelog prism and the associated cosimplicial object in $((R, P)/(A, M_A))_{\Prism}$:
\[
(C^{\bullet}, IC^{\bullet}, M_C^{\bullet}), \quad (C^{\bullet}, IC^{\bullet}, M_C^{\bullet})^a
\]
with exact surjections $(C^{\bullet}, M_C^{\bullet})\to (C^{\bullet}/ I C^{\bullet}, P)$.  
Each $C^{\bullet}$ is $(p,I)$-completely flat over $A$, and this cosimplicial object in $((R, P)/(A, M_A))_{\Prism}$ agrees with the \v{C}ech nerve of $(C^{0}, IC^{0}, M_{C^{0}})^a$ by Proposition \ref{Existence of log prismatic envelopes}. 

Let $(B', IB', M_{\Spf (B')})$ be an object of $((R, P)/(A, M_A))_{\Prism}$. Note that 
\begin{equation}\label{exact surjectivity of modulo I}
(B', \Gamma (\Spf (B'), M_{\Spf (B')}))\to 
\Gamma (\Spf (B'/IB'), P^a_{\Spf (B'/IB')})
\end{equation}
is exact surjective, cf. Lemma \ref{exact closed immersion}. 
By the freeness of $(B_0, M_B)$ as a prelog ring, there exists at least one map of prelog rings
\begin{center}
$(B_0, M_B)\to (B', \Gamma (\Spf (B'), M_{\Spf (B')}))$ 
\end{center}
lifting the composite $(B_0, M_B)\to (R,P)\to (B'/IB', \Gamma (\Spf (B'/IB'), P^a_{\Spf (B'/IB')}))$, which maps $J$ to $IB'$. 
The $\delta$- and $\delta_{\log}$-structures on $(B', \Gamma (\Spf (B'/IB'), M_{\Spf (B'/IB')}))$ and the freeness of $(B, M_B)$ as a $\delta_{\log}$-ring enable us to extend it to a map of $\delta_{\log}$-triples
\[
(B, (JB)^{\wedge}_{(p,I)}, M_B) \to (B', IB', \Gamma (\Spf (B'), M_{\Spf (B')}))
\]
yielding, by the exact surjectivity of (\ref{exact surjectivity of modulo I}), a map of bounded prelog prisms
\[
(C^{0}, IC^{0}, M_{C^{0}}) \to (B', IB', \Gamma (\Spf (B'), M_{\Spf (B')})). 
\]
By Remark \ref{Maps from prelog prisms}, it further induces a map of log prisms
\[
(C^{0}, IC^{0}, M_{C^{0}})^a \to (B', IB', M_{\Spf (B')}). 
\]
Therefore, the object $(C^{0}, IC^{0}, M_{C^{0}})^a$ in $((R, P)/(A, M_A))_{\Prism}$ represents a covering of the final object of the topos $\Shv (((R, P)/(A, M_A))_{\Prism})$. Thus, $\Prism_{(R, P)/(A, M_A)}$ is computed by the cosimplicial $\delta$-$A$-algebra $C^{\bullet}$. (Use \cite{Stacks}*{\href{https://stacks.math.columbia.edu/tag/079Z}{Tag 079Z}} and the affine vanishing, or work with the indiscrete topology.) This construction commutes with any base change $(A, I, M_A)\to (A', IA', M_{A'})$. 
\end{const}

\begin{rem}[Strictly functorial complexes computing log prismatic cohomology]\label{functorial complex}
Continuing from the above construction, we further assume that
\begin{center}
    $P=\Gamma (X, M_X)$, i.e., we look at the identity $\Gamma (X, M_X)\to \Gamma (X, M_X)$.   
\end{center}
Recall that we also require that $M_A\to P=\Gamma (X, M_X)$ be integral and weakly finitely generated. For instance, this is the case for any ``small'' neighborhood of a geometric point $\overline{x}$ of $X$ under some mild assumption by Lemmas \ref{small neighborhood}, \ref{chart and global section II}, and smoothness of $(X, M_X)$. 
%\cite{Ogus}*{I.4.2.1.2} if $M_A\to M_{X,\overline{x}}$ is exact. 

In Construction \ref{\v{C}ech}, we can take a surjection $(B_0, M_B)\to (R, P)$ strictly functorial on $(X, M_X)$ under these assumptions. Explicitly, we can use a unique map
\[
(B_0, M_B)=(A\langle \bN^R\oplus \bN^P\rangle, M_A\oplus\bN^P)\to (R, P)
\]
sending $1_r\mapsto r$, $1_p\mapsto p$ for $r\in R$, $p\in P$.  

The resulting complex $C^{\bullet}((R, P)/ (A, M_A), \cO_{\Prism})$ is a strictly functorial complex computing $\Prism_{(R, P)/ (A, M_A)}$. (This complex does not commute with base changes on $(A, M_A)$, but still functorial on base changes of $(A, M_A)$.) 
\end{rem}

\begin{rem}[The log-affine line]\label{log-affine line}
Let $(R, P)=(A/I\langle X\rangle, M_A\oplus \bN)$. In Construction \ref{\v{C}ech}, we can take 
\begin{center}
$(B_0, M_B)=(A\langle X\rangle, M_A\oplus \bN)$.
\end{center}
The resulting cosimplicial $\delta$-$A$-algebra $C^{\bullet}$ commutes with base change on the prelog prism $(A, I, M_A)$. 
\end{rem}

\begin{rem}[Monoid algebras]\label{monoid algebras}
In Construction \ref{\v{C}ech}, assume that $R=A/I \langle P\rangle$. Then, we can take
\begin{center}
$(B_0, M_B)=(A\langle \bN^P \rangle, M_A\oplus \bN^P)$.
\end{center}
The resulting cosimplicial $\delta$-$A$-algebra $C^{\bullet}$ commutes with base change on the prism $(A, I)$ without changing $M_A$. 
\end{rem}

The above construction has immediate consequences. 

\begin{lem}[weak Base change]\label{base change I}
Let $(R, P)$ be as above. Suppose $(A, I, M_A)\to (A', IA', M_{A'})$ is a map of bounded prelog prisms such that $M_{A'}$ is integral and $A\to A'$ has finite $(p,I)$-complete Tor amplitude. If we write $(R', P')$ for the $p$-completed base change of $(R, P)$ as a prelog ring, then the natural map induces an isomorphism
\[
\Prism_{(R, P)/(A, M_A)}\widehat{\otimes}^L_A A' \overset{\cong}{\longrightarrow}
\Prism_{(R', P')/(A', M_{A'})}. 
\]
Similar statement holds for $\overline{\Prism}_{(R, P)/(A, M_A)}$. 
\end{lem}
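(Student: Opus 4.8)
The plan is to reduce the statement to the explicit \v{C}ech--Alexander computation of Construction~\ref{\v{C}ech}, where the base change behaviour is manifest. First I would invoke the setup of Construction~\ref{\v{C}ech} for $(R,P)$ over $(A,I,M_A)$: choose a surjection $M_B = M_A\oplus\bN^T \to P$ and a compatible surjection $B_0 = A\langle (X_s)_{s\in S}, \bN^T\rangle \to R$, form the $(p,I)$-completed $\delta_{\log}$-ring $B = (A\{(X_s)_{s\in S}\}^\delta\{\bN^T\}_{\log}^\delta)^{\wedge}_{(p,I)}$, take \v{C}ech nerves, and apply Proposition~\ref{Prelog prismatice envelops for smooth log algebras} to get the cosimplicial prelog prism $(C^\bullet, IC^\bullet, M_C^\bullet)$ with $C^\bullet$ being $(p,I)$-completely flat over $A$; this cosimplicial $\delta$-$A$-algebra computes $\Prism_{(R,P)/(A,M_A)}$.

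The key point, already recorded at the end of Construction~\ref{\v{C}ech}, is that \emph{this construction commutes with any base change $(A,I,M_A)\to (A',IA',M_{A'})$}: the chosen surjection $M_B\to P$ and $B_0\to R$ base change to the corresponding data for $(R',P')$ over $(A',M_{A'})$ (here one uses that $(R',P')$ is \emph{defined} as the $(p,I)$-completed base change, and that $M_{A'}$ integral guarantees $M_B' = M_{A'}\oplus\bN^T$ is integral so the hypotheses of Proposition~\ref{Prelog prismatice envelops for smooth log algebras} are met for the primed data), and the formation of the prelog prismatic envelope commutes with base changes on $(A,I,M_A)$ by that proposition. Hence there is a canonical identification of cosimplicial rings
\[
C^\bullet \widehat{\otimes}_A^L A' \;\cong\; C'^{\,\bullet},
\]
where $C'^{\,\bullet}$ is the analogous cosimplicial ring computing $\Prism_{(R',P')/(A',M_{A'})}$ --- note that the $(p,I)$-complete flatness of each $C^n$ over $A$ means the completed tensor product is the derived one and commutes with taking the cosimplicial totalization up to the usual completeness bookkeeping.

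So I would then take $R\lim$ over $\bullet$: since $\Prism_{(R,P)/(A,M_A)} = R\lim_\bullet C^\bullet$ and similarly for the primed side, it remains to see that $(-)\widehat{\otimes}_A^L A'$ commutes with this totalization. This is where the hypothesis that $A\to A'$ has finite $(p,I)$-complete Tor amplitude enters, and I expect it to be the main (though standard) technical point: finite Tor amplitude lets one commute the derived $(p,I)$-completed base change past the homotopy limit of the cosimplicial diagram, exactly as in the non-logarithmic case (cf. the proof of base change in \cite{BS}, using that $C^\bullet$ is already derived $(p,I)$-complete and termwise $(p,I)$-completely flat). Assembling these, the natural map $\Prism_{(R,P)/(A,M_A)}\widehat{\otimes}_A^L A' \to \Prism_{(R',P')/(A',M_{A'})}$ is identified with the totalization of the termwise isomorphism $C^\bullet\widehat{\otimes}_A^L A'\cong C'^{\,\bullet}$, hence an isomorphism. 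Finally, the statement for $\overline{\Prism}$ follows by applying $-\otimes_A^L A/I$ (resp.\ $-\otimes_{A'}^L A'/IA'$) and using $\Prism\otimes_A^L A/I\cong\overline{\Prism}$ together with the compatibility $A/I\otimes^L A' \simeq A'/IA'$; alternatively, one runs the identical argument with $\overline{\cO}_{\Prism}$ in place of $\cO_{\Prism}$.
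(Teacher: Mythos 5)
Your proposal is correct and follows essentially the same route as the paper: the paper's proof simply observes that $(R',P')$ still satisfies the standing hypotheses of \S\ref{computing log prismatic cohomology} and then argues as in \cite{BS}*{Lemma 4.20} using Construction~\ref{\v{C}ech}, which is exactly your reduction to the termwise base-change compatibility of the \v{C}ech--Alexander complex (via Proposition~\ref{Prelog prismatice envelops for smooth log algebras}) followed by the finite-Tor-amplitude argument to commute the completed base change past the totalization. The only point worth making explicit, which the paper flags, is that $M_{A'}\to P'$ remains integral and weakly finitely generated since both properties are stable under pushout.
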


\begin{proof}
Note that $(R', P')$ satisfies the assumption in \S \ref{computing log prismatic cohomology}. Given Construction \ref{\v{C}ech}, we can argue as in \cite{BS}*{Lemma 4.20}. 
\end{proof}

\begin{lem}[Strict \'etale localization]\label{localization}
Let $R\to S$ be a $p$-completely \'etale map of $A/I$-algebras; in particular, $(S, P)$ is $p$-completely smooth over $(A, M_A)$. Then the natural map 
\[
\overline{\Prism}_{(R, P)/(A, M_A)}\widehat{\otimes}^L_R S \to \overline{\Prism}_{(S, P)/(A, M_A)}
\]
is an isomorphism. 
\end{lem}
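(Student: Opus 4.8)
\textbf{Proof proposal for Lemma \ref{localization}.} The plan is to reduce everything to the \v{C}ech--Alexander description of Construction \ref{\v{C}ech} and to the flat base change property of prelog prismatic envelopes from Proposition \ref{Prelog prismatice envelops for smooth log algebras}. First I would choose the data computing $\overline{\Prism}_{(R,P)/(A,M_A)}$ as in Construction \ref{\v{C}ech}: pick a surjection $M_B = M_A\oplus \bN^T\to P$ and a surjection $B_0 = A\langle (X_s)_{s\in S},\bN^T\rangle\to R$, form $B$, take the $(p,I)$-completed \v{C}ech nerve $(B^\bullet,M_B^\bullet)$ of $(A,M_A)\to (B,M_B)$, and let $(C^\bullet, IC^\bullet, M_C^\bullet)$ be the resulting cosimplicial prelog prismatic envelope, so that $\overline{\Prism}_{(R,P)/(A,M_A)}$ is computed by $C^\bullet/IC^\bullet$.

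Since $R\to S$ is $p$-completely \'etale, I can find a surjection $B_0^S\to S$ over $B_0\to R$ by adjoining finitely many (or a set of) further polynomial variables, say $B_0^S = B_0\langle (Y_t)_{t\in T'}\rangle$, keeping the same monoid $M_B$ (the log structure on $S$ is pulled back from $R$, so $P$ is still a chart); this computes $\overline{\Prism}_{(S,P)/(A,M_A)}$. Now the key point: the kernel $J^{S,\bullet}$ of $B_0^{S,\bullet}\to S$ is, Zariski-locally, obtained from $J^\bullet$ by adjoining a $(p,I)$-completely regular sequence relative to $B_0^\bullet/J^\bullet = R$ which cuts out the \'etale $R$-algebra $S$ (the $Y_t$ minus their images, roughly), and the transition map $B_0^\bullet\to B_0^{S,\bullet}$ together with $B^\bullet\to B^{S,\bullet}$ is $(p,I)$-completely flat. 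By the base change clause of Proposition \ref{Prelog prismatice envelops for smooth log algebras} (commuting with $(p,I)$-completely flat base changes on $B_0$), the prelog prismatic envelope $(C^{S,\bullet}, IC^{S,\bullet}, M_C^{S,\bullet})$ of $(B^{S,\bullet}, (J^{S,\bullet}B^{S,\bullet})^{\wedge}_{(p,I)}, M_B^\bullet)$ satisfies
\[
C^{S,\bullet}\widehat{\otimes}^L_{C^\bullet} (\text{something}) \cong C^{S,\bullet},
\]
more precisely $C^{S,\bullet}$ is the prismatic envelope of the base change, and reducing modulo $I$ one gets $C^{S,\bullet}/IC^{S,\bullet}$ as a $(p,I)$-completely flat $C^\bullet/IC^\bullet$-algebra whose formation commutes with the relevant base change. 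Concretely, because $S$ is $p$-completely \'etale over $R$, at each cosimplicial level the induced map $\overline{\Prism}$-level algebra is $C^\bullet/IC^\bullet \widehat{\otimes}^L_R S$; taking the totalization over $\bullet$ gives the claimed isomorphism $\overline{\Prism}_{(R,P)/(A,M_A)}\widehat{\otimes}^L_R S \xrightarrow{\ \cong\ } \overline{\Prism}_{(S,P)/(A,M_A)}$. One should note that totalization commutes with the derived tensor $\widehat{\otimes}^L_R S$ here because $S$ is $p$-completely flat (indeed \'etale) over $R$, so the tensor is exact after $p$-completion and the relevant complexes are uniformly bounded/complete.

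The main obstacle I expect is the bookkeeping at the level of the \v{C}ech nerves: one must check that $(B^{S,\bullet}, M_B^\bullet)$ is genuinely a legitimate choice of computing data for $\overline{\Prism}_{(S,P)/(A,M_A)}$ (i.e. that adjoining the \'etale variables to $B_0$ still satisfies condition $(*)$ of Proposition \ref{Prelog prismatice envelops for smooth log algebras}, with $M_A\to P$ integral and weakly finitely generated unchanged), and that the identification of $J^{S,\bullet}$ as $J^\bullet$ plus a completely regular sequence is compatible across the cosimplicial degrees. This is exactly the log analogue of the argument for \cite{BS}*{Lemma 4.20 / Corollary 3.14}, and the additional log-theoretic input — that passing from $R$ to a strict \'etale $S$ does not touch the monoid side at all (the chart $P$ and the exactification are unchanged) — is what makes the flat-base-change clause of Proposition \ref{Prelog prismatice envelops for smooth log algebras} directly applicable. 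Once that compatibility is in hand, the rest is the formal manipulation of derived $(p,I)$-complete tensor products and totalizations sketched above.
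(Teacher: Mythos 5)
There is a real gap at the central step of your argument: the term-wise identification $C^{S,\bullet}/IC^{S,\bullet}\cong C^{\bullet}/IC^{\bullet}\widehat{\otimes}^L_R S$ does not hold for the presentation you chose. If you realize $S$ as a quotient of $B_0^S=B_0\langle (Y_t)\rangle$, the kernel $J^{S}$ is strictly larger than $J B_0^S$ --- it also contains the equations $\underline{f}$ cutting $S$ out of $R\langle Y_t\rangle$ --- so passing from $(B_0,J)\to R$ to $(B_0^S,J^S)\to S$ is \emph{not} a $(p,I)$-completely flat base change of the envelope datum, and the base-change clause of Proposition \ref{Prelog prismatice envelops for smooth log algebras} does not apply. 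What the prismatic envelope actually does with the enlarged ideal is freely adjoin $\delta$-compatible elements $f_j/d$ (say $I=(d)$), so $C^{S,n}/IC^{S,n}$ is a large flat algebra over $C^{n}/IC^{n}\widehat{\otimes}_R S$ (in the crystalline case a free PD-type algebra on the extra generators), not equal to it. For instance, take $R=A/I$ with $B_0=A$, so $C^0=A$ and $C^0/IC^0\widehat{\otimes}_R S=S$; for a standard \'etale $S=R\langle Y\rangle/(f)$ your $C^{S,0}$ is the envelope of $(I,\tilde f)\subset A\langle Y\rangle$, and $C^{S,0}/IC^{S,0}$ contains the class of $\tilde f/d$ as a genuinely new generator, so it is much bigger than $S$. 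Your $C^{S,\bullet}$ does still compute $\overline{\Prism}_{(S,P)/(A,M_A)}$ (it is the \v{C}ech nerve of a weakly final object), but the comparison with $C^{\bullet}/IC^{\bullet}\widehat{\otimes}_R S$ is precisely what has to be proved and cannot be read off term by term from this presentation.

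The paper instead argues site-theoretically, as in \cite{BS}*{Lemma 4.21}: the restriction functor $((\Spf(S),P)^a/(A,M_A))_{\Prism}\to((\Spf(R),P)^a/(A,M_A))_{\Prism}$ admits a right adjoint $F$ sending $(B,IB,M_B)$ to its unique strict $(p,I)$-completely \'etale lift along $B/IB\to B/IB\widehat{\otimes}_R S$ (Remark \ref{slice category}); for this functor the identity $\overline{\cO}_{\Prism}(F(B,IB,M_B))=B/IB\widehat{\otimes}_R S$ holds by construction, which is exactly the term-wise statement your approach is missing. Note also that your closing remark about commuting $\widehat{\otimes}^L_R S$ with the totalization needs a better justification: flatness alone does not let a tensor product pass through the infinite products occurring in a totalization; one uses that $S$ is $p$-completely \'etale over $R$, hence Zariski-locally a retract of a finite free $R$-module modulo every power of $p$. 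If you want to salvage a \v{C}ech--Alexander-style proof, you must first identify the envelope over $S$ with the strict \'etale lift of the envelope over $R$, which again routes through the unique-lifting statement of Remark \ref{slice category} rather than through flat base change of prismatic envelopes.
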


\begin{proof}
Note that the pair $(S, P)$ satisfies the assumption in \S \ref{computing log prismatic cohomology}. 
We proceed as in the proof of \cite{BS}*{Lemma 4.21}. 
The restriction functor 
\[
((\Spf (S), P)^a /(A, M_A))_{\Prism}\to ((\Spf (R), P)^a /(A, M_A))_{\Prism}
\]
sending $(B, IB, M_{\Spf (B)})$ to itself regarded as an object of the target admits a right adjoint
\[
F\colon ((\Spf (R), P)^a /(A, M_A))_{\Prism}\to ((\Spf (S), P)^a /(A, M_A))_{\Prism}.
\]
At the level of $R$-algebras, $F$ is the $p$-completed base change along $R\to S$ and $F$ is its unique lifting determined by Remark \ref{slice category}. The log structure (with $\delta_{\log}$-structure) is simply the pullback to the base change. 
Thus, the proof of \cite{BS}*{Lemma 4.21} works in our setting. 
\end{proof}

\subsection{More coverings of the final object}
We now work with the \emph{flat} topology; see Remark \ref{flat topology}. 
Under a mild assumption that is always satisfied \'etale locally on $X$, we show that there are coverings of the final object in the topos that are simpler and smaller than the ones we considered above. 
So, we can also use these coverings to compute log prismatic cohomology. 
In the non-log setting, similar results were considered in several papers including \cite{Mao}. 

Let $(R, P)$ be $p$-completely smooth over $(A/I, M_A)$, and assume that $M_A \to P$ is a smooth chart. 
Let $(B, IB, M_B)$ be a prelog prism over $(A, I, M_A)$ that is $(p,I)$-completely smooth over $(A, M_A)$ and assume $M_A\to M_B$ is smooth chart. 
Let $(B, M_B) \to (R, P)$ a surjection and let $(B', IB', M_{B'})$ denote the prelog prismatic envelope of $(B, IB, M_B)$, which is $(p, I)$-completely flat over $A$ by Proposition \ref{Prelog prismatice envelops for smooth log algebras:variant}. 

\begin{prop}\label{more coverings}
Let $(C, IC, M_{C})^a$ be an object of $((R, P)/ (A, M_A))_{\Prism}$. 
The product of $(B', IB', M_{B'})^a$ and $(C, IC, M_{C})^a$ in $((R, P)/ (A, M_A))_{\Prism}$ exists and it is $(p,I)$-completely faithfully flat over $C$.
In particular, the sheaf represented by $(B', IB', M_{B'})^a$ is a covering of the final object of the topos. 
\end{prop}

For instance, if $(A, M_A)$ is of rank 1, then a natural smooth lift $(\widetilde{R}, P)$ over $(A, M_A)$ of $(R, P)$ admits the structure of a $\delta_{\log}$-ring of rank $1$, and it gives rise to a covering. 

\begin{proof}
As in the proof of Proposition \ref{Existence of log prismatic envelopes}, the map
\[
P \times_{\Gamma (\Spf (C/IC), M_{\Spf (C/IC)})} \Gamma (\Spf (C), M_{\Spf (C)}) \to C
\]
is a chart of $M_{\Spf (C)}$ with $\delta_{\log}$-structure. We may assume this is the chart $M_C \to C$. By Lemma \ref{exact closed immersion} and base change, $M_C\to P$ is exact surjective.
Using the composite $(B, M_B)\to (R, P) \to (C/IC, P)$, we obtain a surjection
\[
(B\widehat{\otimes}_A C, M_B \times_{M_A} M_C) \to (C/IC, P),  
\]
where the source is a $(p, I)$-completely smooth $\delta_{\log}$-ring over $(C, M_C)$. 
Let $(D, M_{C'})$ denote its exactification, which is $(p,I)$-completely smooth over $(C, M_C)$. 
The prelog prismatic envelope $(C', IC', M_{C'})$ of the displayed map is $(p,I)$-completely flat over $C$ by Proposition \ref{Prelog prismatice envelops for smooth log algebras:variant}. In fact, it is $(p,I)$-completely faithfully\footnote{This can be seen somewhat more directly using an elementary formula in \cite{Mao}*{5.34} (see also \cite{Du-Liu}*{2.2.4}), together with the proof of \cite{BS}*{Proposition 3.13}.} flat over $C$: we may assume $(A, I)$ is orientable, and we can apply \cite{Mao}*{5.51} since the surjection $D/ID\to C/IC$ is $p$-completely quasiregular. 
By its construction, there is a also map $(B', IB', M_{B'})^a \to (C', IC', M_{C'})^a$ and one sees that $(C', IC', M_{C'})^a$ is the product of $(B', IB', M_{B'})^a$ and $(C, IC, M_C)^a$. 
\end{proof}

Let us also record the following lemma in the absolute case (see also \cite{Du-Liu}):

\begin{lem}
The Breuil-Kisin log prism $(W(k)[\![ u]\!], (E(u)), u^{\bN})^a$ represents a covering of the final object of the category of sheaves on $(\cO_K, \cO_K\setminus\{0\})_{\Prism}$.   
\end{lem}

In the non-log case, see \cite{Bhatt-Scholze:F-crystal}*{Example 2.6(1)}. 

\begin{proof}
Let $(A, I, M_{\Spf (A)})$ be an object of $(\cO_K, \cO_K\setminus\{0\})_{\Prism}$. 
Note that $A$ is canonically a $W(k)$-algebra. 
As before, take the fiber product 
\[
N\coloneqq u^\bN \times_{\Gamma (\Spf (A/I), M_{\Spf (A/I)})} \Gamma (\Spf (A), M_{\Spf (A)}), 
\]
which comes with an exact surjection $N\to \bN$. 
Take a lift $n\in N$ of the generator of $\bN$. 
(Note that $A$ is classically complete with respect the ideal of $A$ generated by the image of $n$.)
We consider $(A[\![ u ]\!], n^{\bN}u^{\bN})$ equipped with a surjection
\[
(A[\![ u ]\!], n^{\bN}u^{\bN}) \to
(A/I, n^{\bN}) 
\]
sending $u^{\bN}$ to $n^{\bN}$. 
The exactification is 
\[
(A [\![ u]\!] [ (u/n)^{\pm 1}], u^{\bN}n^{\bN}(u/n)^{\bZ}). 
\]
The map $A\to A [\![ u]\!] [ (u/n)^{\pm 1}]$ is $(p,I)$-completely flat, and the kernel of the extended map $A [\![ u ]\!] [ (u/n)^{\pm 1}]\to A; u/n\mapsto 1$ is generated by a $(p,I)$-completely regular sequence relative to $A$. 
So, its prismatic envelope $A'$ is $(p,I)$-completely flat over $A$. 
It is also $(p,I)$-completely \emph{faithfully} flat by \cite{Mao}*{5.51} (See also \cite{Du-Liu}). 
Moreover, there is clearly a map from $(W(k) [\![ u ]\!], (E(u)), u^{\bN})$ to $(A', IA', u^{\bN}n^{\bN}(u/n)^{\bZ})$ that is compatible with $(W(k)[\![ u]\!], u^{\bN})\to (\cO_K, \bN) \to (A/I, n^\bN)$. 
\end{proof}

\subsection{A remark on globalization}
Any site-theoretic construction will be tautologically global as the log prismatic site is globally defined.  
However, in some situation, we would have to work purely locally and use strictly functorial complexes like the one in Remark \ref{functorial complex}, and glue them to globalize; cf. Section \ref{section:comparison with AOmega}. To do so, we need a basis of the \'etale site of $X$ for which charts satisfying the assumptions in Remark \ref{functorial complex} exist.  

More concretely, we want a basis $(U_i\to X)_{i\in I}$ such that $(U_i, (M_X)|_{U_i})$ is log-affine, and $M_A\to \Gamma (U_i, M_X)$ is integral and weakly finitely generated. 
Lemmas \ref{small neighborhood}, \ref{chart and global section II}, and smoothness of $(X, M_X)$ imply that this is the case if 
\begin{itemize}
    \item the underlying topological space of $X$ is locally noetherian, and
\end{itemize}
one of the following holds:
\begin{itemize}
    \item the monoid $M_A$ has finitely many prime ideals \cite{Ogus}*{I.1.4.1}, or
    \item the image of $M_A\setminus M_A^{\times}\to A$ is contained in $\sqrt{I}$. 
\end{itemize}

\section{The Hodge-Tate comparison}
After recalling Gabber's cotangent complexes for maps of prelog ringed structures on topoi, we formulate and prove the Hodge-Tate comparison. 

\subsection{Gabber's logarithmic cotangent complex}
There are two notions of logarithmic cotangent complexes in the literature; one is defined by Olsson, and the other one is due to Gabber \cite{Olsson}. We shall use (the completed version of) Gabber's cotangent complex as it is defined without any finiteness conditions.

We mainly work locally. 
%Let $(Y, M_Y)\to (X.M_X)$ be a morphism of log $p$-adic formal schemes, we denote by $\underline{L}_{(X, M_X)/(Y,M_Y)}$ the cotangent complex \cite{Olsson}*{8.5} (applied in a way similar to \cite{Olsson}*{8.29}). 
Let $f\colon (A, M_A)\to (B, M_B)$ be a map of prelog rings and assume that $A$ and $B$ are classically $I$-complete for some finitely generated ideal $I\subset A$. Using the canonical free resolution of prelog ringed structure on a topos (with enough points), Gabber(-Olsson) defined the cotangent complex \cite{Olsson}*{8.5}. We apply it in the \'etale topos of the $I$-adic formal scheme $\Spf (B)$ to
\[
(f^{-1}\cO_{\Spf (A)}, f^{-1} \underline{M_{A}})\to (\cO_{\Spf (B)}, \underline{M_B}), 
\]
where $\cO_{\Spf (A)}, \cO_{\Spf (B)}$ are structure sheaves on corresponding formal schemes and $\underline{M_A}, \underline{M_B}$ are constant \'etale sheaves associated with $M_A, M_B$. (Compare with \cite{Olsson}*{8.29}.)
Its $I$-completion is denoted by $\underline{L}_{(B, M_B)/ (A, M_A)}$, often regarded as an $I$-complete object of the derived category of $\cO_{\Spf (B)}$-modules. 

We list some properties:
\begin{enumerate}
    \item If $M_A\to M_B$ is an isomorphism, $\underline{L}_{(B, M_B)/ (A, M_A)}$ is canonically isomorphic to the $I$-completed (nonlog) cotangent complex $L_{B/A}$ \cite{Olsson}*{8.22}.
    \item For maps $(A, M_A)\to (B, M_B)\to (C, M_C)$ of classically $I$-complete prelog rings, there is a functorial distinguished triangle
    \[
    (\underline{L}_{(B,M_B)/(A,M_A)}\otimes^L_{\cO_{\Spf (B)}} \cO_{\Spf (C)})^{\wedge} \to \underline{L}_{(C, M_C)/(A,M_A)}\to \underline{L}_{(C, M_C)/(B,M_B)}\overset{+1}{\longrightarrow}
    \]
    \cite{Olsson}*{8.18}, where the completion is $I$-adic. 
    \item A natural map
\[
\underline{L}_{(B, M_B)/ (A, M_A)}\to \underline{L}_{(B, M_B^a)/ (A, M_A^a)}
\]
induces an isomorphism \cite{Olsson}*{8.20}. 
\end{enumerate}

The properties (1) and (2) imply that
\[
\underline{L}_{(C, M_B)/(A, M_A)}\cong (\underline{L}_{(B, M_B)/(A, M_A)}\otimes^L_{\cO_{\Spf (B)}} \cO_{\Spf (C)})^{\wedge}
\]
for an $I$-completely \'etale map $B\to C$ as $L_{C/B}=0$. 

Under our definition of smooth log formal schemes, the cotangent complex may be identified with the sheaf of log-differentials in the smooth case as shown below.
(For a smooth morphism of log schemes in the sense of Kato, the cotangent complex may not be discrete; a key additional assumption in our convention is the integrality.)

Let us write $\underline{\Omega}^1_{(B,M_B)/(A,M_A)}$ for the ($I$-completed) sheaf of log-differentials of associated log formal schemes, and $\Omega^1_{(B, M_B)/(A, M_A)}$ for its global sections, which only depends on the associated log formal schemes. 

\begin{prop}
Suppose that $A$ has bounded $p^\infty$-torsion and $(B,M_B)$ is $I$-completely smooth over $(A, M_A)$ with a smooth chart $P\to M_B$ over $M_A$. Then, $\underline{L}_{(B, M_B)/(A, M_A)}$ is discrete and there is a canonical isomorphism
\[
\underline{H}^0(\underline{L}_{(B, M_B)/(A, M_A)})\cong \underline{\Omega}^1_{(B,M_B)/(A,M_A)}. 
\]
\end{prop}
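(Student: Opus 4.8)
The plan is to reduce, by passing to associated log structures and strict \'etale localization, to the standard monoid-algebra chart, and then to compute $\underline L$ from an explicit free resolution whose existence without auxiliary ring variables is exactly the point where integrality is used.

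First I would use property (3) to replace $(A,M_A)$ and $(B,M_B)$ by their associated log rings, so that the statement depends only on the associated log formal schemes; both sides are then insensitive to strict $I$-completely \'etale base change on $\Spf(B)$ --- for $\underline L$ by properties (1) and (2) together with the vanishing of the non-log cotangent complex of an $I$-completely \'etale map, and for $\underline\Omega^1$ by the usual \'etale base change of log differentials. Using the definition of a smooth log formal scheme with a smooth chart $P\to M_B$ over $M_A$ (Appendix), there is a strict $I$-completely \'etale map $(B_0,P)\to(B,M_B)$ with $B_0$ the $I$-completion of $A\otimes_{\bZ_{(p)}[M_A]}\bZ_{(p)}[P]$, where $M_A\to P$ is injective, integral, and $P^{\gp}/M_A^{\gp}$ is torsion-free; so I may assume $(B,M_B)=(B_0,P)$. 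Note that $M_A^{\gp}\to P^{\gp}$ is then injective, since $M_A$ and $P$ are integral.

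Next I would build a convenient free resolution of $(B_0,P)$ over $(A,M_A)$. Take a cofibrant simplicial resolution $M_\bullet\to P$ of the monoid $P$ by free $M_A$-monoids (so $M_n\cong M_A\oplus Q_n$ with $Q_n$ free) and set $A_\bullet=(A\otimes_{\bZ_{(p)}[M_A]}\bZ_{(p)}[M_\bullet])^{\wedge}_I$ with the evident prelog structure. Because $M_A\to P$ is integral and injective, $\bZ_{(p)}[M_A]\to\bZ_{(p)}[P]$ is flat by \cite{Kato:log}*{4.1}, so $A\otimes_{\bZ_{(p)}[M_A]}\bZ_{(p)}[M_\bullet]\to B_0$ is a weak equivalence of simplicial rings (the base change along $\bZ_{(p)}[M_A]\to A$ is Tor-independent), and hence $(A_\bullet,M_\bullet)\to(B_0,P)$ is a free resolution over $(A,M_A)$ with \emph{no} auxiliary ring variables. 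Therefore $\underline L_{(B_0,P)/(A,M_A)}$ is computed by $(\Omega^1_{(A_\bullet,M_\bullet)/(A,M_A)}\otimes_{A_\bullet}B_0)^{\wedge}_I$, and since the log differentials of the free chart $M_A\to M_n$ form the free $A_n$-module $A_n\otimes_{\bZ}(M_n^{\gp}/M_A^{\gp})$, this complex is $(B_0\otimes_{\bZ}(M_\bullet^{\gp}/M_A^{\gp}))^{\wedge}_I$.

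It remains to analyze the simplicial abelian group $M_\bullet^{\gp}/M_A^{\gp}$. From the levelwise short exact sequence $0\to M_A^{\gp}\to M_\bullet^{\gp}\to M_\bullet^{\gp}/M_A^{\gp}\to 0$ (constant on the left), the fact that $M_\bullet^{\gp}\to P^{\gp}$ is a weak equivalence (group completion of monoids is homotopical), and the injectivity of $M_A^{\gp}\to P^{\gp}$, the long exact homotopy sequence shows $M_\bullet^{\gp}/M_A^{\gp}$ is concentrated in degree $0$, where it equals $P^{\gp}/M_A^{\gp}$. Hence $\underline L_{(B_0,P)/(A,M_A)}$ is discrete with $\underline H^0=(B_0\otimes_{\bZ}(P^{\gp}/M_A^{\gp}))^{\wedge}_I=\underline\Omega^1_{(B_0,P)/(A,M_A)}$, and transporting back along the \'etale localization gives the proposition; here the bounded $p^\infty$-torsion of $A$ is what makes the $I$-completions well-behaved (and $\underline\Omega^1$ $I$-completely finite projective). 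The step I expect to be the main obstacle is precisely the use of integrality to avoid auxiliary ring variables: without it, the resolution $A_\bullet$ must be enlarged by polynomial generators whose differentials feed $\mathrm{Tor}^{\bZ_{(p)}[M_A]}_{>0}(A,\bZ_{(p)}[P])$ into $\underline L$, and discreteness genuinely fails --- this is exactly the gap between Kato-smoothness and our (integral) smoothness flagged before the statement.
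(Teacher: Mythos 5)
Your opening reduction (pass to associated log structures, then strict $I$-completely \'etale localization to the standard chart $B_0=(A\otimes_{\bZ_{(p)}[M_A]}\bZ_{(p)}[P])^{\wedge}_I$) matches the paper's first step, but from there the routes diverge: the paper builds no resolution at all, instead invoking Olsson's homotopy-pushout description of Gabber's cotangent complex for a chart, namely the pushout of
\[
\cO_{A'}\otimes_{\bZ}(P^{\gp}/M_A^{\gp}) \longleftarrow \cO_{A'}\otimes_{\bZ[P]}L_{\bZ[P]/\bZ[M_A]} \longrightarrow L_{\cO_{A'}/\cO_{\Spf (A)}},
\]
and then observing that flatness of $\bZ[M_A]\to\bZ[P]$ (Kato 4.1, i.e.\ injectivity plus integrality) makes the right-hand arrow an isomorphism by Illusie's flat base change, so the pushout collapses onto the discrete free module $\cO_{A'}\otimes_{\bZ}(P^{\gp}/M_A^{\gp})$. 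Your route, by contrast, tries to compute $\underline{L}$ directly from a monoid-only simplicial resolution, and this is where the problem lies.

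The genuine gap is the assertion that $M_\bullet^{\gp}\to P^{\gp}$ is a weak equivalence, justified by ``group completion of monoids is homotopical.'' Group completion is a left adjoint, hence left Quillen: it preserves weak equivalences between \emph{cofibrant} objects, but $M_\bullet\to P$ is a map from a cofibrant resolution to the non-cofibrant constant simplicial monoid $P$, which is exactly the situation a left Quillen functor need not respect. What you are asserting is that the derived group completion of $P$ agrees with the naive one, i.e.\ $\pi_i(M_\bullet^{\gp})=0$ for $i>0$; since $\underline{L}_{(B_0,P)/(A,M_A)}$ is (in your setup) literally $(B_0\otimes_{\bZ}(M_\bullet^{\gp}/M_A^{\gp}))^{\wedge}_I$, this vanishing \emph{is} the discreteness you are trying to prove, so the justification as written is circular in effect and wrong as stated. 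The claim can be repaired --- for instance, $\bZ[M_\bullet^{\gp}]$ is the levelwise localization of $\bZ[M_\bullet]\simeq\bZ[P]$ at $M_\bullet$, hence has homotopy $\bZ[P^{\gp}]$ concentrated in degree $0$, and a simplicial abelian group whose integral homology is concentrated in degree $0$ is homotopy discrete --- but some such argument must actually be supplied. A secondary point: you also use, without justification, that Gabber's $\underline{L}$ (defined via the canonical free resolution of the pair ring/monoid) can be computed from an arbitrary levelwise-free simplicial resolution such as your $(A_\bullet,M_\bullet)$ with no polynomial ring generators; this is the prelog analogue of Illusie II.1.2.6.2 and should be cited or proved. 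Your diagnosis of where integrality enters (it makes $A\otimes_{\bZ_{(p)}[M_A]}\bZ_{(p)}[M_\bullet]\to B_0$ a weak equivalence by killing higher Tor's) is correct and is the same mechanism by which the paper's pushout collapses.
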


\begin{proof}
By \'etale localization, it reduces to the case where $B$ is the $I$-completion of $A\otimes_{\bZ[M_A]}\bZ[P]$. Let $\cO_{A'}=\cO_{\Spf (A)}\otimes_{\bZ[\underline{M_A}]}\bZ[\underline{P}]$ denote the corresponding \'etale sheaf on $\Spf (A)$. 
In this case, we prove an analogous claim for the cotangent complex $\underline{L}_{(\cO_{A'}, \underline{P})/(\cO_{\Spf (A)}, \underline{M_A})}$ itself (without $I$-completion) on the \'etale topos of $\Spf(A)$ defined in \cite{Olsson}; this in turn implies the result for our $I$-completed cotangent complex of $\cO_{\Spf (B)}$-modules as $\underline{\Omega}^1_{(A',P)/(A,M_A)}$ (defined in an obvious way) is free of finite rank over $\cO_{A'}$, and ($I$-completed) $\underline{L}_{B/(A\otimes_{\bZ[M_A]}\bZ[P])}$ is $0$. 

In the rest of the  proof, every cotangent complex lives in the \'etale topos of $\Spf (A)$, and we do not take $I$-completions. 
At least in the case of fine log schemes, we are done by \cite{Olsson}*{8.34} through the comparison with Olsson's cotangent complex. 
Actually, the argument in {\it loc.cit.} can be modified so that Olsson's cotangent complex does not appear and the finiteness condition on $M_A$ is not used. Indeed, the argument in \cite{Olsson}*{8.32} shows that $\underline{L}_{(\cO_{A'}, \underline{P})/(\cO_{\Spf (A)}, \underline{M_A})}$ is isomorphic to the homotopy pushout of the diagram
\[
\begin{CD}
\cO_{A'}\otimes_{\bZ[\underline{P}]}L_{\bZ[\underline{P}]/\bZ[\underline{M_A}]} @>>> \cO_{A'}\otimes_{\bZ}(\underline{P}^{\gp}/\underline{M_{A}}^{\gp})\\
@VVV \\
L_{\cO_{A'}/\cO_{\Spf (A)}}. 
\end{CD}
\]
As $\cO_{A'}=\cO_{\Spf (A)}\otimes_{\bZ[\underline{M_A}]}\bZ[\underline{P}]$ and $\bZ[\underline{P}]$ is flat over $\bZ[\underline{M_A}]$, the vertical arrow is an isomorphism by \cite{Illusie:cotangent complex I}*{II.2.2.1}.  
\end{proof}

\subsection{Formulation of the Hodge-Tate comparison}
Fix a bounded prelog prism $(A, I, M_A)$ with integral $M_A$. Let $(X, M_X)$ be a smooth log $p$-adic formal scheme over $(A/I, M_A)$. 
Recall that
\begin{gather*}
\Prism_{(X, M_X)/ (A/I, M_A)} = R\nu_* \cO_{\Prism}, \quad \overline{\Prism}_{(X, M_X)/ (A/I, M_A)}= R\nu_* \overline{\cO}_{\Prism}, \\
\overline{\Prism}_{(X, M_X)/ (A/I, M_A)}\cong \Prism_{(X, M_X)/ (A/I, M_A)}\otimes^L_A A/I.
\end{gather*}
Consider the Bockstein differential:
\begin{multline*}
\beta_I\colon H^i (\overline{\Prism}_{(X,M_X)/(A, M_A)})\{i\}\coloneqq H^i (\overline{\Prism}_{(X,M_X)/(A, M_A)}) \otimes_A^L I^i / I^{i+1}  \\
\to 
H^{i+1} (\overline{\Prism}_{(X,M_X)/(A, M_A)})\{i+1\}.
\end{multline*}
(Twists $\{ *\}$ may disappear if $(A, I)$ is orientable, i.e., $I$ is principal, and we fix its orientation(=generator).)

We have the structure map 
\[
\eta_{(X,M_X)}^0\colon \cO_X \to H^0 (\overline{\Prism}_{(X,M_X)/(A, M_A)}), 
\]
and this extends to 
\[
\eta_{(X,M_X)}^1\colon \Omega^1_{X/A} \to H^1 (\overline{\Prism}_{(X,M_X)/(A, M_A)})\{1\}
\]
by the universal property of the K\"ahler differentials. 
We want to extend this map to log-differentials. 

We shall use another description of $\eta_{(X,M_X)}^1$ in terms of cotangent complexes following \cite{BS}*{Proposition 4.15}. 
Assume that $X=\Spf (R)$ and $M_X$ has a chart $P\to \Gamma (X, M_X)\to R$. 
For any object $(B, IB, M_B)^a$ of the log prismatic site $((R, P)/(A, M_A))_{\Prism}$ with an exact surjection $M_B \to P$ as in the proof of Proposition \ref{more coverings}, there are natural maps
\begin{align*}
R\Gamma (\Spf (R)_{\et}, \underline{L}_{(R, P)/(A, M_A)}) &\longrightarrow R\Gamma (\Spf (B/IB)_{\et}, \underline{L}_{(B/IB, P)/(B, M_B)}) \\
&\overset{\cong}{\longleftarrow}R\Gamma(\Spf (B/IB)_{\et}, \underline{L}_{(B/IB, M_B)/(B, M_B)}) \\
&\overset{\cong}{\longrightarrow}{IB/I^2 B}[1] \\
&\overset{\cong}{\longleftarrow}  I/I^2\otimes^L_{A/I} B/IB[1];
\end{align*}
the second map is an isomorphism as $P$ and $M_B$ have the same associated log structure on $\Spf (B/IB)$, and the third is an isomorphism by \cite{Illusie:cotangent complex I}*{III.3.2.4}. (These maps can be defined more naturally using the cotangent complexes of log formal schemes.) 
Taking derived global sections on $((R, P)/(A, M_A))_{\Prism}$, we get a map
\[
R\Gamma (\Spf (R)_{\et}, \underline{L}_{(R, P)/(A, M_A)}) \to \overline{\Prism}_{(R,P)/(A, M_A)}\{1\}[1]. 
\]
Passing to $H^0$, we obtain
\[
\eta_{(R,P)}^1\colon \Omega^1_{(R, P)/ (A/I, M_A)}\to H^1(\overline{\Prism}_{(R,P)/(A, M_A)})\{1\}. 
\]
This recovers the previous $\eta_{(X, M_X)}^1$ after restricted to $\Omega^1_{R/P}$: it reduces by functoriality to the case of the affine line with the trivial log structure, and this case is checked in  \cite{AB}*{Proposition 3.2.1}. 
The above map globalizes to 
\[
\eta_{(X,M_X)}^1\colon \Omega^1_{(X, M_X)/ (A/I, M_A)}\to H^1(\overline{\Prism}_{(X,M_X)/(A, M_A)})\{1\}. 
\]

We want to extend the above map to higher degrees. 
We know that the differential graded $A/I$-algebra $H^* (\overline{\Prism}_{(R,P)/(A, M_A)})\{*\}$ is graded commutative. It follows from \cite{BS}*{Proposition 6.2} and functoriality that \cite{BS}*{Lemma 4.10} continues to holds in our setting. Namely, for any local section $f\in \cO_X(U)$, we have $\beta_I (f)^2=0$. 
We need an analog for log-differentials:

\begin{lem}\label{lemma for log-differentials}
For any local section $m\in M^{\gp}_X(U)$, the following holds:
\[
\eta_{(X,M_X)}^1 (d\log (m))^2=0 \quad \textnormal{and} \quad \beta_I (\eta_{(X,M_X)}^1 (d\log (m)))=0. 
\]
\end{lem}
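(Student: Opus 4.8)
The plan is to reduce both identities, by functoriality and étale descent, to an explicit one-generator situation, identify $\eta^1_{(X,M_X)}(d\log(m))$ with a concrete \v{C}ech cocycle, and then invoke a Tor-amplitude bound for the log-affine line. Both assertions may be checked étale-locally on $X$, so I would first shrink $U$ so that $(U,M_U)$ is log-affine with a chart of the form $M_A\oplus\bN^r$ (possible by smoothness of $(X,M_X)$), and write the section $m\in M_U^{\gp}(U)$, after a further localization, as $m=q\cdot v$ with $q\in M_A^{\gp}\oplus\bZ^r$ and $v\in\cO_U^\times$. Then $d\log(m)=d\log(q)+d\log(v)$, the $M_A^{\gp}$-component of $q$ dies under $d\log$, and $\eta^1(d\log(v))=v^{-1}\beta_I(v)$, whose square vanishes by the (already granted) log analogue of \cite{BS}*{Lemma 4.10}. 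Using additivity of $\eta^1\circ d\log$ and graded-commutativity of $H^{\ast}(\overline{\Prism}_{(X,M_X)/(A,M_A)})\{\ast\}$, the cross terms in $\eta^1(d\log(m))^2$ collapse, so it suffices to treat $m$ equal to a single free generator; by functoriality along the map from the universal oriented bounded prelog prism (Example~\ref{universal prelog prism}) and the log-affine line (Remark~\ref{log-affine line}), we are reduced to $(R,P)=(A/I\langle t\rangle,M_A\oplus\bN)$ with $m$ the canonical generator, so that $\alpha(m)=t$.

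The key point is that the two assertions of the lemma are in fact equivalent, and both reduce to a vanishing statement. Using the \v{C}ech--Alexander description of Construction~\ref{\v{C}ech} and Remark~\ref{log-affine line}, let $(C^{\bullet},IC^{\bullet},M_C^{\bullet})$ be the cosimplicial covering object and let $x_0,x_1\in(M_{C^1})^{\gp}$ be the two tautological lifts of $m$; then $x_0/x_1\in(M_{C^1})^{\gp}$ and $\alpha(x_0/x_1)=\alpha(x_0)/\alpha(x_1)\equiv 1\pmod{IC^1}$. Unwinding the cotangent-complex definition of $\eta^1$ — the conormal identification $\underline L_{(C^1/IC^1,M_{C^1})/(C^1,M_{C^1})}\simeq IC^1/I^2C^1[1]$ together with the ``difference of two lifts'' description of the associated $H^1$-class — one finds that $\eta^1(d\log(m))$ is represented by the normalized \v{C}ech $1$-cocycle $\xi=(\xi_{01})$ with $\xi_{01}=\alpha(x_0/x_1)-1\in IC^1$; equivalently $\alpha(m)\cdot\eta^1(d\log(m))=\beta_I(\alpha(m))$, which is just the image of $d\alpha(m)=\alpha(m)\,d\log(m)$ under $\eta^1$. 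Multiplicativity of $\alpha$ on $(M_C^{\bullet})^{\gp}$ gives $1+\xi_{ik}=(1+\xi_{ij})(1+\xi_{jk})$, i.e. $\xi_{ik}=\xi_{ij}+\xi_{jk}+\xi_{ij}\xi_{jk}$, so the \v{C}ech differential of $\xi$ is $-\xi_{ij}\xi_{jk}\in I^2C^2$; comparing with the cup-square, whose $(ijk)$-component is $\xi_{ij}\xi_{jk}$, yields $\beta_I(\eta^1(d\log(m)))=-\eta^1(d\log(m))^2$. Hence it remains only to prove $\eta^1(d\log(m))^2=0$, i.e. that $[\xi_{01}\xi_{12}]=0$ in $H^2(\overline{\Prism}_{(R,P)/(A,M_A)})\{2\}$; note that from $\alpha(m)\cdot\eta^1(d\log(m))=\beta_I(\alpha(m))$ and the log analogue of \cite{BS}*{Lemma 4.10} one already gets $\alpha(m)^2\cdot\eta^1(d\log(m))^2=0$, so the issue is precisely one of dividing by the non-unit $\alpha(m)^2$.

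The clean way to finish is to show $H^{i}(\overline{\Prism}_{(R,P)/(A,M_A)})=0$ for $i\geq 2$ for the log-affine line. Since $(R,P)$ is log-smooth of relative dimension $1$, Gabber's cotangent complex $\underline L_{(R,P)/(A/I,M_A)}$ is discrete of amplitude $[0]$, and a direct computation with the \v{C}ech--Alexander complex of Remark~\ref{log-affine line} — parallel to the computation of $\overline{\Prism}_{\bA^1/A}$ underlying the non-logarithmic Hodge--Tate comparison in \cite{BS}, using that the log prismatic envelopes there are $(p,I)$-completely flat and that the exactification at \v{C}ech degree $n$ adjoins exactly one invertible monoid element — should show that $\overline{\Prism}_{(R,P)/(A,M_A)}$ has $\cO$-Tor-amplitude $[0,1]$, whence the desired vanishing and the lemma. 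I expect this last amplitude bound, carried out by hand and without presupposing the Hodge--Tate comparison that §\ref{Section:crystalline comparison} onward is building towards, to be the main obstacle; as a consistency check, in the crystalline case $I=(p)$ it is immediate, since there $\overline{\Prism}_{(R,P)/(A,M_A)}$ is computed by the length-one log de Rham complex $[\,R\to\Omega^1_{(R,P)/(A/I,M_A)}\,]$ via Theorem~\ref{crystalline comparison:global}, and the general case can be approached either by the explicit \v{C}ech computation above or by the universal log $q$-de Rham reduction in the style of \cite{BS}.
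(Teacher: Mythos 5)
Your proposal is correct and follows essentially the paper's route: after the functoriality/\'etale-localization reduction to the log-affine line, the lemma is deduced from the vanishing of $H^{i}(\overline{\Prism}_{(R,P)/(A,M_A)})$ for $i\geq 2$ there, which the paper proves exactly by the two mechanisms you name for your ``main obstacle'' --- the crystalline comparison plus the log Cartier isomorphism when $I=(p)$ (Proposition \ref{Hodge-Tate:crystalline case 1}) and the Bhatt--Scholze universal-oriented-prism reduction of \cite{BS}*{Construction 6.1} in mixed characteristic (Proposition \ref{Hodge-Tate:log-affine line}). Your intermediate \v{C}ech-cocycle identity $\beta_I(\eta^1(d\log m))=-\eta^1(d\log m)^2$ is a reasonable extra observation but is not needed, since the degree-$2$ vanishing kills both classes simultaneously.
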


We prove it later, and assume Lemma \ref{lemma for log-differentials} holds true for the moment. Since $\Omega^1_{(X, M_X)/(A, M_A)}$ has locally a basis that consists of elements of the form of $d\log (m) \, (m \in M^{\gp}_X(U))$, we obtain, from the universality of the log de Rham complex, a map of commutative differential graded algebras
\[
\eta_{(X,M_X)}^*\colon \Omega^*_{(X, M_X)/ (A/I, M_A)}\to H^*(\overline{\Prism}_{(X,M_X)/(A, M_A)})\{*\}
\]
that compatible with $\cO_X$-module structure on the terms. 

\begin{thm}[The Hodge-Tate comparison]\label{Hodge-Tate}
The map
\[
\eta_{(X,M_X)}^*\colon \Omega^*_{(X, M_X)/ (A/I, M_A)}\to H^*(\overline{\Prism}_{(X,M_X)/(A, M_A)})\{*\}
\]
constructed above is an isomorphism of differential graded $A/I$-algebras. 
In particular, $\overline{\Prism}_{(X, M_X)/(A, M_A)}$ is a perfect complex. 
\end{thm}

\begin{rem}
By the construction of $\eta_{(R, P)}$, the Hodge-Tate comparison implies that there is an isomorphism
\[
R\Gamma (\Spf (R)_{\et}, \underline{L}_{(R,P)/(A, M_A)})\cong (\tau^{\leq 1}\overline{\Prism}_{(R,P)/(A,M_A)})\{1\}[1]. 
\]
\end{rem}

\begin{cor}[Base change]\label{base change II}
Let $(A, I, M_A)\to (A', IA', M_{A'})$ be a map of bounded prelog prisms with integral monoids. The formation of $\Prism_{(X, M_X)/(A,M_A)}$ commutes with the completed base change along this map. 
\end{cor}

\begin{proof}
This reduces to the corresponding statement for $\overline{\Prism}_{(X, M_X)/(A,M_A)}$, which follows from the Hodge-Tate comparison as $\Omega^*_{(X, M_X)/(A/I, M_A)}$ commutes with base change.  
\end{proof}

\begin{rem}[The de Rham comparison]
The de Rham comparison is an isomorphism of the form of
\begin{align*}
R\Gamma_{\Prism}((X, M_X)/(A, M_A))\widehat{\otimes}^L_{A, \phi} \phi_* A/I 
&\cong\phi_* R\Gamma_{\textnormal{dR}}((X, M_X)/(A/I, M_A)) \\
&\cong \phi_* R\Gamma_{\crys}((X, M_X)_{A/(p,I)}/(A/I, M_A)), 
\end{align*}
where the completion is $p$-adic. 
The de Rham comparison can be deduced in some cases by rewriting the log prismatic cohomology using base change along some map to a crystalline prism and applying the crystalline comparison discussed in the next section. 
However, the characterization of the de Rham comparison isomorphism seems to be best explained in terms of Nygaard filtrations, and we will only mention the de Rham comparison briefly in this paper. 
\end{rem}

\subsection{A reformulation of the Hodge-Tate comparison for smooth rings}
We rewrite the Hodge-Tate comparison for smooth rings without log structure. 
Let $(A, I)$ be a bounded prism. 
Let $G^0 \to G$ be a surjection of finitely generated abelian groups whose $p$-torsion parts are trivial, with kernel $H$. 
It induces a surjection
\[
A \langle G^0  \rangle \to R\coloneqq A/I \langle G \rangle. 
\]
Note that $R$ (resp. $A \langle G^0  \rangle$) is $p$-completely (resp. $(p,I)$-completely) smooth over $A/I$ (resp. $A$). 
Its prismatic envelope $B^0$ gives rise to a covering of the final object of the topos of $(R/A)_{\Prism}$ by Proposition \ref{more coverings}. 
Let $B^{\bullet}$ denote the \v{C}ech nerve of $B^0$ in $(R/A)_{\Prism}$. 
Concretely, again by Proposition \ref{more coverings}, $B^{\bullet}$ is the prismatic envelope of a surjection
\[
A\langle (G^0)^{\oplus(\bullet +1)}\rangle \to A/I \langle G\rangle. 
\]
induced by the multiplication map $(G^0)^{\oplus (\bullet +1)} \to G^0$ and $G^0 \to G$. 
Let $H_n$ denote the kernel of $(G^0)^{\oplus (n+1)} \to G^0$. Regarding $B^n$ as a $B^0$-algebra via the face map $\delta_0^n$, we rewrite $B^n$ as the $(p,I)$-adic completion of
\[
B^0 \otimes_{A} A \langle H_n  \rangle \{ (I, (h-1)_{h \in H_n})/I\}^\delta. 
\]
So, the reduction $B^n \otimes_A A/I$ is the $p$-completed tensor product
\begin{multline*}
A\langle G^0 \rangle \widehat{\otimes}_{A\langle H \rangle} A \langle H\oplus H_n  \rangle  \{ (I, (h-1)_{h \in H\oplus H_n})/I\}^\delta \otimes_A A/I \\
\cong 
R\widehat{\otimes}_{A/I} (A \langle H\oplus H_n  \rangle  \{ (I, (h-1)_{h \in H\oplus H_n})/I\}^\delta \otimes_A A/I), 
\end{multline*}
where we use that $h=1$ in $A \langle H\oplus H_n  \rangle  \{ (I, (h-1)_{h \in H\oplus H_n})/I\}^\delta \otimes_A A/I$. 
We observe that $B^{\bullet} \otimes_A A/I$ is a cosimplicial $R$-algebra, and, in fact, is the $p$-completed base change of a cosimplicial $A/I$-algebra
\[
C^{\bullet}\coloneqq A \langle H\oplus H_n  \rangle  \{ (I, (h-1)_{h \in H\oplus H_n})/I\}^\delta \otimes_A A/I. 
\]
For instance, for $G^0=G=X_0^\bZ$, $R=A\langle X_0^{\pm 1}\rangle$, we have
\[
B^n= A\langle X_0^{\pm 1}, \dots X_n^{\pm 1} \rangle
\]
and $X_1/X_0-1=\dots=X_n/X_0 -1=0$ in $C^n$.

Note that the Hodge-Tate comparison isomorphism induces a map
\begin{align*}
A/I \otimes_{\bZ} G \xhookrightarrow{d\log} \Omega^1_{R/ (A/I)}  
&\xrightarrow{\cong} H^1 (\overline{\Prism}_{R/A}) \{1\} \\
&\cong H^1 (\textnormal{Tot} (B^{\bullet}\otimes_A A/I)) \{1\}, 
\end{align*}
and it factors over 
\[
H^1 (\textnormal{Tot} (C^{\bullet})) \{1\} \to H^1 (\textnormal{Tot} (B^{\bullet}\otimes_A A/I)) \{1\}.
\]
Indeed, by the Bockstein construction, we see that this map is, Zariski locally on $A$, induced by sending $1\otimes g, g\in G$ to $(\widetilde{g}-1)/d \otimes d$ for a lift $\widetilde{g} \in G^0$ of $g$ and an orientation $d\in I$. 
Also, this induces
\[
\bigwedge^i A/I \otimes_{\bZ} G \to H^i (\textnormal{Tot} (C^{\bullet})) \{i\} \hookrightarrow H^i (\textnormal{Tot} (B^{\bullet}\otimes_A A/I)) \{i\}. 
\]

Here is our reformulation of the Hodge-Tate comparison theorem in the non-log case:

\begin{lem}\label{baby Hodge-Tate}
Notation as above. The induced map
\[
\bigwedge^i A/I \otimes_{\bZ} G \to H^i (\textnormal{Tot} (C^{\bullet})) \{i\}
\]
is an isomorphism of $A/I$-modules for all $i$. 
\end{lem}

\begin{proof}
As in the proof of \cite{BS}*{Theorem 6.3}, we choose a map
\[
\eta\colon \bigoplus_{i=0}^{\infty} \bigwedge^i A/I \otimes_{\bZ} G \{-i\}[-i] \to \textnormal{Tot} (C^{\bullet})
\]
compatible with maps in the statement. 
(Note that $A/I \otimes_{\bZ} G$ is a free $A/I$-module of finite rank.)
It suffices to show $\eta$ is an isomorphism in the derived category $D(A/I)$. 
This can be checked after the base change $A/I \to R$ as this map is $p$-completely faithfully flat.
After the base change, the map induces 
\[
\bigoplus_{i=0}^{\infty} \Omega^i_{R/ (A/I)} \{-i\}[-i] \to \textnormal{Tot} ( B^{\bullet}\otimes_A A/I )
\]
as totalizations commute with base change $A/I\to R$ by the $p$-complete flatness. 
As this maps is compatible with the Hodge-Tate comparison map on $H^{\bullet}$, it is an isomorphism by the Hodge-Tate comparison theorem.
\end{proof}

\subsection{The case of log affine line}
Let $(A, I, M_A)$ be a bounded prelog prism and assume $M_A$ is integral. 
Consider the (pre)log affine line
\[
(R, P)=(A/I\langle \bN \rangle, M_A \oplus \bN)=(A/I \langle X_0 \rangle, M_A \oplus X_0^\bN).
\]
Its lift $(B^0, M_{B^{0}})\coloneqq (A\langle \bN \rangle, M_A \oplus \bN)$ give rise to an object $(B^0, IB^0, M_{B^0})^a$ of $((R, P)/(A, M_A))$ by setting $\delta_{\log}(\bN)=0$. 
By Proposition \ref{more coverings}, the \v{C}ech nerve $(B^{\bullet}, IB^{\bullet}, M_{B^{\bullet}})^a$ of $(B^0 IB^0, M_{B^0})^a$ computes the log prismatic cohomology $\Prism_{(R, P)/(A, M_A)}$. 
Concretely, $B^n$ is the $(p, I)$-adic completion of
\[
A\langle X_0, \dots , X_n \rangle \{ (I, X_1 /X_0 -1, \dots, X_n /X_0 -1 )/ I\}^{\delta}. 
\]
The reduction $B^{\bullet}\otimes_A A/I$ is a cosimplicial $R$-algebra, and it has the form of
\[
B^{\bullet}\otimes_A A/I \cong R\widehat{\otimes}_{A} C^{\bullet}, 
\]
where the completed tensor product is $p$-adic and $C^{\bullet}$ is the one in Lemma \ref{baby Hodge-Tate} for $G^0=G=X_0^\bZ$. 
Thus, by Lemma \ref{baby Hodge-Tate}, we obtain an isomorphism 
\[
R\otimes_{\bZ} X_0^{\bZ} 
\cong H^1 (\textnormal{Tot}(R\widehat{\otimes}_A C^{\bullet}) )\{1\} \cong 
H^1 (\overline{\Prism}_{(R, P)/(A, M_A)})\{1\}. 
\]
After inverting $X_0\in R$, this identification is compatible with the Hodge-Tate comparison map in the non-log case, and $1\otimes X_0$ corresponds to $d X_0 / X_0$. So, $X_0 \otimes X_0$ corresponds to $d X_0$ before inverting $X_0$. As $X_0$ is a nonzerodivisor of $R$, this implies that the above identification is compatible with 
\[
\eta^1_{(R, P)}\colon \Omega^1_{(R, P)/ (A/I, M_A)} \to H^1 (\overline{\Prism}_{(R, P)/(A, M_A)})\{1\}
\]
under the map $d\log$. 
Therefore, Lemma \ref{baby Hodge-Tate} implies that

\begin{prop}[The Hodge-Tate comparison for the log-affine line]\label{Hodge-Tate:log-affine line}
The Hodge-Tate comparison holds for $(A/I \langle \bN \rangle, M_A \oplus \bN)$.  
\end{prop}

\subsection{Proof of the Hodge-Tate comparison}
First note that Proposition \ref{Hodge-Tate:log-affine line} implies that Lemma \ref{lemma for log-differentials} holds true for any $(R, P)$ by the functoriality of the Hodge-Tate comparison maps. 

Fix a surjection $M_A \oplus \bN^S \to P$ for some finite set $S$. 
We have a $\delta_{\log}$-ring $(A\langle \bN^S \rangle, M_A \oplus \bN^S)$ over $(A, M_A)$ by letting $\delta_{\log}(\bN^S)=0$, equipped with a surjection
\[
(A\langle \bN^S \rangle , M_A \oplus \bN^S) \to (R, P). 
\]
The \v{C}ech nerve $(B^{\bullet}, IB^{\bullet}, M_{\bullet})^a$ of its log prismatic envelope computes the log prismatic cohomology $\Prism_{(R, P)/ (A, M_A)}$ by Proposition \ref{more coverings}. 
(Note that $B^n$ is $(p,I)$-completely flat over $A$ for every $n$.)
Set 
\[
G\coloneqq P^{\gp}/M_A^{\gp}, \quad G^0\coloneqq \bZ^S, \quad H \coloneqq \textnormal{Ker} (G^0\to G). 
\]
The reduction $B^{\bullet}\otimes_A A/I$ is a cosimplicial $R$-algebra, and it has the form of
\[
B^{\bullet}\otimes_A A/I \cong   R  \widehat{\otimes}_A C^{\bullet}
\]
for $C^{\bullet}$ in Lemma \ref{baby Hodge-Tate}. Therefore, by the same lemma, we have identifications
\[
\Omega^i_{(R, P)/(A/I, M_A)} \cong
\bigwedge^i R\otimes_{\bZ} G \cong H^i ( \textnormal{Tot} (B^{\bullet}\otimes_A A/I) )\{i\}
\cong H^i (\overline{\Prism}_{(R, P)/(A, M_A)})\{i\},
\]
as $R$ is $p$-completely flat over $A/I$. 
So, it remains to check that this identification is compatible with the Hodge-Tate comparison map for $i=1$. 
For this, we fix an element $m\in P$ and show that images of $d\log (m)$ are the same. 
The element $d\log (m)$ is the image of $d\log (X_0)$ under the map induced by
\[
(A\langle X_0 \rangle, M_A \oplus X_0^{\bN}) \to (R, P)
\]
sending $X_0$ to $m$. 
Consider another surjection
\[
M_A \oplus \bN^S \oplus X_0^{\bN} \to P,  
\]
which lives over the identity of $M_A \oplus X_0^{\bN}$. 
For this identity, the compatibility is already checked in the discussion of Proposition \ref{Hodge-Tate:log-affine line}. 
Note that the construction is functorial with respect to surjections $M_A \oplus \bN^S \to P$ and the Hodge-Tate comparison is functorial. So, we are allowed to replace the surjection $M_A\oplus \bN^S \to P$ to compare images of $d\log (m)$, and the desired compatibility follows from the case of the identity of $M_A \oplus X_0^{\bN}$. 

\section{The crystalline comparison}\label{Section:crystalline comparison}
In the case of crystalline prisms, i.e., $I=(p)$, we relate the log prismatic cohomology to the log crystalline cohomology. 

Let $(A, (p), M_A)$ be a bounded prelog prism with integral $M_A$, and assume it is of rank $1$ or $(A, M_A)$ is a log ring. In particular, $A$ is $p$-torsionfree. Let $I$ be a PD ideal of $A$ that contains $p$, i.e., $I$ admits divided powers. 
The Frobenius map $(A/ p, M_A)\to (A/p, M_A)$, where Frobenius acts on $M_A$ via $\phi_{M_A}$ from Remark \ref{Frobenius lifts on monoids}, factors as 
\[
(A/p, M_A) \to (A/I, M_A) \overset{\psi}{\longrightarrow} (\phi_* A/p, \phi_*M_A)
\]
with the first map being a natural surjection since $I$ maps to $(p)$ under the Frobenius map.   
(Here $\phi_*M_A$ denotes $M_A$ regarded as a monoid over $M_A$ via $\phi_{M_A}$.)

Let $(R, P)$ be a smooth prelog ring over $(A/I, M_A)$ with $P$ integral, and assume that 
$M_A\to P$ is integral and (weakly) finitely generated. Let $(R^{(1)}, P^{(1)})$ be the base change of $(R, P)$ along $\psi$; the monoid $P^{(1)}$ is given by the pushout
\[
\begin{CD}
P @<<< M_A \\
@VVV @VV m \mapsto \phi_{M_A}(m)V \\
P^{(1)} @<<< \phi_*M_A. 
\end{CD}
\]
%with relative Frobenius $P^{(1)}\to P$. 
Note that $P^{(1)}$ is integral as $M_A \to P$ is integral, $(R^{(1)}, P^{(1)})$ is smooth over $(\phi_* A/p, \phi_* M_A)$, and $\phi_* M_A\to P^{(1)}$ is (weakly) finitely generated.  

We prove the crystalline comparison under an additional assumption on charts. 
A map of monoids $h\colon M\to N$ is \emph{exact} if the following diagram 
\[
\begin{CD}
M@>h >> N \\
@VVV @VVV \\
M^{\gp}@> h^{\gp}>> N^{\gp}
\end{CD}
\]
is Cartesian. If $M, N$ are integral and $h$ is injective, this means $M^{\gp}\cap N=M$ inside $N^{\gp}$, and in particular $M^{\gp}\cap N^\times=M^{\times}$. 
Note that a map of integral monoids is exact surjective if and only if it is exact and surjective, so no confusion occurs.

We say that a map $M\to N$ from a sharp\footnote{A monoid $M$ is sharp if $M^\times=\{e\}$; this assumption is not essential, and put here only for avoiding conflict of notation $N^{(1)}$ in the main text.} monoid to a monoid is of \emph{Cartier type} if it is integral and the relative Frobenius $N^{(1)}\to N$ is exact, where $N^{(1)}$ is the pushout of $N$ along the $p$-th power map of $M$.

Back to the setting at the beginning.   
We will assume that $M_A/M_A^{\times} \to P/P^\times$ is of Cartier type. This is equivalent to that $P^{(1)}\to P$ is exact. 
We say that $M_A\to P$ is of Cartier type as well.  
%The base change $\phi_* M_A\to P^{(1)}$ is exact \cite{Ogus}*{I.2.1.16.4}. 

\begin{thm}\label{crystalline comparison}
We assume that $M_A\to P$ is \emph{of Cartier type} and $(R, P)$ admits an exact surjection 
$(\widetilde{R}, \widetilde{P})\to (R, P)$ from a smooth lift $(\widetilde{R}, \widetilde{P})$ over $(A/p, M_A)$.  
Then, there exists a canonical isomorphism
\[
\Prism_{(R^{(1)}, P^{(1)})/ (\phi_* A, \phi_* M_A)} \cong \phi_* R\Gamma_{\crys} ((R, P)/ (A, M_A))
\]
of $E_{\infty}$-$\phi_* A$-algebras compatible with the Frobenius. 
\end{thm}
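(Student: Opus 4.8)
The plan is to mimic the classical proof of the crystalline comparison for prismatic cohomology in \cite{BS}*{Theorem 5.2}, adapting each step to the logarithmic setting and making use of the exactification machinery developed above. The key point is that, just as in the non-logarithmic case, both sides can be computed by explicit \v{C}ech-type complexes attached to a single well-chosen object (the log PD envelope on the crystalline side, the log prismatic envelope on the prismatic side), and these explicit complexes can be matched.

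First I would reduce to the local, log-affine situation with an exact chart. By the hypothesis, we may choose an exact surjection $(\widetilde R,\widetilde P)\to(R,P)$ from a smooth lift over $(A/p,M_A)$; applying Construction \ref{\v{C}ech} and Proposition \ref{Prelog prismatice envelops for smooth log algebras}, the log prismatic cohomology $\Prism_{(R^{(1)},P^{(1)})/(\phi_*A,\phi_*M_A)}$ is computed by the cosimplicial $\delta$-algebra arising from the log prismatic envelope of a free \v{C}ech nerve. On the crystalline side, $R\Gamma_{\crys}((R,P)/(A,M_A))$ is computed by the log PD envelope of the analogous \v{C}ech nerve, i.e., the $p$-completed PD envelope of the kernel of $(\widetilde R^\bullet,\widetilde P^\bullet)\to(R,P)$ with its logarithmic structure, exactly as in the Illusie--Kato description of log crystalline cohomology. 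The exactness of $M_A/M_A^\times\to P/P^\times$ (Cartier type) guarantees that these exactified \v{C}ech nerves behave well and that $P^{(1)}\to P$ is exact, so the Frobenius twist is transparent on charts.

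Next I would construct the comparison map and check it is an isomorphism on the level of these explicit complexes. The idea, following Bhatt--Scholze, is that for $I=(p)$ a PD ideal, the prism $(A,(p))$ maps to the crystalline story: any log PD thickening gives, after base change along $\psi$ (i.e., a Frobenius twist), a log prism, because the $\delta$-structure on $A$ together with the divided powers produces the required Frobenius lift — concretely, one uses that $\delta(x)=x^p/p - (\text{stuff})$ makes sense on a PD-ideal, and on the monoid side the map $\delta_{\log}$ is forced by Proposition \ref{change of monoids} once one passes to the exactification. This yields a functor from the log crystalline site (of $(R,P)/(A,M_A)$) to the log prismatic site (of $(R^{(1)},P^{(1)})/(\phi_*A,\phi_*M_A)$), hence a map of cohomologies; comparing the log PD envelope with the log prismatic envelope of the same regular-sequence data — the log prismatic envelope being, up to Frobenius twist, the $p$-completed PD envelope, by the structure theory of prismatic envelopes for regular sequences (Proposition \ref{prismatic envelopes for regular sequenes}) — shows the map is an isomorphism term by term, hence a quasi-isomorphism. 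Compatibility with Frobenius is built into the construction since the comparison is set up through $\psi=\phi$.

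The main obstacle I expect is the matching of the two envelopes in the logarithmic setting: one must verify that the log prismatic envelope of the exactified \v{C}ech nerve agrees, after the Frobenius twist, with the log PD envelope, and this requires controlling how $\delta_{\log}$ interacts with the exactification (Proposition \ref{change of monoids}) and with divided powers simultaneously. In the non-logarithmic case this is the identification of the prismatic envelope of $(B,(p,x_1,\dots,x_r))$ with the PD envelope, proved via an explicit computation with $\delta$; here one additionally needs that the extra monoid generators introduced by exactification contribute no new PD or $\delta$-data beyond what is prescribed by $\delta_{\log}(n)=n^p\delta_{\log}(n)$, which is precisely the content of the proof of Proposition \ref{change of monoids} (the localization $\alpha(M)^{-1}A$ is $\delta$-stable). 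Granting that, the rest is the standard \v{C}ech-to-cohomology bookkeeping plus a check that the globalization over $X$ is unproblematic because both sides are computed by the strictly functorial complexes of Remark \ref{functorial complex}.
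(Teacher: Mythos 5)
Your overall architecture — compute both sides by \v{C}ech--Alexander complexes and compare the log PD envelope with the log prismatic envelope — is the right one, but there are two genuine gaps. First, your comparison functor is not well defined on the log crystalline site as stated. To send a log PD thickening $(B,J,M_B)$ to a log prism $(\phi_*B,(p),M_B^{(1)})^a$ you need an actual Frobenius lift on $B$ together with a $\delta_{\log}$-structure on $M_B$; the divided powers on $J$ and the $\delta$-structure on $A$ do not produce one on an arbitrary object. The paper therefore interposes the $\delta_{\log}$-crystalline site, whose objects are log PD thickenings equipped with $\delta$- and $\delta_{\log}$-structures, defines the functor there, and then must prove separately (Proposition \ref{delta-crys vs crys}) that restricting from the log crystalline site to the $\delta_{\log}$-crystalline site does not change the cohomology — a cosimplicial homotopy equivalence between the PD envelopes of the polynomial \v{C}ech nerve and of the free $\delta_{\log}$-ring \v{C}ech nerve. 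This step is absent from your proposal.

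Second, and more seriously, the comparison of envelopes is not ``an isomorphism term by term.'' The prismatic envelope $D^{\bullet}$ on the prismatic side is the envelope of the Frobenius-pulled-back ideal $\phi_*(\phi^*J^{\bullet}(B^{\bullet})')^{\wedge}_p$, while $\phi_*C^{\bullet}$ is the PD envelope of $J^{\bullet}$ itself; the map $D^{\bullet}\to\phi_*C^{\bullet}$ is induced by the relative Frobenius $1\otimes\phi_{(B^{\bullet})'}$, which on generators is essentially $x\mapsto x^p$ and is nowhere near a term-wise isomorphism. The substance of the proof is that this relative Frobenius nevertheless induces a quasi-isomorphism of the associated total complexes mod $p$: for the free polynomial part this is \cite{BS}*{Lemma 5.4}, and for the monoid part it is Proposition \ref{Relative Frobenius is a quasi-isomorphism} of Appendix B, where the Cartier-type hypothesis (exactness of $P^{(1)}\to P$, hence of $Q^{(1)}\to Q$ for the exactified charts) is exactly what allows one to build the contracting homotopy. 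Your proposal invokes Cartier type only to say the Frobenius twist is ``transparent on charts'' and never confronts the fact that the key map is a quasi-isomorphism but not an isomorphism of cosimplicial rings, so the heart of the argument is missing. (Also, Proposition \ref{prismatic envelopes for regular sequenes} gives flatness and base-change properties of prismatic envelopes, not their identification with PD envelopes; the relevant identification is \cite{BS}*{Corollary 2.38}, and even granting it the relative-Frobenius issue above remains.)
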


\begin{rem}
By Corollary \ref{base change II}, one can replace the left hand side by the $p$-completed base change of $\Prism_{(\widetilde{R}, \widetilde{P})/ (A, M_A)}$. 
\end{rem}

The required lifting $(\widetilde{R}, \widetilde{P})$ of $(R,P)$ perhaps always exists as in the case of schemes \cite{Stacks}*{\href{https://stacks.math.columbia.edu/tag/07M8}{Tag 07M8}}, but we leave it as an assumption.
It exists if $P$ itself is a smooth chart, i.e., the one appearing in the definition of smoothness (see Definition \ref{smooth}), of the associated log scheme $(X, M_X)$ over $(A/I, M_A)$. In particular, the lifting exists \'etale locally, and in fact we can globalize the crystalline comparison:

\begin{thm}[The crystalline comparison]\label{crystalline comparison:global}
Let $(X, M_X)$ be a smooth log scheme over $(A/I, M_A)$ of Cartier type in the sense of \cite{Kato:log}*{4.8}. 
There is a canonical isomorphism of $E_{\infty}$-$\phi_* A$-algebras on $X_{\et}$:
\[
\Prism_{(X^{(1)}, M_X^{(1)})/(\phi_* A, \phi_* M_A)}\cong \phi_* Ru^{\crys}_{*} \cO_{\crys}
\]
in $D(X_{\et}, \phi_* A)$ under the natural identification $D(X_{\et}^{(1)}, \phi_* A)\cong D(X_{\et}, \phi_* A)$, where $u^{\crys}$ denotes the projection from the crystalline topos and $(X^{(1)}, M_X^{(1)})$ is the base change of $(X, M_X)$ along $\psi$. 
\end{thm}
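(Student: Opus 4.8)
The plan is to reduce the global statement (Theorem \ref{crystalline comparison:global}) to the local affine statement (Theorem \ref{crystalline comparison}) by a sheafification argument on $X_{\et}$. First I would observe that both sides are complexes of sheaves on $X_{\et}$: the left-hand side is $R\nu_* \cO_{\Prism}$ for the log prismatic site of $(X^{(1)}, M_X^{(1)})$ (using Remark \ref{prismatic to etale}), and the right-hand side is $\phi_* Ru^{\crys}_* \cO_{\crys}$. To construct a natural comparison map of sheaves, and then to check it is an isomorphism, it suffices to work on a basis of $X_{\et}$. By the discussion in \S 4.3 (the remark on globalization) together with Lemmas \ref{small neighborhood}, \ref{chart and global section II} and the smoothness of $(X, M_X)$, I may choose a basis of $X_{\et}$ consisting of log-affine opens $U = \Spf(R)$ equipped with an integral, weakly finitely generated chart $P \to \Gamma(U, M_X)$ over $M_A$, and such that the chart $P$ is (locally) a \emph{smooth} chart; on such $U$ the lifting hypothesis of Theorem \ref{crystalline comparison} is automatically satisfied since a smooth chart lifts to $(A/p, M_A)$. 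The Cartier-type hypothesis on $(X, M_X)/(A/I, M_A)$, in the sense of \cite{Kato:log}*{4.8}, passes to such charts: $M_A \to P$ is of Cartier type in the sense introduced above, because being of Cartier type is a local condition on charts.

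Next I would invoke the local comparison. On each such $U$, Theorem \ref{crystalline comparison} gives a canonical isomorphism
\[
\Prism_{(R^{(1)}, P^{(1)})/(\phi_* A, \phi_* M_A)} \cong \phi_* R\Gamma_{\crys}((R, P)/(A, M_A))
\]
of $E_\infty$-$\phi_* A$-algebras compatible with Frobenius. By Lemma \ref{localization} (strict \'etale localization) and the analogous statement for log crystalline cohomology (base change along $p$-completely \'etale maps, a standard fact for the log crystalline site), the presheaves $U \mapsto \Prism_{(R^{(1)}, P^{(1)})/(\phi_* A, \phi_* M_A)}$ and $U \mapsto \phi_* R\Gamma_{\crys}((R, P)/(A, M_A))$ on this basis agree with the restrictions of $\Prism_{(X^{(1)}, M_X^{(1)})/(\phi_* A, \phi_* M_A)}$ and $\phi_* Ru^{\crys}_* \cO_{\crys}$ respectively; here one uses that $R\nu_*$ and $Ru^{\crys}_*$ commute with the relevant localizations, as in \cite{BS}*{proof of Theorem 5.2}. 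The key point is functoriality and compatibility: I need the local isomorphisms of Theorem \ref{crystalline comparison} to be compatible with strict \'etale localization, so that they glue to an isomorphism of sheaves. This compatibility should follow from the construction of the local isomorphism, which is built from comparing log PD envelopes with log prismatic envelopes (the "log PD/prismatic envelope" comparison referenced in the introduction); both envelope constructions commute with $p$-completely \'etale, hence strict \'etale, localization on the algebra. Finally, one passes from the basis to all of $X_{\et}$ by sheafification, and the identification $D(X_{\et}^{(1)}, \phi_* A) \cong D(X_{\et}, \phi_* A)$ is simply that the morphism of topoi underlying $\psi$ is (up to the Frobenius twist) the identity on underlying spaces since $X^{(1)} \to X$ is a homeomorphism.

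I expect the main obstacle to be the \emph{compatibility of the local comparison isomorphisms with localization and gluing}, i.e., checking that the isomorphism of Theorem \ref{crystalline comparison} is genuinely natural in $(R, P)$ for strict \'etale maps, rather than merely existing for each $(R, P)$. This requires tracing through how the isomorphism is produced: one realizes both sides via explicit functorial complexes (the \v{C}ech--Alexander complex of Construction \ref{\v{C}ech} for the prismatic side, and a log PD \v{C}ech--Alexander or de Rham complex for the crystalline side, as in Remark \ref{functorial complex}), establishes a map of such complexes compatible with the surjections $(B_0, M_B) \to (R, P)$, and then checks this map of complexes is itself functorial in $(R, P)$. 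Since the functorial complexes of Remark \ref{functorial complex} are strictly functorial on $(X, M_X)$, and the envelope formations commute with base change, this functoriality should be bookkeeping rather than a genuine difficulty; the subtlety is only in making sure the comparison map is defined before passing to cohomology. A secondary, more routine point is verifying that the Cartier-type and liftability hypotheses of Theorem \ref{crystalline comparison} are met on a cofinal family of charts, which I would handle by the Appendix lemmas on existence of smooth charts \'etale locally.
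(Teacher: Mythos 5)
Your overall strategy---reduce to the affine statement of Theorem \ref{crystalline comparison} on a basis of $X_{\et}$ where smooth charts of Cartier type exist, then check the isomorphism locally---matches the paper's verification step, and your use of Lemmas \ref{small neighborhood} and \ref{chart and global section II} to produce such a basis is exactly right. But the construction of the comparison map itself is where you diverge from the paper, and where your argument has a real gap. The paper does not glue local isomorphisms at all: it first introduces the auxiliary $\delta_{\log}$-crystalline site $((X, M_X)/(A, M_A))_{\delta\CRYS}$, proves (Proposition \ref{delta-crys vs crys}, together with Remark \ref{small log crystalline sites}) that its cohomology agrees with the usual log crystalline cohomology, and then defines a \emph{cocontinuous functor of sites}
\[
((X, M_X)/(A, M_A))_{\delta\CRYS} \to ((X^{(1)}, M_X^{(1)})/(\phi_* A, \phi_* M_A))_{\Prism},
\quad (B, J, M_B) \mapsto (\phi_* B, (p), M_B^{(1)})^a,
\]
using the Frobenius lift carried by every object of the $\delta_{\log}$-crystalline site. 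This yields a morphism of ringed topoi and hence a single, canonical, globally defined comparison map in $D(X_{\et}, \phi_* A)$ with no gluing whatsoever; only the verification that it is an isomorphism is local. This is the point of the sentence ``we formulate it in a more site-theoretic way'' and is the reason the $\delta_{\log}$-crystalline site is introduced in the first place.

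The gap in your route is the step you dismiss as ``bookkeeping.'' An isomorphism in a derived category specified on a basis does not glue without coherence data, and the local isomorphism of Theorem \ref{crystalline comparison} is built from a choice of smooth lift $(\widetilde{R}, \widetilde{P})$ over $(A/p, M_A)$ together with a choice of surjection onto that lift; the exactifications, PD envelopes and prismatic envelopes entering the comparison of complexes are all taken relative to these choices. The strictly functorial surjections of Remark \ref{functorial complex} control the choice of presentation of $(R,P)$, but they do not produce a strictly functorial choice of lift, so you have not actually exhibited a strictly functorial map of complexes between the two sides. One could presumably repair this (e.g.\ by reformulating the crystalline side so that the lift is only used to verify flatness of envelopes and not to define the comparison map), but as written the construction of the global map is missing, and the paper's device of passing through the $\delta_{\log}$-crystalline site is precisely what makes this issue evaporate.
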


Under the above assumption, one can locally make a smooth chart of Cartier type. 
We essentially follow \cite{BS}*{Theorem 5.2} for the proof, but we formulate it in a more site-theoretic way.

\subsection{The \texorpdfstring{$\delta_{\log}$}{delta-log}-crystalline site}
The $\delta_{\log}$-crystalline site is a special case of the log $q$-crystalline site in \S \ref{log q-crystalline site} below. 
It is a modification of the big log crystalline site such that each object is equipped with a Frobenius lift, i.e., $\delta$- and $\delta_{\log}$-structure. 

We shall consider a general log $p$-adic formal scheme $(X, M_X)$ over $(A, M_A)$ instead of $(R, P)$.

\begin{defn}[The $\delta_{\log}$-crystalline site]\label{log delta-crystalline}
We define the (big) \emph{$\delta_{\log}$-crystalline site} $((X, M_X)/(A, M_A))_{\delta\CRYS}$ as the opposite of the category whose object consists of 
\begin{itemize}
    \item a log prism $(B, (p), M_{\Spf (B)})=(B, (p), M_B)^a$ associated to a bounded prelog prism $(B, (p), M_B)$ over $(A, (p), M_A)$ with integral log structure, and a $p$-completed PD ideal $J\subset B$ such that $B/J$ is classically $p$-complete (we call the triple $(B, J, M_B)^a\coloneqq (B, J, M_{\Spf (B)})$ a \emph{$\delta_{\log}$-PD triple} over $(A, M_A)$), 
    \item a map of $p$-adic formal schemes $f\colon \Spf (B/J)\to X$ over $A$, and
    \item an exact closed immersion of log $p$-adic formal schemes 
    \[
    (\Spf (B/J), f^*M_X)\hookrightarrow (\Spf (B), M_{\Spf (B)})
    \]
     over $(A, M_A)$.
\end{itemize}
Morphisms are obvious ones. We endow it with the \'etale topology as in the case of log prismatic site. 
(Any \'etale cover of $B/J$ lifts uniquely to an \'etale cover of $(B, J, M_B)^a$.)
The structure sheaf $\cO_{\delta\CRYS}$ is defined by sending $(B, J, M_B)^a$ to $B$, which is indeed a sheaf. 
We write $R\Gamma_{\delta\CRYS}((X,M_X)/(D, M_D))$ for the cohomology of the structure sheaf. 
\end{defn}

We only allowed $(B, (p), M_B)$ that admits a map from $(A, (p), M_A)$, and this is not natural as it depends on the chart $M_A\to A$ rather than the associated log structure. While it is also possible to drop this condition, this slightly simplifies the proof of the crystalline comparison. 

\begin{rem}
For a $\delta_{\log}$-PD triple $(B, J, M_B)^a$ over $(A, M_A)$, $J$ admits divided powers as $B$ is assumed to be $p$-torsionfree. Thus, for any PD-ideal $I$ of $A$, the divided power structure of $I$ extends to $IB$, and further to $J+IB$. In other words, the divided power structures of $I, J$ are always compatible. 
\end{rem}

\begin{rem}[Relation to a mixed-characteristic log crystalline site]
We define a version of the big log crystalline site\footnote{The notation here is traditionally used for crystalline topoi rather than crystalline sites.} $((X, M_X)/(A, M_A))_{\CRYS}$ with the \'etale topology by dropping $\delta$- and $\delta_{\log}$-structures on $B$ in the definition of $\delta_{\log}$-crystalline site. 
Its structure sheaf is denoted by $\cO_{\CRYS}$. 
There is a morphism of topoi
\[
u_X\colon \Shv(((X, M_X)/(A, M_A))_{\CRYS})\to \Shv(X_{\et})
\]
as usual. 

There is a natural cocontinuous functor of sites
\[
((X, M_X)/(A, M_A))_{\delta\CRYS} \to ((X, M_X)/(A, M_A))_{\CRYS}
\]
by forgetting $\delta$- and $\delta_{\log}$-structures, identifying \'etale coverings. 
It induces a morphism of topoi
\[
\Shv(((X, M_X)/(A, M_A))_{\delta\CRYS})\to \Shv(((X, M_X)/(A, M_A))_{\CRYS}), 
\]
and its composite with $u_X$:
\[
u_X^{\delta} \colon \Shv(((X, M_X)/(A, M_A))_{\delta\CRYS})\to \Shv(X_{\et}). 
\]
The pushforward along $u_X, u_X^{\delta}$ are computed as in the case of log prismatic site. 
Finally, as $\cO_{\CRYS}$ restricts to $\cO_{\delta\CRYS}$, we have a canonical map
\[
Ru_{X*}\cO_{\CRYS}\to Ru_{X*}^{\delta}\cO_{\delta\CRYS}. 
\]
\end{rem}

\begin{rem}[Relation to usual log crystalline sites]\label{small log crystalline sites}
For simplicity, suppose $X$ lives over $A/I$ for a $p$-completed PD ideal $I \subset A$ containing $p$. 
For any integer $m \geq 1$, the reduction mod $p^m$ defines a cocontinuous functor
\[
((X, M_X)/(A, M_A))_{\CRYS} \to ((X, M_X)/ (A/p^m, M_A))_{\CRYS}
\]
to the log version of the big crystalline site with affine objects and the \'etale topology, and the local computation of $Ru_{X*}\cO_{\CRYS}$ given in the proof of Proposition \ref{delta-crys vs crys} below shows that it induces the following isomorphisms
\begin{align*}
Ru_*^{\crys} \cO_{(X, M_X)/ (A/p^m, M_A)}&\overset{\cong}{\longleftarrow} Ru_*^{\CRYS} \cO_{(X, M_X)/ (A/p^m, M_A)} \\
&\overset{\cong}{\longrightarrow} Ru_{X*}(\cO_{\CRYS}\otimes_{A}^L A/p^m) \\
&\overset{\cong}{\longleftarrow}(Ru_{X*}\cO_{\CRYS})\otimes_{A}^L A/p^m
\end{align*}
compatibly with $m$, where $u_*^{\CRYS}$ denote the projection from the big crystalline site. 
The last identification is the projection formula \cite{Stacks}*{\href{https://stacks.math.columbia.edu/tag/0943}{Tag 0943}}.

The isomorphisms lift to an isomorphism in $D(X_{\et} \times \bN, A)$. 
Therefore, taking the derived limit, we obtain a canonical isomorphism
\[
Ru_*^{\crys} \cO_{(X, M_X)/ (A, M_A)} \cong Ru_{X*}\cO_{\CRYS}
\]
since $Ru_{X*}\cO_{\CRYS}$ is $p$-complete \cite{Stacks}*{\href{https://stacks.math.columbia.edu/tag/099J}{Tag 099J}}. 
(Use also \cite{Beilinson}*{1.12}.)
\end{rem}

\begin{prop}[$\delta_{\log}$-crystalline cohomology and log crystalline cohomology]\label{delta-crys vs crys}
Let $I\subset A$ be a $p$-completed PD ideal such that $A/I$ is classically $p$-complete. 
If $(X, M_X)$ is smooth over $(A/I, M_A)$, the natural map induces an isomorphism of  $E_{\infty}$-$A$-algebras on $X_{\et}$:
\[
Ru_{X*}\cO_{\CRYS}\overset{\cong}{\longrightarrow} Ru_{X*}^{\delta}\cO_{\delta\CRYS}. 
\]
\end{prop}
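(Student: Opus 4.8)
The plan is to reduce the global statement to a local computation and then show that, locally, both sides compute the same explicit complex built from divided-power envelopes. First I would work \'etale-locally on $X$, so that $X=\Spf(R)$ with an integral chart $P\to\Gamma(X,M_X)$ over $M_A$ such that $M_A\to P$ is integral and weakly finitely generated, and so that there exists an exact surjection $(\widetilde{R},\widetilde{P})\to(R,P)$ from a $p$-completely smooth lift over $(A/p,M_A)$ (this is possible by taking $P$ to be a smooth chart; since $p\in I$ and $I$ is a PD ideal, $\widetilde R$ can be produced from a polynomial lift). Then the statement to prove becomes: the natural map from the log crystalline cohomology $R\Gamma_{\crys}((R,P)/(A,M_A))$ to the $\delta_{\log}$-crystalline cohomology $R\Gamma_{\delta\CRYS}((R,P)/(A,M_A))$ is an isomorphism.

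The key step is to exhibit an explicit object computing each side. On the crystalline side, choose a free prelog polynomial presentation $(B_0,M_B)=(A\langle(X_s)_{s\in S},\bN^T\rangle, M_A\oplus\bN^T)\to(R,P)$ lifting the chart, form its \v{C}ech nerve, exactify against the surjection onto $(R,P)$ using Construction \ref{Exactification}, and take $p$-completed log PD envelopes; by the log crystalline Poincar\'e lemma (and the cofinality of such PD-envelope \v{C}ech nerves in the big log crystalline site with \'etale topology) this cosimplicial PD ring computes $Ru_{X*}\cO_{\CRYS}$. On the $\delta_{\log}$-side, I apply the $\delta_{\log}$-ring functor to $(B_0,M_B)$, getting $(B,M_B)=(A\{(X_s)_{s\in S}\}^\delta\{\bN^T\}^\delta_{\log}, M_A\oplus\bN^T)$, again form the \v{C}ech nerve, exactify, and take $p$-completed log PD envelopes; the resulting cosimplicial $\delta_{\log}$-PD ring computes $Ru^{\delta}_{X*}\cO_{\delta\CRYS}$, by the same argument as in Construction \ref{\v{C}ech} (the object over degree $0$ covers the final object of the topos, and its \v{C}ech nerve is the exactified PD-envelope nerve). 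The comparison map is induced by $B_0\to B$ termwise. So it suffices to show that after taking log PD envelopes the map $B_0^\bullet\to B^\bullet$ becomes an equivalence of cosimplicial PD rings; concretely, one must check that adjoining $\delta$- and $\delta_{\log}$-variables ($\delta^n(X_s)$, $\delta^n(\delta_{\log}(1_t))$) to $B_0$ becomes inessential inside the PD envelope.

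The heart of the matter — and the main obstacle — is exactly this last claim: the extra $\delta$/$\delta_{\log}$-generators lie in the PD ideal after passing to the envelope, and adjoining a polynomial variable together with its divided powers to a PD ring and then quotienting it back out (via a PD-compatible section) is a PD-homotopy equivalence. For the $\delta$-variables this is the same bookkeeping as in \cite{BS}*{Theorem 5.2} (or Bhatt's lectures): $\phi\equiv$ $p$-th power mod $p$, $p$ is in the PD ideal, and $\delta(x)=(\phi(x)-x^p)/p$ already makes sense with divided powers so the $\delta$-tower collapses. The genuinely new point is the $\delta_{\log}$-generators attached to the monoid part $\bN^T$: here I would use that in a $\delta_{\log}$-PD triple the element $\delta_{\log}(1_t)$ satisfies $\alpha(1_t)^p\delta_{\log}(1_t)=\delta(\alpha(1_t))$, that $\alpha(1_t)$ is a unit times the image of the corresponding element of $P$ which is invertible modulo the PD ideal, and that therefore $\delta_{\log}(1_t)$ is determined, up to the PD ideal, by PD data already present — so the $\delta_{\log}$-tower collapses in the envelope by the same divided-power filtration / smooth-homotopy argument. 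Once both cosimplicial complexes are identified with the same exactified log PD-envelope complex, the isomorphism follows, it is visibly an $E_\infty$-$A$-algebra map, and it globalizes since the construction was carried out \'etale-locally and the identifications are natural; combining with Remark \ref{small log crystalline sites} then yields the comparison with the usual log crystalline cohomology as well.
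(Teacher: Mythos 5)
Your reduction to the local statement and your identification of the two \v{C}ech--Alexander complexes (the log PD envelopes $C_0^{\bullet}$ of the \v{C}ech nerve of the free prelog polynomial ring $B_0$, versus the log PD envelopes $C^{\bullet}$ of the \v{C}ech nerve of the free $\delta_{\log}$-ring $B$) match the paper's proof, and reducing everything to showing that $C_0^{\bullet}\to C^{\bullet}$ is an equivalence is the right move. But the mechanism you propose for that final step --- which you yourself flag as the heart of the matter --- does not work. The freely adjoined generators $\delta^n(X_s)$ and $\delta^n(\delta_{\log}(1_t))$ do \emph{not} lie in the PD ideal of $C^{\bullet}$: the PD ideal is $(J^{\bullet}C^{\bullet})^{\wedge}_p$ with $J^{\bullet}=\ker(C_0^{\bullet}\to R)$, and the quotient $C^{\bullet}/(J^{\bullet}C^{\bullet})^{\wedge}_p$ is a completed polynomial ring over $R$ on exactly those generators, so they survive as honest free variables. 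This is forced by weak finality: for a general object $(D,J_D,M_D)$ of the $\delta_{\log}$-crystalline site the images of $\delta(X_s)$ and $\delta_{\log}(1_t)$ in $D/J_D$ can be arbitrary, so no relation pinning them down can hold in a weakly final object. For the same reason the ``$\delta$-tower'' does not collapse and $\delta_{\log}(1_t)$ is not determined up to the PD ideal by data already present in $C_0$ --- the surjection $B_0\to\widetilde{R}$ carries no canonical Frobenius lift, which is precisely why the $\delta$- and $\delta_{\log}$-structures must be adjoined freely. (Your auxiliary claim that $\alpha(1_t)$ is a unit times something invertible modulo the PD ideal is also false in general: $1_t$ typically maps to a coordinate cutting out a boundary divisor, which is not invertible in $R$.)

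The correct, and much simpler, reason the extra variables are harmless is cosimplicial contractibility rather than any collapse. Since $B=B_0\widehat{\otimes}_A F$ with $F$ a completed polynomial $A$-algebra on the new generators, one has $B^{\bullet}=B_0^{\bullet}\widehat{\otimes}_A F^{\widehat{\otimes}(\bullet+1)}$, and the augmented \v{C}ech nerve $A\to F^{\widehat{\otimes}(\bullet+1)}$ is a cosimplicial homotopy equivalence because $A\to F$ admits the $A$-algebra retraction sending every generator to $0$ (the extra-degeneracy argument). Hence $B_0^{\bullet}\to B^{\bullet}$ is a cosimplicial homotopy equivalence, and this persists after applying $(-\otimes_{B_0^{\bullet}}C_0^{\bullet})^{\wedge}_p$, which yields the desired homotopy equivalence $C_0^{\bullet}\to C^{\bullet}$. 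Note finally that the Frobenius bookkeeping you import from \cite{BS}*{Theorem 5.2} belongs to a different step of the theory: the relative Frobenius and the quasi-isomorphisms of \cite{BS}*{Lemma 5.4} and Proposition \ref{Relative Frobenius is a quasi-isomorphism} enter in the proof of Theorem \ref{crystalline comparison}, whereas the present proposition involves no Frobenius twist at all.
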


\begin{proof}
It suffices to show that the restriction map
\begin{multline*}
R\Gamma (((X, M_X)/(A, M_A))_{\CRYS}, \cO_{\CRYS}) \\
\to 
R\Gamma (((X, M_X)/(A, M_A))_{\delta\CRYS}, \cO_{\delta\CRYS})
\end{multline*}
is an isomorphism if $(X, M_X)$ comes from a prelog ring $(R, P)$ with a lift $(\widetilde{R}, \widetilde{P})$ as in Theorem \ref{crystalline comparison} but we require neither that $I$ contain $p$ nor $M_A\to \widetilde{P}$ be exact. 
We can even assume that $P$ itself is a smooth chart.

Take a surjection
\[
(B_0, M_B)=(A\langle (X_s)_{s\in S}, \bN^T\rangle, M_A\oplus \bN^T)\to (\widetilde{R}, \widetilde{P})
\]
over $(A, M_A)$ for some sets $S$, $T$ whose kernel is Zariski locally generated by $p$ and a $p$-completely regular sequence relative to $A$. 
Denote the $p$-completed \v{C}ech nerve of $(A, M_A)\to (B_0, M_B)$ by $(B_0^{\bullet}, M_B^{\bullet})$ with surjections $(B_0^{\bullet}, M_B^{\bullet})\to (R, P)$, 
and their $p$-completed log PD envelopes by $(C_0^{\bullet}, M_{C}^{\bullet})$ regarded as $p$-adically log-affine prelog rings. 
By an argument analogous to Proposition \ref{Prelog prismatice envelops for smooth log algebras}, we see that each $C_0^{\bullet}$ is $p$-completely flat over $A$ (cf. \cite{BS}*{Lemma 2.42}), hence $p$-torsionfree as $A$ is so. 
Therefore, it is the \v{C}ech nerve of $(C_0^{\bullet}, M_{C}^{\bullet})$ in $((X, M_X)/(A, M_A))_{\CRYS}$.
Write $J$ for the kernel of $C_0 \to R$. 
As $(C_0, J, M_C)^a$ is a weakly final object of $((X, M_X)/(A, M_A))_{\CRYS}$, the cosimplicial $A$-algebra $C_0^{\bullet}$ computes the cohomology $R\Gamma (((X, M_X)/(A, M_A))_{\CRYS}, \cO_{\CRYS})$; compare with Construction \ref{\v{C}ech}. 

We have a similar computation for the $\delta_{\log}$-crystalline cohomology, parallel to Construction \ref{\v{C}ech}. 
Namely, let
\[
B=(A\{ (X_s)_{s\in S}\}^{\delta}\{\bN^T\}_{\log}^{\delta})^{\wedge}_{p}, 
\]
and write $C$ for the $p$-completion of $B \otimes_{B_0}C_0$, which is $p$-torsionfree. 
Since $B_0\to B$ is $p$-completely flat, $(JC)^{\wedge}_p$ is a PD ideal of $C$ and $C$ is identified with a $p$-completed log PD envelope of $(B, M_B)$ with a surjection $M_B\to \widetilde{P}$ and an ideal generated, Zariski locally, by $p$ and a $p$-completely regular sequence relative to $A$. Therefore, $C$ is a $\delta$-ring over $C_0$ by \cite{BS}*{Corollary 2.38} and Zariski descent.  
It is easy to see that the $\delta_{\log}$-PD triple associated to $(C, (JC)^{\wedge}_p, M_C)$ is weakly final in the category $((X, M_X)/(A, M_A))_{\delta\CRYS}$, and its \v{C}ech nerve $C^{\bullet}$ computes the cohomology $R\Gamma (((X, M_X)/(A, M_A))_{\delta\CRYS}, \cO_{\delta\CRYS})$. Moreover, the \v{C}ech nerve has the following description:
\[
C^{\bullet}\cong (B^{\bullet}\otimes_{B_0^{\bullet}}C_0^{\bullet})^{\wedge}_p, 
\]
where $B^{\bullet}$ comes from the $p$-completed \v{C}ech nerve of $(A, M_A)\to (B, M_B)$.  
(To see that the right hand side comes from a simplicial object of $((X, M_X)/(A, M_A))_{\delta\CRYS}$, we use again \cite{BS}*{Corollary 2.38} and Zariski descent.)

To conclude, it is enough to observe that the natural map $C_0^{\bullet}\to C^{\bullet}$ is a cosimplicial homotopy equivalence because $B_0^{\bullet}\to B^{\bullet}$ is so by the $p$-completely freeness of $B$ over $B_0$. 
(Another way to conclude is to regard $C_0\to C$ as a covering in the big crystalline site with the ind-smooth topology and argue that $C^{\bullet}$ computes the same cohomology.)
\end{proof}

\begin{rem}[Smooth $\delta_{\log}$-algebras compute the $\delta_{\log}$-crystalline cohomology]\label{small smooth-delta-algebra computes delta-crystalline cohomology}
For simplicity assume $I$ contains $p$. 
Suppose $X=\Spec (R)$ is affine with a chart $M_A\to P$ that is integral and weakly finitely generated, and let $(B, M_B)\to (R, P)$ be a surjective map from a $p$-completely smooth $\delta_{\log}$-ring $(B, M_B)$ of topologically of finite presentation over $(A, M_A)$. 
Then, its log PD envelope modulo $p^m$, for every integer $m\geq 1$, is PD-smooth over $(A/p^m, M_A)$ in the sense of \cite{Beilinson}*{1.4} by Remark \ref{strong lifting property}. 
(Compare with \cite{Beilinson}*{1.4 Remarks (ii)}.)

Therefore, by \cite{Beilinson}*{1.6}, the ($p$-completed) \v{C}ech nerve $C^{\bullet}$ of the $p$-completed log PD envelope $(C, M_C)$ either in the $\delta_{\log}$-crystalline site or the log crystalline site computes the log crystalline cohomology of $(X, M_X)$ over $(A, M_A)$. 
Combined with Remark \ref{small log crystalline sites} and Proposition \ref{delta-crys vs crys}, $C^{\bullet}$ also computes the $\delta_{\log}$-crystalline cohomology. Moreover, the log de Rham complex with coefficients in $C$ computes these cohomology as well \cite{Beilinson}*{1.8, 1.7 Exercises (i)}. 
\end{rem}

\subsection{Proof of the crystalline comparison}
First we define a cocontinuous functor
\[
((X, M_X)/(A, M_A))_{\delta\CRYS} \to ((X^{(1)}, M_X^{(1)})/(\phi_* A, \phi_* M_A))_{\Prism}. 
\]
Let $(B, J, M_B)^a$ be an object of $((X, M_X)/(A, M_A))_{\delta\CRYS}$. We may assume $(B, M_B)$ is a log ring. Take the pushout
\[
\begin{CD}
M_A @>\phi_{M_A} >> \phi_* M_A \\
@VVV @VVV \\
M_{B} @>>> M_{B}^{(1)} 
\end{CD}
\]
with a map $M_{B}^{(1)}\to \phi_* M_{B}$ induced by $\phi_{M_B}$ from Remark \ref{Frobenius lifts on monoids}. 
Taking the composite of the last map with $M_B\to B$, we obtain a prelog ring $(\phi_* B, M_B^{(1)})$ over $(\phi_* A, \phi_* M_A)$. 
Then, the Frobenius on $(B/p, M_B)$ factors, where $M_B$ is acted by $\phi_{M_B}$, as
\[
(B/p, M_B)\to (B/J, M_B)\overset{\psi_B}{\longrightarrow} (\phi_* B/p, M_B^{(1)}) \to (\phi_*B/p, \phi_* M_B).   
\]
The composite 
\[
\Spf (\phi_* B/p) \overset{\psi_B}{\longrightarrow} \Spf (B/J) \to X
\]
factors through $X^{(1)}$ over $\phi_{*}A/p$.  
One can check that 
\begin{center}
$(\phi_* B, (p), M_B^{(1)})^a$ with $\Spf(\phi_* B/p) \to X^{(1)}$
\end{center}
comes naturally from an object of $((X^{(1)}, M_X^{(1)})/(\phi_* A, \phi_* M_A))_{\Prism}$. 
This defines the desired functor. It is cocontinuous because $\psi_B$ identifies the \'etale covers of $B/J$ and $B/p$. 
Moreover, it gives rise to a morphism of ringed topoi
\begin{multline*}
(\Shv(((X, M_X)/(A, M_A))_{\delta\CRYS}), \phi_* \cO_{\delta\CRYS}) \\
\to
(\Shv((X^{(1)}, M_X^{(1)})/(\phi_* A, \phi_* M_A))_{\Prism}, \cO_{\Prism}). 
\end{multline*}
Thus, we obtain a comparison map as in Theorem \ref{crystalline comparison:global}. 

To prove the comparison map is an isomorphism, we may work locally and it suffices to prove Theorem \ref{crystalline comparison}. 
We recall the computation of $\delta_{\log}$-crystalline cohomology from the proof of Proposition \ref{delta-crys vs crys}. 
Take $(\widetilde{R}, \widetilde{P})$ as in the assumption of Theorem \ref{crystalline comparison}. 
Choose a surjection
\[
(B_0, M_B^0)=(A\langle (X_s)_{s\in S}, \bN^T\rangle, M_A\oplus\bN^T)\to (\widetilde{R}, \widetilde{P})
\]
over $(A, M_A)$ for some sets $S$, $T$ whose kernel is Zariski locally generated by $p$ and a $p$-completely regular sequence relative to $A$. 
Let
\[
B=(A\{ (X_s)_{s\in S}\}^{\delta}\{\bN^T\}_{\log}^{\delta})^{\wedge}_{p}, 
\]
and $(B^{\bullet}, M_B^{\bullet})$ the \v{C}ech nerve of $(A, M_A)\to (B, M_B^0)$. 
A certain $p$-completed log PD envelope $(C^{\bullet}, M_C^{\bullet})$ computes the $\delta_{\log}$-crystalline cohomology as in the proof of \ref{delta-crys vs crys}. 
More precisely, it is given as follows. Let $(B_0^{\bullet}, M_B^{\bullet})$ denotes the \v{C}ech nerve of $(A, M_A)\to (B_0, M_B^0)$. We have a surjection $(B_0^{\bullet}, M_B^{\bullet})\to (\widetilde{R}, \widetilde{P})$, and denote its $p$-completed exactification by $((B_0^{\bullet})', (M_B^{\bullet})')$ and the $p$-completed log PD envelope by $(C_0^{\bullet}, M_C^{\bullet})$. 
If $J^{\bullet}$ denotes the kernel of $(B_0^{\bullet})'\to \widetilde{R}$, $C_0^{\bullet}$ is obtained as  
the $p$-completed PD envelope of $J^{\bullet} \subset (B_0^{\bullet})'$. 
Finally, $C^{\bullet}$ is the $p$-completion of $B^{\bullet}\otimes_{B_0^{\bullet}}C_0^{\bullet}$, 
which is the $p$-completed PD envelope of 
\[
(B^{\bullet})'\coloneqq (B^{\bullet}\otimes_{B_0^{\bullet}}(B_0^{\bullet})')^{\wedge}_{p}
\]
with the $p$-completed ideal generated by $J^{\bullet}$. 

On the other hand, starting from the following surjection
\[
((\phi_*\phi^* B_0)^{\wedge}_p, M_B^{0(1)})\to (\widetilde{R}^{(1)}, \widetilde{P}^{(1)}) 
\]
over $(\phi_* A, \phi_* M_A)$, we can compute the log prismatic cohomology as in Construction \ref{\v{C}ech}. 
Using the above notation, consider 
\begin{center}
$\phi_*\phi^* J^{\bullet}\subset \phi_*\phi^* (B_0^{\bullet})'$ and its extension $\phi_*(\phi^* J^{\bullet}(B^{\bullet})')^{\wedge}_p\subset \phi_*(\phi^* (B^{\bullet})')^{\wedge}_p$. 
\end{center}
The prismatic envelope $D^{\bullet}$ over $(\phi_* A, (p))$ of the latter computes the log prismatic cohomology. 

The Frobenius $\phi_{(B^{\bullet})'}\colon (B^{\bullet})'\to \phi_*(B^{\bullet})'$ extends to
\[
1\otimes \phi_{(B^{\bullet})'}\colon \phi_*\phi^* (B^{\bullet})'\to \phi_* (B^{\bullet})', 
\]
and it further extends to
\[
D^{\bullet}\to \phi_* C^{\bullet}
\]
by the description of the PD envelope \cite{BS}*{Corollary 2.38}. 
These maps come from a map of $\delta_{\log}$-rings
\[
(\phi_*\phi^* B, M_B^{0(1)})\overset{1\otimes \phi_B}{\longrightarrow} (\phi_* B, M_B^{0(1)})
\]
identifying $M_B^{0(1)}$. 
Therefore, the extended map is compatible with the morphism of ringed topoi and the calculation of the cohomology of structure sheaves, and it remains to show that it is an isomorphism. 
We will apply Proposition \ref{Relative Frobenius is a quasi-isomorphism} and \cite{BS}*{Lemma 5.4}. 
Indeed, we have the following description. Recall that 
\begin{center}
$(B_0^0)'$ is $p$-completely free over the $p$-completion of $A\otimes_{\bZ_{(p)}[ M_A]} \bZ_{(p)}[(M_{B}^{0})']$.
\end{center} 
Set
\[
Q_1=M_A, \quad Q_2=(M_{B}^{0})', \quad G=Q_2^{\gp}/Q_1^{\gp},
\]
$Q_1\to Q_2$ being of Cartier type by our assumption. 
Recall also that $G$ is a free abelian group. 
So, $(M_{B}^{n})'$ is isomorphic to $Q_2 \oplus G^n$, and we have the following cosimplicial $A$-algebra
\[
A \otimes_{\bZ_{(p)} [Q_1]} \bZ_{(p)}[Q_2]\to A\otimes_{\bZ_{(p)} [Q_1]} \bZ_{(p)}[Q_2\oplus G] \to
A \otimes_{\bZ_{(p)} [Q_1]} \bZ_{(p)}[Q_2\oplus G^2] \to \cdots 
\]
as in Appendix B. 
Up to the contribution from the ``free part'' and $p$-completion, this is the description of $(B^{\bullet})'$, and the map $D^{\bullet}\to \phi_* C^{\bullet}$ is induced from the relative Frobenius of $(B^{\bullet})'$ by \cite{BS}*{Corollary 2.38}. 
Therefore, the mod $p$ reduction of $D^{\bullet}\to \phi_* C^{\bullet}$ decomposes into two maps to which we can apply either Proposition \ref{Relative Frobenius is a quasi-isomorphism} or \cite{BS}*{Lemma 5.4} (this is for ``relative Frobenius'' on the free part). 
Thus, $D^{\bullet}\to \phi_* C^{\bullet}$ induces a quasi-isomorphism on the associated complexes. 

\section{Logarithmic \texorpdfstring{$q$}{q}-crystalline cohomology and logarithmic \texorpdfstring{$q$}{q}-de Rham complexes}
We introduce a logarithmic variant of (a big version of) the $q$-crystalline site of \cite{BS}. 
This is a $q$-deformation of the $\delta_{\log}$-crystalline site we introduced before. 

Let $A=\bZ_p [\![ q-1 ]\!]$ with $\delta$-structure given by $\delta(q)=0$. 
The $q$-analogue of $p$ is defined by 
\[
[p]_q=\frac{q^p-1}{q-1}=1+q+\cdots q^{p-1}
\]
and $[p]_q$ specializes to $p$ under $q\mapsto 1$.  
We have a prism $(A, ([p]_q))$, called the \emph{$q$-de Rham prism} or \emph{$q$-crystalline prism}. 

If $x$ is an element of a $[p]_q$-torsionfree $\delta$-ring $D$ over $A$ such that $\phi(x)\in [p]_q D$, we define
\[
\gamma (x)=\frac{\phi (x)}{[p]_q} - \delta (x). 
\]

\subsection{Log \texorpdfstring{$q$}{q}-divided power thickenings}
First we recall from \cite{BS} that a \emph{$q$-PD pair} is a $(p, [p]_q)$-complete $\delta$-pair $(D,I)$ over $(A, (q-1))$ satisfying
\begin{enumerate}
    \item $(D, ([p]_q))$ is a bounded prism over $(A, ([p]_q))$, 
    \item $\phi (I) \subset [p]_q D$ and $\gamma (I)\subset I$, 
    \item $D /(q-1)$ is $p$-torsionfree with finite $(p, [p]_q)$-complete Tor-amplitude over $D$, 
    \item $D/I$ is classically $p$-complete. 
\end{enumerate}
%Here, note that $I$ is also assumed to be $(p,[p]_q)$-complete. 
%If $q=1$ in $D$, a $q$-PD pair $(D, I)$ is called a $\delta$-PD pair, cf. \cite{BS}*{Remark 16.3}. 
The final condition is not required in \cite{BS}*{Definition 16.2}, but it makes possible to consider the affine $p$-adic formal scheme $\Spf (D/I)$, which will be used to define a global version the $q$-crystalline site. 

We have a logarithmic variant:

\begin{defn}[log $q$-PD triples]\label{q-PD triples}
A \emph{prelog $q$-PD triple} is a triple $(D, I, M_D)$, where $(D, I)$ is a $q$-PD pair and $(D, I, M_D)$ is a $\delta_{\log}$-triple.  
A \emph{log $q$-PD triple} is a triple $(D, I, M_{\Spf (D)})$, where $(D, I)$ is a $q$-PD pair and $(D, [p]_q, M_{\Spf (D)})=(D, [p]_q, M_D)^a$ for some prelog $q$-PD triple $(D, I, M_D)$.  
\end{defn}

In particular, we recover $\delta_{\log}$-PD triples (over $\bZ_p$ with the trivial prelog structure) if $q=1$. 
Let us call a prelog $q$-PD triple a \emph{pre-$\delta_{\log}$-PD triple} if $q=1$. 

Prelog $q$-PD triples form a category, and pre-$\delta_{\log}$-PD triples form its full subcategory. 
If $(D, I, M_D)$ is a prelog $q$-PD triple, we have the associated log $q$-PD triple $(D, I, M_D)^a$. 

If $(D, I, M_D)$ is a prelog $q$-PD triple, then $(D/(q-1), I/(q-1), M_D)$ is a pre-$\delta_{\log}$-PD triple. 
Suppose $(D, I, M_D)$ is a $\delta_{\log}$-triple, where $D$ is regarded as an $A$-algebra with $q=1$.  Then, $(D, I, M_D)$ is a pre-$\delta_{\log}$-PD triple if and only if
\begin{enumerate}
    \item $D$ is $p$-torsionfree and $p$-adically complete, 
    \item $D/I$ is classically $p$-complete, and
    \item $I$ admits divided powers. 
\end{enumerate}

In particular, $I$ is locally nilpotent modulo $p^m D$ for any positive integer $m$, and we will freely use Remark \ref{strong lifting property} for exact closed immersions defined by $I$. 

\begin{exam}
Let $C$ be an algebraically closed perfectoid field of characteristic $0$, and let $\epsilon$ be a usual element of $C^\flat$ for a fixed choice of compatible $p$-power roots of unity. 
Set $q=[\epsilon]$ and $\xi=\phi^{-1}([p]_q)$. 
The triple 
\[
(A_{\Inf}(\cO_C), (\xi), \cO_C^\flat \setminus\{0\})
\]
is a prelog $q$-PD triple. 
\end{exam}

\begin{lem}\label{\'etale maps and q-PD}
Let $(D, I, M_D)$ be a prelog $q$-PD triple. If $D/I\to \overline{E}$ is $p$-completely \'etale, there exists a unique prelog $q$-PD triple $(E, J, M_D)$ over $(D, I, M_D)$ lifting $D/I\to \overline{E}$. 
\end{lem}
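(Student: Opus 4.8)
The plan is to imitate the proof of Lemma~\ref{\'etale maps} together with the description of \'etale coverings in Remark~\ref{slice category}. First I would produce a $(p,[p]_q)$-completely \'etale $D$-algebra $E$ lifting $\overline E$ along the surjection $D\to D/I$; then I would equip it with the unique compatible $\delta$- and $\delta_{\log}$-structures; and finally I would check that the resulting $\delta_{\log}$-triple $(E,J,M_D)$, with $J:=\ker(E\to\overline E)$ the completed extension of $I$ to $E$, satisfies the four axioms of a prelog $q$-PD triple. Uniqueness will drop out at each stage, $E$ being the unique such lift, its $\delta$-/$\delta_{\log}$-structures being unique, and $J$ being determined.

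For the first step I would show that reduction along $D\to D/I$ induces an equivalence between $(p,[p]_q)$-completely \'etale $D$-algebras and $p$-completely \'etale $D/I$-algebras; $E$ is then the object matching $\overline E$. By rigidity of the completed \'etale site this amounts to identifying \'etale algebras over $D/(p,[p]_q)$ with \'etale algebras over $(D/I)/p$, and the crux is that $I$ is topologically nilpotent for the $(p,[p]_q)$-adic topology on $D$ while the $(p,[p]_q)$-adic topology on $D/I$ is controlled by the given classical $p$-completeness. The topological nilpotence is exactly where the definition of a $q$-PD pair enters: modulo $q-1$ the ideal $I/(q-1)\subset D/(q-1)$ is a PD ideal, hence locally (topologically) nilpotent modulo every $p^m$ (the discussion after Definition~\ref{q-PD triples}), whereas the $(p,[p]_q)$-adic and $(p,q-1)$-adic topologies on $D$ coincide because $[p]_q\in(p,q-1)$ and $(q-1)^p\in(p,[p]_q)$; consequently $D/(p,[p]_q)\to D/(p,[p]_q,I)$ is a surjection with locally nilpotent kernel, and topological invariance of the \'etale site \cite{Stacks} (and its completed form, as in the proof of \cite{BS}*{Lemma 2.18}) yields the equivalence, with $E\widehat{\otimes}_D D/I\cong\overline E$. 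I expect this to be the main obstacle: the topological bookkeeping — reductions of $D$ and of $D/I$, and closures of $I$ — is concentrated here, and the rest of the argument is essentially formal.

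Granting $E$, the remaining steps go as follows. Since $D\to E$ is $(p,[p]_q)$-completely \'etale and $D$ is a $\delta$-ring, $E$ acquires a unique compatible $\delta$-structure by \cite{BS}*{Lemma 2.18}, and then Lemma~\ref{\'etale maps} promotes $(E,M_D)$ uniquely to a $\delta_{\log}$-ring over $(D,M_D)$. That $(E,([p]_q)E)$ is a bounded prism over $(D,([p]_q))$ — with $E$ in particular $[p]_q$-torsionfree — follows from $(p,[p]_q)$-completely flat base change of bounded prisms (\cite{BS}*{Lemma 3.7} and the surrounding discussion). The inclusion $\phi(J)\subseteq[p]_qE$ is immediate from $\phi(I)\subseteq[p]_qD$ as $J$ is the completed extension of $I$, and $\gamma(J)\subseteq J$ follows from $\gamma(I)\subseteq I$ by the additivity formula for $\gamma$ together with the Leibniz-type identity $\gamma(xy)=\gamma(x)\phi(y)-x^{p}\delta(y)$ valid for $x\in I$, itself a one-line consequence of $\gamma(x)=\phi(x)/[p]_q-\delta(x)$ and $\phi(y)=y^{p}+p\delta(y)$. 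Finite $(p,[p]_q)$-complete Tor-amplitude of $E/(q-1)$ over $E$, and its $p$-torsionfreeness, are inherited from $D$ by flat base change, and $E/J=\overline E$ is classically $p$-complete by hypothesis; so $(E,J,M_D)$ is a prelog $q$-PD triple over $(D,I,M_D)$ lifting $D/I\to\overline E$, unique among those for which $D\to E$ is $(p,[p]_q)$-completely \'etale (the relevant notion, as in Remark~\ref{slice category}).
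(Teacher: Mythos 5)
Your overall strategy coincides with the paper's: lift $\overline E$ to the unique $(p,[p]_q)$-completely \'etale $D$-algebra $E$ (the paper simply invokes \cite{BS}*{Lemma 16.5.(5)} here, and your topological-invariance argument is a reasonable way of filling that in), endow it with the unique $\delta$- and $\delta_{\log}$-structures, set $J=(IE)^{\wedge}_{(p,[p]_q)}$, and verify the axioms of Definition \ref{q-PD triples}, the only non-formal one being the stability $\gamma(J)\subseteq J$ in condition (2).

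That last step is exactly where you have a gap, and you have misidentified where the real work lies. The additivity formula for $\gamma$ and the identity $\gamma(fx)=\phi(f)\gamma(x)-x^{p}\delta(f)$ only handle \emph{finite} sums, so they yield $\gamma(IE)\subseteq IE$; they do not by themselves give $\gamma(J)\subseteq J$, because $J$ is the classical $(p,[p]_q)$-completion of $IE$ and $\gamma$ is neither additive nor a ring map, so there is no free ``pass to the closure'' as there is for $\phi$. What is needed (and what the paper proves, following \cite{BS}*{Lemma 16.5.(4)} and \cite{BS}*{Remark 16.6}) is a uniform continuity estimate: using $\delta(p^{m})\equiv 0 \bmod p^{m-1}$ and $\phi((q-1)^{n}),\delta((q-1)^{n})\equiv 0 \bmod (q-1)^{n}$, one shows $\gamma\bigl(p^{m}IE+(q-1)^{n}IE\bigr)\subseteq p^{m-1}IE+(q-1)^{n}IE$, whence $\gamma\bigl(j+p^{m}IE+(q-1)^{n}IE\bigr)\subseteq \gamma(j)+(p^{m-1},(q-1)^{n})J$ for $j\in J$, and one concludes $\gamma(j)\in J$ by passing to the limit (the $(p,[p]_q)$- and $(p,q-1)$-adic topologies agree). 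Without this estimate your verification of condition (2) is incomplete; with it supplied, the rest of your argument goes through as in the paper.
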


\begin{proof}
There is a unique $(p,[p]_q)$-completely \'etale lift $D\to E$ of $D/I\to \overline{E}$, which is a $\delta_{\log}$-ring over $D$ in a unique way by Lemma \ref{\'etale maps}. In particular, $E$ is $(p, [p]_q)$-completely flat over $D$. This lemma is essentially \cite{BS}*{Lemma 16.5.(5)} applied to $D\to E$. 
The kernel of $E\to \overline{E}$ is given by $J\coloneqq (IE)^{\wedge}_{(p,[p]_q)}$. 
As $E$ is classically $(p, [p]_q)$-complete, $J$ agrees with the classical $(p, [p]_q)$-completion of $IE$. 
To show that $(E, J, M_D)$ is a prelog $q$-PD triple, we only check the second half of the condition (2) of Definition \ref{q-PD triples}; this is basically \cite{BS}*{Lemma 16.5.(4)}. 
As $(D, I)$ is a $q$-PD pair, $\gamma (I)\subset I$. Recall the following general formulas from \cite{BS}*{Remark 16.6}:
\begin{align*}
\gamma (x+y)&=\gamma (x)+\gamma(y) +\sum_{i=1}^{p-1}\frac{(p-1)!}{i!(p-i)!} x^i y^{p-i},  \\
\gamma (fx)&=\phi (f)\gamma (x)-x^p \delta(f). 
\end{align*}
In particular, we see that $\gamma (IE)\subset IE$. It is easy to check that
\[
\delta (p^m) \equiv 0 \mod p^{m-1}, \quad \phi((q-1)^n), \delta((q-1)^n) \equiv 0 \mod (q-1)^n
\]
for integers $m, n\geq 1$. 
Again using the formulas, we see that 
\[
\gamma(p^m IE + (q-1)^n IE) \subset p^{m-1} IE + (q-1)^n IE. 
\]
Therefore, for any $j\in J$, 
\[
\gamma (j+p^m IE + (q-1)^n IE) \in \gamma (j) + (p^{m-1}, (q-1)^n)J. 
\]
As $J$ is with the classical $(p, [p]_q)$-completion of $IE$, we conclude that $\gamma (j) \in J$. 
(Note that $(p,[p]_q)$-completion and $(p, q-1)$-completion agree.)
\end{proof}

We shall prove the existence of log $q$-PD envelopes. 

\begin{lem}[Existence of log $q$-PD envelopes for smooth log algebras]\label{log q-PD envelope}
Fix a prelog $q$-PD triple $(D_1, I_1, M_{D_1})$ with integral $M_{D_1}$, and let $(R, P)$ be a $p$-completely smooth prelog ring over $(D_1/I_1, M_{D_1})$ such that $M_{D_1}\to P$ is \emph{integral} and \emph{weakly finitely generated}. Assume that $(R, P)^a$ admits a \emph{smooth lift} over $(D_1, M_{D_1})$. Let $(D_{2,0}, M_{D_2})\to (R, P)$ be a surjection from a prelog ring over $(D_1, M_{D_1})$ satisfying either 
\begin{itemize}
    \item $(D_{2,0}, M_{D_2})$ is a $(p,[p]_q)$-completely smooth $\delta_{\log}$-ring of topologically finite presentation over $(D_1, M_{D_1})$, or
    \item $(*)_{q}$: $M_{D_1}\to M_{D_2}$ is injective and integral, $M_{D_2}^{\gp}/M_{D_1}^{\gp}$ is a free abelian group, and $D_{2,0}$ is $(p, [p]_q)$-completely free over the $(p, [p]_q)$-completion of 
$D_1 \otimes_{\bZ_{(p)}[M_{D_1}]}\bZ_{(p)}[M_{D_2}]$.  
\end{itemize}
In the second case, let $(D_2, M_{D_2})$ denote the $(p, [p]_q)$-completed universal $\delta_{\log}$-ring over $(D_1, M_{D_1})$ generated by $(D_{2,0}, M_{D_2})$.  
Put $D_2= D_{2,0}$ in the first case. 
Write $I_{2,0}$ for the kernel of $D_{2,0} \to R$, and $I_2$ for the $(p,[p]_q)$-completion of $I_{2,0}D_2$. 

\begin{enumerate}
    \item There exists a universal map $(D_2, I_2, M_{D_2})\to (D_3, I_3, M_{D_3})$ to a prelog $q$-PD triple with an exact surjection $M_{D_3} \to P$ that fits into a commutative diagram
    \[
    \begin{CD}
    M_{D_3} @>>> P \\
    @VVV @VVV \\
    D_3/I_3 @<<< D_2/ I_2=R. 
    \end{CD}
    \]
    \item $D_3$ is $(p, [p]_q)$-completely flat over $D_1$. 
    \item The bottom arrow in the diagram is an \emph{isomorphism} $D_2 / I_2 \cong D_3 /I_3$. 
    \item The map of log $q$-PD triples $(D_2, I_2, M_{D_2})^a\to (D_3, I_3, M_{D_3})^a$ is universal among factorizations of $(\Spf (R), P^a_{\Spf (R)})\to (\Spf (D_2), M^a_{D_2, \Spf (D_2)})$ into an exact closed immersion
    \[
    (\Spf (R), P_{\Spf (R)}^a)\cong (\Spf (D/I), P^a_{\Spf (D/I)}) \hookrightarrow (\Spf (D), M_{\Spf (D)})
    \]
    for log $q$-PD triples $(D, I, M_{\Spf (D)})$ over $(D_2, I_2, M_{D_2})^a$ with integral $M_{\Spf (D)}$. 
    \item\label{log PD envelope} The association $(D_2, I_2, M_{D_2})\mapsto (D_3, I_3, M_{D_3})$ commutes with $(p, [p]_q)$-completed base change along $(D_1, I_1, M_{D_1})\to (D'_1, I'_1, M_{D_1})$ of prelog $q$-PD triples. In particular, $D_3 \widehat{\otimes}_{D_1} D_1 / (q-1)$ is the $p$-completed log PD envelope of $I_2 /(q-1) \subset D_2 / (q-1)$. 
\end{enumerate}
\end{lem}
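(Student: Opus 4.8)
The plan is to run the proofs of Proposition~\ref{Prelog prismatice envelops for smooth log algebras} and Proposition~\ref{Existence of log prismatic envelopes} in the $q$-PD setting, using the non-logarithmic $q$-PD envelope theory of \cite{BS}*{\S 16} in place of Proposition~\ref{Bhatt:lecture} and Proposition~\ref{prismatic envelopes for regular sequenes}; the only genuinely new ingredient over \cite{BS} is that $\delta_{\log}$-structures propagate through exactification (Construction~\ref{Exactification}, Proposition~\ref{change of monoids}). As in Proposition~\ref{Existence of prelog prismatic envelopes}, I would first apply the exactification of Construction~\ref{Exactification} to $(D_2, I_2, M_{D_2})$ along $(D_2, M_{D_2}) \to (D_2/I_2, P) = (R, P)$, replacing $M_{D_2}$ by $M'_{D_2} = (h^{\gp})^{-1}(P) \subseteq M_{D_2}^{\gp}$ with $h\colon M_{D_2}\to P$. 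By Proposition~\ref{change of monoids} the $\delta_{\log}$-structure extends uniquely; since $M_{D_1}\to P$ is integral, the maps $M_{D_1}\to M'_{D_2}$ and $M_{D_2}\to M'_{D_2}$ are integral and injective (Remark~\ref{Exactification and integral maps}, \cite{Kato:log}*{4.1}), so the exactified ring $D'_2$ is $(p,[p]_q)$-completely flat over $D_2$, the hypothesis $(*)_q$ and the weak finite generation of $M_{D_1}\to P$ are preserved, and $(D'_2, I'_2, M'_{D_2})$ is again a prelog $q$-PD triple — the conditions of Definition~\ref{q-PD triples} are immediate from flatness except for $\phi(I'_2)\subseteq [p]_q D'_2$ and $\gamma(I'_2)\subseteq I'_2$, which are checked exactly as in the proof of Lemma~\ref{\'etale maps and q-PD} using the formulas for $\gamma$ in \cite{BS}*{Remark 16.6}. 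Since the universal objects in (1) and (4) are defined by factorizations through \emph{exact} closed immersions, this replacement is harmless, so I may assume $(D_2, M_{D_2})\to (R, P)$ is exact surjective.

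With exactness in hand the monoid part is inert, exactly as in Proposition~\ref{Existence of prelog prismatic envelopes}: one sets $M_{D_3} = M_{D_2}$ and takes $(D_3, I_3)$ to be the non-logarithmic $q$-PD envelope of $(D_2, I_2)$ over $(D_1, I_1)$ produced by \cite{BS}*{\S 16}, which gives (1). For the flatness statement (2) I would copy the structure of the proof of Proposition~\ref{Prelog prismatice envelops for smooth log algebras}: in the second case of the hypothesis, use weak finite generation of $M_{D_1}\to P$ to write $(D_{2,0}, M_{D_2})$ as a $(p,[p]_q)$-completed filtered colimit of $(p,[p]_q)$-completely smooth $(D_1, M_{D_1})$-algebras with integral, finitely generated monoids and faithfully flat transition maps; in either case, reduce (Zariski-locally on the chart) to the situation where the kernel of $D_{2,0}\to R$ has the form $(I_1, x_1,\dots,x_r)$ for a $(p,[p]_q)$-completely regular sequence relative to $D_1$, using the smooth-lift hypothesis as in the computations in the proof of Proposition~\ref{delta-crys vs crys}. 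The $q$-PD envelope of such a pair is $(p,[p]_q)$-completely flat over $D_1$ by the regular-sequence $q$-PD envelope theory of \cite{BS}*{\S 16}, and (2) follows by Zariski descent and by passing to the colimit; (3) follows because the $q$-PD envelope of $(I_1, x_1,\dots,x_r)$ modulo its $q$-PD ideal recovers $D_2/(I_1, x_1,\dots,x_r) = R$. In particular $(D_3, ([p]_q))$ is a bounded prism and $D_3$ is classically $(p,[p]_q)$-complete.

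Part (4) is the $q$-PD analogue of Proposition~\ref{Existence of log prismatic envelopes} with the identical proof: given a log $q$-PD triple $(D, I, M_D)$ over $(D_2, I_2, M_{D_2})$ with an exact closed immersion $(\Spf(R), P)^a \hookrightarrow (\Spf(D), M_D)^a$, one forms the fibre product $M_D\times_{P^a_{D/I}} P$ (where $P^a_{D/I} = \Gamma(\Spf(D/I), \underline{P}^a)$), which is a chart of $\underline{M_D}^a$ by Remark~\ref{exact closed immersion} and whose map to $(D/I, P)$ is exact surjective; one applies the prelog universal property (1) to obtain a map $(D_3, I_3, M_{D_3})\to (D, I, M_D\times_{P^a_{D/I}} P)$ of prelog $q$-PD triples and passes to associated log $q$-PD triples via Corollary~\ref{sheafification}. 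For (5), the formation of the exactification (Construction~\ref{Exactification}) and of the filtered-colimit presentation commute with base change along $(D_1, I_1, M_{D_1})\to (D'_1, I'_1, M_{D_1})$, and so does the regular-sequence $q$-PD envelope of \cite{BS}*{\S 16}; assembling these yields the base-change assertion. The final sentence is the $q = 1$ specialization: base changing along $D_1 \to D_1/(q-1)$ carries $[p]_q$ to $p$ and turns the $q$-PD envelope of a regular sequence into its classical $p$-completed PD envelope (as in \cite{BS}*{\S 16}; compare \cite{BS}*{Corollary 2.38}), so $D_3\widehat{\otimes}_{D_1} D_1/(q-1)$ is the $p$-completed PD envelope of $I_2/(q-1)\subseteq D_2/(q-1)$, and the logarithmic statement follows from the associated-log-structure construction.

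The main obstacle is the reduction to the regular-sequence case carried out while keeping all intermediate monoids integral and all the relevant pushouts and monoid-ring maps flat — this is the $q$-PD counterpart of the technical core Proposition~\ref{Prelog prismatice envelops for smooth log algebras}, and the integrality hypotheses on $M_{D_1}\to M_{D_2}$ and $M_{D_1}\to P$, together with Proposition~\ref{change of monoids} and Remark~\ref{Exactification and integral maps}, are exactly what make it go through. A secondary but still delicate point is verifying that exactification preserves $\phi(I'_2)\subseteq [p]_q D'_2$ and $\gamma(I'_2)\subseteq I'_2$; this is the same computation as in Lemma~\ref{\'etale maps and q-PD}, for which one needs that $I'_2$ is the $(p,[p]_q)$-completion of $I_2 D'_2$ — a consequence of the $(p,[p]_q)$-complete flatness of $D'_2$ over $D_2$.
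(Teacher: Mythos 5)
Your proposal is correct and follows essentially the same route as the paper: exactify via Construction \ref{Exactification}, use the smooth lift (through the strong lifting property) to exhibit $I_2$ Zariski-locally as $I_1$ plus a $(p,[p]_q)$-completely regular sequence relative to $D_1$, invoke the regular-sequence $q$-PD envelope result of \cite{BS}*{Lemma 16.10}, and treat (4) as in Proposition \ref{Existence of log prismatic envelopes}. One small correction: the exactified triple $(D'_2, I'_2, M'_{D_2})$ is not, and need not be, a prelog $q$-PD triple --- $I'_2$ is precisely the ideal whose envelope you are about to form, so $\gamma(I'_2)\subseteq I'_2$ will generally fail; the replacement is harmless for the other reason you give, namely that the universal properties in (1) and (4) only involve factorizations through exact closed immersions.
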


In light of (4), $(D_3, I_3, M_{D_3})^a$ is called the \emph{log $q$-PD envelope} of $(\Spf (R), P)^a\hookrightarrow (\Spf (D_2), M_{D_2})^a$. 

\begin{proof}
To simplify the notation, we only discuss the first case. The case $(*)_q$ is similar, following the proof of Proposition \ref{Prelog prismatice envelops for smooth log algebras}. 

Assume that $(D_2, M_{D_2})$ is $(p, [p]_q)$-completely smooth and topologically of finite presentaiton over $(D_1, M_{D_1})$. 
We first prove (1)-(3) and (5). 
Applying Construction \ref{Exactification} to $(D_2, M_{D_2})\to (R, P)$, we may assume that $(D_2, M_{D_2})\to (R, P)$ is exact surjective by Remark \ref{Exactification and integral maps}. 

By the assumption, $(R, P)^a$ admits a $(p, [p]_q)$-completely smooth lift $(\widetilde{R}, \widetilde{P})$ over $(D_1, M_{D_1})$, and $(D_2, M_{D_2})^a\to (R, P)^a$ lifts to $(D_2, M_{D_2})^a\to (\widetilde{R}, \widetilde{P})$ modulo $(p^m, [p]_q^n)$ for each $m, n\geq 1$ by Remark \ref{strong lifting property}. This implies that $I_2$ is generated, Zariski locally, by $I_1$ and a $(p, [p]_q)$-completely regular sequence relative to $D_1$. Thus, we may apply \cite{BS}*{Lemma 16.10}. 

To check (4), one may proceed as in the proof of Proposition \ref{Existence of log prismatic envelopes} using (1) and (3). 
\end{proof}

\subsection{The log \texorpdfstring{$q$}{q}-crystalline site}\label{log q-crystalline site}
Fix a prelog $q$-PD triple $(D, I, M_{D})$ with integral $M_D$. 
Let $(X, M_X)$ be a smooth log $p$-adic formal scheme over $(D/I, M_D)$. 

\begin{defn}[The log $q$-crystalline site]
We define the log $q$-crystalline site $((X, M_X)/ (D, M_D))_{q\CRYS}$ as the opposite of the following category: an object is a log $q$-PD triple $(E, J, M_{\Spf (E)})$ associated with a prelog $q$-PD triple $(E, J, M_E)$ over $(D, I, M_D)$ and $M_E$ being integral, equipped with a morphism of $p$-adic formal schemes $f\colon \Spf (E/J)\to X$ over $D/I$ and an exact closed immersion 
\[
(\Spf (E/J), f^* M_X) \hookrightarrow (\Spf (E), M_{\Spf (E)})
\]
over $(D, M_D)$. 
Morphisms are obvious ones. 
We endow it with the \'etale topology; see Lemma \ref{\'etale maps and q-PD}. 
\end{defn}

Let $\cO_{q\CRYS}$ denote the structure sheaf $(E, J, M_E)^a\mapsto E$. 
We write 
\[
R\Gamma_{q\CRYS}((X, M_X)/(D, M_D))
\]
for the cohomology of the structure sheaf. 
It is a $(p, [p]_q)$-complete $E_{\infty}$-$D$-algebra equipped with a $\phi_D$-semilinear endomorphism. 

As usual, we have a morphism of topoi
\[
u^q_X\colon \Shv (((X, M_X)/ (D, M_D))_{q\CRYS})\to \Shv (X_{\et}). 
\]
We write $q\Omega_{(X, M_X)/ (D, M_D)}$ for $Ru^q_{X*}\cO_{q\CRYS}$,  
which is an $E_{\infty}$-$D$-algebra on $X_{\et}$ equipped with a $\phi_D$-semilinear endomorphism. 

If $X=\Spf(R)$ is affine and $P$ is a chart of $M_X$ over $M_D$, we also write 
\begin{center}
$((R, P)/ (D, M_D))_{q\CRYS}$ for $((X, M_X)/ (D, M_D))_{q\CRYS}$, 
\end{center}
and
\begin{center}
$q\Omega_{(R, M_R)/ (D, M_D)}$ for $R\Gamma (((R, P)/ (D, M_D))_{q\CRYS}, \cO_{q\CRYS})$. 

\end{center}

\begin{rem}
If $M_X$ is trivial, our $q$-crystalline site is close to that of \cite{BS}*{Remark 16.15 (2)}. 
\end{rem}

\begin{rem}[Relation to $\delta_{\log}$-crystalline sites]\label{log q- vs log crys:map}
If $q=1$ in $D$, the log $q$-crystalline site is exactly the $\delta_{\log}$-crystalline site. 
In general, there is a natural functor
\[
((X, M_X)/ (D/(q-1), M_D))_{\delta\CRYS} \to
((X, M_X)/ (D, M_D))_{q\CRYS}, 
\]
inducing
\[
q\Omega_{(X, M_X)/ (D, M_D)}\widehat{\otimes}^L_D D/(q-1) \to
Ru^{\delta}_{X*}\cO_{\delta\CRYS}. 
\]
We will show that this is an isomorphism in Theorem \ref{log q vs log crys}. 
\end{rem}

\begin{const}[Computing log $q$-crystalline cohomology]\label{compuing log q-crystalline cohomology}
Assume that $X=\Spf (R)$ is affine with a chart $P\to \Gamma (X, M_X)$ over $M_D$, and 
\begin{center}
    $M_{D}\to P$ is integral and weakly finitely generated. 
\end{center}
We also assume that $(X, M_X)$ admits a smooth lift over $(D, M_D)$. 

Take a surjection $M_{E}=M_D\oplus \bN^T\to P$ for some set $T$.  
Take a surjection from a $(p,[p]_q)$-completed polynomial ring
\[
E_0\coloneqq D\langle (X_{s})_{s\in S}, \bN^T\rangle\to R
\]
for some set $S$, compatible with the map $M_E\to P$. 

Let $(E, M_{E})$ be the $(p,I)$-completed $\delta_{\log}$-ring over $(D, M_D)$ generated by $E_0$:
\[
E=(D\{ (X_{s})_{s\in S}\}^{\delta}\{\bN^T\}_{\log}^{\delta})^{\wedge}_{(p,I)}.  
\]
Let $(E_0^{\bullet}, M_{E}^{\bullet})$ (resp. $(E^{\bullet}, M_{E}^{\bullet})$) be the $(p,[p]_q)$-completed \v{C}ech nerve of $(D, M_D)\to (E_0, M_E)$ (resp. $(D, M_D)\to (E_0, M_E)$), with a natural inclusion
\[
(E_0^{\bullet}, M_{E}^{\bullet}) \hookrightarrow (E^{\bullet}, M_{E}^{\bullet})
\]
identifying monoids. There is a surjection of prelog rings
\[
(E_0^{\bullet}, M_{E}^{\bullet})\to (R, P). 
\]

Each $(E_0^{\bullet}, M_E^{\bullet})$ satisfies the condition $(*)_q$ of Lemma \ref{log q-PD envelope}, thus we obtain a cosimplicial object $(F^{\bullet}, J^{\bullet}, M_F^{\bullet})^a$ in $((R, P)/(D, M_D))_{q\CRYS}$ with exact surjections $(F^{\bullet}, M_F^{\bullet})\to (F^{\bullet}/ J^{\bullet}, P)$.  
Each $F^{\bullet}$ is $(p,[p]_q)$-completely flat over $D$, and this cosimplicial object in $((R, P)/(D, M_D))_{q\CRYS}$ agrees with the \v{C}ech nerve of $(F^{0}, J^{0}, M_{F^{0}})^a$. 

By the freeness of $(E_0, M_E)$ as a prelog ring, for any object $(F', J', M_{\Spf (F')})$ of $((R, P)/(D, M_D))_{q\CRYS}$, there exists at least one map of prelog rings
\begin{center}
$(E_0, M_E)\to (F', P\times_{\Gamma (\Spf (F'/J'), M_{\Spf (F'/J')})} \Gamma (\Spf (F'), M_{\Spf (F')}))$
\end{center}
lifting the composite $(E_0, M_E)\to (R,P)\to (F'/J', P)^a$. The $\delta$- and $\delta_{\log}$-structures on $(F', M_{\Spf (F')})$ and the freeness of $(E, M_E)$ as a $\delta_{\log}$-ring enable us to extend it to a map of $\delta_{\log}$-rings
\[
(E, M_E) \to (F', P\times_{\Gamma (\Spf (F'/J'), M_{\Spf (F'/J')})} \Gamma (\Spf (F'), M_{\Spf (F')}))
\]
yielding a map of log $q$-PD triples
\[
(F^{0}, J^{0}, M_{J^{0}})^a \to (F', J', M_{\Spf (F')}). 
\]
Therefore, the object $(F^{0}, J^{0}, M_{F^{0}})^a$ in $((R, P)/(D, M_D))_{q\CRYS}$ represents a covering of the final object of the topos $\Shv (((R, P)/(A, M_A))_{q\CRYS})$. Thus, the cosimplicial $\delta$-$D$-algebra $F^{\bullet}$ computes $q\Omega_{(R, P)/(D, M_D)}$.  
\end{const}

\begin{rem}[Strictly functorial complexes computing log $q$-crystalline cohomology]\label{functorial:q-crys}
In Construction \ref{compuing log q-crystalline cohomology}, one may take $(E_0, M_E)\to (R, P)$ strictly functorial on $(R, P)$. 
Indeed, we set
\[
(E_0, M_E) := (D\langle \bN^R, \bN^P\rangle , M_D\oplus \bN^P). 
\]
This construction does not commute with base changes on $(D, M_D)$, but still functorial on $(D, M_D)$. 
\end{rem}

\begin{thm}[Log $q$-crystalline cohomology and log crystalline cohomology]\label{log q vs log crys}
The canonical map from Remark \ref{log q- vs log crys:map} induces an isomorphism
\[
q\Omega_{(X,M_X)/(D, M_D)}\widehat{\otimes}^L_D D/(q-1)\overset{\cong}{\longrightarrow} 
Ru^{\delta}_{X*}\cO_{\delta\CRYS}.
\]
Therefore, 
\[
q\Omega_{(R,P)/(D, M_D)}\widehat{\otimes}^L_D D/(q-1)\cong
Ru^{\crys}_*\cO_{(X, M_X)/(D/(q-1), M_D)}, 
\]
where the target is defined using the projection from the small crystalline site. 
\end{thm}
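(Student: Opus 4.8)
The plan is to compute both sides locally by \v{C}ech--Alexander complexes built from log $q$-PD envelopes and their reductions modulo $q-1$, and to check that the comparison map becomes the evident reduction map, which is invertible after $\widehat{\otimes}^L_D D/(q-1)$ by flatness.

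\emph{Reduction to the affine case.} The comparison map of Remark~\ref{log q- vs log crys:map} is defined globally, so it is enough to verify it is an isomorphism after restriction to a basis of $X_{\et}$. By Lemmas~\ref{small neighborhood} and~\ref{chart and global section II}, the smoothness of $(X,M_X)$, and the discussion of globalization, we may assume $X=\Spf(R)$ is affine, $P\to\Gamma(X,M_X)$ is an integral and weakly finitely generated chart over $M_D$, and $(X,M_X)$ admits a smooth lift over $(D,M_D)$ (for instance because $P$ is a smooth chart). On such an affine, the local computation of $Ru^{q}_{X*}$ and $Ru^{\delta}_{X*}$ (as in Remark~\ref{prismatic to etale}) identifies the two sides with $q\Omega_{(R,P)/(D,M_D)}\widehat{\otimes}^L_D D/(q-1)$ and $R\Gamma(((R,P)/(D/(q-1),M_D))_{\delta\CRYS},\cO_{\delta\CRYS})$, compatibly with the comparison map.

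\emph{The two \v{C}ech--Alexander complexes.} Apply Construction~\ref{compuing log q-crystalline cohomology}: from a surjection $(E_0,M_E)=(D\langle(X_s)_{s\in S},\bN^T\rangle,M_D\oplus\bN^T)\to(R,P)$, forming the $\delta_{\log}$-ring $(E,M_E)$, its \v{C}ech nerve $(E^\bullet,M_E^\bullet)$, and the associated log $q$-PD envelopes, one obtains a cosimplicial $\delta$-$D$-algebra $F^\bullet$ that is $(p,[p]_q)$-completely flat over $D$ (Lemma~\ref{log q-PD envelope}(2)), is the \v{C}ech nerve of the weakly final object $(F^0,J^0,M_{F^0})^a$ of $((R,P)/(D,M_D))_{q\CRYS}$, and computes $q\Omega_{(R,P)/(D,M_D)}$. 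Since $D/(q-1)$ has finite $(p,[p]_q)$-complete Tor amplitude over $D$ and each $F^n$ is $(p,[p]_q)$-completely flat over $D$, the derived completed base change $F^\bullet\widehat{\otimes}^L_D D/(q-1)$ is discrete and coincides with the classical completed base change $F^\bullet\widehat{\otimes}_D D/(q-1)$; hence $q\Omega_{(R,P)/(D,M_D)}\widehat{\otimes}^L_D D/(q-1)$ is computed by the cosimplicial ring $F^\bullet\widehat{\otimes}_D D/(q-1)$.

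\emph{Identifying the reduction with a log-crystalline model.} By Lemma~\ref{log q-PD envelope}(5) and its compatibility with base change on the base $q$-PD triple, $F^n\widehat{\otimes}_D D/(q-1)$ is the $p$-completed log PD envelope, over $(D/(q-1),M_D)$, of the surjection $E_0^n\widehat{\otimes}_D D/(q-1)\to R$, and the cosimplicial structure exhibits $F^\bullet\widehat{\otimes}_D D/(q-1)$ as the \v{C}ech nerve of the reduction modulo $q-1$ of $(F^0,J^0,M_{F^0})^a$, an object of $((R,P)/(D/(q-1),M_D))_{\delta\CRYS}$. Exactly as in Construction~\ref{\v{C}ech} and the proof of Proposition~\ref{delta-crys vs crys}, the freeness of $(E_0,M_E)$ as a prelog ring and of $(E,M_E)$ as a $\delta_{\log}$-ring shows that this reduction represents a covering of the final object of $\Shv(((R,P)/(D/(q-1),M_D))_{\delta\CRYS})$, so its \v{C}ech nerve computes $R\Gamma(((R,P)/(D/(q-1),M_D))_{\delta\CRYS},\cO_{\delta\CRYS})$ (by the affine vanishing, or after passing to the indiscrete topology). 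Thus both sides are computed by the same cosimplicial complex $F^\bullet\widehat{\otimes}_D D/(q-1)$, and by the construction of the functor in Remark~\ref{log q- vs log crys:map} the comparison map is on these models the reduction $F^\bullet\to F^\bullet\widehat{\otimes}_D D/(q-1)$; since this induces an isomorphism after $\widehat{\otimes}^L_D D/(q-1)$ on the source, the first isomorphism follows, and globalizing over $X_{\et}$ gives it for general $(X,M_X)$. Finally, the second isomorphism is obtained by combining the case $Ru^{\delta}_{X*}\cO_{\delta\CRYS}$ of the first isomorphism with Proposition~\ref{delta-crys vs crys}, and then with Remark~\ref{small smooth-delta-algebra computes delta-crystalline cohomology} (or Remark~\ref{small log crystalline sites}), which in the present affine situation identifies $Ru_{X*}\cO_{\CRYS}$ with $Ru^{\crys}_*\cO_{(X,M_X)/(D/(q-1),M_D)}$ for the small crystalline site. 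The main obstacle is the third step: showing that reduction modulo $q-1$ turns the log $q$-PD envelope \v{C}ech--Alexander complex into one that provably computes log crystalline cohomology. This relies on the specialization of log $q$-PD envelopes to log PD envelopes together with its base-change compatibility (Lemma~\ref{log q-PD envelope}(5)), and the flatness of Lemma~\ref{log q-PD envelope}(2) is exactly what makes the derived and classical base changes along $D\to D/(q-1)$ agree.
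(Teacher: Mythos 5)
Your proposal is correct and follows essentially the same route as the paper: compute both sides by the \v{C}ech--Alexander complex $F^{\bullet}$ of log $q$-PD envelopes from Construction \ref{compuing log q-crystalline cohomology}, use Lemma \ref{log q-PD envelope}(5) to identify $F^{\bullet}\widehat{\otimes}_D D/(q-1)$ with the log PD envelope model for the $\delta_{\log}$-crystalline cohomology, invoke the finite $(p,[p]_q)$-complete Tor amplitude of $D/(q-1)$ over $D$ (the paper cites \cite{BS}*{Lemma 16.5 (3)}) to commute the base change with the totalization, and conclude the second statement from Proposition \ref{delta-crys vs crys} and Remark \ref{small log crystalline sites}. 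No substantive differences.
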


\begin{proof}
We may assume that $X=\Spf (R)$ is affine with a smooth chart $M_D\to P$. 
Let $F^{\bullet}$ a cosimplicial $\delta$-$D$-algebra computing $q\Omega_{(R,P)/(D, M_D)}$ as in Construction \ref{compuing log q-crystalline cohomology}. 
It also follows from Construction \ref{compuing log q-crystalline cohomology} and Lemma \ref{log q-PD envelope} that the term-wise $(p,[p]_q)$-completion $F^{\bullet}\widehat{\otimes}^L_D D/(q-1)$ computes the $\delta_{\log}$-crystalline cohomology. 
Since the totalization commutes in this setting \cite{BS}*{Lemma 16.5 (3)}, we see that
\[
q\Omega_{(R,P)/(D, M_D)}\widehat{\otimes}^L_D D/(q-1)\cong
R\Gamma_{\delta\CRYS}((X,M_X)/(D, M_D)). 
\]
The second point is a combination with Proposition \ref{delta-crys vs crys} and Remark \ref{small log crystalline sites}. 
\end{proof}

\begin{rem}[Smooth $\delta_{\log}$-algebras compute the log $q$-crystalline cohomology]\label{computing log q-crystalline cohomology via smooth lift}
Assume that $X=\Spf (R)$ with a chart $M_D\to P$ that is integral and weakly finitely generated, and $(X, M_X)$ admits a smooth lift over $(D,  M_D)$. 
Let $(E, M_E)\to (R, P)$ be a surjection from $(p, [p]_q)$-completely smooth $\delta_{\log}$-ring of topologically of finite presentation over $(D, M_D)$. 
By a similar procedure as in Construction \ref{compuing log q-crystalline cohomology}, we obtain $(F^{\bullet}, M_F^{\bullet})$. By Theorem \ref{log q vs log crys}, the cosimplicial object $(F^{\bullet}, M_F^{\bullet})$ also computes the log $q$-crystalline cohomology as it computes the log crystalline cohomology modulo $(q-1)$ by Remark \ref{small smooth-delta-algebra computes delta-crystalline cohomology}. (Also use \cite{BS}*{Lemma 16.5 (2)}.)
\end{rem}

\begin{lem}\label{Invariance under q-PD thickenings of the base}
Let $(D, I, M_D)\to (D,I', M_D)$ be a strict map of prelog $q$-PD triples. 
Then, there is a canonical isomorphism
\[
q\Omega_{(X, M_X) /(D, M_D)} \overset{\cong}{\longrightarrow}
q\Omega_{(X, M_X)_{D/I'}/(D, M_D)}. 
\]
\end{lem}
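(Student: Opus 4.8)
The plan is to compute both sides by explicit cosimplicial complexes attached to a \emph{common} smooth lift, and to observe that these complexes have the same underlying cosimplicial $\delta$-ring: replacing the $q$-PD structure $I$ by the larger one $I'$ only enlarges the divided-power ideals of the log $q$-PD envelopes that occur, without changing their underlying $\delta$-rings, precisely because $I'$ already carries divided powers inside $D$.

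First I would reduce to the local situation. Since $I'$ is topologically nilpotent, the closed immersion $(X,M_X)_{D/I'}\hookrightarrow(X,M_X)$ induces an equivalence on \'etale sites, so $q\Omega_{(X,M_X)/(D,M_D)}$ and $q\Omega_{(X,M_X)_{D/I'}/(D,M_D)}$ both live in $D(X_{\et},D)$; moreover, because $I\subseteq I'$, every log $q$-PD triple over $(D,I',M_D)$ is a fortiori one over $(D,I,M_D)$, and sending such a triple $(E,J,M_E)$, together with $\Spf(E/J)\to(X,M_X)_{D/I'}\hookrightarrow(X,M_X)$, to itself defines a fully faithful functor of sites whose induced map on the cohomology of the structure sheaf is the canonical comparison map $q\Omega_{(X,M_X)/(D,M_D)}\to q\Omega_{(X,M_X)_{D/I'}/(D,M_D)}$. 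To check it is an isomorphism one may work on a basis of $X_{\et}$ consisting of affines $\Spf(R)$ carrying a smooth chart $P$ over $M_D$ with $M_D\to P$ integral and weakly finitely generated and admitting a smooth lift $(\widetilde R,\widetilde P)$ over $(D,M_D)$ (such a basis exists \'etale-locally, \'etale algebras lifting uniquely along the thickening $D\to D/I$; cf.\ Lemma~\ref{\'etale maps}). Set $(R',P'):=(R,P)\widehat\otimes_{D/I}D/I'$, the corresponding chart of $(X,M_X)_{D/I'}$; then $P'=P$ and, crucially, $(\widetilde R,\widetilde P)$ is \emph{simultaneously} a smooth lift of $(R',P')$ over $(D,M_D)$, its reduction modulo $I'$ being $\widetilde R/I'\widetilde R=R/I'R=R'$.

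Next, choose a surjection $(E,M_E)\to(\widetilde R,\widetilde P)$ from a $(p,[p]_q)$-completely smooth $\delta_{\log}$-ring topologically of finite presentation over $(D,M_D)$ whose kernel is Zariski-locally generated by a $(p,[p]_q)$-completely regular sequence $x_1,\dots,x_r$ relative to $D$, and let $(E^\bullet,M_E^\bullet)$ be the $(p,[p]_q)$-completed \v{C}ech nerve of $(D,M_D)\to(E,M_E)$, a datum not involving $I$. By Remark~\ref{computing log q-crystalline cohomology via smooth lift} the cosimplicial $\delta$-$D$-algebra $F^\bullet$ whose $n$-th term is the log $q$-PD envelope of $(E^n,(\ker(E^n\to R))^\wedge,M_E^n)$ relative to $(D,I,M_D)$ computes $q\Omega_{(R,P)/(D,M_D)}$, and, applying the same remark to $(E,M_E)\to(\widetilde R,\widetilde P)\to(R',P')$, the cosimplicial $\delta$-$D$-algebra $F'^\bullet$ whose $n$-th term is the log $q$-PD envelope of $(E^n,(\ker(E^n\to R'))^\wedge,M_E^n)$ relative to $(D,I',M_D)$ computes $q\Omega_{(R',P')/(D,M_D)}$. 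The heart of the argument is that $F^\bullet=F'^\bullet$ as cosimplicial $\delta$-$D$-algebras: Zariski-locally $\ker(E^n\to R)=(x_1,\dots,x_r)+IE^n$ and $\ker(E^n\to R')=(x_1,\dots,x_r)+I'E^n$, since $\widetilde R\to R$ (resp.\ $\widetilde R\to R'$) is the reduction modulo $I$ (resp.\ $I'$) and $x_1,\dots,x_r$ generate $\ker(E^n\to\widetilde R)$; as $I,I'\subseteq D$ are PD ideals of $D$, their divided powers already lie in $E^n$, so by the description of $q$-PD envelopes for regular sequences (\cite{BS}*{Corollary 2.38} together with Lemma~\ref{log q-PD envelope}) the $n$-th envelope is, as a $\delta$-ring over $E^n$, obtained purely from $E^n$ and the $\gamma$-iterates of $x_1,\dots,x_r$, hence is the same in both cases --- only its PD-ideal grows, by adjoining $I'E^n$. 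Since the cosimplicial face and degeneracy maps come from functoriality of the envelope along the ($I$-independent) maps of $E^\bullet$, the two cosimplicial $\delta$-$D$-algebras coincide; as the structure sheaf only records underlying rings this yields $q\Omega_{(R,P)/(D,M_D)}\cong q\Omega_{(R',P')/(D,M_D)}$, and unwinding the weakly final objects identifies it with the comparison map above.

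The step I expect to be the main obstacle is exactly this identification $F^\bullet=F'^\bullet$: one must verify carefully that the \emph{same} regular sequence $x_1,\dots,x_r$ is admissible relative to $D$ for both reductions (this is where the common smooth lift is used), that enlarging the base PD-ideal genuinely leaves the envelope's underlying $\delta$-ring unchanged rather than introducing an extra divided-power-polynomial factor, and that the resulting identification is compatible both with the cosimplicial structure and with the site-theoretic comparison map. Should this bookkeeping prove awkward, an alternative is to pass to the reduction modulo $q-1$: by Theorem~\ref{log q vs log crys}, Proposition~\ref{delta-crys vs crys} and Remark~\ref{small log crystalline sites}, the assertion modulo $q-1$ is the classical invariance of log crystalline cohomology under a PD-thickening of the base (which one cites or deduces from the log de Rham comparison), and one then concludes since both $q$-crystalline complexes are $(p,[p]_q)$-complete $D$-complexes, so that an isomorphism modulo $q-1$ already forces an isomorphism, the $(p,[p]_q)$-adic and $(p,q-1)$-adic topologies on $D$ agreeing.
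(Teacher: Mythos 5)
Your proposal is correct and takes essentially the same route as the paper: the paper also defines the comparison map via the functor of sites induced by $X_{D/I'}\hookrightarrow X$, reduces to an affine with a smooth chart, and observes that the cosimplicial algebra $F^{\bullet}$ of Construction \ref{compuing log q-crystalline cohomology} computes both sides, precisely because passing from $I$ to $I'$ enlarges only the $q$-PD ideal of each log $q$-PD envelope while leaving its underlying $\delta$-ring unchanged (as $\gamma(I')\subset I'\subset D$ already). Your bookkeeping with the common smooth lift and the regular sequence is exactly the justification the paper compresses into ``it is clear from Construction \ref{compuing log q-crystalline cohomology}''.
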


\begin{proof}
The canonical embedding $X_{D/I'}\hookrightarrow X$ induces a functor
\[
((X, M_X)_{D/I'} /(D, M_D))_{q\CRYS}\to ((X, M_X) /(D, M_D))_{q\CRYS}. 
\]
This determines the map, and we shall show that it is an isomorphism. 

We may assume $X=\Spf (R)$ with a smooth chart $M_D\to P$. 
We apply Construction \ref{compuing log q-crystalline cohomology} to $((R, P) /(D, M_D))_{q\CRYS}$ and obtain $F^{\bullet}$ computing $q\Omega_{(R, P) /(D, M_D)}$. It is clear from Construction \ref{compuing log q-crystalline cohomology} that $F^{\bullet}$ also computes $q\Omega_{(R, P)_{D/I'} /(D, M_D)}$
\end{proof}

Let $(D, I, M_D)$ be a prelog $q$-PD-triple and assume it is of rank $1$ or $(D, M_D)$ is a log ring. In particular, we have $\phi_{M_D}\colon M_D\to M_D$ and the target will be denoted by $\phi_* M_D$ instead. 
Observe that $(\phi_D, \phi_{M_D})$ induces $\psi_D\colon (D/I, M_D)\to (\phi_* D/[p]_q, \phi_* M_D)$.
Write $(X^{(1)}, M_X^{(1)})$ for the base change of $(X, M_X)$ along $\psi_D$. 
We have $\Prism_{(X^{(1)}, M_X^{(1)})/ (\phi_* D, \phi_* M_D)}$ relative to the log prism $(\phi_* D, ([p]_q ), \phi_* M_D)$. 

\begin{thm}[Log $q$-crystalline cohomology and log prismatic cohomology]\label{log q-crystalline vs log prismatic}
Assume that the mod $p$ fiber of $(X, M_X)$ is of Cartier type over $(D/(p,I), M_D)$. 
There exists a canonical isomorphism
\[
\Prism_{(X^{(1)}, M_X^{(1)})/ (\phi_* D, \phi_* M_D)}\overset{\cong}{\longrightarrow}
\phi_* q\Omega_{(X,M_X)/ (D, M_D)}
\]
of $E_{\infty}$-$\phi_* D$-algebras on $X_{\et}$. 
\end{thm}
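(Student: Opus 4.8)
The plan is to realize the comparison map by a cocontinuous functor of sites and then to check it is an isomorphism by a \v{C}ech--Alexander computation, exactly parallel to the proof of the crystalline comparison (Theorem \ref{crystalline comparison:global}).

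First I would construct a cocontinuous functor
\[
((X, M_X)/(D, M_D))_{q\CRYS} \longrightarrow ((X^{(1)}, M_X^{(1)})/(\phi_* D, \phi_* M_D))_{\Prism}.
\]
Given an object $(E, J, M_E)$, form the pushout $M_E^{(1)} = M_E \oplus_{M_D}\phi_* M_D$ (with its map to $\phi_* M_E$ coming from $\phi_{M_E}$ of Remark \ref{Frobenius lifts on monoids}), so that $(\phi_* E, M_E^{(1)})$ is a prelog ring over $(\phi_* D, \phi_* M_D)$. Since $(E, [p]_q E)$ is a bounded prism, so is $(\phi_* E, [p]_q\phi_* E)$; the composite $\psi_E\colon \Spf(\phi_* E/[p]_q\phi_* E)\to \Spf(E/J)\to X$ factors through $X^{(1)}$ because $\psi_E$ is compatible with $\psi_D$; and the exact closed immersion $(\Spf(E/J), f^*M_X)\hookrightarrow (\Spf E, M_E)^a$ remains exact after the base change of monoids along $M_D\to\phi_* M_D$. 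Thus $(\phi_* E, [p]_q\phi_* E, M_E^{(1)})^a$ is naturally an object of the target. As in the crystalline comparison, $\psi_E$ identifies the \'etale covers of $\Spf(E/J)$ and $\Spf(\phi_* E/[p]_q\phi_* E)$ (here $\phi(J)\subset [p]_q E$ and the $\gamma$-formulas of the proof of Lemma \ref{\'etale maps and q-PD} are used), so the functor is cocontinuous. It upgrades to a morphism of ringed topoi sending $\cO_{\Prism}$ to $\phi_*\cO_{q\CRYS}$, hence produces the desired map
\[
\Prism_{(X^{(1)}, M_X^{(1)})/(\phi_* D, \phi_* M_D)} \longrightarrow \phi_* q\Omega_{(X, M_X)/(D, M_D)}
\]
of $E_\infty$-$\phi_* D$-algebras, globally on $X_{\et}$.

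To prove it is an isomorphism one reduces, via \'etale descent and Remark \ref{slice category}, to the affine case $X = \Spf R$ with an integral, weakly finitely generated smooth chart $M_D\to P$ of Cartier type admitting a smooth lift over $(D, M_D)$. Then both sides are computed by \v{C}ech--Alexander complexes: choosing a free surjection $(E_0^\bullet, M_E^\bullet)\to(R,P)$ as in Construction \ref{compuing log q-crystalline cohomology}, with $(E^\bullet, M_E^\bullet)$ the $(p,[p]_q)$-completed $\delta_{\log}$-hull and $F^\bullet$ the associated log $q$-PD envelopes (Lemma \ref{log q-PD envelope}), the complex $\phi_* F^\bullet$ computes $\phi_* q\Omega_{(R,P)/(D,M_D)}$; and, starting from the Frobenius-twisted data $((\phi_*\phi^* E_0^\bullet)^\wedge, M_E^{\bullet(1)})\to(R^{(1)}, P^{(1)})$, Construction \ref{\v{C}ech} produces a cosimplicial prismatic envelope $C^\bullet$ computing $\Prism_{(R^{(1)}, P^{(1)})/(\phi_* D, \phi_* M_D)}$. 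The Frobenius $\phi_E\colon E^\bullet\to\phi_* E^\bullet$, after exactification (using Proposition \ref{change of monoids} and Remark \ref{Exactification and integral maps}), extends to a map $C^\bullet\to\phi_* F^\bullet$ of cosimplicial $\delta$-$\phi_* D$-algebras by the explicit description of the envelopes in terms of $x\mapsto\phi(x)/[p]_q$ (\cite{BS}*{Corollary 2.38} together with the $\gamma$-formulas), and one checks this realizes the site-theoretic comparison map under the two computations.

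It therefore remains to show $C^\bullet\to\phi_* F^\bullet$ is a quasi-isomorphism, and by $(p,[p]_q)$-completeness it suffices to check this modulo $p$. Writing $(M_E^n)'\cong Q_2\oplus G^n$ with $Q_1 = M_D\to Q_2$ of Cartier type and $G = Q_2^{\gp}/Q_1^{\gp}$ free abelian, exactly as in the proof of Theorem \ref{crystalline comparison:global} the mod-$p$ map decomposes into a ``monoid part'', governed by Proposition \ref{Relative Frobenius is a quasi-isomorphism} (this is where the Cartier-type hypothesis enters), and a ``free polynomial part'' over $D\langle\underline X\rangle$, governed by \cite{BS}*{Lemma 5.4}; both pieces are quasi-isomorphisms on associated complexes, and so is their combination. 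The main obstacle is precisely this last step together with the verification that the Frobenius carries the prismatic envelope $C^\bullet$ into $\phi_* F^\bullet$ compatibly in the cosimplicial direction and matches the site-theoretic map --- i.e.\ the careful use of exactification and of the $\gamma$/$\delta$ comparison for divided powers. The $q=1$ specialization recovers, and is consistent with, the crystalline comparison.
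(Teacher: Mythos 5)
Your construction of the comparison map is the same as the paper's (the cocontinuous functor $(E,J,M_E)\mapsto(\phi_*E,([p]_q),M_E^{(1)})^a$), but your proof that it is an isomorphism takes a genuinely different route. The paper does \emph{not} redo the envelope comparison in the $q$-deformed setting: it first reduces to $I=(q-1)$ via Lemma \ref{Invariance under q-PD thickenings of the base}, then uses the conservativity of $(p,[p]_q)$-completed base change along $D\to D/(q-1)$ (\cite{BS}*{Lemma 16.5 (2)}) together with Lemma \ref{base change I} on the prismatic side, and identifies the two reductions by quoting the already-proved crystalline comparison (Theorem \ref{crystalline comparison}) and the log $q$-crystalline versus log crystalline comparison (Theorem \ref{log q vs log crys}). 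You instead inline the content of those two theorems: you compare the prismatic envelopes $C^\bullet$ of the Frobenius-twisted data with the log $q$-PD envelopes $F^\bullet$ directly, reduce mod $p$, and invoke Proposition \ref{Relative Frobenius is a quasi-isomorphism} and \cite{BS}*{Lemma 5.4}. This is viable and closer to a ``one-pass'' generalization of \cite{BS}*{Theorem 5.2}, at the cost of repeating the exactification/decomposition bookkeeping that the paper only carries out once (in the proof of Theorem \ref{crystalline comparison}); the paper's route buys a much shorter argument by recycling work, while yours avoids the detour through $D/(q-1)$ and makes the mechanism of the isomorphism more explicit.

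Two points in your sketch need sharpening. First, the step that makes the mod-$p$ decomposition legitimate is the identification of the log $q$-PD envelope of $J\subset E$ with the prismatic envelope obtained by adjoining $\phi(J)/[p]_q$, so that $C^\bullet\to\phi_*F^\bullet$ becomes the base change of the relative Frobenius of the underlying cosimplicial prelog rings; for this you must cite \cite{BS}*{Lemma 16.10} (the $q$-deformed statement), not \cite{BS}*{Corollary 2.38}, which is its crystalline specialization and does not apply when $[p]_q\neq p$. Second, the exactness you need on the prismatic side is that of $(\Spf(\phi_*E/[p]_q),\,\text{pullback of }M_X^{(1)})\hookrightarrow(\Spf(\phi_*E),M_E^{(1)})^a$, a closed immersion along $([p]_q)$ rather than along $J$; this is deduced by transporting the given exact immersion through $\psi_E$, not merely by base-changing monoids along $M_D\to\phi_*M_D$, so state that verification explicitly.
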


\begin{rem}
By the base change theorem, $\Prism_{(X^{(1)}, M_X^{(1)})/ (\phi_* D, \phi_* M_D)}$ is isomorphic to the $(p, [p]_q)$-completed base change of $\Prism_{(\widetilde{X}, M_{\widetilde{X}})/ (D, M_D)}$ along $\phi_D$. 
\end{rem}

\begin{proof}
First we construct a functor
\[
((X, M_X)/(D, M_D))_{q\CRYS}\to ((X^{(1)}, M_X^{(1)})/(D, M_D))_{\Prism}. 
\]
Let $(E, J, M_E)^a$ be an object of $((X, M_X)/ (D, M_D))_{q\CRYS}$. There is a commutative diagram
\[
\begin{CD}
E/J @>>> \phi_* E/ [p]_q  \\
@AAA @AAA \\
D/I @>\psi_D >> \phi_* D/[p]_q , 
\end{CD}
\]
where the top map is induced by $\phi_E$, and this diagram induces a map of formal schemes $\Spf (\phi_* E/[p]_q)\to X^{(1)}$. Moreover, $\phi_{M_E}$ factors as
\[
M_E \to M_E^{(1)}\to M_E,
\]
where the first map is induced by $\phi_{M_D}$. 
The log prism $(\phi_* E, ([p]_q), M_E^{(1)})^a$ over $(\phi_* D, ([p]_q ), \phi_* M_D)$ can be naturally upgraded to an object of the prismatic site $((X^{(1)}, M_X^{(1)})/(\phi_* D, \phi_* M_D))_{\Prism}$.
This functor is cocontinuous and induces the desired comparison map, say $\alpha$. 

We shall show that $\alpha$ is an isomorphism. We may work locally and assume that $X=\Spf(R)$ with a smooth chart $M_D\to P$ of Cartier type.
In particular, $(R, P)$ has a smooth lift over $(D/(q-1), M_D)$ (even over $(D, M_D)$). The comparison map $\alpha$ is compatible with the change of $I$ under the isomorphism of Lemma \ref{Invariance under q-PD thickenings of the base} (the prismatic side does not change), so we may further assume that $I=(q-1)$, the smallest $q$-PD ideal. 

By \cite{BS}*{Lemma 16.5 (2)}, it suffices to show that $\alpha$ is an isomorphism after $(p, [p]_q)$-completed base change to $D/(q-1)$.  
Applying Lemma \ref{base change I} to 
\[
(\phi_* D, ([p]_q), \phi_* M_D)\to (\phi_* D/ (q-1), (p), \phi_* M_D),
\]
we obtain an isomorphism
\[
\Prism_{(R^{(1)}, P^{(1)})/ (\phi_* D, \phi_* M_D)}\widehat{\otimes}^L_{\phi_* D} \phi_* D/(q-1) \overset{\cong}{\longrightarrow} 
\Prism_{(\overline{R}^{(1)}, P^{(1)})/ (\phi_* D/(q-1), \phi_* M_D))}, 
\]
where $\overline{R}^{(1)}$ is the base change of $R^{(1)}$ to $\phi_* D/(p, q-1)$. 
On the other hand, (the proof of) Theorem \ref{crystalline comparison} and Theorem \ref{log q vs log crys} give identifications
\begin{align*}
\Prism_{(\overline{R}^{(1)}, P^{(1)})/ (\phi_* D/(q-1), \phi_* M_D))} &\overset{\cong}{\longrightarrow}
\phi_* R\Gamma_{\delta\CRYS}((X, M_X)/ (D/(q-1), M_D)), \\
&\overset{\cong}{\longleftarrow} \phi_* q\Omega_{(R,P)/ (D, M_D)}\widehat{\otimes}^L D/(q-1),  
\end{align*}
where the first isomorphism is constructed in exactly the same way as $\alpha$. 
Since the construction of $\alpha$ commutes with the base change $D\to D/(q-1)$, we conclude that $\alpha$ is an isomorphism.
\end{proof}

\subsection{Log \texorpdfstring{$q$}{q}-de Rham complexes}
From now on, we work locally and assume that $X=\Spf (R)$ is affine, $(X, M_X)$ admits a smooth lift over $(D, M_D)$, $M_D\to P$ is a chart that is integral and weakly finitely generated. We further assume that $D$ is \emph{flat} over $A$. We introduce logarithmic variants of \cite{BS}*{Construction 16.19, 16.20}. 

\begin{const}[Log $q$-de Rham complex]\label{log q-de Rham}
Let $M_D\oplus \bN^S$ be a monoid free over $M_D$ for some set $S$, and let $(E, M_E)$ be the $(p, [p]_q)$-completion of 
\[
D[\bN^S]=D \otimes_{\bZ_{(p)}[M_{D}]}\bZ_{(p)}[M_D\oplus \bN^S],
\]
regarded as a $\delta_{\log}$-ring over $(D, M_D)$ with $\delta_{\log}(\bN^S)=0$. 
As $D$ is flat over $A$, $E$ is also flat over $A$. 

For each $s\in S$, 
\[
X_s \mapsto q X_s, \quad X_t \mapsto X_t \quad (t\neq s), 
\]
defines an automorphism $\gamma_s$ of $(D [\bN^S], \bN^S)^a$ as a $\delta_{\log}$-ring over $(D, M_D)$ since $q$ is invertible in $A$. It extends to an automorphism of $E$ as it is congruent to the identity modulo a topologically nilpotent element $(q-1)X_s$. 
Since $q-1$ is a nonzerodivisor of $E$, we can define
\[
\nabla_{q,s}^{\log}:E\to E ; \quad f \mapsto \frac{\gamma_s (f)- f}{q-1}. 
\]
There exists a unique $D$-linear continuous map
\[
\nabla_q \colon E \to \Omega^1_{(E, M_E)/(D, M_D)}
\]
such that $\nabla_q (f) =\sum_{s\in S}\nabla_{q,s}^{\log}(f) d\log(X_s)$. 
The associated $(p, [p]_q)$-completed (cohomological) Koszul complex is the \emph{log $q$-de Rham complex} $q\Omega^*_{(E, M_E)/ (D, M_D)}$ of $(E, M_E)$ relative to $(D, M_D)$. 
This construction is functorial on $S$: every map $S\to S'$ induces a map 
\[
q\Omega^*_{(E, M_E)/ (D, M_D)}\to q\Omega^*_{(E', M_{E'})/ (D, M_D)}
\]
of complexes of $D$-modules. 

We need to generalize the construction of log $q$-de Rham complexes. 
Fix $S$ as above. First note that $\gamma_s$ above extends to an automorphism of 
\[
D \otimes_{\bZ_{(p)}[M_{D}]}\bZ_{(p)}[M_D^{\gp}\oplus \bZ^S],
\]
and it is congruent to the identity modulo $(q-1)$. 
% the completion could be zero...

Let $N\subset M_D^{\gp}\oplus \bZ^S$ be a submonoid containing $M_D$, and set
\begin{center}
$E_N=$ (the $(p, [p]_q)$-completion of $D \otimes_{\bZ_{(p)}[M_{D}]}\bZ_{(p)}[N]$), 
\end{center}
regarded as a $\delta_{\log}$-ring over $(D, M_D)$ by Proposition \ref{change of monoids}. 
For any $n\in N$, $\gamma_s (n)$ and $\nabla_{q,s}^{\log}(n)$ are defined inside $E_N$, and $\gamma_s $ induces an automorphism of $(E_N, N)^a$. 
Now, the construction of the log $q$-de Rham complex works in this setting, and we obtain $q\Omega_{(E_N, N)/(M_D, D)}^*$. 

This generalized construction is functorial on $(S, N)$: for every map $S\to S'$ and $N\to N'\subset (M_{D}\oplus\bN^{S'})^{\gp}$, we have a map 
\[
q\Omega^*_{(E_N, N)/ (D, M_D)}\to q\Omega^*_{(E_{N'}, N')/ (D, M_D)}.
\]
\end{const}

\begin{const}[Log $q$-de Rham complexes for log $q$-PD envelopes]\label{log q-de Rham for enveloeps}
Using the notation of Construction \ref{log q-de Rham}, we let $(E_N, N)\to (R, P)$ be a surjection from some $N$ and $(E_{N'}, N')$ its exactification. Applying Construction \ref{log q-de Rham} to $N'$, we obtain $\gamma_s$, $\nabla_{q}^{\log}$, and the log $q$-de Rham complex $q\Omega_{(E_{N'}, N')/ (D, M_D)}^*$. 

Let $(F, M_F)$ be the log $q$-PD envelope of $(E_N, N)\to (R, P)$, which is the same as the $q$-PD envelope of $(E_{N'}, N')\to (R, P)$. By the same argument of \cite{BS}*{Construction 16.20} (with $(q-1)X_s$ replaced by $(q-1)$), we can extend $\nabla_{q,s}^{\log}$ to $F$ and thus obtain the log $q$-de Rham complex 
\[
q\Omega_{(F, M_F)/ (D, M_D)}:
F \overset{\nabla^{\log}_{q}}{\longrightarrow} F\widehat{\otimes}_E \Omega^1_{(E,M_E)/ (D, M_D)}
\overset{\nabla^{\log}_{q}}{\longrightarrow} F\widehat{\otimes}_E \Omega^2_{(E,M_E)/ (D, M_D)}\to \cdots.
\]
The reduction $F/(q-1)$ is the classical de Rham complex of the $p$-completed log PD envelope  \cite{Beilinson}*{1.7}.   

This construction is functorial. If $(E_1, N_1)\to (R, P)$ is another surjection with $N_1 \subset M_D^{\gp}\oplus \bZ^{S_1}$, and $S\to S_1$ is any map compatible with two surjections, then we have a natural map 
\[
q\Omega_{(F, M_F)/ (D, M_D)}^* \to q\Omega_{(F_1, M_{F_1})/ (D, M_D)}^*
\]
with obvious notation. This functoriality is slightly nontrivial than previous ones as it involves the $q$-PD envelope, but easily follows from $[p]_q$-torsionfreeness of $F_1$ as in the last sentence of \cite{BS}*{Construction 16.20}. 
%In this construction, we can replace $E$ by its ind-\'etale extensions as in \cite{BS}*{16.19}. 
\end{const}

\begin{thm}[Log $q$-de Rham cohomology and log $q$-crystalline cohomology]\label{log q-de Rham vs log q-crystalline}
Let $(E, N)\to (R, P)$ be a surjection and $(F, M_F)$ the log $q$-PD envelope as in Construction \ref{log q-de Rham for enveloeps}. 
Then, there is a canonical isomorphism
\[
q\Omega_{(R, P)/ (D, M_D)} \cong q\Omega_{(F, M_F)/ (D, M_D)}^*. 
\]
This isomorphism is functorial on surjections $(E, N)\to (R, P)$. 
\end{thm}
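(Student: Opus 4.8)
The plan is to reduce the statement to a \v{C}ech-theoretic computation of $q\Omega_{(R,P)/(D,M_D)}$ and then identify each term of the resulting cosimplicial complex with a log $q$-de Rham complex, exactly in the spirit of the proof of \cite{BS}*{Theorem 16.22}. Concretely, first I would choose a surjection $(E_0,M_E)\to(R,P)$ of the form used in Construction \ref{compuing log q-crystalline cohomology}, arranging compatibility with the given surjection $(E,N)\to(R,P)$ (this is possible after enlarging $S$), and form the $(p,[p]_q)$-completed \v{C}ech nerve $(E_0^\bullet,M_E^\bullet)$ over $(D,M_D)$. Passing to log $q$-PD envelopes term by term produces a cosimplicial log $q$-PD triple $(F^\bullet,J^\bullet,M_F^\bullet)$ which, by Construction \ref{compuing log q-crystalline cohomology} together with Lemma \ref{log q-PD envelope}, computes $q\Omega_{(R,P)/(D,M_D)}$.

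The next step is to replace this bi-indexed object by the single log $q$-de Rham complex of $(F,M_F)$. For this I would show that each $F^n$ is, up to $(p,[p]_q)$-completion, a log $q$-PD envelope over $(D,M_D)$ of a free prelog ring in $n$ extra batches of variables beyond those of $F$, and that the cosimplicial structure in the $n$-direction is precisely the one whose totalization realizes the Koszul-type complex $q\Omega^*_{(F,M_F)/(D,M_D)}$ of Construction \ref{log q-de Rham for enveloeps}. This is the log analogue of the key computation in \cite{BS}*{Lemma 16.21}: the relevant statement is that for the free situation, the \v{C}ech nerve $q\Omega_{(E_N,N)^\bullet/(D,M_D)}$ is cosimplicially homotopy equivalent to (the totalization of) $q\Omega^*_{(E_N,N)/(D,M_D)}$, where the homotopy equivalence is built from the standard path-object/contracting-homotopy argument using the automorphisms $\gamma_s$ and the operators $\nabla^{\log}_{q,s}$ introduced in Construction \ref{log q-de Rham}. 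Here the hypothesis that $D$ is flat over $A=\bZ_p[\![q-1]\!]$ is used so that $q-1$ is a nonzerodivisor throughout and the formulas defining $\nabla^{\log}_{q,s}$ make sense; the integrality and weak finite generation of $M_D\to P$ are used to guarantee the log $q$-PD envelopes exist and are $(p,[p]_q)$-completely flat, so that termwise completions commute with totalization by \cite{BS}*{Lemma 16.5 (3)}.

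Once the free case is settled, I would deduce the general case by exactification: Construction \ref{log q-de Rham for enveloeps} was set up precisely so that the log $q$-PD envelope of $(E,N)\to(R,P)$ coincides with that of its exactification $(E_{N'},N')\to(R,P)$, and the corresponding log $q$-de Rham complexes agree by construction. Functoriality in $(E,N)\to(R,P)$ then follows from the functoriality clauses already recorded in Constructions \ref{log q-de Rham} and \ref{log q-de Rham for enveloeps} together with the functoriality of the \v{C}ech-nerve comparison, since any two choices of surjection can be dominated by a common one (take the coproduct of the two polynomial charts).

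The main obstacle I expect is the log analogue of \cite{BS}*{Lemma 16.21}: verifying that the cosimplicial log $q$-PD triple really is computed by the Koszul complex $q\Omega^*_{(F,M_F)/(D,M_D)}$, i.e.\ that the extra simplicial directions introduced by the \v{C}ech nerve are ``exact'' against the de Rham differential. The extra bookkeeping here, relative to the non-logarithmic case, is that one must track the submonoids $N\subset M_D^{\gp}\oplus\bZ^S$ and check that the automorphisms $\gamma_s$ and the contracting homotopies are compatible with the exactification step and with the $\delta_{\log}$-structures supplied by Proposition \ref{change of monoids}. I expect this to be routine but notationally heavy, and the flatness of $D$ over $A$ plus \cite{BS}*{Lemma 16.5} to be exactly what makes all the completions behave.
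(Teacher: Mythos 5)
Your proposal is correct in outline and follows the same architecture as the paper's proof: form the cosimplicial complex $M^{\bullet,*}$ whose rows come from the \v{C}ech nerve of $(D,M_D)\to(E,N)$ with termwise log $q$-PD envelopes and whose columns are the log $q$-de Rham complexes of Construction \ref{log q-de Rham for enveloeps}; identify the $0$-th row with $q\Omega_{(R,P)/(D,M_D)}$ (Remark \ref{computing log q-crystalline cohomology via smooth lift}) and the $0$-th column with $q\Omega^*_{(F,M_F)/(D,M_D)}$; handle the general surjection by exactification and functoriality by dominating two charts by a third. The one place you genuinely diverge is the step you yourself flag as the main obstacle. The paper splits it into two separately cheap statements: for $i>0$ the row $M^{\bullet,i}$ is cosimplicially contractible, which is a purely module-theoretic fact about the free modules $\Omega^i_{(E^{\bullet},M_E^{\bullet})/(D,M_D)}$ on the classes $d\log(X_s)$, identical to the nonlog bookkeeping of \cite{Bhatt-deJong}*{2.15} and involving neither $\gamma_s$ nor any $q$-homotopy; and every coface $F^0\to F^n$ induces a quasi-isomorphism of columns, which by $(q-1)$-completeness (\cite{BS}*{Lemma 16.5}) is checked after reduction modulo $(q-1)$, where it becomes the classical log crystalline Poincar\'e lemma \cite{Beilinson}*{(1.8.1)}. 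Your plan to establish this instead by explicit contracting homotopies built from $\gamma_s$ and $\nabla^{\log}_{q,s}$ would amount to proving a genuinely $q$-deformed Poincar\'e lemma for log $q$-PD polynomial algebras; that is substantially more delicate than ``routine but notationally heavy,'' and the reduction mod $(q-1)$ is precisely the device that lets one avoid it. With that substitution your argument matches the paper's.
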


\begin{proof}
The proof is entirely parallel to \cite{BS}*{Theorem 16.22}. To provide references, let us add some details. 

Let $(E^{\bullet}, M_E^{\bullet})$ be the \v{C}ech nerve of $(D, M_D)\to (E, N)$, and $(F^{\bullet}, M_F^{\bullet})$ its log $q$-PD envelopes. 
We shall apply Construction \ref{log q-de Rham for enveloeps} to each surjection $(E^n, M_{E^n})\to (R,P)$, and obtain the log $q$-de Rham complexes of $(F^{\bullet}, M_{\bullet})$ relative to $(D, M_D)$.  
Set $M^{\bullet, *}\coloneqq q\Omega_{F^{\bullet}, M_F^{\bullet}/ (D, M_D)}^*$; this is a cosimplicial complex. 
The proof is done once we check the following items:
\begin{enumerate}
    \item The $0$-th row $M^{\bullet, 0}$ computes $q\Omega_{(R, P)/ (D, M_D)}$. 
    \item The $0$-th column $M^{0,*}$ is exactly the log $q$-de Rham complex $q\Omega_{((F, M_F)/ (D, M_D)}^*$. 
    \item For $i>0$, the $i$-th row $M^{\bullet, i}$ is cosimplicially homotopy equivalent to $0$.  
    \item Any face map $F^i\to F^j$ induces an isomorphism $M^{i,*}\cong M^{j,*}$. 
\end{enumerate}
Remark \ref{computing log q-crystalline cohomology via smooth lift} says (1) holds, and (2) is trivial. 
For (3), the argument in the nonlog case \cite{Bhatt-deJong}*{2.15} works because the problem reduces to showing that $\Omega^1_{(E^{\bullet}, M_E^{\bullet})/ (D, M_D)}$ is cosimplicially homotopy equivalent to $0$; each module $\Omega^1_{(E^{\bullet}, M_E^{\bullet})/ (D, M_D)}$ is free over $E^{\bullet}$ with a basis obtained from $d\log (X_s)$, and their behavior under maps $[n]\to [m]$ are essentially the same as in the nonlog case. 
Finally, (4) reduces to the corresponding statement modulo $(q-1)$, which holds as both sides compute the same log crystalline cohomology \cite{Beilinson}*{(1.8.1)}; see also the last part of the proof of \emph{loc.cit.}.
\end{proof}

\begin{rem}[Frobenius is an isogeny]\label{Isogeny}
It is easy to see that the Frobenius on $q\Omega_{(R, P)/ (D, M_D)}$ is translated into 
\[
d\log (X_s) \mapsto [p]_q d\log (X_s)
\]
on the log $q$-de Rham complex $q\Omega_{(F, M_F)/ (D, M_D)}^*$; compare with \cite{Bhatt:lecture}*{XI, Remark 2.10}. 
In particular, the linearized Frobenius 
\[
\phi_D^* q\Omega_{(F, M_F)/ (D, M_D)}^* \to q\Omega_{(F, M_F)/ (D, M_D)}^* 
\]
factors over $\eta_{[p]_q} q\Omega_{(F, M_F)/ (D, M_D)}^*\to q\Omega_{(F, M_F)/ (D, M_D)}^*$. 

Assume $R$ is topologically of finite presentation over $D/I$ and $(X, M_X)$ has the mod $p$ fiber of Cartier type over $(\Spec (D/(p,I)), M_D)^a$. 
Then $q\Omega_{(R, P)/(D, M_D)}$ sits in $D^{[0, r]}(D)$, where $r$ denotes the rank of $\Omega^1_{(R, P)/ (D/I, M_D)}$. 
(This can be checked by reducing it to the case of the log crystalline cohomology or generalizing the discussion above on log $q$-de Rham complexes.)
Moreover, the linearized Frobenius induces an isomorphism
\[
\phi_D^* q\Omega_{(R, P)/ (D, M_D)} \cong L\eta_{[p]_q} q\Omega_{(R, P)/ (D, M_D)};
\]
this is reduced to the case of log crystalline cohomology, where the Cartier isomorphism implies the claim. 
Thus, we deduce that the linearized Frobenius of $q\Omega_{(R, P)/ (D, M_D)}$ admits an inverse up to $[p]_q^r$ in this case. 
\end{rem}

\if0
\begin{rem}\label{More log q-de Rham}
In Construction \ref{log q-de Rham}, we can replace $M_D^{\gp}\oplus \bZ^S$ by $G\oplus \bZ^S$ for some group $G$ that contains $M_D^{\gp}$ with a prime-to-$p$ finite quotient $G/M_D^{\gp}$; $\gamma_s$ acts trivially on $X_g$ for $g\in G$. Let $N$ be a submonoid of $G\oplus \bZ^S$, and assume that $(R, P)$ is the base change of $(\widetilde{R}, N)$ that is $(p, [p]_q)$-completely \'etale and strict over $(E_N, N)$.
In this setting, we have the log $q$-de Rham complex with coefficients in $\widetilde{R}$. 
(The extension along the \'etale map can be done as in \cite{BS}*{Construction 16.19}.)
The proof of the above theorem and Remark \ref{computing log q-crystalline cohomology via smooth lift} also shows that this generalized log $q$-de Rham complex computes $q\Omega_{(R, P)/ (D, M_D)}$. (The functoriality of the identification also extends to this generality.)
\end{rem}

\subsection{Cartier isomorphisms for log $q$-de Rham complexes}
Let $N\subset M_D^{\gp}\oplus\bZ^S\oplus G$ be a submonoid containing $M_D$ as in Construction \ref{log q-de Rham} and Remark \ref{More log q-de Rham}. 
Recall that $M_D \to N$ is of Cartier type if the relative Frobenius $N^{(1)}\to N$ is exact. (Note that $M_D \to N$ is always integral.)

\begin{lem}\label{Cartier}
If $M_D \to N$ is of Cartier type, there exists a natural isomorphism
\[
\Omega^i_{(E_{N^{(1)}} /[p]_q, N^{(1)})/ (D/[p]_q, M_D)} \\ 
\cong
H^i (q\Omega_{(E_N, N)/(D, M_D)}^* /[p]_q)
\]
of differential graded $D/[p]_q$-algebra. 
In particular, $H^i (q\Omega_{(E_N, N)/(D, M_D)}^* /[p]_q)$ is a finite free $E_{N^{(1)}} /[p]_q$-module if $S$ is finite. 
\end{lem}

\begin{proof}
By a direct calculation, one sees that $H^i (q\Omega_{(E_N, N)/(D, M_D)}^* /[p]_q)$ is generated by elements of the form of $n \cdot d\log (X_{s_1})\wedge\cdots \wedge d\log (X_{s_i})$ for $n\in N\cap (M^{\gp}_D\oplus p\bZ^S\oplus G)$. (Compare with \cite{Scholze}*{3.4}.)
By the assumption, $N\cap (M^{\gp}_D\oplus p\bZ^S)$ is exactly $N^{(1)}$. This description gives an isomorphism; see also the next lemma. 
\end{proof}

\begin{lem}\label{isogeny:chart}
If $M_D \to N$ is of Cartier type, the Frobenius induces an isomorphism
\[
\phi_D^* q\Omega_{(E_N, N)/(D, M_D)}\cong L\eta_{[p]_q}q\Omega_{(E_N, N)/(D, M_D)},
\]
where $L\eta$ is the d\'ecalage functor; see \cite{BMS1}*{Section 6}. 
It $S$ is finite, it admits an inverse up to $[p]_q^{\# S}$. 
\end{lem}

\begin{proof}
It suffices to show that the relative Frobenius induces an isomorphism
\[
q\Omega^*_{(E_{N^{(1)}}, N^{(1)}/(D, M_D)}\cong
\eta_{[p]_q} (q\Omega^*_{(E_{N}, N)/(D, M_D)}). 
\]
As the right hand side is still $[p]_q$-complete \cite{BMS1}*{6.19}, it suffices to check this after the reduction modulo $[p]_q$. 
The left hand side specializes to the de Rham complex $q\Omega^*_{(E_{N^{(1)}}, N^{(1)}/(D, M_D)}$, while the right hand side has the following description
\[
\eta_{[p]_q} (q\Omega^*_{(E_{N}, N)/(D, M_D)}) /[p]_q \cong
H^* (q\Omega^*_{(E_{N}, N)/(D, M_D)}) /[p]_q )
\]
by \cite{BMS1}*{6.13}. This map is exactly the one given in Lemma \ref{Cartier}, hence an isomorphism. 
\end{proof}

Let us generalize the above lemma in a weak way. 

\begin{thm}[Frobenius is an isogeny]\label{isogeny:general}
Assume that $M_D \to N$ is of Cartier type and $S$ is finite, and $(R,P)$ is the base change of $(\widetilde{R}, N)$ that is $(p, [p]_q)$-completely \'etale and strict over $(E_N, N)$. 
Then, the Frobenius induces an isomorphism
\[
\phi_D^* q\Omega_{(R, N)/(D, M_D)}[1/[p]_q]\cong q\Omega_{(R, N)/(D, M_D)}[1/[p]_q].
\]
\end{thm}

\begin{proof}
Write $\widetilde{R}^{(1)}$ for the base change of $\widetilde{R}$ to $D$ via $\phi_D$. 
Note that the relative Frobenius induces an isomorphism
\[
\widetilde{R}^{(1)}\widehat{\otimes}^L_{E_N} E_{N^{(1)}} \cong
\widetilde{R}
\]
since $\phi_{E_N}$ modulo $p$ preserves \'etale maps. Therefore, the relative Frobenius
\[
q\Omega^*_{(\widetilde{R}^{(1)}, N^{(1)})/(D, M_D)}\to q\Omega^*_{(\widetilde{R}, N)/(D, M_D)}
\]
can be identified with
\[
\widetilde{R}^{(1)}\otimes_{E_{N^{(1)}}} q\Omega^*_{(E_{N^{(1)}}, N^{(1)})/(D, M_D)}\to
\widetilde{R}^{(1)}\otimes_{E_{N^{(1)}}} q\Omega^*_{(E_N, N)/(D, M_D)})
\]
in each degree. Thus, by the proof of Lemma \ref{isogeny:chart}, this has an inverse up to $[p]_q^{\# S}$. 
\end{proof}
\fi

\section{Comparison with \texorpdfstring{$A\Omega$}{AOmega}}\label{section:comparison with AOmega}
We compare the log $q$-crystalline cohomology with the $A_{\Inf}$-cohomology in the semistable case \cite{CK}. Let $k$ be an algebraically closed field of characteristic $p$ and $C$ the completed algebraic closure of $W(k)[1/p]$. 

\begin{thm}\label{AOmega}
Let $X$ be a $p$-adic formal scheme over $\cO_C$ that is, \'etale locally, \'etale over
\[
\cO_C \langle t_0, \dots, t_r, t_r^{\pm 1}, \dots, t_d^{\pm 1}\rangle/(t_0 \cdots t_r - \pi)
\]
for some non-unit $\pi \in \cO_C$. 
Denote by $M_X$ its canonical log structure \cite{CK}*{1.6}.
There exists an isomorphism
\[
q\Omega_{(X, M_X)/ (A_{\Inf}, \cO_C^\flat \setminus\{0\})} \cong A\Omega_X
\]
in $D(X_{\et}, A_{\Inf})$ compatible with the Frobenius. 
\end{thm}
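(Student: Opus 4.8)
The plan is to reduce to an explicit local computation and match two log $q$-de Rham complexes, following the structure of the comparison between $q$-crystalline cohomology and $A\Omega$ in the smooth case. Both $q\Omega_{(X, M_X)/ (A_{\Inf}, \cO_C^\flat \setminus\{0\})} = Ru^q_{X*}\cO_{q\CRYS}$ and $A\Omega_X$ (defined in \cite{CK} as an $L\eta_\mu$-twisted pro-\'etale cohomology complex, $\mu = q-1 = [\epsilon]-1$) live in $D(X_{\et}, A_{\Inf})$, so it is enough to produce an isomorphism, functorial on a basis of the \'etale site, of explicit complexes computing them. First I would restrict to small affine opens $U = \Spf(R)$ admitting a framing, i.e.\ an \'etale map from $R_0 = \cO_C \langle t_0, \dots, t_r, t_{r+1}^{\pm 1}, \dots, t_d^{\pm 1}\rangle/(t_0 \cdots t_r - \pi)$, equipped with the canonical log structure and the semistable chart $M_D = \cO_C^\flat\setminus\{0\} \to P$, where $P = M_D \oplus_{\bN} (\bN^{r+1}\oplus\bZ^{d-r})$ with $\pi^\flat$ mapping to the diagonal of $\bN^{r+1}$ and the extra generators to the $t_i$. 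One checks that $M_D\to P$ is integral (a pushout of the integral diagonal $\bN\to\bN^{r+1}$), weakly finitely generated, and of Cartier type, and that $(U, M_U)$ admits a smooth lift over $(A_{\Inf}, \cO_C^\flat\setminus\{0\})$, namely the evident lift $A_{\Inf}\langle t_i\rangle/(t_0\cdots t_r - [\pi^\flat])$ of $R_0$ followed by the \'etale framing; thus all of \S\ref{log q-crystalline site}--Theorem \ref{log q-de Rham vs log q-crystalline} applies, and by Remark \ref{functorial:q-crys} the $q$-crystalline side is computed by a strictly functorial local complex.

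Next I would compute both sides over such a $U$. On the $q$-crystalline side, Theorem \ref{log q-de Rham vs log q-crystalline} together with Constructions \ref{log q-de Rham} and \ref{log q-de Rham for enveloeps} identifies $q\Omega_{(R, P)/(A_{\Inf}, \cO_C^\flat\setminus\{0\})}$ with the log $q$-de Rham complex $q\Omega^*_{(F, M_F)/(A_{\Inf}, \cO_C^\flat\setminus\{0\})}$ of the log $q$-PD envelope $F$ of a surjection $(E, N)\to(R,P)$, where $E$ is the $(p,[p]_q)$-completion of $A_{\Inf}\otimes_{\bZ_{(p)}[M_D]}\bZ_{(p)}[N]$; this is the $(p,[p]_q)$-completed Koszul complex on the operators $\nabla^{\log}_{q,i}$ induced by the automorphisms $\gamma_i\colon t_i\mapsto q t_i$ extended to $F$. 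On the $A\Omega$ side I would invoke the local computation of \cite{CK}: over a framed affine one has the perfectoid cover $R_\infty$ adjoining compatible $p$-power roots of $t_1,\dots,t_d$ (hence of $t_0$ by the relation), with Galois group $\Gamma\cong\bZ_p(1)^d$, and $A\Omega_R = L\eta_\mu R\Gamma_{\mathrm{cont}}(\Gamma, A_{\Inf}(R_\infty))$, a d\'ecalage of an explicit Koszul complex on the commuting operators $\gamma_i - 1$; the Teichm\"uller lifts satisfy $\gamma_i([t_i^\flat]) = [\epsilon][t_i^\flat] = q[t_i^\flat]$, and $A_{\Inf}(R_\infty)$, with the prelog structure sending the $t_i$-generators to $[t_i^\flat]$, is a $\delta_{\log}$-ring (the perfectoid directions having rank $1$).

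Then I would build the comparison and check it is an isomorphism. The $\delta_{\log}$-ring map from $(E,N)$ to $A_{\Inf}(R_\infty)$ with this prelog structure, sending $t_i\mapsto[t_i^\flat]$, is compatible with the two surjections, so its defining ideal lands in $\ker(A_{\Inf}(R_\infty)\to R_\infty)$ (which carries divided powers) and it factors through $F$; under this map $\gamma_i$ on $F$ corresponds to the Galois operator $\gamma_i$ on $A_{\Inf}(R_\infty)$, so $\nabla^{\log}_{q,i}$ matches $(\gamma_i-1)/(q-1)$, giving a map from $q\Omega^*_{(F,M_F)/(A_{\Inf}, \cO_C^\flat\setminus\{0\})}$ to the Koszul complex underlying $A\Omega_R$, compatible with $L\eta_\mu$ on the target. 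To show it is a quasi-isomorphism I would reduce modulo $q-1$: the source then computes the log crystalline cohomology of the special fiber over $(A_{\Inf}/(q-1), \cO_C^\flat\setminus\{0\})$ by Theorem \ref{log q vs log crys}, the target computes the (log) de Rham/crystalline cohomology identified in \cite{CK}, and the map respects both identifications; hence it is an isomorphism modulo $q-1$, and both sides being $(p,[p]_q)$-complete, an isomorphism. Functoriality in the framing (from Theorem \ref{log q-de Rham vs log q-crystalline} and Construction \ref{log q-de Rham for enveloeps}, and the corresponding functoriality of the local models in \cite{CK}) lets the local isomorphisms glue to the asserted one in $D(X_{\et}, A_{\Inf})$; Frobenius compatibility follows from Remark \ref{Isogeny}, where the Frobenius on $q\Omega^*_{(F,M_F)/\cdots}$ acts by $d\log t_i\mapsto[p]_q\,d\log t_i$, matching the Frobenius on $A\Omega_R$ coming from $\phi$ on $A_{\Inf}$ and the tower.

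The main obstacle will be the local quasi-isomorphism: it requires importing from \cite{CK} the full semistable local description of $A\Omega$ as an $L\eta_\mu$ of a Koszul complex on the Galois operators, and a careful verification that the comparison map identifies $\nabla^{\log}_{q,i}$ with $(\gamma_i-1)/(q-1)$ \emph{and} matches the $q$-PD structure on $F$ with the divided powers implicit in $L\eta_\mu$ of the Galois cohomology complex, including the $[p]_q$-torsion bookkeeping of the d\'ecalage. By contrast, the log-geometric bookkeeping (integrality, weak finite generation, Cartier type, and existence of a smooth lift for the canonical log structure over the large base $(A_{\Inf}, \cO_C^\flat\setminus\{0\})$) and the gluing should be routine once the local input of \cite{CK} is set up.
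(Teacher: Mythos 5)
Your overall architecture matches the paper's: compute both sides by explicit local complexes (the log $q$-de Rham complex of a log $q$-PD envelope on one side, the $\eta_\mu$-twisted Koszul complex of Galois operators from \cite{CK} on the other), build the map by sending coordinates to Teichm\"uller lifts so that $\nabla^{\log}_{q,i}$ matches $(\gamma_i-1)/(q-1)$, and glue. The genuine gap is in your argument that the local comparison map is a quasi-isomorphism. You reduce modulo $q-1$ and claim the target $A\Omega_R\widehat{\otimes}^L_{A_{\Inf}}A_{\Inf}/(q-1)$ "computes the (log) de Rham/crystalline cohomology identified in \cite{CK}". But $A_{\Inf}/(q-1)=A_{\Inf}/(\mu)$ is (via $\theta_\infty$) the ring $W(\cO_C)$, and identifying $A\Omega_X/\mu$ with log crystalline cohomology over $W(\cO_C)$ is a log de Rham--Witt--type comparison: it is not among the specializations proved in \cite{CK} (which are the Hodge--Tate and de Rham specializations at $\theta$ and its Frobenius twist, plus the absolute crystalline comparison over $A_{\crys}$), and it is not established in the present paper either. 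Even granting such an identification, you would still have to show that your comparison map intertwines the two identifications; you assert this ("the map respects both identifications"), but that compatibility is precisely the delicate point.

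The paper avoids this by reducing modulo $[p]_q$ rather than $q-1$: Theorems \ref{Hodge-Tate} and \ref{log q-crystalline vs log prismatic} identify the Bockstein complex of the source, and \cite{CK}*{4.11, 4.17} that of the target, with the log de Rham complex of $R^{(1)}$ over $\phi_* A_{\Inf}/[p]_q$ as differential graded algebras; the possible discrepancy between the two identifications is then eliminated by noting that, since the prelog structure consists of nonzerodivisors, the Hodge--Tate comparison map is determined by its degree-zero part, which is checked by direct computation on the standard model $\cO_C\langle t_{\lambda,0},\dots,t_{\lambda,d}^{\pm1}\rangle/(t_{\lambda,0}\cdots t_{\lambda,r_\lambda}-p^{q_\lambda})$. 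To salvage your mod-$(q-1)$ route you would first need a semistable de Rham--Witt (or $W(\cO_C)$-crystalline) comparison for $A\Omega$, a substantial theorem in its own right; switching to the mod-$[p]_q$/Hodge--Tate argument is the workable path. The remaining log-geometric bookkeeping in your write-up (integrality, Cartier type, smooth lifts, extension of the prelog structure to the exactified monoid via \cite{CK}*{5.37}, and the Frobenius compatibility via Remark \ref{Isogeny}) is consistent with the paper.
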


\begin{rem}
The isomorphism should be an isomorphism of $E_{\infty}$-$A_{\Inf}$-algebras as in \cite{BS}*{Theorem 17.2}, but we do not check it here. 
\end{rem}

\begin{rem}
As the mod $p$ fiber of a semistable formal scheme is of Cartier type, $q\Omega_{(X, M_X)/ (A_{\Inf}, \cO_C^\flat \setminus\{0\})}$ is isomorphic to the $(p, \mu)$-completed base change of $\Prism_{(X, M_X)/ (A_{\Inf}, \cO_C^\flat \setminus\{0\})}$ along $\phi_{A_{\Inf}}$ by Theorem \ref{log q-crystalline vs log prismatic} and base change. 
\end{rem}

\begin{proof}
We shall construct an isomorphism \'etale locally that is functorial (at least) with respect to \'etale maps, and we use (a slight modification of) the notation of \cite{CK}, especially 5.17. Fix a compatible system of rational powers of $p$ in $\cO_C$ as in \cite{CK}*{1.5}. 

We assume that $X=\Spf (R)$ is affine nonempty, that any intersection of irreducible components of $\Spec (R\otimes_{\cO_C}k)$ is nonempty and irreducible, and we have
\begin{itemize}
    \item $R_{\Sigma}^{\square}=\cO_C\langle \bZ^{\Sigma} \rangle$ for a finite set $\Sigma$ of units of $R$, 
    \item a nonempty finite set $\Lambda$ and, for each $\lambda\in \Lambda$,
    \[
    R_{\lambda}^{\square}\coloneqq \cO_C \langle \bN^{r_\lambda +1}\oplus \bZ^{d-r_{\lambda}}\rangle /(t_{\lambda_0}\cdots t_{\lambda, r_{\lambda}}- p^{q_\lambda}) \quad \textnormal{with} \quad q_\lambda \in \bQ_{>0},
    \]
    \item a closed immersion 
    \[
    X=\Spf (R)\to \Spf (R_\Sigma^\square) \times \prod_{\lambda\in\Lambda}\Spf(R_{\lambda}^\square)
    \]
    where $X\to \Spf (R_\Sigma^\square)$ is determined by $\Sigma$ itself, such that 
    $X\to \Spf (R_\Sigma^\square)$ is already a closed immersion and, for each $\lambda\in \Lambda$, $X=\Spf(R)\to \Spf (R_{\lambda}^\square)$ is \'etale.
\end{itemize}
We endow $\Spf (R_{\Sigma}^{\square})$, $\Spf (R_{\lambda}^{\square})$ with the natural log structure over $(\cO_C, \cO_C\setminus\{0\})$. 
Then, each map $X=\Spf(R)\to \Spf (R_{\lambda}^{\square})$ gives a smooth chart that is small in the sense of Definition \ref{small chart}; compare with \cite{Tsuji:purity}*{2.3}. 
Namely, $\Gamma (X, M_X)$ is generated by $\cO_C\setminus\{0\}$, $\bN^{r_\lambda +1}\oplus \bZ^{d-r_{\lambda}}$, and $\Gamma (X, \cO_X^\times)$. It also follows that $(X, M_X)$ is log-affine as in (the proof of) Lemma \ref{small neighborhood}. 

For each $\lambda\in \Lambda$, we use the following embedding
\[
\bN^{r_\lambda +1}\oplus \bZ^{d-r_{\lambda}} \to (\cO_C\setminus\{0\}) \oplus\bZ^{d};
e_0 \mapsto (p^{q_{\lambda}}, -1, \dots, -1), \quad
e_i \mapsto e_i \quad (i\neq 0) 
\]
when we form the log $q$-de Rham complexes; this embedding is consistent and compatible with the choice of $\delta_{\lambda, i}$ in \cite{CK}*{5.18, 5.19}. 

As in \cite{CK}*{5.18}, set
\[
\Delta_{\Sigma}=\bZ_p^{\Sigma}, \quad \Delta_{\lambda}\coloneqq \bZ_p^d, \quad
\Delta_{\Sigma, \Lambda}\coloneqq \Delta_{\Sigma}\times \prod_{\lambda\in \Lambda}\Delta_{\lambda}. 
\]
By extracting all $p$-power roots of coordinates, we obtain a perfectoid pro-(finite \'etale) $\Delta_{\Sigma, \Lambda}$-cover
\[
\textnormal{Spa}(R_{\Sigma, \Lambda, \infty}[1/p], R_{\Sigma, \Lambda, \infty})\to 
\textnormal{Spa}(R[1/p], R)
\]
giving a (cohomological) Koszul complex
\[
\eta_\mu (K_{\bA_{\Inf}(R_{\Sigma, \Lambda, \infty})}((\delta_{\sigma}-1)_{\sigma\in \Sigma}, (\delta_{\lambda,i}-1)_{\lambda, 1\leq i \leq d}); 
\]
see \emph{loc.cit.} for details. 
By \cite{CK}*{3.21, 4.6}, this Koszul complex computes 
\[
A\Omega_R\coloneqq R\Gamma (X_{\et}^{\textnormal{psh}}, A\Omega_X^{\textnormal{psh}})\cong R\Gamma (X_{\et}, A\Omega_X).
\]
Then,  
\[
\varinjlim_{\Sigma, \Lambda}
\eta_\mu (K_{\bA_{\Inf}(R_{\Sigma, \Lambda, \infty})}((\delta_{\sigma}-1)_{\sigma\in \Sigma}, (\delta_{\lambda,i}-1)_{\lambda, 1\leq i \leq d})
\]
and its term-wise $(p,\mu)$-completion both compute $A\Omega_R$. 

Recall the following lifts of $R_{\Sigma}^{\square}$, $R_{\lambda}^{\square}$:
\[
A(R_{\Sigma}^{\square})=A_{\Inf}\langle \bZ^{\Sigma}\rangle, \quad
A(R_{\lambda}^{\square})=A_{\Inf} \langle \bN^{r_\lambda +1}\oplus \bZ^{d-r_{\lambda}}\rangle /(X_{\lambda_0}\cdots X_{\lambda, r_{\lambda}}- [p^\flat]^{q_\lambda}). 
\]
Then, we have a surjection
\[
A_{\Sigma, \Lambda}^{\square}\coloneqq (A(R_{\Sigma}^{\square})\otimes\bigotimes_{\lambda\in\Lambda}A(R_{\lambda}^{\square}))^{\wedge}
\to R. 
\]
Moreover, if we regard $A_{\Sigma, \Lambda}^{\square}$ naturally as a $\delta_{\log}$-ring with 
\[
\delta_{\log}(\bZ^{\Sigma})=\delta_{\log}(\bN^{r_\lambda +1}\oplus \bZ^{d-r_{\lambda}})=0
\]
over $(A_{\Inf} , \cO_C^\flat \setminus\{0\})$, 
\[
(A^{\square}, M_{A^{\square}})=(\varinjlim_{\Sigma, \Lambda}(A_{\Sigma, \Lambda}^{\square}, M_{\Sigma, \Lambda}^{\square}))^{\wedge} \to (R, \Gamma (X, M_X))
\]
is a surjection from a $(p, \mu)$-completely ind-smooth $\delta_{\log}$-ring, where $M_{\Sigma, \Lambda}^{\square}$ denotes the prelog structure. 
Thus, if we write $(D^{\square}, M_{D^{\square}})$ for the $(p, \mu)$-completed log $q$-PD envelope, the following log $q$-de Rham complex 
\[
q\Omega^*_{(D^{\square}, M_{D^{\square}}))/(A_{\Inf}, \cO_C^\flat \setminus\{0\})}
\]
computes the log $q$-crystalline cohomology $q\Omega_{(R, \Gamma (X, M_X))/ (A_{\Inf}, \cO_C^\flat \setminus\{0\})}$. 

We shall construct a comparison map
\begin{multline*}
q\Omega^*_{(D^{\square}, M_{D^{\square}})/ (A_{\Inf}, \cO_C^\flat \setminus\{0\})}) \\
\to 
(\varinjlim_{\Sigma, \Lambda}
\eta_\mu (K_{\bA_{\Inf}(R_{\Sigma, \Lambda, \infty})}((\delta_{\sigma}-1)_{\sigma\in \Sigma}, (\delta_{\lambda,i}-1)_{\lambda, 1\leq i \leq d}))^{\wedge}.
\end{multline*}
It suffices to construct a natural map
\[
D^{\square}\coloneqq D_{\Sigma, \Lambda}^{\square}
\to
\bA_{\Inf}(R^{\square}_{\infty}),
\]
where $R^{\square}_{\infty}=(\varinjlim_{\Sigma, \Lambda}R_{\Sigma, \Lambda, \infty})^{\wedge}$,
that intertwines $\gamma_s$ (in the notation of Construction \ref{log q-de Rham}) and $\delta_{\sigma}, \delta_{\lambda,i}$ in \cite{CK}*{5.18, 5.19}. 

Set $N=(h^{\gp})^{-1}(\Gamma (X, M_X))$, where $h\colon M_{A^{\square}}\to \Gamma (X, M_X)$ denotes the natural map. 
There is a natural map $M_{A^{\square}}\to \bA_{\Inf}(R^{\square}_{\infty})$. 
The proof of \cite{CK}*{5.37} shows that this extends (uniquely) to $N$. 
Now, we get a commutative diagram
\[
\begin{CD}
(A^{\square},M_{A^{\square}}) @>>> (\bA_{\Inf}(R^{\square}_{\infty}), N) \\
@VVV @VVV \\
(R, \Gamma (X, M_X)) @>>> (R^{\square}_{\infty}, \Gamma (X, M_X)).
\end{CD}
\]
We see that the top map is a map of $\delta_{\log}$-rings,  
the bottom arrow is strict,  the left vertical map is surjective, and the right vertical map is exact surjective. Therefore, this commutative diagram induces a desired map $D^{\square}\to \bA_{\Inf}(R^{\square}_{\infty})$ in a unique way. 
The uniqueness also implies that it intertwines $\gamma_s$ and $\delta_{\sigma}, \delta_{\lambda,i}$ as these actions agree in the exactification of the left vertical map: argue as in the proof of \cite{BS}*{Theorem 17.2}. 

Finally, let us show that the comparison map is an isomorphism. 
In the rest of the proof, we regard the comparison map as a map of complexes of $\phi_* A_{\Inf}$-modules instead. 
Theorems \ref{Hodge-Tate}, \ref{log q-crystalline vs log prismatic}, and \cite{CK}*{4.11, 4.17} show that the Bockstein complexes with respect to $[p]_q$ obtained from the source and target of the comparison map both can be identified with the log de Rham complex 
\[
\Omega^*_{(R^{(1)}, \Gamma(X, M_X)^{(1)})/(\phi_* A_{\Inf}/[p]_q, \phi_* \cO_C^\flat \setminus\{0\})}
\]
as commutative differential graded $\phi_* A_{\Inf}/[p]_q$-algebras. 
(Here, $R$ is regarded as an $A_{\Inf}/(\xi)(\cong \cO_C)$-algebra, $R^{(1)}$ is the base change along $A_{\Inf}/(\xi) \cong \phi_* A_{\Inf}/([p]_q)$, and $\phi$ on $\cO_C^\flat$ is the $p$-th power.)
Two identifications are a priori possibly different under the comparison map. 
In the current setting, the Hodge-Tate comparison map is determined uniquely by the nonlog part of the Hodge-Tate comparison map given by the universality of the (nonlog) differentials since the prelog structure consists of nonzerodivisors. 
Therefore, arguing as in the proof of \cite{BS}*{Lemma 17.4}, we need only compare the following maps
to check that two identifications are the same:
\begin{align*}
\eta_{q\Omega}\colon R^{(1)} &\ \to \phi_* H^0 (q\Omega^*_{(D^{\square}, M_{D^{\square}})/ (A_{\Inf}, \cO_C^\flat \setminus\{0\})} /[p]_q), \\
\eta_{A\Omega}\colon R^{(1)} & \to \phi_* H^0 (A\Omega_X /[p]_q). 
\end{align*}
By \'etale localization, we are reduced to the case of 
\[
R=\cO_C\langle t_{\lambda, 0}, \dots, t_{\lambda, r_{\lambda}}, \dots, t^{\pm 1}_{\lambda, d}\rangle/(t_{\lambda, 0}\cdots t_{\lambda, r_{\lambda}}-p^{q_{\lambda}})
\]
for some non-unit $\pi\in \cO_C$, and this case can be checked by a direct computation.
(Strictly speaking, this $R$ does not satisfy our assumption at the beginning of the proof, but we have a simple comparison map
\[
q\Omega^*_{(A(R_{\lambda}^{\square}), M_{\lambda}^{\square})/(A_{\Inf}, \cO_C^\flat \setminus\{0\}))}\to
\eta_{\mu} K_{\bA_{\Inf}(R_{\lambda,\infty}^{\square})}((\delta_{\lambda,i}-1)_{1\leq i \leq d}),
\]
where $M_{\lambda}^{\square}$ is a natural prelog structure, compatible with the previous comparison map, and we may use it.)
\end{proof}

The above proof shows that the comparison isomorphism is compatible with the Hodge-Tate comparison.  

\subsection{More compatibility}
The main result of \cite{CK} is the absolute crystalline comparison \cite{CK}*{5.4}, which identifies $A\Omega\widehat{\otimes}^L_{A_{\Inf}}A_{\crys}$ with the pushforward of the structure sheaf of the absolute log crystalline site. There is a corresponding identification for the log $q$-crystalline cohomology as follows. 

Let $(X, M_X)=(\Spf (R), M_X)$ be a smooth log $p$-adic formal scheme of Cartier type over $(\cO_C, \cO_C \setminus\{0\})$ with a small smooth chart $P\to \Gamma (X, M_X)$. 
Combining Theorems \ref{log q-crystalline vs log prismatic}, \ref{crystalline comparison}, and the base change of log prismatic cohomology, we have an isomorphism
\[
q\Omega_{((R, P)/(A_{\Inf}, \cO_C^\flat \setminus\{0\})}\widehat{\otimes}^L_{A_{\Inf}}A_{\crys} \cong 
R\Gamma_{\crys}((X, M_X)_{\cO_C/p} /(A_{\crys}, \cO_C^\flat \setminus\{0\})). 
\]

\begin{rem}\label{q-crys tensored with Acrys}
Let us write down the map
\[
q\Omega_{((R, P)/(A_{\Inf}, \cO_C^\flat \setminus\{0\}))} \to
R\Gamma_{\crys}((R, P)_{\cO_C/p}/(A_{\crys}, \cO_C^\flat \setminus\{0\})). 
\]
Using the notation of Construction \ref{computing log q-crystalline cohomology via smooth lift}, $F^{\bullet}$ computes $q\Omega_{((R, P)/(A_{\Inf}, \cO_C^\flat \setminus\{0\}))}$. Each $F^{\bullet}$ is obtained as the log $q$-PD envelope, and it is identified with the log PD envelope after the $p$-completed base change to $A_{\crys}$ since $([p]_q)=(p)$ in $A_{\crys}$. (Recall the formula for these envelopes \cite{BS}*{Lemma 16.10, Corollary 2.38}.) 
The compatibility with base change of the log prismatic cohomology can be seen by analyzing the proofs of Theorems \ref{crystalline comparison} and \ref{log q-crystalline vs log prismatic}. 

We further translate it in terms of log $q$-de Rham complexes via Theorem \ref{log q-de Rham vs log q-crystalline}. To do so, we need a map from the log $q$-de Rham complex to the corresponding de Rham complex; this map is given by the formula appearing in \cite{BMS1}*{12.5}, \cite{CK}*{(5.15.2)} relating the $q$-derivative and the usual derivative when we are working over $A_{\crys}$, but now the map involved is in the opposite direction and the coefficients of the source (resp. target) are log $q$-PD envelopes (resp. log PD envelopes)\footnote{Technically speaking, it is only clear that this map is a map of complexes modulo $\mu^\infty$-torsion. So, one has to kill possible $\mu^{\infty}$-torsion of log PD envelopes. This ambiguity does not matter in Theorem \ref{comparison with BdR^+-cohomology} below.}. 
Moreover, one sees that this map gives a map from the cosimplicial complex $M^{\bullet, *}$ in the proof of Theorem \ref{log q-de Rham vs log q-crystalline} to a similar cosimplicial complex giving an identification of the log crystalline cohomology. 
\end{rem}

There are technical issues to compare this identification with \cite{CK}*{5.4}\footnote{In the nonlog case, the comparison is possible indirectly by using the uniqueness result of Li-Liu \cite{Li-Liu}*{Theorem 3.13} after deriving the identification to obtain an automorphism of the derived de Rham cohomology on quasisyntomic $\cO_C$-algebras; see \cite{Li-Liu}*{Corollary 3.15}. The same would probably work in a general log case once a suitable theory of derived log prismatic cohomology is developed.}. For instance, \cite{CK} uses ``truncated'' PD envelopes $D_{\Sigma, \Lambda}^{(m)}$ obtained from a ``non-completed" PD envelope $D_{j_0}$. Fortunately, we have the following analogue of \cite{CK}*{6.8}, which is strong enough to deduce the conjecture $C_{\textnormal{st}}$ in this setting without using \cite{CK}*{5.4}:

\begin{thm}\label{comparison with BdR^+-cohomology}
Let $X$ be as in Theorem \ref{AOmega}. If $X$ is proper over $\cO_C$, there is a commutative diagram
\[
\xymatrixcolsep{0.03in}
\xymatrix{
R\Gamma_{\crys}((X, M_X)/ (A_{\crys}, \cO_C^\flat \setminus\{0\})) \ar[r] &
R\Gamma_{\crys}(X^{\textnormal{ad}}_C/B_{\textnormal{dR}}^+) \ar[d]_{\cong} \\
R\Gamma_{q\CRYS}((X, M_X)/ (A_{\crys}, \cO_C^\flat \setminus\{0\}))\otimes^L_{A_{\Inf}} A_{\crys} \ar[d]_{\cong} \ar[u]^{\cong}&
R\Gamma_{\crys}(X^{\textnormal{ad}}_C, \bZ_p)\otimes^L_{\bZ_p} B_{\textnormal{dR}}^+ \ar[d]_{\cong} \\
R\Gamma (X_{\et}, A\Omega_X)\otimes^L_{A_{\Inf}} A_{\crys} \ar[r] &
R\Gamma_{\et}(X^{\textnormal{ad}}_C, \bA_{\Inf, X^{\textnormal{ad}}_C})\otimes^L_{A_{\Inf}}B_{\textnormal{dR}}^+,
}
\]
where left vertical arrows are given by Theorem \ref{AOmega} and Remark \ref{q-crys tensored with Acrys}, and other maps are the same as in \cite{CK}*{6.8}.
\end{thm}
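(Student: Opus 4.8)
The plan is to regard the asserted square as an enlargement of \cite{CK}*{6.8}: the two right vertical isomorphisms and all horizontal arrows except the leftmost ones are literally that diagram, so I would import it wholesale, and the actual work is to construct the three \emph{left} vertical maps and to check that the two resulting squares commute.

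First I would build the left column. The bottom two identifications are obtained by applying $R\Gamma(X_{\et}, -)$ to the Frobenius-compatible isomorphism $q\Omega_{(X, M_X)/(A_{\Inf}, \cO_C^\flat\setminus\{0\})} \cong A\Omega_X$ of Theorem \ref{AOmega} and tensoring with $A_{\crys}$ over $A_{\Inf}$ (the Frobenius-compatibility is what makes these base changes align with the structure map $A_{\Inf}\to A_{\crys}$, which incorporates $\phi$). The top map is the isomorphism
\[
q\Omega_{(R, P)/(A_{\Inf}, \cO_C^\flat\setminus\{0\})}\widehat{\otimes}^L_{A_{\Inf}} A_{\crys} \cong R\Gamma_{\crys}((X, M_X)_{\cO_C/p}/(A_{\crys}, \cO_C^\flat\setminus\{0\}))
\]
recorded just before the theorem, i.e. the composite of Theorem \ref{log q-crystalline vs log prismatic}, Theorem \ref{crystalline comparison}, and base change of log prismatic cohomology along $(A_{\Inf}, \cO_C^\flat\setminus\{0\}) \to (A_{\crys}, \cO_C^\flat\setminus\{0\})$. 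Since $X$ is proper, the Hodge-Tate comparison (Theorem \ref{Hodge-Tate}) makes all the cohomologies involved perfect, so the completed tensor products are honest and this identification globalizes functorially. At that point all three left verticals are isomorphisms, and only commutativity of the two squares remains.

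Commutativity I would check locally, on the affine charts with a small smooth chart used in the proof of Theorem \ref{AOmega}, and then propagate it to proper $X$ by a \v{C}ech argument over a finite cover. Locally each corner carries an explicit model: on the $q$-crystalline/$A\Omega$ side, the log $q$-de Rham complex $q\Omega^*_{(D^{\square}, M_{D^{\square}})/(A_{\Inf}, \cO_C^\flat\setminus\{0\})}$ with the comparison map to $\varinjlim_{\Sigma,\Lambda}\eta_\mu K_{\bA_{\Inf}(R_{\Sigma,\Lambda,\infty})}(\cdots)$ built in that proof; after $-\widehat{\otimes}^L_{A_{\Inf}} A_{\crys}$, Remark \ref{q-crys tensored with Acrys} rewrites the former as the log de Rham complex of the $p$-completed log PD envelope $D^{\square}\widehat{\otimes}_{A_{\Inf}} A_{\crys}$, using $([p]_q)=(p)$ there and the map relating the $q$-derivative to the ordinary derivative as in \cite{BMS1}*{12.5}, \cite{CK}*{(5.15.2)}. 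The bottom square then commutes because, on these models, the bottom horizontals are CK's \'etale comparison maps and the left verticals are the identifications from the proof of Theorem \ref{AOmega}, whose compatibility with the Hodge-Tate comparison was recorded there; concretely I would argue as in \cite{BS}*{Lemma 17.4}, reducing to the degree-zero maps $R^{(1)} \to \phi_* H^0(-/[p]_q)$ and then, by \'etale localization, to a single standard chart $R^{\square}_{\lambda}$. For the top square one must match the arrow ``log crystalline over $A_{\crys}$ $\to$ $B_{\textnormal{dR}}^+$-cohomology of $X^{\textnormal{ad}}_C$'' of \cite{CK}*{6.8} with the composite going around the left and bottom; via Remark \ref{q-crys tensored with Acrys} this reduces to comparing, on the explicit de Rham and Koszul models over $A_{\crys}$ and $B_{\textnormal{dR}}^+$, the crystalline comparison constructed in this paper with CK's, again a local computation on the $R^{\square}_{\lambda}$.

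The hard part will be reconciling the two families of models: \cite{CK} works with \emph{non-completed} PD envelopes $D_{j_0}$ and their truncations $D^{(m)}_{\Sigma,\Lambda}$, whereas our log $q$-PD envelopes are $(p,\mu)$-completed, so I must show that the two compute the same cohomology with the same comparison maps, and handle the $\mu^{\infty}$-torsion ambiguity flagged in the footnote to Remark \ref{q-crys tensored with Acrys}. Following the proof of \cite{CK}*{6.8}, I expect the relevant complexes to agree modulo $\mu^{\infty}$-torsion while the final objects, being perfect over $A_{\crys}$ resp. $B_{\textnormal{dR}}^+$, carry no such torsion, so the ambiguity is harmless; once the diagram over $A_{\crys}$ commutes, base change to $B_{\textnormal{dR}}^+$ yields the stated one. (One could also contemplate the indirect route of \cite{Li-Liu} through derived de Rham cohomology, which would become available once a theory of derived log prismatic cohomology is in place, but I would not pursue that here.)
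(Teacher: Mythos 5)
Your proposal is correct and follows essentially the same route as the paper's proof: work locally on the explicit log $q$-de Rham and Koszul models, use Remark \ref{q-crys tensored with Acrys} together with the \cite{CK}*{(5.15.2)}-type formula (applied in both directions) and the fact that the isomorphism of Theorem \ref{AOmega} is induced by the embedding $D^{\square}\to A^{\square}$, and reduce everything to a commutativity that is already verified in \cite{CK}*{6.8} (there, the diagram (6.8.4)). The additional scaffolding you propose (the \v{C}ech globalization and the \cite{BS}*{Lemma 17.4}-style degree-zero reduction) is not needed beyond the local check the paper performs, but it is harmless.
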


\begin{proof}
We basically follow the proof of \cite{CK}*{6.8}, and we work locally.  
A description of the composite of the right vertical arrows are given in \emph{loc.cit.}, and in particular it involves the formula of the type of \cite{CK}*{(5.15.2)}.  
On the other hand, the isomorphism of Remark \ref{q-crys tensored with Acrys} completely identifies each term of the log $q$-de Rham complex tensored with $A_{\crys}$ and the log de Rham complex, while differentials are related by the formula \cite{CK}*{(5.15.2)} again, but in the opposite direction as explained in Remark \ref{q-crys tensored with Acrys}. 
The isomorphism of Theorem \ref{AOmega} is simply given by the embedding $D^{\square}\to A^{\square}$ in the notation there.
Thus, as in the proof of \cite{CK}*{6.8}, we are reduced to checking the commutativity of the diagram (6.8.4) in \cite{CK}.  
(More precisely, we need the commutativity after taking colimits and completions, but it is enough to check the commutativity beforehand.)
This commutativity is checked in \emph{loc.cit.}. 
\end{proof}

\appendix
\section{Complements on logarithmic geometry}
We discuss some geometry of log formal schemes whose log structures are not necessarily fine.
For log schemes, \cite{Beilinson} and \cite{ALPT} already contain some useful results in such generality. Other general references are \cites{Kato:log, Ogus} for log schemes, \cite{Shiho} for log $p$-adic formal schemes, and \cite{GR} for log topoi.
Some basic definitions are omitted here. 

Every monoid is assumed to be commutative. 
All our log structures are defined on the small \'etale sites. 

\subsection{Examples}
Suppose $(A, M_A)$ is a prelog ring, and $A$ is classically $I$-complete for a finitely generated ideal $I$. 
We do \emph{not} assume $M_A$ or its associated log structure is fine. 
We sometimes write $(\Spf (A), M_A)^a$ for the corresponding log formal scheme. 

We study log $I$-adic formal schemes over $(A, M_A)$, equivalently over $(\Spf (A), M_A)^a$. 
If $I=0$, they are just log schemes over $(A, M_A)$.

Let us illustrate some examples first; all examples here are integral quasi-coherent, and smooth in the sense of Definition \ref{smooth} below. 

\begin{enumerate}
    \item (Normal crossing divisors) Suppose $X$ is smooth over $A$. Let $D\subset X$ be a relative normal crossing divisor, $j\colon U=X\setminus D \to X$ an open immersion. The sheaf $j_* \cO_U^{\times}\cap \cO_X$ defines a log structure on $X$ over $A$ with the trivial log structure. 
    \item (Generalized semistable formal schemes) Let $\cO$ be a complete discrete valuation ring of mixed characteristic $(0,p)$ with $p>0$. Suppose $(A, M_A)=(\cO, \cO\setminus\{0\})$. Let $(Y, M_Y)$ be a smooth vertical fine saturated log scheme over $(A, M_A)$; vertical means that $M_Y$ is trivial on the generic fiber. In particular, $Y$ is flat over $A$. It is (log) regular by \cite{GR}*{12.5.44}, and $M_Y$ is given by (the sheafification of) $(\cO_{Y}[1/p])^\times \cap \cO_{Y}$ \cite{GR}*{12.5.54}. 
    The log structure $M_X$ of the $p$-adic completion $(X, M_X)$ of $(Y, M_Y)$ is also given by $(\cO_{X}[1/p])^\times \cap \cO_{X}$; cf.~\cite{CK}*{1.6.3}. This includes the case of semistable reduction. 

    Now consider only those $(Y, M_Y)$ with reduced special fibers (equivalently, the special fiber is of Cartier type \cite{Tsuji:saturated}). 
    This class is stable under base change: let $\cO\subset \cO'$ be a finite extension of complete discrete valuation rings. Set $(A', M_{A'})=(\cO', \cO'\setminus\{0\})$. The base change $(Y', M_{Y'})$ of $(Y, M_Y)$ to $(A', M_{A'})$ is clearly a smooth vertical fine log $p$-adic formal scheme over $(A', M_{A'})$. Moreover, $(Y', M_{Y'})$ is saturated with the reduced special fiber by \cite{Tsuji:saturated}*{II.4.2, II.2.11, II.2.1}. In particular, its log structure is given by $(\cO_{Y'}[1/p])^\times \cap \cO_{Y'}$; compare with \cite{CK}*{1.6.1}. 
    \item (Over $\cO_C$) Let $C$ be the $p$-adic completion of the algebraic closure of $\cO[1/p]$. Any log $p$-adic formal scheme $(X, M_X)$ that is \'etale locally isomorphic to the $p$-adic completion of the base change to $(\cO_C, \cO_C \setminus\{0\})$ of a log scheme of the type considered in (2) with the reduced special fiber is a smooth saturated log $p$-adic formal scheme over $(\cO_C, \cO_C \setminus\{0\})$ with the reduced special fiber. 
    The log structure $M_X$ is given by $(\cO_X[1/p])^\times \cap \cO_X$; we refer the reader to \cite{CK} again. 
\end{enumerate}

Let us make some remarks on $\cO \setminus\{0\}$, $\cO'\setminus\{0\}$, and $\cO_C \setminus\{0\}$. For a general valuation ring $\cO$, the multiplicative monoid $\cO \setminus\{0\}$ is \emph{valuative}. Recall that a monoid $M$ is valuative if it is integral and, for $m\in M^{\gp}$, either $m\in M$ or $m^{-1}\in M$ holds. If $M$ is valuative, any map $M\to N$ of monoids is integral \cite{Ogus}*{I.4.6.3.5}, thus the associated map $\bZ[M]\to \bZ[N]$ is flat if $M\to N$ is injective. (These are used in (2) above.)

The monoid $\cO \setminus\{0\}$ has at most $n$ nonempty prime ideals if and only if the ordered group $(\textnormal{Frac} \cO)^\times/ \cO^\times$ is embedded into $\mathbb{R}^n$, i.e., the valuation has height $\leq n$. 
As in the above examples, we often only consider the case of height one. 
For an arbitrary valuation ring $\cO$ of height one, there is a polystable reduction conjecture \cite{ALPT}*{1.2.6} generalizing the semistable reduction conjecture. 
(The assumption of height one is perhaps superfluous.)
This will give rise to another important case to study. 

\subsection{Relative coherence and small charts}
We recall/introduce some finiteness conditions on charts. We also study the global sections of a log structure on a ``small'' neighborhood of a geometric point. 
 
A \emph{chart} of a log formal scheme $(X, M_X)$ is a map $P \to \Gamma (X, M_X)$ of monoids inducing an isomorphism $\underline{P}^{a}\cong M_X$ of log structures. For $(X, M_X)$ over $(A, M_A)$, a chart over $M_A$ is a chart $P$ with a map $M_A \to P$ compatible with $M_A\to \Gamma (X, M_X)$. 
If $X$ is affine and the identity map of $\Gamma (X, M_X)$ is a chart of $M_X$, we say that $(X, M_X)$ is \emph{log-affine}. 

\begin{rem}
In a previous version, it is claimed that $(X, M_X)$ with affine $X$ is log-affine if (and only if) $(X, M_X)$ has a chart. However, as pointed out by Takeshi Tsuji, this is false. See Lemma \ref{small chart} for a sufficient condition. 
\end{rem}

The category of (pre)log structures admits colimits; cf. \cite{Ogus}*{III.2.1.1}.
Note that the colimit of maps $\{P_i \to \Gamma (X, \cO_X)\}_{i \in I}$ gives rise to a chart of the colimit of the associated log structures since the functor sending a prelog structure to the associated log structure is a left adjoint of the forgetful functor. 

A chart $P\to \Gamma (X, M_X)$ is \emph{exact} \cite{Ogus}*{II.2.3.1.1} at a geometric point $\overline{x}$ of $X$ if $P\to M_{X.\overline{x}}$ is exact. We say that the chart is exact if it is exact at every geometric point.

We shall introduce finiteness conditions for charts of morphisms of log formal schemes. 
A monoid \emph{finitely generated} over a monoid $M$ is a monoid $N$ with a map $M \to N$ such that there exist finitely many elements of $N$ that, together with the image of $M$, generate $N$.  
A map $M \to N$ of monoids is \emph{weakly finitely generated} if $N/N^\times$ is finitely generated over $M$. 
\begin{defn}
A chart $P \to \Gamma (X, M_X)$ (resp. over $M_{A}$) is \emph{coherent} (resp. \emph{relatively coherent}) if $P$ is finitely generated (resp. finitely generated over $M_A$). 
%A log formal scheme $(X, M_X)$ (resp. over $(A, M_A)$) is coherent (resp. relatively coherent) if it admits a coherent (resp. relatively coherent, resp. relatively weakly coherent) chart \'etale locally. 
\end{defn}

Every log structure is often assumed to be fine in the literature, so coherent charts are used there. Their base change to a non-fine case gives an example of relatively coherent one. 

Any finitely generated monoid is known to be finitely presented. We need some variant. 

\begin{defn}
We say that an integral monoid $P$ over an integral monoid $M$ is \emph{finitely presented as an integral monoid} if there exists a monoid $P_0$ that is finitely presented over $M$ and $P_0^{\textnormal{int}}=P$.  
More precisely, $P_0$ is finitely generated over $M$, and for any surjection $M\oplus \bN^r \to P_0$ over $M$, the equalizer of two projections $(M\oplus \bN^r) \oplus_M (M\oplus \bN^r) \rightrightarrows P_0$ is finitely generated over $M$, and the image of $P_0$ in $P_0^{\gp}$ is isomorphic to $P$. 

Assume $M_A$ is integral. An integral chart $P \to \Gamma (X, M_X)$ over $M_{A}$ is \emph{relatively fine} if $P$ is finitely presented over $M_A$ as an integral monoid. 
\end{defn}

Being finitely presented as integral monoids are stable under pushouts $M \to M'$ in the category of integral monoids. 

Let us record the following lemma that supplies plenty of examples and is useful in the study of smooth log formal schemes. 

\begin{lem}[\cite{ALPT}*{2.1.7}]\label{ALPT finite presentation lemma}
Let $M \hookrightarrow P$ be an injection between integral monoids and assume $P$ is finitely generated over $M$. Then, $P$ is finitely presented over $M$ as an integral monoid. 
\end{lem}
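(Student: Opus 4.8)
The plan is to produce an explicit finite presentation over $M$ whose integralization is $P$. Since $P$ is finitely generated over $M$, I would first choose $x_1,\dots,x_n\in P$ generating $P$ together with the image of $M$, and form the surjection of monoids over $M$
\[
\pi\colon Q\coloneqq M\oplus\bN^n\to P,\qquad (m,v)\mapsto m\cdot\textstyle\prod_i x_i^{v_i}.
\]
The heart of the argument is the analysis of the group-level kernel $K\coloneqq\ker\bigl(\pi^{\gp}\colon Q^{\gp}=M^{\gp}\oplus\bZ^n\to P^{\gp}\bigr)$. Because $M\hookrightarrow P$ is injective and $P$ is integral, the map $M^{\gp}\to P^{\gp}$ is injective, so $K\cap(M^{\gp}\oplus 0)=0$; hence the projection $Q^{\gp}\to\bZ^n$ embeds $K$ into $\bZ^n$, and $K$ is therefore a free abelian group of finite rank. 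This is the step I expect to be the main obstacle: it is exactly here that the hypotheses (injectivity of $M\hookrightarrow P$, integrality of $P$) are used in an essential way, and without them $K$ could be an infinitely generated subgroup of $M^{\gp}$, so that $P$ need not be finitely presented over $M$ in the naive sense.

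Granting that, fix generators $k_1,\dots,k_s$ of $K$ and write $k_j=a_j-b_j$ with $a_j,b_j\in Q$ (possible since every element of $Q^{\gp}$ is a difference of elements of $Q$). Let $E$ be the congruence on $Q$ generated by the pairs $(a_j,b_j)$ and set $P_0\coloneqq Q/E$. By construction $P_0$ is finitely presented over $M$ (finite presentedness being independent of the chosen surjection onto $P_0$, so it may be tested with $Q\to P_0$, for which the equalizer congruence is generated over $M$ by the finitely many pairs $(a_j,b_j)$ and the diagonal of a generating set of $\bN^n$).

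It remains to identify $P_0^{\textnormal{int}}$ with $P$. Since groupification is a left adjoint and hence preserves the coequalizer defining $P_0$, one gets $P_0^{\gp}=Q^{\gp}/\langle a_j-b_j\rangle=Q^{\gp}/K$, which is precisely $P^{\gp}$. Moreover $\pi$ factors through $P_0$ (each relation $a_j\sim b_j$ holds in $P$ because $a_j-b_j\in K$), and the composite $Q\to P_0\to P_0^{\gp}\cong P^{\gp}$ coincides with $Q\xrightarrow{\pi}P\hookrightarrow P^{\gp}$; as $\pi$ is surjective and $P$ is integral, the image of $P_0$ in $P_0^{\gp}$ is exactly the submonoid $P\subset P^{\gp}$. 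Therefore $P_0^{\textnormal{int}}=P$ and the image of $P_0$ in $P_0^{\gp}$ is $P$, which is the assertion that $P$ is finitely presented over $M$ as an integral monoid. I would cite \cite{Ogus}*{Chapter I} for the basic facts on monoid congruences and groupifications used above.
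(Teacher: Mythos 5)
The paper itself gives no proof here --- it simply cites \cite{ALPT}*{2.1.7} --- and your argument is essentially the one in that reference. The mathematical heart of your proof is correct and is exactly the right key step: because $M\hookrightarrow P$ is injective and both monoids are integral, $M^{\gp}\to P^{\gp}$ is injective, so $K=\ker(\pi^{\gp})$ meets $M^{\gp}\oplus 0$ trivially, embeds into $\bZ^{n}$, and is therefore finitely generated; your identification $P_0^{\gp}=Q^{\gp}/K\cong P^{\gp}$ and hence $P_0^{\textnormal{int}}=P$ is also correct, and you correctly locate where injectivity is indispensable.

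The one step I would not wave through is the parenthetical claim that $P_0=Q/E$ is finitely presented over $M$ ``because the equalizer congruence is generated over $M$ by the pairs $(a_j,b_j)$ and the diagonal.'' Being generated \emph{as a congruence} by finitely many pairs is not the same as the equalizer of $(M\oplus\bN^{n})\oplus_M(M\oplus\bN^{n})\rightrightarrows P_0$ being finitely generated \emph{as a monoid} over $M$, which is what the paper's definition literally asks for: the congruence is also closed under translation by all of $Q$ and under transitivity, and for the (generally non-integral) quotient $Q/E$ the resulting equalizer submonoid is not obviously finitely generated. Under the congruence-based notion of finite presentation used in \cite{ALPT} your $P_0$ is fine as written; if one insists on the equalizer formulation, the cheapest repair is to take $P_0=P$ itself and note that, $P$ being integral, the relevant equalizer is cut out inside $M\oplus\bN^{n}\oplus\bN^{n}$ by a group-theoretic condition on the $\bN^{n}\oplus\bN^{n}$ coordinates determined by $K$, so Gordan's lemma (together with the finite generation of $K$ you already established) gives the required finite generation over $M$. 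Either way the substance of your argument survives.
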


Now we discuss some technical results relying on finite presentation. 
In the case of coherent log schemes, this can be found in \cite{Illusie}*{I.3}. 
A modern reference is \cite{Ogus}*{II.2.2}. 

\begin{lem}\label{Hom stalk}
Assume $M_A$ is integral and $P$ is finitely presented over $M_A$ as an integral monoid and $M_X$ is a sheaf of integral monoids on a $I$-adic formal scheme $X$ over $A$ equipped with a map from the constant sheaf $\underline{M_A}$. Then, for any geometric point $\overline{x}$ of $X$, the natural map
\[
\underline{\textnormal{Hom}}_{\underline{M_A}}(\underline{P}, M_X)_{\overline{x}} \to \textnormal{Hom}_{M_A}(P, M_{\overline{x}})
\]
is an isomorphism. 
\end{lem}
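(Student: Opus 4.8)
The plan is to reduce the assertion to the case of free monoids over $M_A$ by invoking the finite presentation hypothesis, using two formal facts: the contravariant functor $\underline{\textnormal{Hom}}_{\underline{M_A}}(-,M_X)$ (computed as presheaf-$\textnormal{Hom}$, which is automatically a sheaf) sends colimits of sheaves of monoids to limits, and finite limits of sheaves commute with the filtered colimits over \'etale neighborhoods of $\overline{x}$ that compute stalks. First I would settle the free case: if $P = M_A\oplus\bN^r$, then for any monoid $Z$ over $M_A$ one has $\textnormal{Hom}_{M_A}(M_A\oplus\bN^r, Z)\cong Z^{r}$ naturally in $Z$, since $M_A\oplus\bN^r$ is the free monoid on $r$ generators relative to $M_A$; sheafifying gives $\underline{\textnormal{Hom}}_{\underline{M_A}}(\underline{M_A\oplus\bN^r}, M_X)\cong M_X^{\times r}$, whose stalk at $\overline{x}$ is $(M_{\overline{x}})^{r}\cong \textnormal{Hom}_{M_A}(M_A\oplus\bN^r, M_{\overline{x}})$, and this identification is visibly the natural map of the statement.

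Next I would unwind the finite presentation. By definition there is a monoid $P_0$, finitely presented over $M_A$ in the ordinary sense, with $P_0^{\textnormal{int}}=P$. Choose a surjection $\pi\colon M_A\oplus\bN^r\twoheadrightarrow P_0$ over $M_A$; by hypothesis the kernel pair of $\pi$ is finitely generated over $M_A$, so there is a surjection $M_A\oplus\bN^s$ onto it, and composing with the two projections produces a coequalizer presentation $M_A\oplus\bN^s\rightrightarrows M_A\oplus\bN^r\to P_0$ of sheaves (after passing to the associated constant sheaves). Applying $\underline{\textnormal{Hom}}_{\underline{M_A}}(-,M_X)$ and using the free case, this becomes an equalizer diagram $\underline{\textnormal{Hom}}_{\underline{M_A}}(\underline{P_0},M_X)=\textnormal{eq}\bigl(M_X^{\times r}\rightrightarrows M_X^{\times s}\bigr)$. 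Since equalizers and finite products are finite limits, taking the stalk at $\overline{x}$ commutes with forming this diagram, so $\underline{\textnormal{Hom}}_{\underline{M_A}}(\underline{P_0},M_X)_{\overline{x}}=\textnormal{eq}\bigl((M_{\overline{x}})^{r}\rightrightarrows (M_{\overline{x}})^{s}\bigr)=\textnormal{Hom}_{M_A}(P_0,M_{\overline{x}})$, again compatibly with the natural map.

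Finally I would pass from $P_0$ to $P=P_0^{\textnormal{int}}$. Because $M_X$ is a sheaf of \emph{integral} monoids and $M_{\overline{x}}$ is integral, and integralization is left adjoint to the inclusion of integral monoids into all monoids, every $\underline{M_A}$-morphism out of $\underline{P_0}$ into $M_X$ (resp. out of $P_0$ into $M_{\overline{x}}$) factors uniquely through $\underline{P}$ (resp. $P$); hence $\underline{\textnormal{Hom}}_{\underline{M_A}}(\underline{P},M_X)=\underline{\textnormal{Hom}}_{\underline{M_A}}(\underline{P_0},M_X)$ and $\textnormal{Hom}_{M_A}(P,M_{\overline{x}})=\textnormal{Hom}_{M_A}(P_0,M_{\overline{x}})$, and combining with the previous step concludes the proof. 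I do not expect a genuine obstacle here: the argument is essentially formal. The only points requiring care are the bookkeeping that every identification above is compatible with the \emph{natural} map in the statement — which follows from naturality of the adjunction isomorphisms and of the stalk functor — and the observation that the integrality hypothesis on $M_X$ is precisely what licenses the last reduction.
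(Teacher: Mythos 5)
Your proof is correct and follows essentially the same route as the paper, whose entire argument is the one-line remark that the claim "reduces to the case $P=M_A\oplus\bN$": your three steps (free case, finite coequalizer presentation plus exactness of the stalk functor, and the integralization adjunction) are exactly the content hiding behind that reduction. No gaps.
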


\begin{proof}
It reduces to the case of $P=M_A \oplus \bN$, and this case is easy to check. 
\end{proof}

\begin{cor}\label{Illusie lemma}
Assume $M_A$ is integral and let $M'_A\to \Gamma (\Spf (A), M_{A, \Spf (A)}^a)$ be another integral chart of $(\Spf (A), M_A)^a$. 
Let $X$ be an $I$-adic formal scheme over $A$ and let $M_X$ (resp. $M'_X$) be an integral log structure on $X$ over $(\Spf (A), M_A)^a$. 
Fix a geometric point $\overline{x}$ of $X$. 
\begin{enumerate}
\item Assume $M_X$ admits a relatively fine chart $M_A \to P$. 
If two maps of log structures $f_1, f_2 \colon M_X \to M'_X$ coincide at $\overline{x}$, then they coincide on a \'etale neighborhood of $\overline{x}$. 
\item Let $f\colon M_X \to M'_X$ be a map of log structures over $(\Spf (A), M_A)^a$. Assume $M_X$ (resp. $M'_X$) admits a relatively fine chart $M_A \to P$ (resp. $M'_A \to P'$). 
If $f$ induces an isomorphism at $\overline{x}$, then $f$ is an isomorphism on some \'etale neighborhood of $\overline{x}$. 
\end{enumerate}
\end{cor}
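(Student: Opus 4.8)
The plan is to deduce both parts from Lemma \ref{Hom stalk} together with the adjunction between the ``associated log structure'' functor and the forgetful functor from log structures to prelog structures. Concretely, if $M_A\to P\to\Gamma(X,M_X)$ is a chart, then $M_X=\underline{P}^a$ and, for any log structure $M'_X$ on $X$ over $(\Spf(A),M_A)^a$, the set of morphisms of log structures $M_X\to M'_X$ over the base is identified with the set of morphisms of prelog structures $\underline{P}\to M'_X$ over $\underline{M_A}$, i.e.\ with $\Gamma(X,\underline{\textnormal{Hom}}_{\underline{M_A}}(\underline{P},M'_X))$. When $M'_X$ is integral and $P$ is finitely presented over $M_A$ as an integral monoid (automatic when $M_A\hookrightarrow P$, by Lemma \ref{ALPT finite presentation lemma}), Lemma \ref{Hom stalk} identifies the germ of this $\textnormal{Hom}$-sheaf at $\bar x$ with $\textnormal{Hom}_{M_A}(P,M'_{X,\bar x})$. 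Hence two such morphisms have the same germ at $\bar x$ as soon as the induced maps $P\to M'_{X,\bar x}$ agree, and any germ extends to a morphism over an \'etale neighborhood of $\bar x$.

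Part (1) follows immediately: the morphisms $f_1,f_2$ correspond to two sections of $\underline{\textnormal{Hom}}_{\underline{M_A}}(\underline{P},M'_X)$ whose germs at $\bar x$ map, under the isomorphism of Lemma \ref{Hom stalk}, to the two composites $P\to\Gamma(X,M_X)\xrightarrow{f_i}\Gamma(X,M'_X)\to M'_{X,\bar x}$; these coincide because $f_1,f_2$ agree at $\bar x$. So the two sections agree on some \'etale neighborhood $U$, i.e.\ $f_1|_U=f_2|_U$ as maps $\underline{P}|_U\to M'_X|_U$, and therefore as maps of log structures $M_X|_U\to M'_X|_U$. (Integrality of $M_X$ is not used; integrality of $M'_X$ enters through Lemma \ref{Hom stalk}.)

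For Part (2) I will first build a local inverse and then invoke Part (1) twice. Using the relatively fine chart $M'_A\to P'\to\Gamma(X,M'_X)$ and Lemma \ref{Hom stalk} applied with base chart $M'_A$ and target sheaf $M_X$ — legitimate since $M_X$ is integral and receives a map from $\underline{M'_A}$, because $M'_A$ and $M_A$ are both charts of the same base log structure $M_A^a$ — the $M'_A$-monoid map $P'\to M'_{X,\bar x}\xrightarrow{f_{\bar x}^{-1}}M_{X,\bar x}$ corresponds to a germ at $\bar x$ of $\underline{\textnormal{Hom}}_{\underline{M'_A}}(\underline{P'},M_X)$; extending it over an \'etale neighborhood $U$ and passing to associated log structures yields a morphism $g\colon M'_X|_U\to M_X|_U$ over $(\Spf(A),M_A)^a$ with $g_{\bar x}=f_{\bar x}^{-1}$ (by uniqueness of the extension to the associated log structure). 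Then $g\circ f$ and $\mathrm{id}_{M_X}$ are morphisms $M_X|_U\to M_X|_U$ over the base agreeing at $\bar x$, so by Part (1) (chart $M_A\to P$, target $M_X$ integral) they agree on a smaller neighborhood; symmetrically $f\circ g=\mathrm{id}_{M'_X}$ on a neighborhood, by Part (1) read with the chart $M'_A\to P'$ and the base written as $(\Spf(A),M'_A)^a=(\Spf(A),M_A)^a$. On the intersection of these neighborhoods $f$ is an isomorphism.

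The argument is essentially formal once Lemma \ref{Hom stalk} is available; the only point requiring care is the bookkeeping forced by the two integral charts $M_A$, $M'_A$ of the base, which obliges one to apply Lemma \ref{Hom stalk} and Part (1) relative to each in turn — harmless precisely because they present the same log structure $M_A^a$ on $\Spf(A)$. I do not anticipate a real obstacle; should one wish to bypass the bookkeeping, one may instead observe directly that, since $f$ is an isomorphism at $\bar x$, the composite $\underline{P}\to M_X\xrightarrow{f}M'_X$ is a chart of $M'_X$ on an \'etale neighborhood of $\bar x$, whereupon $f$ becomes there the canonical identification of two log structures associated to the same prelog structure $\underline{P}$.
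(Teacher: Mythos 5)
Your proof is correct and follows essentially the same route as the paper: both parts reduce to Lemma \ref{Hom stalk} via the argument of Illusie's I.3.7, with (2) obtained by constructing a local inverse from the chart $P'$ of the target and then applying (1) to the two composites. The only difference is bookkeeping: the paper first replaces $P$, $P'$ by pushouts of integral monoids so that one may assume $M_A=M'_A$, whereas you keep both base charts and apply Lemma \ref{Hom stalk} and part (1) relative to each in turn, which is equally valid since both present the same log structure on $\Spf(A)$. One caveat: your closing ``bypass'' --- that $\underline{P}\to M'_X$ is a chart of $M'_X$ on a neighborhood of $\overline{x}$ because it induces an isomorphism at $\overline{x}$ --- is precisely the content of (2) and hence circular as stated; fortunately it is offered only as an aside and your main argument does not use it.
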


\begin{proof}
(1) Using Lemma \ref{Hom stalk}, one can argue as in \cite{Illusie}*{I.3.7 (a)}. 

(2) We argue as in \cite{Illusie}*{I.3.7 (b)} with Lemma \ref{Hom stalk} and (1). Let us include some details. Let $g_{\overline{x}}\colon M'_{X, \overline{x}}\to M_{X, \overline{x}}$ denote the inverse of $f_{\overline{x}}$, which is automatically a map over $M'_A$. By Lemma \ref{Hom stalk}, it extends to $g\colon M'_X\to M_X$ over a some \'etale neighborhood of $\overline{x}$. It suffices to show that $g\circ f=\id, f\circ g=\id$ after passing to another \'etale neighborhood of $\overline{x}$, and this follows from (1). 
\end{proof}

We have the following generalization of \cite{Kato:log}*{2.10}. 

\begin{cor}\label{constructing charts}
Assume $M_A$ is integral and $(X, M_X)$ over $(A, M_A)$ admits a relatively fine chart $M_A \to P$. Let $\overline{x}$ be a geometric point of $X$, $M'_A \to A$ another chart of $(\Spf (A), M_A)^a$, $G$ an abelian group that contains $(M'_A)^{\gp}$ and is finitely generated over $(M'_A)^{\gp}$. 
Let $h\colon G \to M^{\gp}_{X,\overline{x}}$ be a homomorphism that is compatible with $M'_A \to M_{X, \overline{x}}$ and induces a surjection $G\to M^{\gp}_{X,\overline{x}}/ \cO^{\times}_{X, \overline{x}}$. 
Define $P'\subset G$ to be inverse image of $M_{X, x}/ \cO^{\times}_{X, \overline{x}}\subset M^{\gp}_{X,\overline{x}}/ \cO^{\times}_{X, \overline{x}}$. 

If the kernel of $(M'_A)^{\gp} \to M^{\gp}_{X,\overline{x}}/ \cO^{\times}_{X, \overline{x}}$ is finitely generated over $(M'_A)^\times$, then $P'$ is finitely presented over $M'_A$ as an integral monoid and the natural map $P'\to M_{\overline{x}}$ extends to a chart of $M_X$ on some \'etale neighborhood of $\overline{x}$, and this chart is exact at $\overline{x}$. 
\end{cor}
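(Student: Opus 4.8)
The plan is to split the statement into three parts and treat them in sequence: (i) that $P$ is finitely presented over $M'_A$ as an integral monoid; (ii) that the homomorphism $P\to M_{\overline{x}}$ induced by $h$ spreads out to a chart of $M_X$ on an \'etale neighbourhood of $\overline{x}$; and (iii) that this chart is exact at $\overline{x}$. Parts (ii) and (iii) will follow the classical argument of \cite{Kato:log}*{2.10} (cf.\ \cite{Illusie}*{I.3.7}), now using Lemma \ref{Hom stalk} and Corollary \ref{Illusie lemma}(2) in place of their fine-log predecessors. The genuinely new point --- and the only place where the hypothesis on $\ker\big((M'_A)^{\gp}\to M^{\gp}_{X,\overline{x}}/\cO^{\times}_{X,\overline{x}}\big)$ is used --- is part (i): in the classical setting $M'_A$ is fine, so all subgroups of $(M'_A)^{\gp}$ are finitely generated and $P$ is automatically fine, whereas here finite generation of $P$ over $M'_A$ has to be produced by hand. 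I expect this to be the main obstacle; the rest is a faithful transcription of the coherent-log arguments.

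Write $\overline{M}_{\overline{x}}\coloneqq M_{X,\overline{x}}/\cO^{\times}_{X,\overline{x}}$ for the characteristic stalk and $\overline{h}\colon G\to\overline{M}^{\gp}_{\overline{x}}$ for the composite of $h$ with the projection, so that $P=\overline{h}^{-1}(\overline{M}_{\overline{x}})=G\times_{\overline{M}^{\gp}_{\overline{x}}}\overline{M}_{\overline{x}}$. For part (i) I would first record the formal consequences of this fibre-product description: $P$ generates $G$ (so $P^{\gp}=G$), the map $P\to\overline{M}_{\overline{x}}$ is exact and surjective, and $\ker(\overline{h})$, being $\overline{h}^{-1}(e)$ and hence contained in $P$, is a subgroup of $P$. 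The heart of (i) is then that $\ker(\overline{h})$ is finitely generated over $M'_A$: the abelian group $\ker(\overline{h})/\big(\ker(\overline{h})\cap(M'_A)^{\gp}\big)$ injects into $G/(M'_A)^{\gp}$, which is finitely generated because $G$ is finitely generated over $(M'_A)^{\gp}$, while $\ker(\overline{h})\cap(M'_A)^{\gp}=\ker\big((M'_A)^{\gp}\to\overline{M}^{\gp}_{\overline{x}}\big)$ is finitely generated over $(M'_A)^{\times}\subset M'_A$ by hypothesis. Next I would observe that $\overline{M}_{\overline{x}}$ is finitely generated over the image of $M'_A$ in it: the given relatively fine chart of $M_X$ over $M_A$ presents $\overline{M}_{\overline{x}}$ as a quotient of a monoid finitely generated over $M_A$, hence finitely generated over the image of $M_A$, and this image coincides with the image of $M'_A$ since $M_A$ and $M'_A$ are both charts of $(\Spf(A),M_A)^a$ and therefore have the same image in every characteristic of $M_X$. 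Lifting a finite generating set of $\overline{M}_{\overline{x}}$ over that image to elements of $P$, and adjoining $M'_A$ together with finitely many generators of $\ker(\overline{h})$ over $M'_A$, shows $P$ is finitely generated over $M'_A$; since $M'_A\hookrightarrow(M'_A)^{\gp}\hookrightarrow G$ is injective, Lemma \ref{ALPT finite presentation lemma} upgrades this to finite presentation of $P$ over $M'_A$ as an integral monoid.

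For part (ii) I would first check that $h$ carries $P$ into $M_{X,\overline{x}}\subset M^{\gp}_{X,\overline{x}}$: for $p\in P$ the image of $h(p)$ in $\overline{M}^{\gp}_{\overline{x}}$ lies in $\overline{M}_{\overline{x}}$, and $M_{X,\overline{x}}$ is exact in $M^{\gp}_{X,\overline{x}}$ (integrality of $M_X$ plus the log-structure identity $M^{\times}_{X,\overline{x}}=\cO^{\times}_{X,\overline{x}}$). Call the resulting homomorphism $\beta\colon P\to M_{X,\overline{x}}$; it is $M'_A$-linear by the compatibility assumed on $h$. Because $P$ is the full preimage $\overline{h}^{-1}(\overline{M}_{\overline{x}})$, one gets $\beta^{-1}(\cO^{\times}_{X,\overline{x}})=\ker(\overline{h})$, the induced map on characteristics $P/\ker(\overline{h})\to\overline{M}_{\overline{x}}$ is a bijection, and $\beta$ is the identity on unit subgroups; consequently $\beta$ induces an isomorphism $\underline{P}^a_{\overline{x}}\cong M_{X,\overline{x}}$ of log structure stalks. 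Invoking part (i) and the $M'_A$-version of Lemma \ref{Hom stalk}, $\beta$ is then the stalk at $\overline{x}$ of a section of $\underline{\textnormal{Hom}}_{\underline{M'_A}}(\underline{P},M_X)$ over some \'etale neighbourhood $U$, that is, of an $M'_A$-linear map $P\to\Gamma(U,M_X)$, which induces a morphism of log structures $\underline{P}^a|_U\to M_X|_U$ over $(\Spf(A),M_A)^a$ that is an isomorphism at $\overline{x}$. Since $\underline{P}^a|_U$ carries the relatively fine chart $M'_A\to P$ from part (i) and $M_X$ carries its given relatively fine chart over $M_A$, the argument of Corollary \ref{Illusie lemma}(2) (which, after reducing to a common base chart, is symmetric in the two bases) makes this morphism an isomorphism on a possibly smaller neighbourhood $U'$, so $P\to\Gamma(U',M_X)$ is a chart of $M_X$. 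Finally, part (iii) is immediate from the fibre-product definition of $P$: if $g\in P^{\gp}=G$ satisfies $\beta(g)\in M_{X,\overline{x}}$, then $\overline{h}(g)\in\overline{M}_{\overline{x}}$, hence $g\in\overline{h}^{-1}(\overline{M}_{\overline{x}})=P$, i.e.\ $P=P^{\gp}\times_{M^{\gp}_{X,\overline{x}}}M_{X,\overline{x}}$, which is the exactness of the chart at $\overline{x}$.
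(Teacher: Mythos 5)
Your proof is correct and follows essentially the same route as the paper's (which is only a few lines long): deduce finite generation of $P$ over $M'_A$ from the hypothesis on the kernel together with the relatively fine chart of $M_X$, upgrade to finite presentation via Lemma \ref{ALPT finite presentation lemma}, spread out $P\to M_{X,\overline{x}}$ using Lemma \ref{Hom stalk}, conclude via Corollary \ref{Illusie lemma}(2), and read off exactness from the fibre-product definition of $P$. You simply make explicit several steps the paper leaves implicit, such as the verification that $\overline{M}_{\overline{x}}$ is finitely generated over the common image of $M_A$ and $M'_A$ and that $h$ carries $P$ into $M_{X,\overline{x}}$.
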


\begin{proof}
By assumptions, the kernel of $G\to M^{\gp}_{X,\overline{x}}/ \cO^{\times}_{X, \overline{x}}$ is finitely generated over $(M'_A)^\times$. As $P'$ contains $M'_A$, we deduce that $P'$ is finitely presented over $M'_A$ as an integral monoid by Lemma \ref{ALPT finite presentation lemma}. Now, $P'\to M_{\overline{x}}$ extends to a map $P'\to \Gamma (U, M_X)$ for some \'etale neighborhood $U$ of $\overline{x}$. 
It induces an isomorphism of the associated log structures after shrinking $U$ by Lemma \ref{Illusie lemma}. 
So, $P'\to \Gamma (U, M_X)$ induces a chart of $M_X$. 
The exactness at $\overline{x}$ is clear. 
\end{proof}

Under some assumption, the identity of $\Gamma (X, M_X)$ is a chart, but this chart is not relatively coherent in general. Instead let us give a sufficient condition for this chart $M_A \to \Gamma (X, M_X)$ to be weakly finitely generated. 

\begin{defn}\label{small chart}
Let $(X, M_X)$ be a log $I$-adic formal scheme over $(A, M_A)$.  
A relatively coherent chart $P \to \Gamma (X, M_X)$ over $M_{A}$ is \emph{small} if the composite
\[
P \to \Gamma (X, M_X) \to \Gamma (X, M_X) / \Gamma(X, \cO_X^\times)
\]
is surjective. In particular, $\Gamma (X, M_X)$ is weakly finitely generated over $M_A$ in this case.
\end{defn}

\begin{lem}\label{small neighborhood}
Let $(X, M_X)$ be a log $I$-adic formal scheme over $(A, M_{A})$ with a relatively coherent chart $P\to \Gamma (X, M_X)$. Suppose the underlying topological space of $X$ is locally noetherian, and (at least) one of the following conditions holds:
\begin{enumerate}
    \item $\Spec (M_A)$ is finite, or
    \item the image of $M_A\setminus M_A^\times$ in $A$ is contained in $\sqrt{I}$. 
\end{enumerate} 
Then, for any $x\in X$, there exists a Zariski open neighborhood $U$ of $x$ such that, for any open neighborhood $V\subset U$ of $x$, $P\to \Gamma (V, M_X)$ is a small chart. 
In this case, for $V$ affine, $(V, \Gamma (V, M_X))$ is log-affine.  
\end{lem}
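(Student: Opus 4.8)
The plan is to reduce the assertion to a statement about the quotient sheaf $\overline{M}_X\coloneqq M_X/\cO_X^\times$ and to control that sheaf via the finiteness hypotheses. First observe that, by the exact sequence $1\to \cO_X^\times\to M_X^{\gp}\to \overline{M}_X^{\gp}\to 1$ of \'etale sheaves and left-exactness of $\Gamma(V,-)$, the chart $P\to \Gamma(V,M_X)$ is small exactly when every $m\in \Gamma(V,M_X)$ has the same image as $\alpha(p)$ in $\Gamma(V,\overline{M}_X)$ for some $p\in P$. Since $P\to \Gamma(X,M_X)$ is a chart, $P\to \overline{M}_{X,\overline{y}}$ is surjective for every geometric point $\overline{y}$, and $\overline{M}_{X,\overline{y}}=P/F_{\overline{y}}$ with $F_{\overline{y}}\coloneqq \alpha^{-1}(\cO_{X,\overline{y}}^\times)$ a face of $P$.

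Next I would show that the set of faces $\{F_{\overline{y}}\}_{\overline{y}}$ occurring is finite. Under hypothesis (1) the morphism $M_A\to P$ is finitely generated, so $\Spec(P)$ is finite by \cite{Ogus}*{I.1.4.1}; in particular $P$ has only finitely many faces, each finitely generated. Under hypothesis (2) the image of $M_A\setminus M_A^\times$ in $\cO_X$ lies in $\sqrt{I}$, hence consists of topologically nilpotent elements, so each $F_{\overline{y}}$ meets the image of $M_A$ only in that of $M_A^\times$; writing $P=\langle \mathrm{im}(M_A),p_1,\dots,p_n\rangle$ one checks that $F_{\overline{y}}$ is the submonoid generated by $\mathrm{im}(M_A^\times)$ together with $\{\,p_i:\alpha(p_i)\in \cO_{X,\overline{y}}^\times\,\}$, so there are at most $2^n$ possibilities. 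In either case each locus $X_{\supseteq F}\coloneqq\{\,y:\alpha(p)\in \cO_{X,\overline{y}}^\times\ \text{for all}\ p\in F\,\}$ is Zariski open (a finite intersection of sets $\{\alpha(p)\neq 0\}$), $\overline{M}_X$ is a constructible sheaf of monoids, and over $X_{\supseteq F_{\overline{x}}}$ all stalks $\overline{M}_{X,\overline{y}}=P/F_{\overline{y}}$ (with $F_{\overline{y}}\supseteq F_{\overline{x}}$) are quotients of $\overline{M}_{X,\overline{x}}=P/F_{\overline{x}}$; this identifies $\overline{M}_X|_{X_{\supseteq F_{\overline{x}}}}$ with a quotient of the constant sheaf $\underline{P/F_{\overline{x}}}$.

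I would then take $U$ to be the connected component of $x$ inside $X_{\supseteq F_{\overline{x}}}$, which is a Zariski open neighborhood of $x$ since the locally noetherian space $X$ is locally connected. Given an open $V$ with $x\in V\subseteq U$ and $m\in\Gamma(V,M_X)$, one lifts $\overline{m}_{\overline{x}}\in\overline{M}_{X,\overline{x}}$ to some $p\in P$ and shows $\overline{m}=\overline{\alpha(p)}$ in $\Gamma(V,\overline{M}_X)$: the agreement locus is open by the sheaf property, its complement is constructible, and the quotient-of-a-constant-sheaf description on $U$ together with the choice of $U$ forces agreement on all of $V$ (the components of $V$ not containing $x$ being treated by the same argument applied at one of their points). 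Then $m\in\Gamma(V,\cO_X^\times)\cdot\alpha(p)$, so the chart is small on $V$. These are the standard manipulations in the theory of charts of not-necessarily-fine log schemes, and I would refer to \cite{Ogus}, \cite{Illusie}, \cite{Tsuji} for the details.

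The main obstacle is this last step: propagating the pointwise surjectivity $P\twoheadrightarrow\overline{M}_{X,\overline{x}}$ to a genuine surjection $P\to\Gamma(V,M_X)/\Gamma(V,\cO_X^\times)$ uniformly in $V$, in particular when $V$ is disconnected. It is precisely here that one cannot simply take $U=X_{\supseteq F_{\overline{x}}}$ — one must pass to the connected component of $x$ so that over $U$ the non-trivial part of $\overline{M}_X$ away from $x$ is adequately controlled — and it is where the finiteness hypotheses (1)/(2), which make $\overline{M}_X$ constructible, are used in an essential way.
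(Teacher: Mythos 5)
Your preliminary reductions are sound and match the paper's: passing to $\overline{M}_X=M_X/\cO_X^\times$, and showing that only finitely many faces $F_{\overline y}=\alpha^{-1}(\cO_{X,\overline y}^\times)$ occur (via finiteness of $\Spec(P)$ in case (1), via the generators of $P$ outside the image of $M_A^\times$ in case (2)) — these are exactly the two observations the paper records before deferring to \cite{Tsuji}*{1.3.3}. The gap is in your choice of $U$ and in the final propagation step, which is where the actual content of Tsuji's lemma lies. Taking $U$ to be the connected component of $x$ in $X_{\supseteq F_{\overline x}}$ does not work. Concretely, let $M_A$ be trivial, $X=\Spec k[s]$ (or $\Spf \bZ_p\langle s\rangle$), $P=\bN$, $\alpha(e)=s(s-1)$, and $x$ the point $s=0$. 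Then $F_{\overline x}$ is the trivial face, $X_{\supseteq F_{\overline x}}=X$ is irreducible, so your $U$ is all of $X$; yet the chart is not small on $V=X$: gluing the section $(1,\frac{c}{s-1})$ of $\bN\oplus\cO^\times$ over $X\setminus\{s=1\}$ with $(0,cs)$ over $X\setminus\{s=0\}$ (they agree on the overlap, where $\alpha(e)$ is invertible) yields $m\in\Gamma(X,M_X)$ whose image in $\Gamma(X,\overline{M}_X)=\overline{M}_{X,\overline 0}\times\overline{M}_{X,\overline 1}=\bN\times\bN$ is $(1,0)$, whereas every $\alpha(e^a)$ has image $(a,a)$. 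A correct $U$ must discard the point $s=1$: one has to remove the irreducible components of the finitely many degeneracy loci $V(\alpha(p))$ (equivalently, of the closures of the strata $\{F_{\overline y}=F\}$) that do not pass through $x$. This is where the locally noetherian hypothesis is genuinely used — to guarantee there are only finitely many such components locally — not merely for local connectedness, and it is the step your write-up replaces with ``the choice of $U$ forces agreement.''

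A second, related problem is your parenthetical treatment of the components of $V$ not containing $x$: applying the stalkwise argument at a point of such a component only produces \emph{some} $p'\in P$ working there, with no reason for it to be compatible with the $p$ dictated by $\overline{m}_{\overline x}$; since $\Gamma(V,M_X)/\Gamma(V,\cO_X^\times)$ is the product of the corresponding quotients over the connected components of $V$, smallness requires one element of $P$ serving all components simultaneously. With the corrected $U$ (all relevant degeneracy components through $x$, hence in particular every irreducible component of every open $V\ni x$ in $U$ containing $x$), both issues are resolved by propagating along specializations to and generizations from $x$ — this is the argument of \cite{Tsuji}*{1.3.3} that the paper cites, and it needs to be carried out rather than referenced away.
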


\begin{proof}
Case (1): the proof is the same as \cite{Tsuji}*{1.3.3}. Note that $\Spec (P)$ is finite if $P$ is finitely generated over $M_{A}$. 

Case (2): observe that 
\begin{itemize}
    \item for any geometric point $\overline{x}$ of $X$, the inverse image of $\mathfrak{m}_{X, \overline{x}}\subset \cO_{X,\overline{x}}$ in $P$ contains the image of $M_A\setminus M_A^\times\to P$ by assumption, and
    \item there are only finitely many prime ideals of $P$ that contain the image of $M_A\setminus M_A^\times \to P$.  
\end{itemize} 
Only using prime ideals of $P$ containing the image of $M_A\setminus M_A^\times \to P$, the argument of \cite{Tsuji}*{1.3.3} again works. 

The final claim follows by arguing as in $(2)\Rightarrow (3)$ of \cite{Tsuji}*{1.3.2}. 
\end{proof}

In the above situation, the tautological chart $\Gamma (X, M_X) \to \Gamma (X, M_X)$ can be controlled if the small chart is exact at some geometric point:

\begin{lem}\label{chart and global section II}
Suppose that there exists a small chart $P \to \Gamma (X, M_X)$ over $M_A$. 
If $M_A$ is integral and $M_A\to P$ is integral (resp. exact) and $P$ is exact at a geometric point $\overline{x}$ of $X$, then $M_A\to \Gamma (X, M_X)$ is integral (resp. exact). 
\end{lem}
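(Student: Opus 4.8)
The plan is to first upgrade the hypothesis ``$P$ is exact at $\bar x$'' to the statement that the chart map $P\to\Gamma (X,M_X)$ is itself exact, and then to treat the two cases separately: the ``exact'' case becomes formal once this is known, while the ``integral'' case is verified by hand via Kato's combinatorial criterion, using smallness of the chart together with the exactness just established.

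I would begin by recording the cancellation property of exact morphisms of integral monoids: if $f\colon Q\to M$ and $g\colon M\to N$ are maps with $g\circ f$ exact, then $f$ is exact, since for $y\in Q^{\gp}$ with $f^{\gp}(y)\in M$ one has $g^{\gp}(f^{\gp}(y))=g(f^{\gp}(y))\in N$, forcing $y\in Q$. Now the chart isomorphism $\underline P^{a}\cong M_X$ gives, on the stalk at $\bar x$, the canonical map $P=\underline P_{\bar x}\to M_{X,\bar x}$, and it factors as $P\to\Gamma (X,M_X)\to M_{X,\bar x}$; this composite is exact by hypothesis, so the cancellation property shows $P\to\Gamma (X,M_X)$ is exact. (Here and below I use that $\Gamma (X,M_X)$ is integral, which holds since $M_X$ is integral and $\Gamma$ is left exact, and that $\Gamma (X,M_X)^{\times}$ is identified via the structure map with $\Gamma (X,\cO_X^{\times})$.) For the exact case this already finishes: $M_A\to P$ is exact by hypothesis and $P\to\Gamma (X,M_X)$ is exact by the above, so their composite is exact; note that smallness of the chart is not needed for this half.

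For the integral case I would verify Kato's criterion (\cite{Kato:log}*{4.1}, \cite{Ogus}*{I.4.6.2}) for $M_A\to\Gamma (X,M_X)$. Given $a,b\in M_A$ and $x,y\in\Gamma (X,M_X)$ with $h(a)x=h(b)y$ (writing $h$ generically for the structure maps), smallness lets me write $x=\bar p_x v_x$ and $y=\bar p_y v_y$ with $p_x,p_y\in P$ and $v_x,v_y\in\Gamma (X,M_X)^{\times}=\Gamma (X,\cO_X^{\times})$. Cancelling in $\Gamma (X,M_X)$ gives $\overline{h(a)p_x}/\overline{h(b)p_y}=v_y/v_x$, a unit; since $h(a)p_x/h(b)p_y\in P^{\gp}$ and its inverse both map into $\Gamma (X,M_X)$, exactness of $P\to\Gamma (X,M_X)$ forces $u:=h(a)p_x/h(b)p_y\in P^{\times}$, with $\bar u=v_y/v_x$ and $h(a)p_x=h(b)p_yu$ in $P$. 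Applying the integrality criterion for $M_A\to P$ to $a,b$ and $p_x,p_yu$ produces $c,d\in M_A$ and $z'\in P$ with $p_x=h(c)z'$, $p_yu=h(d)z'$, $ac=bd$; then $z:=\bar{z'}v_x\in\Gamma (X,M_X)$ satisfies $h(c)z=x$, $h(d)z=\overline{p_yu}\,v_x=\bar p_y\bar uv_x=\bar p_yv_y=y$ and $ac=bd$, which is exactly what the criterion demands, so $M_A\to\Gamma (X,M_X)$ is integral.

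The genuinely delicate point is this last paragraph — specifically, extracting the unit $u\in P^{\times}$ from exactness and checking that the element $z'$ returned by Kato's criterion for $M_A\to P$, once twisted by $v_x$, simultaneously reproduces $x$ and $y$; the bookkeeping with the units must be done carefully, whereas everything else is formal. A more structural alternative to this paragraph would be to observe that exactness together with smallness identify $\Gamma (X,M_X)$ with the pushout $P\oplus_{P^{\times}}\Gamma (X,M_X)^{\times}$ and then invoke stability of integral morphisms under adjoining a group of units; but proving that stability statement appears to require essentially the same computation, so I would present the direct verification above.
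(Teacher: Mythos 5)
Your proof is correct. The paper takes a more packaged route: it first invokes the general fact that integrality and exactness of a map of integral monoids depend only on the induced map of sharpened monoids, and then shows that $P/P^\times \to \Gamma(X,M_X)/\Gamma(X,\cO_X^\times)$ is an isomorphism --- surjective by smallness, injective because the composite to $M_{X,\overline{x}}/\cO_{X,\overline{x}}^\times$ is an isomorphism by exactness of the chart at $\overline{x}$ --- so that $M_A\to\Gamma(X,M_X)$ inherits integrality (resp.\ exactness) from $M_A\to P$. You instead stay with the un-sharpened monoids: the cancellation lemma upgrading stalkwise exactness to exactness of $P\to\Gamma(X,M_X)$ is a clean substitute for the injectivity step, the exact case then follows by composing exact maps, and your explicit verification of Kato's pushout criterion (extracting the unit $u\in P^\times$ and twisting $z'$ by $v_x$) is precisely an unwinding of the statement that integrality is insensitive to units. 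Both arguments are sound; the paper's is shorter because it cites the sharpening reduction, yours is more self-contained and has the small bonus of making explicit that smallness of the chart is not needed for the exactness half.
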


\begin{proof}
The claim is equivalent to the integrality (resp. exactness) of 
\[
M_A / M_A^\times \to \Gamma (X, M_X)/ \Gamma (X, \cO_X^\times),
\]
and it suffices to show that $P/P^\times \to \Gamma (X, M_X)/ \Gamma (X, \cO_X^\times)$ is an isomorphism. 
By the assumption it is surjective. 
The composite
\[
P/P^\times \to \Gamma (X, M_X)/ \Gamma (X, \cO_X^\times) \to M_{X, \overline{x}}/M_{X, \overline{x}} 
\]
is an isomorphism since the chart $P\to \Gamma (X, M_X)$ is exact at $\overline{x}$. So, the first map is injective, hence an isomorphism. 
\end{proof}

\subsection{Smooth log formal schemes}

Let us introduce a notion of smooth log formal schemes in our favorite generality. 
Compared to Kato's original definition \cite{Kato:log}*{3.3}, there are two main differences: we weaken the finiteness condition, and we require the integrality so that the underlying morphism of (formal) schemes is flat. 

\begin{defn}\label{smooth}
Suppose $M_A$ is integral. 
A log $I$-adic formal scheme $(X,M_X)$ over $(A, M_A)$ is \emph{smooth} (resp. \emph{\'etale}) if, \'etale locally on $X$, there exists relatively coherent chart $P \to \Gamma (X, M_X)$ such that the following conditions holds:
\begin{itemize}
%    \item $X$ is locally of finite presentation over $A$,
    \item $P$ is integral and $M_A\to P$ is integral, 
    \item the map $M_A^{\gp}\to P^{\gp}$ is injective and the torsion part of its cokernel (resp. the whole cokernel) is a finite group whose order is invertible in $X_{A/I}$, and
    \item the induced map
\[
X \to \Spf (A \langle P\rangle)\times_{\Spf (A \langle M_A\rangle)} \Spf(A)
\]
is \'etale.  
\end{itemize}
A \emph{smooth chart} (resp. \emph{\'etale chart}) is a chart satisfying the above conditions. 
If we drop the condition that $M_A \to P$ is integral, we say that $(X, M_X)$ is \emph{Kato-smooth} (resp. \emph{Kato-\'etale}) over $(A, M_A)$, and call $M_A \to P$ a \emph{Kato-smooth chart} (resp. \emph{Kato-\'etale chart}). 
\end{defn}

In particular, $M_X$ is an integral log structure on $X$.
By Lemma \ref{ALPT finite presentation lemma}, every smooth chart is relatively fine. 

\begin{rem}
Let us make initial remarks: 
\begin{enumerate}
    \item We do not know if $\bZ[P]$ is of finite presentation over $\bZ[M_A]$. Therefore, $X$ is possibly not locally of finite presentation over $A$. It is perhaps more natural to assume $P$ is finitely presented over $M_A$, cf. Proposition \ref{descent to the fine case}. 
    \item Recall that $M_A \to P$ is automatically integral if $M_A$ is valuative, and this is the case for our main examples. 
    \item Smooth/\'etale log formal schemes are stable under base change along arbitrary maps $(A, M_A)\to (A', M_{A'})$ of classically $I$-complete prelog rings with integral monoids. In particular, a smooth/\'etale chart exists for the tautological chart $M_A^a \to A$ if $(X, M_X)$ is smooth/\'etale over $(A, M_A)$. 
    For Kato-smoothness and Kato-\'etaleness, the same holds if one takes base changes in the category of integral log structures/monoids.  
    \item The above definition depends on the chart $M_A\to A$. It is more canonical if $M_A$ coincides with the global section of the associated log structure, but we want to allow more flexibility. 
    For smoothness and \'etaleness, stability under base change is one weak version of independence. We show another independence result in Proposition \ref{smoothness is independent of charts} below.  
\end{enumerate}
\end{rem}

\begin{rem}[Relation to Kato's smoothness]
We briefly explain how the above definition is related to Kato's original definition. 
\begin{enumerate}
    \item If $(X, M_X)$ is Kato-smooth over $(A, M_A)$, the reduction $(X, M_X)_{A/I^n}$, for any integer $n\geq 1$, is an integral log scheme over $(A/I^n, M_A)$ that is \emph{formally smooth} over $(A/I^n, M_A)$ in the sense of \cite{Ogus}*{IV.3.1.1} (cf. the proof of \cite{Kato:log}*{3.4}). 
    See also Remarks \ref{lifting property}, \ref{strong lifting property} for stronger lifting properties which we mainly use. 
    \item Assuming $X$ is locally of finite presentation over $A$ and $M_A, M_X$ are fine, Kato defined smoothness \cite{Kato:log}*{3.3} using formally smoothness. 
(In particular, it does not depend on the choice of $M_A\to A$ as long as $M_A$ is fine.)
Furthermore, he showed that it is equivalent to Kato-smoothness we defined above \cite{Kato:log}*{3.5}. 
(Kato-smoothness also implies that $X$ is locally of finite presentation in the setting of fine log schemes.)
(These are stated for fine log schemes, but can be adapted to fine log formal schemes. See also \cite{Shiho}*{2.2.8}.)

Moreover, $f$ is smooth/\'etale in our sense if and only if $f$ is integral and smooth/\'etale in the sense of Kato; cf. Proposition \ref{smoothness is independent of charts}. 
    \item Let $M_{A,0}\subset M_A$ be a fine submonoid. If $(X, M_X)$ is obtained as the base change of a fine log $I$-adic formal scheme $(X, M_{X,0})$ that is smooth and integral over $(A, M_{A,0})$ in the sense of Kato, then $(X, M_X)$ is smooth in our sense. 
    Similarly, working inside the category of integral log structures, the base change of $(X, M_{X,0})$ smooth over $(A, M_{A,0})$ in the sense of Kato is Kato-smooth. 
The converse holds under an additional assumption; see Proposition \ref{descent to the fine case}. 
\end{enumerate}
\end{rem}

\begin{rem}[Lifting property]\label{lifting property}
If $(X, M_X)$ is Kato-smooth over $(A, M_A)$, the sheaves of ($I$-completed) log-differentials $\Omega^i_{(X, M_X)/(A, M_A)}$ are locally free $\cO_X$-modules of finite rank. 
This implies the following lifting property of $(X, M_X)_{A/I^n}$ that is stronger than the formally smoothness. 
If $(S, M_S)\to (T, M_T)$ is an exact closed immersion of affine integral log schemes over $(A/I^n, M_A)$ defined by a \emph{nilpotent} ideal, i.e., a log thickening of finite order \cite{Ogus}*{IV.2.1.1}, then, for every commutative diagram
\[
\begin{CD}
(S, M_S)@>>> (T, M_T) \\
@VVV @VVV \\
(X, M_X)@>>> (\Spec (A/I^n), M_A)^a, 
\end{CD}
\]
there exists $(T, M_T)\to (X, M_X)$ making the resulting diagram commutative. 
This holds because the argument of \cite{Ogus}*{IV.3.1.4.2} works in this setting. 

From now on, the formally smoothness also refers to this lifting property. 
(The original definition of formally smoothness only implies $(T, M_T)\to (X, M_X)$ exists locally on $T$.) 
\end{rem}

\begin{prop}\label{descent to the fine case}
Assume $M_A$ is integral, and let $(X, M_X)$ be a log $I$-adic formal scheme over $(A, M_A)$. 
If $(X, M_X)$ admits a Kato-smooth chart $M_A\to P$, then there exists a fine submonoid $M_{A, 0}\subset M_A$ and a fine log scheme $(X, M_{X, 0})$ with a chart $M_{A, 0}\to P_0$ such that
\begin{itemize}
\item $(X, M_{X, 0})_{A/I^n}$ is formally smooth over $(A/I^n, M_{A, 0})$ for every integer $n\geq 1$, and 
\item the base change of $M_{A,0}\to P_0$ in the category of integral monoids is isomorphic to $M_A\to P$. 
\end{itemize}
The choice of $M_{A,0}\to P_0$ is essentially unique: two choices are isomorphic after replacing $M_{A,0}$ by a sufficiently large submonoid.

If $X$ is moreover locally of finite presentation over $A$, then $(X, M_{X, 0})$ is smooth over $(\Spf (A), M_{A, 0})^a$ in the sense of Kato \cite{Kato:log}*{3.3}. 

If the chart $M_A \to P$ is of finite presentation, then $X$ is locally of finite presentation over $A$ and we can take $M_{A, 0}$ and $M_{X, 0}$ so that $(X, M_{X,0})$ is smooth over $(A, M_{A,0})$. 
\end{prop}

\begin{proof}
The third assertion follows from the first by Kato's definition of smoothness. 

Using \cite{ALPT}*{2.1.9}, one can find $(X, M_{X, 0})$ as in the statement if we drop the condition that $(X, M_{X, 0})$ is formally smooth over $(A, M_{A,0})$ (and the uniqueness part is clear from \cite{ALPT}*{2.1.9}). 
(Compare with \cite{ALPT}*{5.2.11}.)
In fact, this argument provides $(X, M_{X,0})$ with an integral chart $M_{A,0}\to P_0$ whose base change to $M_A$ in the category of integral monoids is $M_A \to P$. 
So, we need only prove that every such $(X, M_{X, 0})$ with $M_{A, 0}\to P_0$ is formally smooth. 
This formally smoothness can be deduced from a natural isomorphism $P_0^{\gp}/ M^{\gp}_{A,0}\cong P^{\gp}/M_A^{\gp}$ and the proof of \cite{Kato:log}*{3.4}. Alternatively, we can argue as follows only using the formally smoothness of $(X, M_X)$. 

We may assume $I=0$. So, we shall show that $(X, M_{X, 0})$ is formally smooth over $(A, M_0)$. Suppose we are given a log thickening of first order
\[
\begin{CD}
(S, M_S) @>>> (T, M_T) \\
@VVV @VVV \\
(X, M_{X, 0}) @>>> (\Spec(A), M_0)
\end{CD}
\]
with $T$ affine. (See \cite{Ogus}*{IV.2.1.1}. We need only consider the case where $(S, M_S)\to (X, M_{X, 0})$ is an open immersion \cite{Ogus}*{IV.3.1.5}.)  
After base change to $(A, M_A)$, we have a map 
\[
\tilde{g}\colon (T, M'_{T})\coloneqq (T, M_T)_{(A,M_A)}\to (X, M_X)
\]
that fits into the base change of the above diagram. 
We need to show that the composite $P_0 \to P \overset{\tilde{g}}{\longrightarrow}\Gamma (T, M'_{T})$ factors through a map $\Gamma (T, M_T)\to \Gamma (T, M'_{T})$. 
%(The inclusion holds as the underlying schemes of $T$, $T$ are the same.) 
It suffices to show that
\[
f \colon P_0  \to \Gamma (T, M'_{T}) / \Gamma (T, \cO_{T}^\times) \subset \Gamma (T, M'_{T} / \cO_{T}^\times)
\]
factors through $\Gamma (T, M_{T}) / \Gamma (T, \cO_T^\times)\to \Gamma (T, M'_{T}) / \Gamma (T, \cO_{T}^\times)$. 

As $(S, M_S)\to (T, M_T)$ is the log thickening of first order and $T$ is affine, 
\[
\Gamma (T, M_T) \to \Gamma (S, M_S), \quad \Gamma(T, M'_{T})\to \Gamma (S, M'_{S})
\]
are surjective quotient maps by actions of $\Gamma (T, \cO_T^\times)$, where $M'_S$ denotes the base change of $M_S$ over $M_A$. 
Therefore, $f$ can be identified with
\[
P_0  \to \Gamma (S, M'_{S}) / \Gamma (S, \cO_{S}^\times) \subset \Gamma (S, M'_{S} / \cO_{S}^\times). 
\]
It is now clear that $f$ factors through 
\[
\Gamma (T, M_{T}) / \Gamma (T, \cO_T^\times)=\Gamma (S, M_{S}) / \Gamma (S, \cO_S^\times)\to 
\Gamma (S, M'_{S}) / \Gamma (S, \cO_{S}^\times). 
\]
This finishes the proof of the first assertion. 

For the final assertion, write $M_A=\varinjlim_j M_{A,j}$ by fine submonoids $M_{A, j}$ containing $M_{A,0}$. It suffices to show that the base change $M_{A, j}\to P_j$ of $M_{A,0} \to P_0$ in the category of integral monoids is integral (and injective). 
For this, observe that
$\bZ[P]=\bZ [P_j]$ is flat over $\bZ[M_A]=\varinjlim_j \bZ[M_{A,j}]$. 
By \cite{Stacks}*{\href{https://stacks.math.columbia.edu/tag/02JO}{Tag 02JO}}, there exists some $j$ such that $\bZ [M_{A,j}]\to \bZ [P_j]$ is flat. So, $M_{A, j}\to P_j$ is integral and injective by \cite{Kato:log}*{4.1}.  
\end{proof}

Let us collect some other properties of smooth log formal schemes. 

\begin{rem}
\begin{enumerate}
    \item Our definition immediately implies that the morphism $(X, M_X)\to (\Spf (A), M_A)^a$ is integral if $(X, M_X)$ is smooth. Moreover, using the notation of Definition \ref{smooth}, we see that $A \langle P\rangle$ is $I$-completely flat over $A \langle M_A\rangle$ in the sense of \cite{BS}. So, if $X=\Spf (R)$ is affine and admits a smooth chart, $R$ is $I$-completely flat over $A$ as well. 
    \item Every smooth $(X, M_X)$ over $(A/I, M_A)$ admits, \'etale locally, a smooth lift over $(A, M_A)$.
    If $(X, M_X)$ is smooth over $(A/I, M_A)$ and $X$ is affine, it lifts uniquely to a smooth log $I$-adic formal scheme $(\widetilde{X}, M_{\widetilde{X}})$ over $(A, M_A)$, cf. the proof of \cite{Kato:log}*{3.14} (see also \cite{Kato:II}*{p.27} for the correction of its statement).  
\end{enumerate}    
\end{rem}

We remarked that smooth log schemes are formally smooth (in the stronger sense as above). 
The following consequence of formally smoothness and flatness is fundamental in the main text. 

\begin{prop}
Let $(X, M_X) \to (Y, M_Y)$ be an exact closed immersion of smooth log schemes over $(A/I, M_A)$. Then, $X \to Y$ is a quasi-regular immersion of schemes in the sense of \cite{Stacks}*{\href{https://stacks.math.columbia.edu/tag/063J}{Tag 063J}}. 
If, moreover, $Y$ is locally of finite presentation over $A/I$, then $X\to Y$ is a regular immersion.   
\end{prop}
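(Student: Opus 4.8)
The plan is to reduce, via the local structure of smooth log formal schemes, to a statement about charts, and then extract a regular sequence from the exactification together with flatness. First I would work \'etale-locally and pick a smooth chart $M_A\to Q$ for $(Y,M_Y)$ over $(A/I,M_A)$; by Definition \ref{smooth} this means $Y$ is \'etale over $\Spf(A/I\langle Q\rangle)\times_{\Spf(A/I\langle M_A\rangle)}\Spf(A/I)$, with $M_A\to Q$ integral and injective and $Q^{\gp}/M_A^{\gp}$ of the allowed form. Because $(X,M_X)\hookrightarrow(Y,M_Y)$ is an exact closed immersion, Construction \ref{Exactification} (or rather its non-$\delta$ analogue, which is just the exactification of the surjection $(Y,M_Y)\to(X,M_X)$) produces a chart $M_A\to Q'$ for $(X,M_X)$ with $Q\to Q'$ exact surjective and $Q'$ again a smooth chart by Lemma \ref{ALPT finite presentation lemma} and the integrality/injectivity statements recorded before Proposition \ref{Prelog prismatice envelops for smooth log algebras}. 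Since $X\to Y$ is exact, the map $Q^{\gp}\to Q'^{\gp}$ is surjective with finitely generated kernel $L$ (a subgroup of $Q^{\gp}$, hence finitely generated), and the closed immersion is, \'etale-locally on $Y$, obtained from the closed immersion of monoid algebras $A/I\langle Q\rangle \twoheadrightarrow A/I\langle Q'\rangle$ by the \'etale base change defining $Y$.

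Next I would identify the ideal. Writing $B=A/I\langle Q\rangle$ and $B'=A/I\langle Q'\rangle=B\otimes_{\bZ[Q]}\bZ[Q']$, the kernel of $B\to B'$ is generated by the elements $1-\chi$ for $\chi$ running over a finite generating set of $L$ (pulled back to elements of $Q$ after clearing denominators). The key point is that $\bZ[Q]\to\bZ[Q']$ is \emph{flat}: this is exactly the statement, recalled just before Proposition \ref{Prelog prismatice envelops for smooth log algebras}, that an integral injective map of monoids induces a flat map on monoid algebras, applied to $M_A\to Q'$ and then to $Q\to Q'$ (the latter is integral and injective because $Q'\subset Q^{\gp}$ with $Q\to Q'$ exact, so $Q=Q^{\gp}\cap Q'$ inside $Q'^{\gp}$ is recovered by exactness — I would double-check this; if $Q\to Q'$ is merely surjective one instead uses that $B'$ is flat over $A/I$ since both $B$ and $B'$ are, by the flatness over $A/I\langle M_A\rangle$ noted in the Remark after Proposition \ref{descent to the fine case}). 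Flatness of $B\to B'$, together with the fact that $B'$ is presented from $B$ by killing the ideal generated by the $1-\chi_i$, is precisely the Tor-independence hypothesis that makes $(1-\chi_1,\dots,1-\chi_n)$ a quasi-regular sequence (Koszul-regularity in the sense of \cite{Stacks}*{\href{https://stacks.math.columbia.edu/tag/063J}{Tag 063J}}): one checks the conormal module $\mathfrak{a}/\mathfrak{a}^2$ is locally free over $B'$ and the symmetric algebra maps isomorphically onto $\bigoplus \mathfrak{a}^n/\mathfrak{a}^{n+1}$, which follows from flatness of $B\to B'$ by the standard criterion. Then an \'etale (in particular flat) base change to $Y$ and a completion preserve quasi-regularity, so $X\to Y$ is a quasi-regular immersion.

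For the second, stronger assertion, if $Y$ is locally of finite presentation over $A/I$ then so is $X$ (being a closed subscheme cut out by a finitely generated ideal), and a quasi-regular immersion between schemes that are (locally) Noetherian — or more generally such that the ideal is finitely generated and the quotient is a $B'$ of finite presentation — is automatically a regular immersion: over a Noetherian base a quasi-regular ideal is generated by a regular sequence (\cite{Stacks}*{\href{https://stacks.math.columbia.edu/tag/063J}{Tag 063J}} and the references therein), and in the non-Noetherian finitely presented case one descends the situation to a Noetherian base by a standard limit argument. I would therefore invoke the Stacks Project characterization directly rather than reprove it.

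The main obstacle I expect is the bookkeeping around \emph{exactness versus surjectivity} of $Q\to Q'$ and making the reduction to monoid algebras genuinely local and functorial: one must be careful that the \'etale chart for $Y$ and the exactified chart for $X$ are compatible on a common \'etale neighborhood, and that the finitely many generators of $L=\ker(Q^{\gp}\to Q'^{\gp})$ can be chosen inside $Q$ (not just $Q^{\gp}$) so that the $1-\chi_i$ genuinely lie in $B$ and generate the kernel — this uses that $Q\to Q'$ is surjective, which is where exactness of the closed immersion enters. Once the correct local model $B\to B'$ with $B'=B/(1-\chi_1,\dots,1-\chi_n)$ and $B\to B'$ flat is in hand, the regularity is a formal consequence and no real computation remains.
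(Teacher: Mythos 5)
Your reduction fails at its first substantive step: an exact closed immersion of smooth log schemes is \emph{not}, even \'etale-locally, obtained by \'etale base change from a surjection of monoid algebras $A/I\langle Q\rangle \to A/I\langle Q'\rangle$. The simplest counterexample already has trivial log structures: $X=V(x_2)\subset Y=\bA^2_{A/I}$ is an exact closed immersion of smooth log schemes over $(A/I,\{e\})$, the exactification is vacuous ($Q=Q'=\{e\}$, and indeed for any exact closed immersion the chart $Q$ of $M_Y$ already restricts to a chart of $M_X=f^*M_Y$), and the ideal $(x_2)$ contains no element of the form $1-\chi$. In general the ideal of the immersion mixes ``transverse'' scheme-theoretic directions --- coming from the polynomial/\'etale parts of the two smooth charts, which need not be compatible --- with the monoidal directions, and your local model discards the former entirely; with trivial log structures the proposition reduces to the classical statement that a closed immersion of smooth schemes is quasi-regular, which cannot be seen at the level of monoids. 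The compatibility issue you flag at the end as bookkeeping is in fact the point at which the strategy breaks. A second, independent problem is the flatness step: Kato's criterion (quoted in the paper) says $\bZ[M]\to\bZ[N]$ is flat iff $M\to N$ is integral and \emph{injective}, whereas your $Q\to Q'$ is surjective; a flat surjection of rings has locally idempotent kernel, so if $B\to B'$ were both flat and surjective the immersion would be open-and-closed. The Tor-independence you want to invoke is therefore not available, and in any case flatness of the quotient map is not the standard criterion for quasi-regularity of a sequence.

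The paper's proof avoids charts altogether and runs the argument of EGA $0_{\mathrm{IV}}$.19.5.4.2 in the log setting, using only that smooth log schemes satisfy the infinitesimal lifting property against log thickenings (Remark \ref{lifting property}). Locally one arranges that the conormal module $J/J^2$ is free; formal smoothness of the closed subscheme produces compatible sections $B/J\to B/J^{n+1}$, hence surjections $\bigoplus_{i\le n}\textnormal{Sym}^i(J/J^2)\to B/J^{n+1}$ whose kernels sit in degrees $\ge 2$ and are nilpotent; formal smoothness of the ambient log scheme then yields retractions $B\to\bigoplus_{i\le n}\textnormal{Sym}^i(J/J^2)$, and comparing graded pieces shows $\textnormal{Sym}^n(J/J^2)\to J^n/J^{n+1}$ is an isomorphism, i.e.\ the immersion is quasi-regular. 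The passage from quasi-regular to regular under finite presentation then uses flatness of $X$ and $Y$ over $A/I$ together with the Stacks Project results, rather than a Noetherian approximation. If you want to salvage a chart-based route you would first have to prove that, after exactification and a choice of smooth lift, the ideal is generated by a regular sequence relative to the base --- but that is essentially the content of the proposition, not an input to it.
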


\begin{proof}
%This is a logarithmic variant of \cite{EGA-IV-4}*{17.12.1}. 
Since $X, Y$ are flat over $A/I$, the second assertion follows from the first; see \cite{Stacks}*{\href{https://stacks.math.columbia.edu/tag/063U}{Tag 063U}, \href{https://stacks.math.columbia.edu/tag/063S}{Tag 063S}}. 
So, we shall prove the first assertion. 

In the rest of the proof, we follow \cite{EGA-IV-1}*{Ch.0, 19.5.4.2} in the case of schemes. 
We may assume that $X=\Spec (B), Y=\Spec (B/J)$ are affine and $J/J^2$ is a free $B/J$-module by \cite{Ogus}*{IV.2.3.2}. (See also the proof of \cite{Ogus}*{IV.3.2.2.}.) 
Let $M_B$ be a chart of $M_X$, which is a chart of $M_Y$ as well. 
We need to show that a canonical surjection
\[
\bigoplus_{i=0}^\infty \textnormal{Sym}^i(J/J^2) \to
\bigoplus_{i=0}^\infty J^n/J^{n+1}
\]
is an isomorphism of $B/J$-algebras, equivalently, an injective map. 
Set
\[
E_n\coloneqq B/J^{n+1}, \quad F_n \coloneqq \bigoplus_{i=0}^n \textnormal{Sym}^i (J/J^2). 
\]
As $(E_n, M_B)^a \to (B/J, M_B)^a=(Y, M_Y)$ are log thickenings and $(Y, M_Y)$ is formally smooth, we obtain maps of prelog rings $(B/J, M_B) \to (E_n, M_B)^a$ over $(A, M_A)$ lifting the identity of $(B/J, M_B)$, compatibly with $n$. As $J/J^2$ is free, we also have maps $J/J^2 \to J/J^{n+1}$ lifting the identity of $J/J^2$, compatibly with $n$. These induce maps
\[
(\bigoplus_{i=0}^\infty \textnormal{Sym}^i (J/J^2), M_B) \to (E_n, M_B)^a
\]
of prelog rings over $(B/J, M_B)$, which in turn induces surjections $F_n\to E_n$; these maps are compatible with $n$. 
The kernel of $F_n \to E_n$ is contained in degree $\geq 2$, and in particular nilpotent. 
So, the induced map $v_n\colon (F_n, M_B)^a\to (E_n, M_B)^a$ are log thickenings. 
As $(X, M_X)=(B, M_B)^a$ is formally smooth, there exist maps $p_n\colon (B, M_B)\to (E_n, M_B)^a$ of prelog rings over $(A, M_A)$ lifting $v_n$. The image $p_n (J)$ is contained in $J/J^{n+1}$, so $p_n$ factors as
\[
(B, M_B) \longrightarrow (E_n, M_B) \overset{w_n}{\longrightarrow} (F_n, M_B),
\]
and the composite $v_n \circ w_n$ induces the identity of $E_n$.  
As the $\textnormal{gr}^0, \textnormal{gr}^1$ of $v_n$ are identities of $B/J$ and $J/J^2$, so are those of $w_n$. Then, we see that the composite
\[
\textnormal{Sym}^n(J/J^2) \longrightarrow J^n /J^{n+1} \overset{\textnormal{gr}^n (w_n)}{\longrightarrow}
\textnormal{Sym}^n(J/J^2)
\]
is the identity. This proves the desired injectivity, and we complete the proof. 
\end{proof}

\begin{rem}
If $X, Y$ are locally of finite type over a noetherian $A/I$, we can replace smooth by formally smooth: the proof above shows that the immersion is quasi-regular, hence regular by \cite{Stacks}*{\href{https://stacks.math.columbia.edu/tag/063I}{Tag 063I}}. 
If all the log structures are fine saturated, it is a special case of \cite{Kato-Saito}*{4.4.4}. (Note that in the reference log schemes are assumed to be fine saturated \cite{Kato-Saito}*{p.77}, and a regular immersion \cite{Kato-Saito}*{1.6.1} is defined as a Koszul-regular immersion in the sense of \cite{Stacks}*{\href{https://stacks.math.columbia.edu/tag/063J}{Tag 063J}}.)
\end{rem}

For computation of log crystalline cohomology, the following stronger version of lifting property is useful. 
Compare with the PD smoothness in \cite{Beilinson}*{1.4}. 

\begin{rem}[Strong lifting property]\label{strong lifting property}
Suppose $X=\Spf (R)$ is affine of finite presentation over $A$, and admits a Kato-smooth chart $P\to \Gamma (X, M_X)$. Then $(X, M_X)_{A/I^n}$ satisfies a stronger lifting property: if $(S, M_S)\to (T, M_T)$ is an exact closed immersion of log-affine integral log schemes over $(A/I^n, M_A)$ defined by a \emph{locally nilpotent} ideal (a.k.a. nilideal), i.e., a log thickening, then, for every commutative diagram
\[
\begin{CD}
(S, M_S)@>>> (T, M_T) \\
@VVV @VVV \\
(X, M_X)@>>> (\Spec (A/I^n), M_A)^a, 
\end{CD}
\]
there exists $(T, M_T)\to (X, M_X)$ making the resulting diagram commutative. 
If $M_A, M_X$ are fine, this reduces by \cite{Beilinson}*{1.1 Remarks (i)} to the case of nilpotent immersions, and the formally smoothness gives such a map. (Compare with \cite{Stacks}*{\href{https://stacks.math.columbia.edu/tag/07K4TH}{Tag 07K4}}.) 
In general,  by Proposition \ref{descent to the fine case}, $(X, M_X)$ has the form of $(X, M_{X, 0})_{(A, M_A)}^{\textnormal{int}}$ for a fine log structure $M_{X, 0}$ that is smooth in the sense of Kato \cite{Kato:log}*{3.3} over $(A, M_{A_0})$ for a fine submonoid $M_{A_0}\subset M_A$.
Since the strong lifting property is preserved by base change and applying $(-)^{\textnormal{int}}$, $(X, M_X)$ satisfies the strong lifting property.  
\end{rem}

Finally, let us discuss to what extent (Kato-)smoothness is independent of charts. 

\if0
First, the integrality is independent of the chart $M_A\to A$ in the following sense:

\begin{prop}\label{smooth = Kato-smooth + integral}
Assume that $(X, M_X)$ is Kato-smooth (resp. Kato-\'etale) over $(A, M_A)$. 
It is smooth (resp. \'etale) if and only if the associated morphism of log $I$-adic formal schemes is integral. 
\end{prop}

\begin{proof}
We only show that every integral Kato-smooth morphism is smooth. 
\end{proof}
\fi

\begin{prop}\label{smoothness is independent of charts}
Let $M'_A \to A$ be another integral chart of $(\Spf (A), M_A)^a$, and let $\overline{x}$ be a geometric point of $X$. 
Assume that the kernel of $(M'_A)^{\gp} \to M_{X, \overline{x}}^{\gp}/\cO^\times_{X, \overline{x}}$ is finitely generated over $(M'_A)^{\times}$.
If $(X, M_X)$ is Kato-smooth (resp. smooth) over $(A, M_A)$, then $(X, M_X)$ admits a Kato-smooth (resp. smooth) chart over $(A, M'_A)$ on some \'etale neighborhood of $\overline{x}$. 
Moreover, this chart can be made exact at $\overline{x}$. 
\end{prop}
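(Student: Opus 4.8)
The plan is to reduce everything to a local question about monoids and then invoke the chart-construction results proved earlier in the appendix. First I would work \'etale-locally around the given geometric point $\overline{x}$, so that $(X,M_X)$ admits a Kato-smooth chart $Q\to\Gamma(X,M_X)$ over $(A,M_A)$, with the standard properties: $Q$ is integral, $M_A^{\gp}\to Q^{\gp}$ is injective with (torsion-)cokernel a finite group of order invertible on $X_{A/I}$, and the induced map $X\to\Spf(A\langle Q\rangle)\times_{\Spf(A\langle M_A\rangle)}\Spf(A)$ is \'etale. The goal is to produce a second chart $P\to\Gamma(X,M_X)$, this time over $(A,M_A')$, which is again Kato-smooth (resp. smooth), and exact at $\overline{x}$.

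The construction of $P$ is exactly the one in Corollary~\ref{constructing charts}: take $G$ to be the pushout (in abelian groups) of $(M_A')^{\gp}\leftarrow M_A^{\gp}\to Q^{\gp}$, or more concretely an abelian group containing $(M_A')^{\gp}$, finitely generated over it, together with a map $h\colon G\to M_{X,\overline{x}}^{\gp}$ compatible with $M_A'\to M_{X,\overline{x}}$ and inducing a surjection onto $M_{X,\overline{x}}^{\gp}/\cO_{X,\overline{x}}^{\times}$; here one uses that $Q^{\gp}\to M_{X,\overline{x}}^{\gp}/\cO_{X,\overline{x}}^\times$ is surjective (since $Q$ is a chart) and that $Q^{\gp}$ is finitely generated over $M_A^{\gp}$. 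The hypothesis that the kernel of $(M_A')^{\gp}\to M_{X,\overline{x}}^{\gp}/\cO_{X,\overline{x}}^{\times}$ is finitely generated over $(M_A')^{\times}$ is precisely what Corollary~\ref{constructing charts} needs: it guarantees that $P:=h^{-1}(M_{X,\overline{x}}/\cO_{X,\overline{x}}^\times)\subset G$ is finitely presented over $M_A'$ as an integral monoid, that $P\to M_{\overline{x}}$ extends to a chart of $M_X$ on an \'etale neighborhood of $\overline{x}$, and that this chart is exact at $\overline{x}$. So the only remaining point is to check that this chart is Kato-smooth (resp. smooth) over $(A,M_A')$.

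For Kato-smoothness I would verify the two algebraic conditions directly. Injectivity and the finite-cokernel condition for $(M_A')^{\gp}\to P^{\gp}$: note $P^{\gp}=G$ by construction, and $G$ sits in the pushout square with $Q$, so $G/(M_A')^{\gp}\cong Q^{\gp}/M_A^{\gp}$ up to the contribution of $\ker((M_A')^{\gp}\to\cdots)$; shrinking the group $G$ if necessary (replacing it by the subgroup generated by $(M_A')^{\gp}$ and a lift of $Q^{\gp}/M_A^{\gp}$), one gets $G/(M_A')^{\gp}\cong Q^{\gp}/M_A^{\gp}$, so the (torsion-)cokernel is the same finite group of invertible order. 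For the \'etaleness of $X\to\Spf(A\langle P\rangle)\times_{\Spf(A\langle M_A'\rangle)}\Spf(A)$: since $P$ is obtained from $Q$ by the base-change-and-localize recipe above, $A\langle P\rangle$ is (up to \'etale maps and completion) $A\langle Q\rangle\otimes_{A\langle M_A\rangle}A\langle M_A'\rangle$ along the torsion-cokernel factor, and one deduces the \'etaleness of the structure map for $P$ from that for $Q$ by cancelling the \'etale cover $X\to\Spf(A\langle Q\rangle)\times\cdots$ and using that the relevant monoid-ring map is \'etale because the order of the finite cokernel is invertible. For the \emph{smooth} (not just Kato-smooth) case, one additionally observes that integrality of $M_A'\to P$ follows: it can be checked after the exact localization, or alternatively one simply quotes that exactness of $M_A'\to P$ at $\overline{x}$ together with the integrality of $M_A\to Q$ passes through the pushout (as in Lemma~\ref{chart and global section II} and Remark~\ref{Exactification and integral maps}). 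I expect the main obstacle to be the bookkeeping in this last verification — precisely matching $A\langle P\rangle$ with a base change of $A\langle Q\rangle$ along the map $A\langle M_A\rangle\to A\langle M_A'\rangle$ and tracking the \'etale cover through it, since $M_A,M_A'$ need not be fine and $A\langle Q\rangle$ need not be of finite presentation over $A\langle M_A\rangle$; everything else is a direct application of Corollary~\ref{constructing charts} and the definitions.
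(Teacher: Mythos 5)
Your high-level strategy is the right one (and matches the paper's): produce the new chart via Corollary \ref{constructing charts} and then check the conditions of Definition \ref{smooth}, with the integrality in the smooth case handled at the end via exactness at $\overline{x}$. But the step you treat as routine --- the construction of the group $G$ and the map $h\colon G\to M^{\gp}_{X,\overline{x}}$ --- is where the actual content of the proof lies, and your construction of it does not work. There is no map of monoids $M_A\to M'_A$ (they are merely two charts of the same log structure, related only through their common image in $M_X/\cO_X^\times$), so the ``pushout of $(M'_A)^{\gp}\leftarrow M_A^{\gp}\to Q^{\gp}$'' is not defined; for the same reason the base change $A\langle Q\rangle\otimes_{A\langle M_A\rangle}A\langle M'_A\rangle$ that you use to verify \'etaleness is not meaningful. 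The fallback you offer --- the subgroup of $M^{\gp}_{X,\overline{x}}$ generated by the images of $(M'_A)^{\gp}$ and $Q^{\gp}$ --- is not finitely generated over $(M'_A)^{\gp}$ in general, because rewriting the image of $M_A^{\gp}$ in terms of $(M'_A)^{\gp}$ introduces an uncontrolled subgroup of $\cO^\times_{X,\overline{x}}$; and ``a lift of $Q^{\gp}/M_A^{\gp}$'' cannot simply be chosen set-theoretically, since the torsion relations must be respected.

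The paper resolves exactly this point. One fixes a presentation $P^{\gp}/M_A^{\gp}\cong\bZ^r\oplus\bigoplus_i\bZ/m_i\bZ$ with lifted generators $e_i, f_i$ and $g_i=f_i^{m_i}\in M_A^{\gp}$, finds $g'_i\in M'_A$ with $\alpha'(g'_i)=\alpha(g_i)u_i$ for units $u_i$, and --- crucially --- extracts $m_i$-th roots $v_i$ of the $u_i$ in $\cO^\times_{X,\overline{x}}$, which exist because the $m_i$ are invertible on $X_{A/I}$ (this is precisely where the torsion condition in the definition of Kato-smoothness is used) and the local ring is strictly henselian. One then defines $G=(P^{\gp})'$ abstractly by generators $e'_i,f'_i$ over $(M'_A)^{\gp}$ with relations $(f'_i)^{m_i}=g'_i$ and sends $f'_i\mapsto\alpha(f_i)v_i$; this yields a well-defined $h$, gives $G/(M'_A)^{\gp}\cong P^{\gp}/M_A^{\gp}$ for free, and feeds into Corollary \ref{constructing charts}. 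Your proposal never confronts the need to extract these roots of units, and without that the chart over $M'_A$ cannot be assembled.
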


\begin{rem}
Suppose $(X, M_X)$ is smooth over $(A, M_A)$. 
If $M'_A\to A$ is an exact chart, then the kernel of $(M'_A)^{\gp} \to M_{X, \overline{x}}^{\gp}/\cO^\times_{X, \overline{x}}$ equals $(M'_A)^\times$ since $(X, M_X)$ is integral, hence exact, over $(\Spf (A), M_A)^a$. 

If one works \'etale locally on $\Spf (A)$ as well, then the assumption that the kernel of $(M'_A)^{\gp} \to M_{X, \overline{x}}^{\gp}/\cO^\times_{X, \overline{x}}$ is finitely generated over $(M'_A)^{\times}$ allows one to replace $M'_A \to A$ by an exact chart with the same $(M'_A)^{\gp}$. 
\end{rem}

\begin{proof}
Take a Kato-smooth chart 
\[
M_A\to P \overset{\alpha_P}{\longrightarrow}\Gamma (U, M_X)
\]
on a neighborhood $U$ of $\overline{x}$. 
Let $\alpha_{P, \overline{x}}$ denote the composite $P\to \Gamma (U, M_X) \to M_{X, \overline{x}}$. 
It suffices to find, after replacing $U$ if necessary, a relatively coherent chart
\[
M'_A \to P' \overset{\alpha_{P'}}{\longrightarrow}\Gamma (U, \cO_X) 
\]
satisfying the conditions in Definition \ref{smooth}. 
Let $\alpha'_{\overline{x}}$ denote the map $M'_A \to M_{X, \overline{x}}$. 

Fix an presentation
\[
P^{\gp}/ M_A^{\gp} \cong \bZ^r \oplus \bigoplus_{i=1}^s \bZ/m_i \bZ
\]
and choose lifts $e_1, \dots, e_r, f_1, \dots, f_s \in P^{\gp}$ of $1$ in the corresponding summands. 
In particular, $g_i=f_i^{m_i} \in M_A^{\gp}$. 
As $M_A, M'_A$ induce isomorphic log structures on $X$, we can find elements $g'_1, \dots, g'_r \in M'_A$ such that 
\[
u_i\coloneqq \frac{\alpha'_{\overline{x}} (g'_i)}{\alpha_{P, \overline{x}}(g_i)} \in M^{\gp}_{X, \overline{x}}
\]
belongs to $\cO^\times_{X, \overline{x}}$. 

Let us now start to construct a chart. 
As $\cO^\times_{X, \overline{x}}$ is $m_i$-divisible, take $v_i$ such that $v_i^{m_i}= u_i$. 
Consider the following group $(P^{\gp})'$: it is generated by $M'_A$ and symbols $e'_1, \dots, e'_r, f'_1, \dots, f'_r$ with relations $(f'_i)^{m_i} =g'_i$.
Define $h\colon (P^{\gp})' \to M_{X, \overline{x}}^{\gp}$ compatible with $\alpha'_{\overline{x}}$ by sending $e'_i$ to $\alpha_{P, \overline{x}} (e_i)$, $f'_i$ to $\alpha_{P, \overline{x}} (f_i) v_i$. As $g_i u_i =g'_i$, this indeed determines a map. Moreover, the composite $(P^{\gp})' \to M_{X, \overline{x}}^{\gp} \to M_{X, \overline{x}}^{\gp}/ \cO^\times_{X, \overline{x}}$ is surjective as the same holds for $\alpha^{\gp}_{P, \overline{x}}$ and $M_A, M'_A$ have the isomorphic log structures. 
We take $P'$ to be the inverse image of $M_{X,\overline{x}}/\cO^\times_{X, \overline{x}}$. 
By Lemma \ref{constructing charts}, the restriction $h|_{P'}\colon P'\to M_{X, \overline{x}}$ extends to a chart of $M_X$ on some \'etale neighborhood of $\overline{x}$. 
This chart is exact at $\overline{x}$. 

Finally, we observe that $M'_A \to P'$ is integral if $(X, M_X)$ is smooth over $(A, M_A)$. 
Indeed, the argument of Step 5 of the proof of \cite{KatoF}*{Theorem 4.1} works as the associated morphism of log formal schemes is integral and $M'_A \to P'$ is exact at $\overline{x}$.
\end{proof}

\subsection{Prelog rings}
We collect some of our terminology for prelog rings. 

\begin{defn}
Suppose a ring $A$ is classically $I$-complete for a finitely generated ideal $I\subset A$. 
\begin{enumerate}
    \item A prelog ring $(A, \alpha\colon M\to A)$ is a \emph{log ring} if $\alpha$ induces an isomorphism $\alpha^{-1}(A^\times)\cong A^\times$. Every prelog ring $(A, M)$ has the associated log ring. 
    \item A map $(A, M_A) \to (B,M_B)$ of classically $I$-complete prelog rings is \emph{$I$-completely smooth} (resp. \emph{$I$-completely \'etale}) if the associated log $I$-adic formal scheme $(\Spf (B), M_B)^a$ is smooth (resp. \'etale) over $(A, M_A)$. 
\end{enumerate}
When $I=0$, we drop ``$I$-completely'' from these terminologies. 
\end{defn}

If $(\Spf (B),M_B)^a$ is $I$-completely smooth over $(A, M_A)$, then $A\to B$ is $I$-completely flat in the sense of \cite{BS} because of our definition of smoothness. (More precisely, one can find an $I$-completely \'etale cover $B\to B'$ such that $A\to B'$ is $I$-completely flat, and hence $A\to B$ is $I$-completely flat.)
 
\begin{defn}
A map $(A, M)\to (B, N)$ of prelog rings is
\begin{enumerate}
    \item \emph{strict} if $M\to N$ is an isomorphism,
    \item \emph{surjective} if $A\to B$, $M\to N$ are surjective,
    \item \emph{exact surjective} if it is surjective and $M/M^\times \cong N/N^\times$. 
\end{enumerate}
\end{defn}

If $(A, M)\to (B, N)$ is an exact surjective map of prelog rings with integral monoids, the associated closed immersion of log schemes is exact in the sense of \cite{Kato:log}*{4.6(2)}, equivalently, it is strict, i.e., an exact closed immersion in the sense of \cite{Kato:log}*{3.1}; see the proof of \cite{Kato:log}*{4.10}. 

\section{Cosimplicial computation}
We record cosimplicial computations we need. This is related to \cite{Niziol}*{3.28} and \cite{BS}*{Lemma 5.4}. 

\subsection{Setting}
Fix a prime number $p$. 
Let $k$ be a ring with a prelog structure $M \to k$. 
Let $M\to Q$ be an injective integral map of integral monoids, and set $G\coloneqq Q^{\gp}/M^{\gp}$. 
Let $M \to Q^{(1)}$ denote the base change of $M \to Q$ along the Frobenius $M \to M; m\to m^p$, with the relative Frobenius $Q^{(1)}\to Q$ over $M$. (In particular, $Q^{(1)}$ is integral as well.)
We also set $G^{(1)}\coloneqq (Q^{(1)})^{\gp}/M^{\gp}$; the composite $Q^{(1)}\to Q\to G$ factors through $Q^{(1)}\to G^{(1)}$. 

Set
\[
A^{n} \coloneqq k\otimes_{\bZ [M]} \bZ [Q \oplus G^{\oplus n}], \quad
A^{n(1)} \coloneqq k\otimes_{\bZ [M]} \bZ [Q^{(1)} \oplus G^{(1)\oplus n}]
\]
with the relative Frobenius
\[
A^{n(1)} \to A^{n}
\]
induced by $Q^{(1)}\to Q$ and $G^{(1)}\to G$. 
We also have an intermediate
\[
B^{n} \coloneqq k\otimes_{\bZ [M]} \bZ [Q^{(1)} \oplus G^{\oplus n}]. 
\]

We put the structure of a cosimplicial $k$-algebra on $A^{\bullet}$ in the following way:
face maps are
\[
\delta^n_j \colon A^n \to A^{n+1}; (q, g_1, \dots, g_n) \mapsto 
\begin{cases}
(q, q g_1^{-1}\cdots g_n^{-1}, g_1, \dots, g_n)& j=0, \\
(q, g_1, \dots, g_{j-1}, e, g_j, \dots, g_n)  & j\neq 0, 
\end{cases}
\]
and degeneracy maps are
\[
\sigma^n_j \colon A^n \to A^{n-1}; (q, g_1, \dots, g_n) \mapsto 
\begin{cases}
(q, g_2, \dots, g_n) & j=0, \\
(q, g_1, \dots, g_{j-1}, g_j g_{j+1}, g_{j+2}, \dots, g_n) & j\neq 0, 
\end{cases}
\]
where maps are described at the level of monoids. 
We can also put the structure of cosimplicial $k$-algebras on $A^{\bullet (1)}, B^{\bullet}$ so that $A^{\bullet (1)}\to B^{\bullet} \to A^{\bullet}$ are maps of cosimplicial $k$-algebras, using similar formulas. 

\subsection{A homotopy equivalence}
Now assume that $Q^{(1)}\to Q$ is exact and injective. So, $M\to Q$ is of Cartier type. 
The following map of $k$-modules
\[
k\otimes_{\bZ} \bZ [Q] \to k\otimes_{\bZ [M]} \bZ [Q^{(1)}];
1\otimes q \mapsto
\begin{cases}
1\otimes q & q \in Q^{(1)} \\
0 & q \notin Q^{(1)},
\end{cases}
\]
is $k[Q^{(1)}]$-linear as $Q^{(1)}\to Q$ is exact, and it induces the $B^{0}$-linear map
\[
\pr^0\colon A^0= k\otimes_{\bZ [M]} \bZ [Q] \to B^0=k\otimes_{\bZ [M]} \bZ [Q^{(1)}]
\]
again by the exactness of $Q^{(1)}\to Q$; the induced map is a section of the inclusion $B^0\hookrightarrow A^0$.  
We extend it to a $B^{n}$-linear map 
\[
\pr^n \colon A^n \to B^{n}
\]
by sending $(q, g_1, \dots, g_n)$ to $0$ if $q\notin Q^{(1)}$ and $(q, g_1, \dots, g_n)$ itself otherwise.  
This defines a map of cosimplicial $B^{\bullet}$-modules
\[
\pr^{\bullet}\colon A^{\bullet} \to B^{\bullet}, 
\]
which is a section of the natural inclusion. 

\begin{prop}
The map $\pr^{\bullet}$, regarded as an endomorphism of $A^{\bullet}$, is homotopic to the identity map as a map of cosimplicial $A^{\bullet(1)}$-modules.  
\end{prop}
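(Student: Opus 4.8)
The plan is to exhibit an explicit $A^{\bullet(1)}$-linear cosimplicial homotopy between $\iota^{\bullet}\circ\pr^{\bullet}$ and $\mathrm{id}_{A^{\bullet}}$, in the spirit of \cite{BS}*{Lemma 5.4} and \cite{Niziol}*{3.28}. First I would recall that a homotopy between two maps of cosimplicial modules $A^{\bullet}\to A^{\bullet}$ amounts to a system of module maps $h^n_i\colon A^n\to A^{n-1}$ for $0\le i\le n-1$, subject to the standard (dual) simplicial-homotopy identities, whose two extreme composites recover $\mathrm{id}_{A^{\bullet}}$ and $\iota^{\bullet}\pr^{\bullet}$. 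Since each $A^n$ is free on the monoid basis $\{[q,g_1,\dots,g_n]\}$, it suffices to define the $h^n_i$ on basis monomials and to verify $A^{\bullet(1)}$-linearity and the homotopy identities at that level.

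Next I would write down the homotopy. Because $Q^{(1)}\hookrightarrow Q$ is exact, $\bZ[Q]$ is graded by $\Gamma\coloneqq Q^{\gp}/(Q^{(1)})^{\gp}$ with degree-zero piece $\bZ[Q^{(1)}]$, and under the identifications of the excerpt $\pr^{\bullet}$ is precisely the projection of $A^{\bullet}$ onto its degree-zero summand $B^{\bullet}$. The homotopy $h^n_i$ will, on a monomial $[q,g_1,\dots,g_n]$, either keep $q$ or replace it by $\pr^0(q)$ (so kill the monomial when $q\notin Q^{(1)}$) depending on the index $i$, while recombining the $G$-slots by the same face/degeneracy-type monoid formulas that already appear in the definition of the cosimplicial structure on $A^{\bullet}$; this is the interpolation used in \cite{BS}*{Lemma 5.4}, transported through the "difference coordinates" (the factor $\bar q\,g_1^{-1}\cdots g_n^{-1}$ in $\delta^n_0$) in which $A^{\bullet}$ is presented here. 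Exactness of $Q^{(1)}\hookrightarrow Q$ enters twice: it makes the retraction $\bZ[Q]\to\bZ[Q^{(1)}]$ well defined over $\bZ[M]$ (for $m\in M$ one has $mq\in Q^{(1)}\iff q\in Q^{(1)}$), and it makes each $h^n_i$ linear over $\bZ[Q^{(1)}]$, hence over $A^{\bullet(1)}$ via the structure map $A^{\bullet(1)}\to B^{\bullet}$ (here the $G$-slot manipulations are $k$-algebra maps, so automatically linear over everything).

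The remaining step is the verification of the cosimplicial-homotopy identities: a finite case analysis on $i$ against the coface/codegeneracy index, further split according to whether the relevant element of $Q$ lies in $Q^{(1)}$, and paying attention to the correction term in $\delta^n_0$ for the mixed and endpoint relations. I expect this bookkeeping — rather than any conceptual difficulty — to be the main obstacle, exactly as in \cite{BS}*{Lemma 5.4}. As a sanity check motivating the statement, one can note that $A^{\bullet}$ and $B^{\bullet}$ are the \v{C}ech nerves of torsors under $\Spec k[G]$ and $\Spec k[G^{(1)}]$ over $\Spec k$, so $H^{>0}$ of both vanishes and their $H^0$ agree (using $Q\cap M^{\gp}\subseteq Q^{(1)}$, again a consequence of exactness); thus $\iota^{\bullet}\pr^{\bullet}$ and $\mathrm{id}$ already coincide on cohomology, and the real content of the proposition is to upgrade this to an $A^{\bullet(1)}$-linear cosimplicial homotopy.
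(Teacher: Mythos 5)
Your strategy coincides with the paper's: both proofs exhibit an explicit interpolating homotopy between $\iota^{\bullet}\pr^{\bullet}$ and the identity, linear over $A^{\bullet(1)}$, with exactness of $Q^{(1)}\hookrightarrow Q$ doing double duty (making the retraction $\bZ[Q]\to\bZ[Q^{(1)}]$ well defined over $\bZ[M]$, and making the homotopy operators $A^{\bullet(1)}$-linear). The difference in packaging --- degree-lowering operators $h^n_i\colon A^n\to A^{n-1}$ versus the paper's single map $h^n\colon A^n\to\prod_{\Delta[1]_n}A^n$ --- is immaterial.

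The gap is that the homotopy itself is never written down, and the heuristic you give for it points in the wrong direction. You say each operator should ``either keep $q$ or replace it by $\pr^0(q)$ depending on the index''; but the interpolating components that actually satisfy the cosimplicial-homotopy identities do not test the $Q$-coordinate except at one endpoint. In the paper's normalization, indexing the components by $\alpha^n_j\colon[n]\to[1]$ with $\alpha^n_j(i)=0$ iff $i<j$, the component at $j=0$ is $\pr^n$, the component at $j=n+1$ is the identity, and the intermediate component at $j\in\{1,\dots,n\}$ sends $(q,g_1,\dots,g_n)$ to itself if the \emph{suffix product} $g_j\cdots g_n$ of the difference coordinates lies in $\mathrm{Im}(Q^{(1)}\to G)$, and to $0$ otherwise. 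It is this membership condition on partial products of the $g$'s --- not a condition on $q$ --- that makes the mixed relations involving the twisted coface $\delta^n_0$ (which inserts $\bar q\,g_1^{-1}\cdots g_n^{-1}$) close up, and it is also the place where exactness of $Q^{(1)}\to Q$ is invoked for $A^{n(1)}$-linearity of the intermediate operators. Without this formula the ``finite case analysis'' you defer has nothing to act on, so the proposal is incomplete at its only nontrivial point; supplying the formula above and checking the identities completes it along the paper's lines.
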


\begin{proof}
We shall construct a $A^{n(1)}$-linear homotopy
\[
h^n \colon A^{n} \to \prod_{\Delta[1]_n} A^{n}. 
\]
Let $\alpha^n_j\in \Delta[1]_n$ denote the map $[n]\to [1]$ defined by $\alpha^n_j (i)=0$ if and only if $i<j$. We define the corresponding component $h^n (\alpha^n_j)$ as follows:
\[
h^n (m, g_1, \dots, g_n)(\alpha^n_j)=
\begin{cases}
\pr^n & j=0, \\
(m, g_1, \dots, g_n) & j\neq 0, n+1,  g_j\cdots g_n \in \textnormal{Im}(Q^{(1)}\to G), \\ 
0 & j\neq 0, n+1,  g_j\cdots g_n \notin \textnormal{Im}(Q^{(1)}\to G), \\
(m, g_1, \dots, g_n) & j=n+1. 
\end{cases}
\]
Note that these maps are $A^{n(1)}$-linear as $Q^{(1)}\to Q$ is exact. 
One can check that this indeed defines a homotopy between $\pr^{\bullet}$ and $\textnormal{id}$. 
\end{proof}

\begin{cor}\label{cosimplicial equiv}
For any cosimplicial $A^{\bullet(1)}$-module $M^{\bullet}$, the natural map
\[
B^{\bullet}\otimes_{A^{\bullet(1)}} M^{\bullet} \to A^{\bullet}\otimes_{A^{\bullet(1)}} M^{\bullet}
\]
is a homotopy equivalence of cosimplicial $A^{\bullet(1)}$-modules. 
\end{cor}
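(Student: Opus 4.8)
The plan is to deduce this formally from the preceding Proposition. Write $\iota^{\bullet}\colon B^{\bullet}\hookrightarrow A^{\bullet}$ for the natural inclusion of cosimplicial $k$-algebras and $\pr^{\bullet}\colon A^{\bullet}\to B^{\bullet}$ for the retraction constructed above; both are maps of cosimplicial $A^{\bullet(1)}$-modules (indeed $\pr^{\bullet}$ is $B^{\bullet}$-linear, hence a fortiori $A^{\bullet(1)}$-linear, since $A^{\bullet(1)}\to B^{\bullet}\to A^{\bullet}$). By construction $\pr^{\bullet}\circ\iota^{\bullet}=\mathrm{id}_{B^{\bullet}}$, and the Proposition provides an $A^{\bullet(1)}$-linear cosimplicial homotopy $h^{\bullet}$ between the endomorphism $\iota^{\bullet}\circ\pr^{\bullet}=\pr^{\bullet}$ of $A^{\bullet}$ and $\mathrm{id}_{A^{\bullet}}$. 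So $\iota^{\bullet}$ and $\pr^{\bullet}$ already exhibit $B^{\bullet}$ and $A^{\bullet}$ as homotopy equivalent cosimplicial $A^{\bullet(1)}$-modules; it remains only to check that this equivalence is preserved by $-\otimes_{A^{\bullet(1)}}M^{\bullet}$.

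First I would apply the degreewise functor $-\otimes_{A^{\bullet(1)}}M^{\bullet}$ to $\iota^{\bullet}$ and $\pr^{\bullet}$: since these are $A^{\bullet(1)}$-linear in each cosimplicial degree, they induce maps $\iota^{\bullet}\otimes\mathrm{id}$ and $\pr^{\bullet}\otimes\mathrm{id}$ between $B^{\bullet}\otimes_{A^{\bullet(1)}}M^{\bullet}$ and $A^{\bullet}\otimes_{A^{\bullet(1)}}M^{\bullet}$, and the relation $(\pr^{\bullet}\circ\iota^{\bullet})\otimes\mathrm{id}=\mathrm{id}$ persists. For the homotopy I would use that the cosimplicial homotopy $h^{\bullet}$ is packaged through the finite products $\prod_{\Delta[1]_n}A^{n}$, and finite products commute with the tensor product, so that there are canonical isomorphisms
\[
\Bigl(\textstyle\prod_{\Delta[1]_n}A^{n}\Bigr)\otimes_{A^{n(1)}}M^{n}\;\cong\;\textstyle\prod_{\Delta[1]_n}\bigl(A^{n}\otimes_{A^{n(1)}}M^{n}\bigr).
\]
Through these identifications $h^{\bullet}\otimes_{A^{\bullet(1)}}\mathrm{id}_{M^{\bullet}}$ is a cosimplicial homotopy between $(\iota^{\bullet}\circ\pr^{\bullet})\otimes\mathrm{id}$ and $\mathrm{id}$. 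Hence $\iota^{\bullet}\otimes\mathrm{id}$, which is precisely the natural map of the statement, is a homotopy equivalence with homotopy inverse $\pr^{\bullet}\otimes\mathrm{id}$.

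I do not expect a serious obstacle here: the corollary is a routine consequence of the Proposition. The only point deserving a line of care is the compatibility of cosimplicial homotopies with the degreewise tensor product over the cosimplicial ring $A^{\bullet(1)}$, and this holds because the products $\prod_{\Delta[1]_n}$ are finite. (One could instead phrase the homotopy in the equivalent simplicial form and tensor that; either way the verification is immediate.)
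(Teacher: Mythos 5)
Your proposal is correct and is essentially the paper's own argument, just spelled out: the paper's proof consists of the single sentence that the homotopy of the Proposition extends across the tensor product by $A^{\bullet(1)}$-linearity, and your observation that $\prod_{\Delta[1]_n}$ is a finite product (so it commutes with $-\otimes_{A^{n(1)}}M^{n}$) is exactly the point that makes this work. No gap.
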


\begin{proof}
The homotopy constructed above can be extended to the tensor product because of $A^{\bullet(1)}$-linearity. 
\end{proof}

\subsection{The relative Frobenius is a quasi-isomorphism}

\begin{prop}\label{Relative Frobenius is a quasi-isomorphism}
Assume that $G= Q^{\gp}/M^{\gp}$ is a free abelian group and $Q^{(1)}\to Q$ is exact.  
For any cosimplicial $A^{\bullet (1)}$-module $M^{\bullet}$, the natural map
\[
M^{\bullet} \to A^{\bullet}\otimes_{A^{\bullet (1)}} M^{\bullet}
\]
gives a quasi-isomorphism on the associated cochain complexes of $k$-modules. 
\end{prop}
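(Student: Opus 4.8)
The plan is to reduce, in two steps, to the ``free'' situation already treated in \cite{BS}*{Lemma 5.4}. First I would exploit the homotopy of the previous pages: by Corollary \ref{cosimplicial equiv}, applied to the given cosimplicial $A^{\bullet(1)}$-module $M^\bullet$, the natural inclusion $B^\bullet\otimes_{A^{\bullet(1)}}M^\bullet\to A^\bullet\otimes_{A^{\bullet(1)}}M^\bullet$ is a cosimplicial homotopy equivalence, hence a quasi-isomorphism on associated complexes. As the composite $M^\bullet\to B^\bullet\otimes_{A^{\bullet(1)}}M^\bullet\to A^\bullet\otimes_{A^{\bullet(1)}}M^\bullet$ is the map in question, it suffices to prove that $M^\bullet\to B^\bullet\otimes_{A^{\bullet(1)}}M^\bullet$ is a quasi-isomorphism on associated complexes. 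This step absorbs the whole interaction with $Q$ versus $Q^{(1)}$: the remaining discrepancy between $A^{\bullet(1)}=k\otimes_{\bZ[M]}\bZ[Q^{(1)}\oplus G^{(1)\oplus\bullet}]$ and $B^\bullet=k\otimes_{\bZ[M]}\bZ[Q^{(1)}\oplus G^{\oplus\bullet}]$ lives only in the free-abelian-group summands, and the structure map $A^{\bullet(1)}\to B^\bullet$ is the identity on $k\otimes_{\bZ[M]}\bZ[Q^{(1)}]$ and, on the $n$ copies of the group part, is induced by the canonical identification $G^{(1)}\cong Q^{\gp}/\mathrm{im}(M^{\gp})\cong G$ followed by multiplication by $p$ (this is what the relative Frobenius $Q^{(1)}\to Q$ induces on $G^{(1)}\to G$). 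In particular $A^{\bullet(1)}\to B^\bullet$ is levelwise free, with basis indexed by $(G/pG)^{\oplus\bullet}$ after choosing a set-theoretic section of $G\to G/pG$, and the $0$-th coface of $B^\bullet$ differs from that of $A^{\bullet(1)}$ only by a twist by an element of $pG$, invisible modulo $pG$.

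Next I would reduce to the case where $G$ has finite rank. Since $G$ is free abelian, it is the filtered union of its finite-rank direct summands $G_\alpha$; correspondingly (over the fixed base $k\otimes_{\bZ[M]}\bZ[Q^{(1)}]$) $B^\bullet$ is the filtered colimit over $\alpha$ of the analogous cosimplicial algebras $B^\bullet_\alpha$ built from $G_\alpha^{(1)}\to G_\alpha$, compatibly with the cosimplicial structure and with the map from $A^{\bullet(1)}$, and hence $B^\bullet\otimes_{A^{\bullet(1)}}M^\bullet=\operatorname{colim}_\alpha\bigl(B^\bullet_\alpha\otimes_{A^{\bullet(1)}_\alpha}M^\bullet\bigr)$, each term of which is an instance of the statement for $G_\alpha$ applied to $M^\bullet$ viewed by restriction as a cosimplicial $A^{\bullet(1)}_\alpha$-module. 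Since forming associated complexes and their cohomology commutes with filtered colimits, it is enough to prove the statement when $G\cong\bZ^r$.

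For $G\cong\bZ^r$ the assertion is (a minor variant of) \cite{BS}*{Lemma 5.4}: over the base ring $k\otimes_{\bZ[M]}\bZ[Q^{(1)}]$, $B^\bullet$ is precisely the cosimplicial (Laurent-polynomial) object considered there, with $A^{\bullet(1)}$ obtained from it by relative Frobenius on the $r$ group variables, and the extra twist of the $0$-th coface by an element of $pG$ plays no role since it vanishes modulo $pG$. Concretely, one decomposes $B^\bullet\otimes_{A^{\bullet(1)}}M^\bullet$ into the weight-summands indexed by the cosimplicial set $(G/pG)^{\oplus\bullet}$, identifies $M^\bullet$ with the weight-zero summand, and builds on the cone over $M^\bullet$ an explicit contracting homotopy by an induction on the group variables exactly as in the proof of \cite{BS}*{Lemma 5.4}; the homotopy is $A^{\bullet(1)}$-linear, hence compatible with the (only $A^{\bullet(1)}$-semilinear) structure maps of $M^\bullet$.

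I expect the main obstacle to be this last step: carrying out, or cleanly invoking, the explicit contracting-homotopy argument of \cite{BS}*{Lemma 5.4} in the present generality — in particular checking that it survives the presence of the extra base ring $k\otimes_{\bZ[M]}\bZ[Q^{(1)}]$, the twist of the $0$-th coface, and, above all, the fact that $M^\bullet$ is an arbitrary cosimplicial module whose cofaces and codegeneracies are merely $A^{\bullet(1)}$-semilinear. The first reduction (removing $Q^{(1)}$ via Corollary \ref{cosimplicial equiv}) and the colimit reduction to finite rank are routine bookkeeping with the identifications $G^{(1)}\cong G$ and with the role of $Q^{(1)}$ in the cofaces of $B^\bullet$.
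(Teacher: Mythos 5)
Your first reduction is exactly the paper's: Corollary \ref{cosimplicial equiv} reduces the statement to showing that $M^{\bullet}\to B^{\bullet}\otimes_{A^{\bullet(1)}}M^{\bullet}$ is a quasi-isomorphism, and after this step only the group parts matter, i.e.\ one must show that $N^{\bullet}\to k[G^{\oplus\bullet}]\otimes_{k[G^{(1)\oplus\bullet}]}N^{\bullet}$ is a quasi-isomorphism for any cosimplicial $k[G^{(1)\oplus\bullet}]$-module $N^{\bullet}$. Your filtered-colimit reduction to $G\cong\bZ^{r}$ is harmless (the paper instead handles general $G$ by the same multigrading argument), and your description of $G^{(1)}\cong pG\subset G$ and of the twist in $\delta^{n}_{0}$ is correct.

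The gap is in the last step, and you have located it yourself. The cosimplicial ring $k[G^{\oplus\bullet}]$ is a bar-construction-type object ($n$ copies of $G$ in cosimplicial degree $n$), not the \v{C}ech nerve of $k\to k[G]$ ($n+1$ copies in degree $n$); \cite{BS}*{Lemma 5.4} is stated and proved for the latter, where every coface is of the form ``insert a unit'', whereas here $\delta^{n}_{0}$ twists by $g_1^{-1}\cdots g_n^{-1}$. So the lemma cannot be ``cleanly invoked'', and transplanting its contracting homotopy to your $(G/pG)^{\oplus\bullet}$-weight decomposition of $B^{\bullet}\otimes_{A^{\bullet(1)}}M^{\bullet}$ is precisely the work the proposal does not do — the crucial acyclicity of the nonzero-weight part is asserted, not proved. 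The paper's resolution is different and avoids redoing any homotopy: it embeds $k[G^{\oplus\bullet}]$ into the \v{C}ech nerve $C^{\bullet}$ of $k\to k[G]$ by $(g_1,\dots,g_n)\mapsto(g_1^{-1}\cdots g_n^{-1},g_1,\dots,g_n)$, applies \cite{BS}*{Lemma 5.4} verbatim to $C^{\bullet(1)}\otimes N^{\bullet}\to C^{\bullet}\otimes N^{\bullet}$, and then observes that this map respects the grading of $C^{\bullet}$ by total $G$-degree, with degree-zero piece $C^{\bullet}_{0}=k[G^{\oplus\bullet}]$; the desired quasi-isomorphism is thus exhibited as a direct summand of one supplied by \cite{BS}. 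To complete your argument you should either supply this embedding-and-grading device, or genuinely write out and verify the contracting homotopy on the nonzero-weight summands in the bar-construction setting.
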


Given Corollary \ref{cosimplicial equiv}, it suffices to show the following lemma:

\begin{lem}
If $G= Q^{\gp}/M^{\gp}$ is a free abelian group, then, for any cosimplicial $A^{\bullet (1)}$-module $M^{\bullet}$, the natural map
\[
M^{\bullet} \to B^{\bullet}\otimes_{A^{\bullet (1)}} M^{\bullet}
\]
gives a quasi-isomorphism on the associated cochain complexes of $k$-modules. 
\end{lem}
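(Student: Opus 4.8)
The plan is to make the statement completely explicit using the freeness of $G$, split off the copy of $M^\bullet$, and then contract the remainder by a standard cosimplicial argument in the spirit of \cite{BS}*{Lemma 5.4} and \cite{Niziol}*{3.28}.

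First I would record the elementary observation that, $Q^{(1)}$ being the pushout of $Q$ along the $p$-th power map of $M$, one has $G^{(1)}=(Q^{(1)})^{\gp}/M^{\gp}\cong Q^{\gp}/M^{\gp}=G$ with the canonical map $G^{(1)}\to G$ identified with multiplication by $p$ on $G$. Since $G$ is free abelian, a choice of $\bZ$-basis of $G$ exhibits $\bZ[G^{\oplus n}]$ as a free $\bZ[G^{(1)\oplus n}]$-module on the monomials $t^{\vec b}$ whose exponent vectors $\vec b$ range over the set $\Sigma_n$ of representatives of $G^{\oplus n}/pG^{\oplus n}$ with coordinates in $\{0,\dots,p-1\}$. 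Base changing from $k$ over $\bZ[M]$ and using $\bZ[Q^{(1)}\oplus G^{\oplus n}]=\bZ[Q^{(1)}\oplus G^{(1)\oplus n}]\otimes_{\bZ[G^{(1)\oplus n}]}\bZ[G^{\oplus n}]$, we get that $B^n$ is free over $A^{n(1)}$ on $\{t^{\vec b}\}_{\vec b\in\Sigma_n}$, hence $B^\bullet\otimes_{A^{\bullet(1)}}M^\bullet=\bigoplus_{\vec b\in\Sigma_\bullet}M^\bullet\cdot t^{\vec b}$ as graded $k$-modules.

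Next I would compute the coface maps in this presentation, after the change of variables $g_i\rightsquigarrow g_ig_{i+1}\cdots g_n$ on the monoids $G^{\oplus n}$ (which is compatible with the $G^{(1)\oplus n}$ via multiplication by $p$): in the new coordinates the $j$-th coface of $B^\bullet$ carries the summand indexed by $\vec b$ to the summand whose index is obtained from $\vec b$ by ``prepend a unit'' ($j=0$), ``repeat the $j$-th entry'' ($1\le j\le n$), resp.\ ``append a unit'' ($j=n+1$), via the corresponding coface of $M^\bullet$; in particular the reduction of exponents (``carrying'') that would occur for the $0$-th coface in the original coordinates disappears, and $B^\bullet\otimes_{A^{\bullet(1)}}M^\bullet$ becomes identified with the diagonal tensor product $M^\bullet\otimes_{\bZ}\mathbb D^\bullet$, where $\mathbb D^\bullet$ is the cosimplicial abelian group $[n]\mapsto\bZ[\Sigma_n]$ with those cofaces. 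The reindexing operations fix the class $\vec 0$ and send nonzero classes to nonzero classes, so the summand $\vec b=0$ is a cosemisimplicial submodule on which the structure map $M^\bullet\to B^\bullet\otimes_{A^{\bullet(1)}}M^\bullet$ is the identity, and $N^\bullet:=\bigoplus_{0\ne\vec b\in\Sigma_\bullet}M^\bullet t^{\vec b}$ is a complementary cosemisimplicial submodule; on associated cochain complexes $B^\bullet\otimes_{A^{\bullet(1)}}M^\bullet$ is therefore the direct sum of $M^\bullet$ and $N^\bullet$, so it remains to show that $N^\bullet$ has acyclic associated complex, equivalently that $\mathbb D^\bullet$ is acyclic in positive degrees.

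The acyclicity of $\mathbb D^\bullet$ is where the real work lies. I would obtain it by an iterated contracting homotopy: filter $\mathbb D^\bullet$ by the cosemisimplicial submodules $\mathbb D^\bullet_{\ge r}$ spanned by the classes whose first $r-1$ entries vanish --- a subobject, since ``prepend a unit'' and ``repeat the $j$-th entry for $j<r$'' only lengthen the leading run of zeros --- note that $\mathbb D^\bullet_{\ge 1}=\mathbb D^\bullet$ and that in each cohomological degree the filtration stabilizes with limit the constant $\underline{\bZ}$, and show that each successive quotient is chain-contractible, the contraction being given, after relabelling, by the codegeneracy that drops the first nonzero entry (a one-sided inverse to ``repeat that entry'' commuting with the remaining cofaces). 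Since all these homotopies are assembled from the cosimplicial operations and the $A^{\bullet(1)}$-linear scalars of $B^\bullet$, they are compatible with $\otimes_{A^{\bullet(1)}}M^\bullet$, which gives the acyclicity of $N^\bullet$. The main obstacle is exactly this last step: verifying the extra-codegeneracy identities and tracking, through the change of variables, the monomials in the $A^{\bullet(1)}$-variables that the $0$-th coface introduces; this is precisely the place where the freeness of $G$ is essential, as it supplies the base-$p$ normal form and hence a $\Sigma_\bullet$-grading stable under all cofaces. (Alternatively one can avoid the explicit homotopy altogether by invoking the Eilenberg--Zilber theorem for cosimplicial abelian groups, $\operatorname{Tot}(M^\bullet\otimes_{\bZ}\mathbb D^\bullet)\simeq\operatorname{Tot}(M^\bullet)\otimes_{\bZ}\operatorname{Tot}(\mathbb D^\bullet)$ compatibly with the natural map, reducing the whole lemma to the single computation $\operatorname{Tot}(\mathbb D^\bullet)\simeq\bZ$.)
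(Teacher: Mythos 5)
Your argument is correct, and it takes a genuinely different route from the paper. The paper embeds $k[G^{\oplus\bullet}]$ as the total-degree-zero graded piece of the \v{C}ech nerve $C^{\bullet}$ of $k\to k[G]$, invokes \cite{BS}*{Lemma 5.4} to see that $C^{(1)\bullet}\otimes_{k[G^{(1)\oplus\bullet}]}N^{\bullet}\to C^{\bullet}\otimes_{k[G^{(1)\oplus\bullet}]}N^{\bullet}$ is a quasi-isomorphism, and then extracts the degree-zero summand of that map using the $G$-grading (treating $G\cong\bZ$ for simplicity). You instead make everything explicit: the base-$p$ normal form exhibits $B^{\bullet}$ as free over $A^{\bullet(1)}$ on monomials indexed by $(G/pG)^{\oplus\bullet}$, the partial-product change of coordinates turns the cofaces into prepend/repeat/append operations on the index set (I checked this, including that the $0$-th coface is clean because the $Q^{(1)}$-contribution lands in $pG$ and is absorbed into the $A^{(1)}$-coefficient), and the complement of the unit summand is contracted by the filtration-plus-extra-codegeneracy argument. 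Your approach is self-contained --- it essentially reproves the degree-zero part of \cite{BS}*{Lemma 5.4} rather than citing it --- and produces an explicit homotopy, at the cost of more coordinate bookkeeping; the paper's is shorter given that lemma. The one step worth spelling out carefully is the contraction of $\mathrm{gr}^r=\mathbb{D}^{\bullet}_{\ge r}/\mathbb{D}^{\bullet}_{\ge r+1}$: the codegeneracy $\sigma^{r-1}$ (delete the $r$-th entry) does \emph{not} preserve $\mathbb{D}^{\bullet}_{\ge r+1}$, so the homotopy $s$ is not induced on the quotient but must be defined on the basis, as $s(m\,t^{\vec b})=\sigma^{r-1}(m)\,t^{\overline{\sigma^{r-1}\vec b}}$ (declared zero when the index falls into the deeper filtration step). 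With that definition the required identities $s\partial_0=\mathrm{id}$ and $s\partial_i=\partial_{i-1}s$ for $i\ge 1$ follow from the cosimplicial identities $\sigma^{r-1}\delta^{r}=\mathrm{id}$ and $\sigma^{r-1}\delta^{j}=\delta^{j-1}\sigma^{r-1}$ for $j>r$, which hold in any cosimplicial module $M^{\bullet}$ --- this is exactly the compatibility with $\otimes_{A^{\bullet(1)}}M^{\bullet}$ that you assert, and it does go through. The Eilenberg--Zilber alternative you mention also works, but note that the input $\operatorname{Tot}(\mathbb{D}^{\bullet})\simeq\bZ$ still requires an argument (e.g.\ faithfully flat descent for $\bZ\to\bZ[G/pG]$ together with the grading trick), which is essentially the paper's argument in miniature.
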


\begin{proof}
We will deduce this from \cite{BS}*{Lemma 5.4}.  
First observe that we only need the structure over
\[
k[G^{(1)\oplus n} ] \subset A^{n(1)}, 
\]
i.e., 
\[
B^{\bullet}\otimes_{A^{\bullet (1)}} M^{\bullet}\cong 
k[G^{\oplus \bullet} ] \otimes_{k[G^{(1)\oplus \bullet}]} M^{\bullet}. 
\]
We shall prove that, for any cosimplicial $k[G^{(1)\oplus \bullet}]$-module $M^{\bullet}$, the natural map
\[
M^{\bullet} \to k[G^{\oplus \bullet} ] \otimes_{k[G^{(1)\oplus \bullet}]} M^{\bullet}
\]
gives a quasi-isomorphism on the associated complexes of $k$-modules. 

Let $C^{\bullet}$ denote the \v{C}ech nerve of $k\to k[H]$. Then, we have the following embedding
\[
k[G^{\oplus \bullet}] \to C^{\bullet}; (e, g_1, \dots, g_n) \mapsto (g_1^{-1} \dots g_n^{-1}, g_1, \dots, g_n), 
\]
and we have the corresponding map
\[
k[G^{(1)\oplus\bullet}] \to C^{\bullet(1)}. 
\]
By Lemma \cite{BS}*{Lemma 5.4} ($k$ is assumed to have characteristic $p$ but the argument actually works in general with our formulation), the natural map
\begin{equation}
C^{(1)\bullet}\otimes_{k[G^{(1)\oplus\bullet}]} N^{\bullet} \to
C^{\bullet}\otimes_{k[G^{(1)\oplus\bullet}]} N^{\bullet} \tag{BS}
\end{equation}
gives a quasi-isomorphism on the associated complexes. 
For simplicity, assume $G\cong \bZ$. 
In this case, both sides of (BS) are naturally graded. For instance, there is a decomposition
\[
C^{n}\cong\bigoplus_{a\in \bZ} \bigoplus_{a_0 +\cdots a_n =a} k \cdot X_0^{a_0}\cdots X_n^{a_n}, 
\]
and the degree $a$ component defines a cosimplicial $k[G^{\bullet}]$-submodule $C^{\bullet}_{a}$ of $C^{\bullet}$, and $C^{\bullet}_{0}= k[G^{\bullet}]$. 
This induces the following decomposition
\[
C^{\bullet}\otimes_{k[G^{(1)\oplus\bullet}]} N^{\bullet}
\cong\bigoplus_{a\in \bZ} C^{\bullet}_a \otimes_{k[G^{(1)\oplus\bullet}]} N^{\bullet}. 
\]
Similarly, there is a decomposition
\[
C^{\bullet (1)}\cong\bigoplus_{a\in p\bZ} C^{\bullet (1)}_a
\]
as cosimplicial $k[G^{(1)\oplus\bullet}]$-modules with $C^{\bullet(1)}_0=k[G^{(1)\oplus\bullet}]$, inducing maps
\[
C^{\bullet (1)}_a \to C^{\bullet}_a 
\]
of cosimplicial $k[G^{(1)\oplus\bullet}]$-modules. (Note that $G^{(1)}\cong pG\subset G$ canonically.)
Thus, the map (BS) factors over the cosimplicial $C^{\bullet}_0$-submodule
\[
\bigoplus_{a\in p\bZ} C^{\bullet}_a \otimes_{k[G^{(1)\oplus\bullet}]} N^{\bullet}, 
\]
and the resulting map decomposes into the direct sum of
\[
C^{(1)\bullet}_a\otimes_{k[G^{(1)\oplus\bullet}]} N^{\bullet} \to
C^{\bullet}_a \otimes_{k[G^{(1)\oplus\bullet}]} N^{\bullet}, 
\]
and it gives a quasi-isomorphism on the associated complexes for every $a$. 
The case of $a=0$ is exactly what we want. 
The case of general $G$ is similar. 
\end{proof}

\begin{bibdiv}
\begin{biblist}
\bib{ALPT}{article}{
    author={Adiprasito, Karim}, 
    author={Liu, Gaku}, 
    author={Pak, Igor}, 
    author={Temkin, Michael}, 
    title={Log smoothness and polystability over valuation rings},
    eprint={https://arxiv.org/abs/1806.09168},
}
\bib{AB}{article}{
 author={Ansch\"{u}tz, Johannes}, 
 author={Le Bras, Arthur-C\'{e}sar},
 title={Prismatic Dieudonn\'e theory}, 
 eprint={https://arxiv.org/abs/1907.10525},
}
\bib{Beilinson}{article}{
   author={Beilinson, A.},
   title={On the crystalline period map},
   journal={Camb. J. Math.},
   volume={1},
   date={2013},
   number={1},
   pages={1--51},
   issn={2168-0930},
   review={\MR{3272051}},
   doi={10.4310/CJM.2013.v1.n1.a1},
}

\bib{Bhatt:derived}{article}{
 author={Bhatt, Bhargav}, 
 title={p-adic derived de Rham cohomology}, 
 eprint={http://arxiv.org/abs/1204.6560},
}

\bib{Bhatt:direct summand}{article}{
   author={Bhatt, Bhargav},
   title={On the direct summand conjecture and its derived variant},
   journal={Invent. Math.},
   volume={212},
   date={2018},
   number={2},
   pages={297--317},
   issn={0020-9910},
   review={\MR{3787829}},
   doi={10.1007/s00222-017-0768-7},
}

\bib{Bhatt:lecture}{article}{
 author={Bhatt, Bhargav}, 
 title={Geometric aspects of p-adic Hodge theory}, 
   date={2018},
 eprint={http://www-personal.umich.edu/~bhattb/teaching/prismatic-columbia/},
}
\bib{Bhatt-deJong}{article}{
 author={Bhatt, Bhargav}, 
 author={de Jong, Aise Johan},
 title={Crystalline cohomology and de Rham cohomology}, 
 eprint={https://arxiv.org/abs/1110.5001},
}

\bib{Bhatt-Lurie}{article}{
 author={Bhatt, Bhargav}, 
 author={Lurie, Jacob}, 
 title={The prismatization of $p$-adic formal schemes}, 
 eprint={https://arxiv.org/abs/2201.06124},
}

\bib{BMS1}{article}{
   author={Bhatt, Bhargav},
   author={Morrow, Matthew},
   author={Scholze, Peter},
   title={Integral $p$-adic Hodge theory},
   journal={Publ. Math. Inst. Hautes \'{E}tudes Sci.},
   volume={128},
   date={2018},
   pages={219--397},
   issn={0073-8301},
   review={\MR{3905467}},
   doi={10.1007/s10240-019-00102-z},
}
\bib{BMS2}{article}{
   author={Bhatt, Bhargav},
   author={Morrow, Matthew},
   author={Scholze, Peter},
   title={Topological Hochschild homology and integral $p$-adic Hodge
   theory},
   journal={Publ. Math. Inst. Hautes \'{E}tudes Sci.},
   volume={129},
   date={2019},
   pages={199--310},
   issn={0073-8301},
   review={\MR{3949030}},
   doi={10.1007/s10240-019-00106-9},
}
\bib{BS}{article}{
 author={Bhatt, Bhargav}, 
 author={Scholze, Peter}, 
 title={Prisms and prismatic cohomology}, 
 eprint={https://arxiv.org/abs/1905.08229},
}
\bib{Bhatt-Scholze:F-crystal}{article}{
 author={Bhatt, Bhargav}, 
 author={Scholze, Peter}, 
 title={Prismatic $F$-crystals and crystalline Galois representations}, 
 eprint={https://arxiv.org/abs/2106.14735},
 date={2021},
}
\bib{CK}{article}{
 author={\v{C}esunavi\v{c}ius, K\k{e}stutis}, 
 author={Koshikawa, Teruhisa}, 
 title={The $A_{\Inf}$-cohomology in the semistable case}, 
   journal={Compos. Math.},
   volume={155},
   date={2019},
   pages={2039--2128},
   issn={11},
   doi={10.1112/S0010437X1800790X},
}

\bib{Du-Liu}{article}{
 author={Du, Heng}, 
 author={Liu, Tong},
 title={A prismatic approach to $(\varphi, \hat{G})$-modules and $F$-crystals}, 
 eprint={https://arxiv.org/abs/2107.12240},
}

\bib{GR}{article}{
 author={Gabber, Ofer}, 
 author={Ramero, Lorenzo}, 
 title={Foundations for almost ring theory}, 
 eprint={https://arxiv.org/abs/math/0409584v13},
}
\bib{EGA-IV-1}{article}{
   author={Grothendieck, A.},
   title={\'{E}l\'{e}ments de g\'{e}om\'{e}trie alg\'{e}brique. IV. \'{E}tude locale des sch\'{e}mas et
   des morphismes de sch\'{e}mas. I},
   language={French},
   journal={Inst. Hautes \'{E}tudes Sci. Publ. Math.},
   number={20},
   date={1964},
   pages={259},
   issn={0073-8301},
   review={\MR{173675}},
}
\bib{Illusie}{article}{
 author={Illusie, Luc}, 
 title={G\'eom\'etrie logarithmique}, 
 note={Unpublished notes},
}
\bib{Illusie:cotangent complex I}{book}{
   author={Illusie, Luc},
   title={Complexe cotangent et d\'{e}formations. I},
   language={French},
   series={Lecture Notes in Mathematics, Vol. 239},
   publisher={Springer-Verlag, Berlin-New York},
   date={1971},
   pages={xv+355},
   review={\MR{0491680}},
}
\bib{Joyal}{article}{
   author={Joyal, Andr\'{e}},
   title={$\delta$-anneaux et vecteurs de Witt},
   language={French},
   journal={C. R. Math. Rep. Acad. Sci. Canada},
   volume={7},
   date={1985},
   number={3},
   pages={177--182},
   issn={0706-1994},
   review={\MR{789309}},
}

\bib{KatoF}{article}{
   author={Kato, Fumiharu},
   title={Log smooth deformation theory},
   journal={Tohoku Math. J. (2)},
   volume={48},
   date={1996},
   number={3},
   pages={317--354},
   issn={0040-8735},
   review={\MR{1404507}},
   doi={10.2748/tmj/1178225336},
}

\bib{Kato:log}{article}{
   author={Kato, Kazuya},
   title={Logarithmic structures of Fontaine-Illusie},
   conference={
      title={Algebraic analysis, geometry, and number theory},
      address={Baltimore, MD},
      date={1988},
   },
   book={
      publisher={Johns Hopkins Univ. Press, Baltimore, MD},
   },
   date={1989},
   pages={191--224},
   review={\MR{1463703}},
}
\bib{Kato:II}{article}{
 author={Kato, Kazuya}, 
 title={Logarithmic structures of Fontaine-Illusie. II}, 
 eprint={https://arxiv.org/abs/1905.10678},
}
\bib{Kato-Saito}{article}{
   author={Kato, Kazuya},
   author={Saito, Takeshi},
   title={On the conductor formula of Bloch},
   journal={Publ. Math. Inst. Hautes \'{E}tudes Sci.},
   number={100},
   date={2004},
   pages={5--151},
   issn={0073-8301},
   review={\MR{2102698}},
   doi={10.1007/s10240-004-0026-6},
}

\bib{Koshikawa-Yao}{misc}{
 author={Koshikawa, Teruhisa},
 author={Yao, Zijian}, 
 title={Logarithmic prismatic cohomology II},
 note={in preparation},
}

\bib{Li-Liu}{article}{
 author={Li, Shizhang}, 
 author={Liu, Tong},
 title={Comparison of prismatic cohomology and derived de Rham cohomology}, 
 eprint={https://arxiv.org/abs/2012.14064},
}

\bib{Mao}{article}{
 author={Mao, Zhouhang}, 
 title={Revisiting derived crystalline cohomology}, 
 eprint={https://arxiv.org/abs/2107.02921},
}

\bib{Niziol}{article}{
   author={Nizio\l , Wies\l awa},
   title={$K$-theory of log-schemes. I},
   journal={Doc. Math.},
   volume={13},
   date={2008},
   pages={505--551},
   issn={1431-0635},
   review={\MR{2452875}},
}
\bib{Ogus}{book}{
   author={Ogus, Arthur},
   title={Lectures on logarithmic algebraic geometry},
   series={Cambridge Studies in Advanced Mathematics},
   volume={178},
   publisher={Cambridge University Press, Cambridge},
   date={2018},
   pages={xviii+539},
   isbn={978-1-107-18773-3},
   review={\MR{3838359}},
   doi={10.1017/9781316941614},
}
\bib{Olsson}{article}{
   author={Olsson, Martin C.},
   title={The logarithmic cotangent complex},
   journal={Math. Ann.},
   volume={333},
   date={2005},
   number={4},
   pages={859--931},
   issn={0025-5831},
   review={\MR{2195148}},
   doi={10.1007/s00208-005-0707-6},
}
\bib{Pridham}{article}{
   author={Pridham, J. P.},
   title={On q-de Rham cohomology via $\Lambda $-rings},
   journal={Math. Ann.},
   volume={375},
   date={2019},
   number={1-2},
   pages={425--452},
   issn={0025-5831},
   review={\MR{4000247}},
   doi={10.1007/s00208-019-01806-7},
}
\bib{Scholze}{article}{
   author={Scholze, Peter},
   title={Canonical $q$-deformations in arithmetic geometry},
   language={English, with English and French summaries},
   journal={Ann. Fac. Sci. Toulouse Math. (6)},
   volume={26},
   date={2017},
   number={5},
   pages={1163--1192},
   issn={0240-2963},
   review={\MR{3746625}},
   doi={10.5802/afst.1563},
}
\bib{Shiho}{article}{
   author={Shiho, Atsushi},
   title={Crystalline fundamental groups. I. Isocrystals on log crystalline
   site and log convergent site},
   journal={J. Math. Sci. Univ. Tokyo},
   volume={7},
   date={2000},
   number={4},
   pages={509--656},
   issn={1340-5705},
   review={\MR{1800845}},
}

\bib{Stacks}{misc}{
    label={Sta},
    author={The Stacks Project Authors},
    title={Stacks Project},
 note={\url{https://stacks.math.columbia.edu/}},
}
\bib{Tsuji}{article}{
   author={Tsuji, Takeshi},
   title={$p$-adic \'{e}tale cohomology and crystalline cohomology in the
   semi-stable reduction case},
   journal={Invent. Math.},
   volume={137},
   date={1999},
   number={2},
   pages={233--411},
   issn={0020-9910},
   review={\MR{1705837}},
   doi={10.1007/s002220050330},
}
\bib{Tsuji:purity}{article}{
   author={Tsuji, Takeshi},
   title={Purity for Hodge-Tate representations},
   journal={Math. Ann.},
   volume={350},
   date={2011},
   number={4},
   pages={829--866},
   issn={0025-5831},
   review={\MR{2818716}},
   doi={10.1007/s00208-010-0582-7},
}
\bib{Tsuji:saturated}{article}{
   author={Tsuji, Takeshi},
   title={Saturated morphisms of logarithmic schemes},
   journal={Tunis. J. Math.},
   volume={1},
   date={2019},
   number={2},
   pages={185--220},
   issn={2576-7658},
   review={\MR{3907739}},
   doi={10.2140/tunis.2019.1.185},
}
\end{biblist}
\end{bibdiv}

\end{document}